%-----------------------------------------------------------------------
% Beginning of English template
%-----------------------------------------------------------------------
%
%%%%%%%%%%%%%%%%%%%%%%%%%%%%%%%%%%%%%%%%%%%%%%%%%%%%%%%%%%%%%%%%%%%%%%%%

%     Remove any commented or uncommented macros you do not use.

%% The template of Science China: Mathematics
\documentclass{SCAE}

% some usual macros such as amsmath,amsthm,
%amsfonts,amssymb,bm can be autoloaded.
% If need some special macros or definitions,
%Please fill in here and uncomment this command.
%\usepackage[]
\def\ls{\lesssim}
\def\gs{\gtrsim}
\def\fz{\infty}

\def\r{\right}
\def\lf{\left}

\def\supp{{\mathop\mathrm{\,supp\,}}}

\def\aa{{\mathbb A}}
\def\rr{{\mathbb R}}
\def\rn{{{\rr}^n}}
\def\zz{{\mathbb Z}}
\def\nn{{\mathbb N}}
\def\ccc{{\mathbb C}}

\newcommand{\wz}{\widetilde}
\newcommand{\oz}{\overline}

\newcommand{\cb}{{\mathcal B}}

\newcommand{\cd}{{\mathcal D}}

\newcommand{\cg}{{\mathcal G}}

\newcommand{\cp}{{\mathcal P}}

\newcommand{\cs}{{\mathcal S}}
\newcommand{\cy}{{\mathcal Y}}
\def\az{\alpha}
\def\lz{\lambda}
\def\blz{\Lambda}

\def\bfai{\Phi}
\def\dz {\delta}
\def\ez {\eta}

\def\bz{\beta}

\def\pz{\psi}
\def\xz{\xi}
\def\fai{\varphi}
\def\gz{{\gamma}}

\def\tz{\theta}
\def\sz{\sigma}
\def\zez{\zeta}
\def\wz{\widetilde}

\def\ls{\lesssim}
\def\gs{\gtrsim}

\def\boz{\Omega}
\def\oz{\omega}

\def\gfz{\genfrac{}{}{0pt}{}}

\def\noz{\nonumber}
\def\fin{{\mathop\mathrm{fin}}}

\def\esup{\mathop\mathrm{\,esssup\,}}

\def\bmo{{{\mathop\mathrm {bmo}}}}
\def\bbmo{{{\mathop\mathrm {BMO}}}}

\def\hs{\hspace{0.3cm}}

\def\dint{\displaystyle\int}
\def\dfrac{\displaystyle\frac}
\def\dsup{\displaystyle\sup}

\def\gfz{\genfrac{}{}{0pt}{}}

\numberwithin{equation}{section}
\def\supp{{\mathop\mathrm{\,supp\,}}}
\def\diam{{\mathop\mathrm{\,diam\,}}}
\def\dist{{\mathop\mathrm{\,dist\,}}}
\def\loc{{\mathop\mathrm{loc\,}}}
\def\lfz{\lfloor}
\def\rfz{\rfloor}
\numberwithin{equation}{section}

\begin{document}

\arraycolsep=1pt

%Basic Information
\Year{2012} %
\Month{January}
\Vol{55} %
\No{1} %
\BeginPage{1} %
\EndPage{XX} %
\AuthorMark{Yang D C {\it et al.}}
\ReceivedDay{January 11, 2012}
\AcceptedDay{February 2, 2012}
\PublishedOnlineDay{; published online February 12, 2012}
\DOI{10.1007/s11425-000-0000-0} % The author doesn't need fill in it.

% \title[short text for running head]{full title}{comments for title}
\title{Local Hardy spaces of Musielak-Orlicz type
and their applications}

% \author[]{Full name}{footnote}
% Remark:  One \author for one author

\author{YANG DaChun}{Corresponding author}
\author{YANG SiBei}{}

\address{School of Mathematical Sciences, Beijing Normal University,
Laboratory of Mathematics}
\address{and Complex Systems, Ministry of
Education, Beijing {\rm 100875}, People's Republic of China}

\Emails{ dcyang@bnu.edu.cn,
yangsibei@mail.bnu.edu.cn}

\maketitle
\vspace{-2mm}

{\begin{center}

\parbox{14.5cm}{\begin{abstract}Let
$\varphi:\,\mathbb{R}^n\times[0,\fz)\rightarrow[0,\fz)$ be a
function such that $\varphi(x,\cdot)$ is an Orlicz function and
$\varphi(\cdot,t)\in
{\mathbb A}^{\mathop\mathrm{loc}}_{\infty}(\mathbb{R}^n)$ (the class of
local weights introduced by V. S. Rychkov). In this paper, the
authors introduce a local Musielak-Orlicz Hardy space
$h_{\varphi}(\mathbb{R}^n)$ by the local grand maximal function, and
a local $\mathop\mathrm{BMO}$-type space
$\mathop\mathrm{bmo}_{\varphi}(\mathbb{R}^n)$ which is further
proved to be the dual space of $h_{\varphi}(\mathbb{R}^n)$. As an
application, the authors prove that the class of pointwise
multipliers for the local $\mathop\mathrm{BMO}$-type space
$\mathop\mathrm{bmo}^{\phi}(\mathbb{R}^n)$, characterized by E.
Nakai and K. Yabuta, is just the dual of
$L^1(\rn)+h_{\Phi_0}(\mathbb{R}^n)$, where $\phi$ is an increasing
function on $(0,\infty)$ satisfying some additional growth
conditions and $\Phi_0$ a Musielak-Orlicz function induced by
$\phi$. Characterizations of $h_{\varphi}(\mathbb{R}^n)$,
including the atoms, the local vertical and the local
nontangential maximal functions, are presented. Using the atomic
characterization, the authors prove the existence of finite atomic
decompositions achieving the norm in some dense subspaces of
$h_{\varphi}(\mathbb{R}^n)$, from which, the authors further
deduce some criterions for the boundedness on
$h_{\varphi}(\mathbb{R}^n)$ of some sublinear operators. Finally,
the authors show that the local Riesz transforms and some
pseudo-differential operators are bounded on
$h_{\varphi}(\mathbb{R}^n)$.
\end{abstract}}
\end{center}}

%  Keyword is required.

\begin{center}
\parbox{14.5cm}{\keywords{local weight, Musielak-Orlicz function,
local Hardy space, atom, local maximal function, local $\mathop\mathrm{BMO}$ space,
dual space, pointwise multiplier,
local Riesz transform, pseudo-differential operator}}
\end{center}

%  \subjclass is required.

\begin{center}
\parbox{14.5cm}{\MSC{42B35, 46E30, 42B30,
42B25, 42B20, 35S05, 47G30,47B06}}
\end{center}

%%%%%%%%%%%%%%%%%%%%%%%%%%%%%%%%%%%%%%%%%%%%%%%%%%%%%%%%%%%%
\renewcommand{\baselinestretch}{1.2}
\begin{center} \renewcommand{\arraystretch}{1.5}
{\begin{tabular}{lp{0.8\textwidth}} \hline \scriptsize {\bf
Citation:}\!\!\!\!&\scriptsize Yang D C, Yang S B.
Local Hardy spaces of Musielak-Orlicz type
and their applications. Sci China Math,
2012, 55, doi: 10.1007/s11425-000-0000-0\vspace{1mm}
\\
\hline
\end{tabular}}\end{center}

%%%%%%%%%%%%%%%%%%%%%%%%%%%%%%%%%%%%%%%%%%%%%%%%%%%%%%%%%%%%
%% Text of article.
%%%%%%%%%%%%%%%%%%%%%%%%%%%%%%%%%%%%%%%%%%%%%%%%%%%%%%%%%%%%
%    Section headings
\baselineskip 11pt\parindent=10.8pt  \wuhao

\section{Introduction\label{s1}}

\hskip\parindent It is well known that the theory of the classical
local Hardy spaces $h^p (\rn)$ with $p\in(0,1]$, originally
introduced by Goldberg \cite{go79}, plays an important role in
partial differential equations and harmonic analysis (see, for
example, \cite{bu81,go79,r01,tay91,t83,t92} and their references).
In particular, pseudo-differential operators are bounded on $h^p
(\rn)$ with $p\in(0,1]$, but they are not bounded on Hardy spaces
$H^p (\rn)$ with $p\in(0,1]$ (see \cite{go79} and also \cite{t83,
t92}). Moreover, it was proved by Goldberg \cite{go79} that
$h^p(\rn)$ with $p\in (0,1]$ is closed under composition with
diffeomorphisms and under multiplication by smooth functions with
compact support; while $H^p(\rn)$ with $p\in(0,1]$ is not.

It is also well known that the classical $\bbmo$ space (the {\it
space of functions with bounded mean oscillation}) originally
introduced by John and Nirenberg \cite{jn1} and the classical
Morrey space originally introduced by Morrey \cite{m67} play an
important role in the study of partial differential equations and
harmonic analysis (see, for example, \cite{ddy05,dxy07,fs72,n07}).
In particular, Fefferman and Stein \cite{fs72} proved that
$\bbmo(\rn)$ is the dual space of $H^1 (\rn)$. Also, Goldberg
\cite{go79} introduced the local variant, $\bmo(\rn)$, of
$\bbmo(\rn)$ and proved that $\bmo(\rn)$ is the dual space of $h^1
(\rn)$.

In \cite{bu81}, Bui studied the weighted version,
$h^p_{\oz}(\rn)$, of $h^p(\rn)$  with $\oz\in A_{\fz}(\rn)$, here and in what follows, $A_{\fz}(\rn)$ denotes the \emph{class of
Muckenhoupt's weights} (see, for example, \cite{g79,gr1,gra1} for
their definitions and properties). Rychkov \cite{r01} introduced
and studied a \emph{class of local weights}, denoted by
$A^{\loc}_{\fz}(\rn)$ (see also Definition \ref{d2.1} below), and
the weighted Besov-Lipschitz spaces and the Triebel-Lizorkin
spaces with weights belonging to $A^{\loc}_{\fz}(\rn)$, which
contains $A_{\fz}(\rn)$ weights and the exponential weights
introduced by Schott \cite{sc98} as special cases. In particular,
Rychkov \cite{r01} generalized some of the theory of weighted
local Hardy spaces developed by Bui \cite{bu81} to
$A^{\loc}_{\fz}(\rn)$ weights. Very recently, Tang \cite{ta1}
established the weighted atomic characterization of
$h^p_{\oz}(\rn)$ with $\oz\in A^{\loc}_{\fz}(\rn)$ via the local
grand maximal function. Tang \cite{ta1} also established some
criterions for the boundedness of $\cb_{\bz}$-sublinear operators
on $h^p_{\oz}(\rn)$ (see Section \ref{s6} of this paper for the notion of
$\cb_{\bz}$-sublinear operators, which was first introduced in
\cite{yz08}). As applications, Tang \cite{ta1, ta2} proved that
some strongly singular integrals, pseudo-differential operators
and their commutators are bounded on $h^p_{\oz}(\rn)$. It is worth
pointing out that in recent years, many papers focused on
criterions for the boundedness of (sub)linear operators on various
Hardy spaces with different underlying spaces (see, for example,
\cite{b05,blyz08,gly,gly10,lbyz10,msv08,msv09,rv,ta1,yz08,yz09}),
and on the entropy and approximation numbers of embeddings of function spaces
with Muckenhoupt weight (see, for example,
\cite{dl1,dl2,dl3,dl4}). Moreover, let $L$ be a linear operator on
$L^2(\rn)$, which generates an analytic semigroup
$\{e^{-tL}\}_{t\ge0}$ with kernels satisfying an upper bound of
Poisson type; the local Hardy space $h^1_L(\rn)$ associated with
$L$ and its dual space were studied in \cite{jyz09a}.

On the other hand,  as the generalization of $L^p (\rn)$, the
Orlicz space was introduced by Birnbaum-Orlicz in \cite{bo31} and
Orlicz in \cite{o32}. Since then, the theory of the Orlicz spaces
themselves has been well developed and these spaces have been
widely used in probability, statistics, potential theory, partial
differential equations, as well as harmonic analysis and some
other fields of analysis (see, for example,
\cite{aikm00,io02,mw08,rr91,rr00}). Moreover, Orlicz-Hardy spaces
are also suitable substitutes of the Orlicz spaces in dealing with
many problems of analysis (see, for example,
\cite{ja80,jy10,s79,vi87}). Recall that Orlicz-Hardy spaces and
their dual spaces were studied by Str\"omberg \cite{s79} and
Janson \cite{ja80} on $\rn$ and Viviani \cite{vi87} on spaces of
homogeneous type in the sense of Coifman and Weiss \cite{cw71}.
The weighted local Orlicz-Hardy space $h^{\Phi}_{\oz}(\rn)$, with
$\oz\in A^{\loc}_{\fz}(\rn)$ and $\Phi$ being an Orlicz
function of critical
lower type index $p^-_\Phi\in(0,1]$ (see Section \ref{s2} below for
the notions of Orlicz functions and $p^-_\Phi$), via the
local grand maximal function
was introduced in \cite{yys}. Let
$\rho(t):=t^{-1}/\Phi^{-1}(t^{-1})$ for all $t\in(0,\infty)$,
where $\Phi^{-1}$ denotes the \emph{inverse function} of $\Phi$. The
$\mathop\mathrm{BMO}$-type space
$\mathop\mathrm{bmo}_{\rho,\,\omega}(\mathbb{R}^n)$ was also
introduced in \cite{yys}, which was proved to be the dual space of
$h^{\Phi}_{\omega}(\mathbb{R}^n)$ therein. Characterizations of
$h^{\Phi}_{\omega}(\mathbb{R}^n)$, including the atoms, the local
vertical and the local nontangential maximal functions, were
presented in \cite{yys}. Some criterions for the boundedness of
$\cb_{\bz}$-sublinear operators on $h^{\Phi}_{\oz}(\rn)$ were also
given in \cite{yys}. As applications, it was showed, in
\cite{yys}, that the local Riesz transforms and some
pseudo-differential operators are bounded on
$h^{\Phi}_{\omega}(\mathbb{R}^n)$. Moreover, Orlicz-Hardy spaces
associated with some differential operators and their dual spaces
were introduced and studied in \cite{lyy11,jy10,jyz09}.

Let $\phi$ be a positive increasing function on $(0,\fz)$,
and $\bbmo^{\phi}(\rn)$ and $\bmo^{\phi}(\rn)$ two classes of
$\mathop\mathrm{BMO}$-type spaces introduced by Nakai and
Yabuta \cite{ny85} (see also Definition \ref{d7.4} below).
Assume further that $\phi(r)/r$ is
\emph{almost decreasing}, namely, there exists a positive constant
$C$ such that $\phi(t)/t\le C\phi(s)/s$ for all $t,\,s\in(0,\fz)$
with $t\ge s$. Nakai and Yabuta \cite{ny85} proved that a function
$g$ on $\rn$ is a pointwise multiplier on $\bmo^{\phi}(\rn)$ if and
only if $g\in \bbmo^{\pz}(\rn)\cap L^{\fz}(\rn)$, where
$$\pz(r):=\phi(r)\lf\{\int^2_{\min(1,r)}\frac{\phi(t)}{t}\,dt\r\}^{-1}$$
for all $r\in(0,\fz)$ (see \cite[Theorem 3]{ny85} or Proposition
\ref{p7.1} below).

Recently, Ky \cite{k} introduced a \emph{new Musielak-Orlicz
Hardy space}, $H^{\fai}(\rn)$, via the grand maximal
function, which generalizes both the Orlicz-Hardy space of
Str\"omberg \cite{s79} and Janson \cite{ja80} and the weighted
Hardy space $H^p_\oz(\rn)$ studied by Garc\'\i a-Cuerva \cite{gr1}
and Str\"omberg and Torchinsky \cite{st}. Here,
$\fai:\,\rn\times[0,\fz)\rightarrow[0,\fz)$ is a function such
that $\fai(x,\cdot)$ is an Orlicz function and $\fai(\cdot,t)$ is
a uniform Muckenhoupt weight. In particular, when $\fai(x,t):=\Phi(t)$ for
all $x\in\rn$ and $t\in[0,\fz)$, $H^{\fai}(\rn)$ is just the
classical Orlicz-Hardy space, where $\Phi$ is an Orlicz function
with upper type 1 and lower type $p\in(0,1]$. In \cite{k}, Ky
established the atomic characterization of $H^{\fai}(\rn)$; as an
application, Ky also gave some criterions for the boundedness of
$\cb_{\bz}$-sublinear operators on $H^{\fai}(\rn)$. Ky \cite{k}
further introduced the $\mathop\mathrm{BMO}$-type space
$\mathop\mathrm{BMO}_{\fai}(\rn)$, which was proved to be the dual
space of $H^{\fai}(\rn)$; as an interesting application, Ky proved
that the class of pointwise multipliers for $\bbmo(\rn)$,
characterized by Nakai and Yabuta \cite{ny85}, is the dual space
of $L^1(\rn)+H^{\log}(\rn)$, where $H^{\log}(\rn)$ is the Musielak-Orlicz
Hardy space related to the Musielak-Orlicz function
$$\fai(x,t):=\frac{t}{\log(e+|x|)+\log(e+t)}$$
for all $x\in\rn$ and $t\in [0,\fz)$. Musielak-Orlicz functions are the
natural generalization of Orlicz functions that may vary in the
spatial variables (see, for example, \cite{d05,dhr09,k,m83}).
Recall that the motivation to study function spaces of
Musielak-Orlicz type comes from the applications to elasticity, fluid
dynamics, image processing, nonlinear partial differential
equations and the calculus of variation (see, for example,
\cite{bg10,bgk,bijz07,d05,dhr09,k,l05}). It is worth noticing that
some special Musielak-Orlicz Hardy spaces appear naturally
in the study of the products of functions in $\bbmo(\rn)$ and
$H^1(\rn)$ (see \cite{bgk,bijz07}), and the endpoint estimates for
the div-curl lemma (see \cite{bfg10,bgk}).

Let $\fai:\,\rn\times[0,\fz)\rightarrow[0,\fz)$ be a \emph{growth
function}, namely, $\fai(x,\cdot)$ is an Orlicz function of upper
type 1 and lower type $p\in(0,1]$, and $\fai(\cdot,t)$ belongs to
$\aa^{\mathop\mathrm{loc}}_{\infty}(\mathbb{R}^n)$ (see Definition
\ref{d2.3} below). Motivated by \cite{k,r01,yys}, in this paper, we
introduce a new local Musielak-Orlicz Hardy space,
$h_{\fai}(\rn)$, via the local grand maximal function and the
local $\mathop\mathrm{BMO}$-type space,
$\mathop\mathrm{bmo}_{\fai}(\rn)$, which is further proved to be
the dual space of $h_{\fai}(\rn)$. As an interesting application,
we show that the class of pointwise multipliers for
$\mathop\mathrm{bmo}^{\phi}(\mathbb{R}^n)$, characterized by Nakai
and Yabuta in \cite{ny85}, is the dual of
$L^1(\rn)+h_{\Phi_0}(\mathbb{R}^n)$, where $\phi$ is an increasing
function on $(0,\infty)$ satisfying some additional growth
conditions (see Theorem \ref{t7.2} and Remark
\ref{r7.1} below), $\mathop\mathrm{bmo}^{\phi}(\mathbb{R}^n)$ denotes the
local $\mathop\mathrm{BMO}$-type space introduced by Nakai and
Yabuta in \cite{ny85}, and $\Phi_0$ is a Musielak-Orlicz function
induced by $\phi$. Characterizations of $h_{\fai}(\rn)$, including
the atoms, the local vertical and the local nontangential maximal
functions, are presented. Using the atomic characterization, we
prove the existence of finite atomic decompositions achieving the
norm in some dense subspaces of $h_{\fai}(\rn)$, from which we
further deduce that for a given admissible triplet
$(\fai,\,q,\,s)$ and a $\beta$-quasi-Banach space
$\mathcal{B}_{\beta}$ with $\beta\in(0,1]$, if $T$ is a
$\mathcal{B}_{\beta}$-sublinear operator, then $T$ uniquely
extends to a bounded $\mathcal{B}_{\beta}$-sublinear operator from
$h_{\fai}(\rn)$ to $\mathcal{B}_{\beta}$ if and only if $T$ maps
all $(\fai,\,q,\,s)$-atoms and $(\fai,\,q)$-single-atoms with
$q<\infty$ (or all continuous $(\fai,\,q,\,s)$-atoms with $q=\fz$)
into uniformly bounded elements of $\mathcal{B}_{\beta}$. Finally,
we show that the local Riesz transforms and some
pseudo-differential operators are bounded on $h_{\fai}(\rn)$. We
point out that the local Musielak-Orlicz Hardy space $h_{\fai}(\rn)$
includes the classical local Hardy space in
\cite{go79}, the weighted local Hardy space with Muckenhoupt
weight in \cite{bu81} and with local Rychkov weight in \cite{ta1}
and the weighted local Orlicz-Hardy space in \cite{yys} as special
cases.

Precisely, this paper is organized as follows. In Section \ref{s2},
we recall some notions concerning Musielak-Orlicz functions,
some examples, and some properties
established in \cite{k} for growth functions.

In Section \ref{s3}, we first introduce the local grand maximal
function, the local Musielak-Orlicz Hardy space,
$h_{\fai,\,N}(\rn)$,  and then the atomic local Musielak-Orlicz
Hardy space, $h^{\fai,\,q,\,s}(\rn)$, for any admissible
triplet $(\fai,\,q,\,s)$ (see Definitions \ref{d3.2} and
\ref{d3.4} below). We point out that when
\begin{equation}\label{1.1}
\fai(x,t):=\oz(x)\Phi(t)\end{equation} for all $x\in\rn$ and
$t\in[0,\fz)$, the local Musielak-Orlicz Hardy space,
$h_{\fai,\,N}(\rn)$, is just the weighted local Hardy space
$h^{\Phi}_{\oz,\,N}(\rn)$ introduced in \cite{yys}. Here, $\oz\in
A^{\mathop\mathrm{loc}}_{\infty}(\mathbb{R}^n)$ and $\Phi$ is an
Orlicz function of upper type 1 and lower type $p\in(0,1]$. Next,
we establish the local vertical and the local nontangential
maximal function characterizations of $h_{\fai,\,N}(\rn)$ via a
local Calder\'on reproducing formula and some useful estimates
established by Rychkov \cite{r01}, which generalizes \cite[Theorem
3.14]{yys} by taking $\fai$ as in \eqref{1.1} (see Theorems
\ref{t3.1} and \ref{t3.2} and Remark \ref{r3.3} below). Finally, we
present some properties of these spaces in Propositions \ref{p3.2}
and \ref{p3.3} below.

Throughout the whole paper, as usual, $\cd(\rn)$ denotes the {\it
set of all $C^\fz(\rn)$ functions on $\rn$ with compact support},
endowed with the inductive limit topology, and $\cd'(\rn)$ its {\it
topological dual space}, endowed with the weak-$\ast$ topology. In
Section \ref{s4}, for any given $f\in\cd'(\rn)$, integer $s\ge\lfz
n[q(\fai)/i(\fai)-1]\rfz$ and $\lz>\inf_{x\in\rn}\cg_{N,\,\wz{R}}
(f)(x)$, where $q(\fai)$, $i(\fai)$ and $\cg_{N,\,\wz{R}}(f)$ are
respectively as in \eqref{2.9}, \eqref{2.3} and \eqref{3.2} below,
$\lfz\az\rfz$ for any $\az\in\rr$ denotes
the \emph{maximal integer not more than $\az$}
and $\wz{R}=2^{3(10+n)}$, following \cite{b03,blyz08,st93,ta1,yys},
via a Whitney decomposition of $\boz_{\lz}$ in \eqref{4.1}, we
obtain the Calder\'on-Zygmund decomposition $f= g+\sum_i b_i$ in
$\cd'(\rn)$ of degree $s$ and height $\lz$ associated with the local
grand maximal function $\cg_{N,\,\wz{R}}(f)$. The main task of
Section \ref{s4} is to recall some subtle estimates for $g$ and
$\{b_i\}_i$, mainly from \cite{ta1}. Precisely, Lemmas \ref{l4.2}
through \ref{l4.5} are
estimates on $\{b_i\}_i$, the bad part of $f$, while Lemmas
\ref{l4.6} and \ref{l4.7} on $g$, the good part of $f$. As an
application of these estimates, we obtain the density of
$L^{q}_{\fai(\cdot,1)}(\rn)\cap h_{\fai,\,N}(\rn)$ in
$h_{\fai,\,N}(\rn)$, where $q\in(q(\fai),\fz)$ (see Corollary
\ref{c4.1} below).

In Section \ref{s5}, we prove that for any admissible triplet
$(\fai,\,q,\,s)$, $h^{\fai,\,\fz,\,s}(\rn)=h_{\fai,\,N}(\rn)$ with
equivalent quasi-norms when positive integer $N\ge N_{\fai}$ (see
\eqref{3.20} below for the definition of $N_{\fai}$), by using the
Calder\'on-Zygmund decomposition and some subtle estimates
presented in Section \ref{s4}, which completely covers
\cite[Theorem\,5.6]{yys} by taking $\fai$ as in \eqref{1.1} (see
Theorem \ref{t5.1} and Remark \ref{r5.2} below). For simplicity,
\emph{in the remainder of this introduction}, we denote by
$h_{\fai}(\rn)$ the \emph{local Musielak-Orlicz Hardy space,
$h_{\fai,\,N}(\rn)$, with positive integer $N\ge N_{\fai}$}.

Assume that $(\fai,\,q,\,s)$ is an admissible triplet and $\fai$
satisfies the uniformly locally $q$-dominated convergence
condition when $q\in(q(\fai),\fz)$ and the uniformly locally
$q_0$-dominated convergence condition with some
$q_0\in(q(\fai),\fz)$ when $q=\fz$ (see Definition \ref{d6.2}
below). Let $h^{\fai,\,q,\,s}_{\fin}(\rn)$ be the {\it space of
all finite linear combinations of $(\fai,\,q,\,s)$-atoms or
$(\fai,\,q)$-single-atoms} (see Definition \ref{d6.1} below), and
$h^{\fai,\,\fz,\,s}_{\fin,\,c}(\rn)$ the {\it space of all $f\in
h^{\fai,\,\fz,\,s}_{\fin}(\rn)$ with compact support}. In Section
\ref{s6}, we prove that $\|\cdot\|_{h^{\fai,\,q,\,s}_{\fin}(\rn)}$
and $\|\cdot\|_{h_{\fai}(\rn)}$ are equivalent quasi-norms on
$h^{\fai,\,q,\,s}_{\fin}(\rn)\cap L^q_{\fai(\cdot,1)}(\rn)$ when
$q <\fz$ and on $h^{\fai,\,\fz,\,s}_{\fin,\,c}(\rn)\cap C(\rn)$
when $q=\fz$ (see Theorem \ref{t6.1} below). As an application, we
prove that for a given admissible triplet $(\fai,\,q,\,s)$ and a
$\beta$-quasi-Banach space $\mathcal{B}_{\beta}$ with
$\beta\in(0,1]$, if $T$ is a $\cb_{\bz}$-sublinear operator from
$h^{\fai,\,q,\,s}_{\fin}(\rn)$ to $\mathcal{B}_{\beta}$ when
$q\in(q(\fai),\fz)$ or from $h^{\fai,\,\fz,\,s}_{\fin}(\rn)\cap
C(\rn)$ to $\mathcal{B}_{\beta}$ when $q=\fz$, and $\fai$ is of
uniformly upper type $\wz{p}\in(0,\beta]$, then $T$ uniquely
extends to a bounded $\cb_{\bz}$-sublinear operator from
$h_{\fai}(\rn)$ to $\cb_{\bz}$ if and only if $T$ maps all
$(\fai,\,q,\,s)$-atoms and $(\fai,\,q)$-single-atoms with
$q\in(q(\fai),\fz)$ (or all continuous $(\fai,\,q,\,s)$-atoms with
$q=\infty$) into uniformly bounded elements of $\cb_{\bz}$ (see
Theorem \ref{t6.2} below). We remark that this
 extends both the results of Meda-Sj\"ogren-Vallarino
\cite{msv08,msv09} and Yang-Zhou \cite{yz09} to the setting of local
Musielak-Orlicz Hardy spaces. We also point out that Theorem
\ref{t6.1} and Theorem \ref{t6.2} below completely cover
\cite[Theorem\,6.2]{yys} and \cite[Theorem \,6.4]{yys},
respectively, by taking $\fai$ as in \eqref{1.1} (see
Remark \ref{r6.2} below).

In Subsection \ref{s7.1}, we introduce the $\bbmo$-type space
$\bmo_{\fai}(\rn)$ which is proved to be the dual space of
$h^{\fai,\,\fz,\,s}(\rn)$ (see Theorem \ref{t7.1} below). Combining
Theorems \ref{t5.1} with \ref{t7.1}, we see that
$\lf[h_{\fai} (\rn)\r]^{\ast}=\bmo_{\fai}(\rn)$
(see Corollary \ref{c7.1} below). Assume that $\phi\equiv1$ or
$\phi$ is an increasing function on $(0,\fz)$ satisfying the fact that
$\phi(r)/r$ is almost decreasing and $\phi$ is of lower type
$p_0\in (0,1]$ (more generally, $\phi$ satisfies \eqref{7.10}
below). As an application of Corollary \ref{c7.1}, in Subsection
\ref{s7.2}, we show that the class of pointwise multipliers for
$\mathop\mathrm{bmo}^{\phi}(\mathbb{R}^n)$ characterized by Nakai
and Yabuta \cite{ny85} is the dual of
$L^1(\rn)+h_{\Phi_0}(\mathbb{R}^n)$ (see Theorem \ref{t7.2} below).
Recall that when $\phi\equiv1$, $\bmo^{\phi}(\rn)$ is just
$\bmo(\rn)$ introduced by Goldberg in \cite{go79}.

In Subsection \ref{s8.1}, we first prove that the local Riesz
transforms are bounded on $h_{\fai}(\rn)$ with
$\fai\in\aa^{\loc}_p(\rn)$ for $p\in[1,\fz)$, which completely
covers \cite[Theorem\,8.2]{yys} by taking $\fai$ as in
\eqref{1.1}, where $\Phi$ is further assumed to satisfy the fact that
$p_{\bfai}=p_{\bfai}^+$ and $p_{\Phi}^+$ is the attainable
critical upper type index of $\Phi$ (see Section \ref{s2} below
for the definitions of $p_{\Phi}$ and $p_{\bfai}^+$), which is now
proved to be superfluous in Theorem \ref{t8.1} below (see also
Remark \ref{r8.1} below). Finally, let $\fai(\cdot,t)$ be a weight
having better properties than $\aa^\loc_\fz(\rn)$
and $T$ an $S^0_{1,\,0}(\rn)$
pseudo-differential operator. We prove that $T$ is bounded on
$h_{\fai}(\rn)$, which completely covers \cite[Theorem\,8.18]{yys} (see
Theorem \ref{t8.2} and also Remark \ref{r8.2} below).

Finally, we make some conventions on some notation. Throughout the whole
paper, we denote by $C$ a \emph{positive constant} which is
independent of the main parameters, but it may vary from line to
line. We also use $C(\gz,\bz,\cdots)$ to denote a {\it positive
constant depending on the indicated parameters $\gz$, $\bz$,
$\cdots$}. The {\it symbol} $A\ls B$ means that $A\le CB$. If $A\ls
B$ and $B\ls A$, then we write $A\sim B$. The {\it symbol} $\lfz
s\rfz$ for $s\in\rr$ denotes the maximal integer not more than $s$.
For any given normed spaces $\mathcal A$ and $\mathcal B$ with the
corresponding norms $\|\cdot\|_{\mathcal A}$ and
$\|\cdot\|_{\mathcal B}$, the {\it symbol} ${\mathcal
A}\subset{\mathcal B}$ means that for all $f\in \mathcal A$, then
$f\in\mathcal B$ and $\|f\|_{\mathcal B}\ls \|f\|_{\mathcal A}$. For
any measurable subset $E$ of $\rn$, we denote by $E^\complement$ the \emph{set}
$\rn\setminus E$ and by
$\chi_{E}$  its \emph{characteristic function}. We also set $\nn:=\{0,\,1,\,
\cdots\}$ and $\zz_+:=\{1,\,2,\,\cdots\}$. For any
$\tz:=(\tz_{1},\ldots,\tz_{n})\in\nn^{n}$, let
$|\tz|:=\tz_{1}+\cdots+\tz_{n}$ and
$\partial^{\tz}_x:=\frac{\partial^{|\tz|}}{\partial
{x_{1}^{\tz_{1}}}\cdots\partial {x_{n}^{\tz_{n}}}}$. Given a
function $g$ on $\rn$, if $\int_\rn g(x)\,dx\neq 0$, we let $L_g:= -1$;
otherwise, we let $L_g\in\nn$ be the {\it maximal integer} such that
$g$ has vanishing moments up to order $L_g$, namely,
$\int_{\rn}g(x)x^{\az}\,dx=0$ for all multi-indices $\az$ with
$|\az|\le L_g$.

\section{Preliminaries\label{s2}}

\hskip\parindent In Subsection \ref{s2.1}, we recall some notions and
notation concerning Musielak-Orlicz functions considered in
this paper; some specific examples of Musielak-Orlicz
functions satisfying the assumptions
are also given in this subsection. Subsection \ref{s2.2}
is devoted to recalling some properties of Musielak-Orlicz
functions established in \cite{k}.

\subsection{Musielak-Orlicz functions\label{s2.1}}

\hskip\parindent We recall that a function $\Phi:[0,\fz)\to[0,\fz)$
is called an \emph{Orlicz function} if it is nondecreasing,
$\Phi(0)=0$, $\Phi(t)>0$ for $t\in(0,\fz)$ and
$\lim_{t\to\fz}\Phi(t)=\fz$ (see, for example,
\cite{m83,rr91,rr00}). The function $\Phi$ is said to be of \emph{
upper type $p$} (resp. \emph{lower type $p$}) for some $p\in[0,\fz)$, if
there exists a positive constant $C$ such that for all $t\in[1,\fz)$
(resp. $t\in[0,1]$) and $s\in[0,\fz)$,
\begin{equation}\label{2.1}
\Phi(st)\le Ct^p \Phi(s).
\end{equation}
If $\Phi$ is of both upper type $p_1$ and lower type $p_0$, then
$\Phi$ is said to be of \emph{type $(p_0,\,p_1)$}. Let
\begin{eqnarray*}
&&p_{\Phi}^{+}:=\inf\{p\in[0,\fz):\,\text{there exists}\,
\,C\in(0,\fz)\,\,\\
&&\hspace{5 em}\text{such that}\,\eqref{2.1} \,\, \text{holds for
all} \,\,t\in[1,\fz)\,\, \text{and}\,\,s\in[0,\fz)\},
\end{eqnarray*}
and
\begin{eqnarray*}
&&p_{\Phi}^{-}:=\sup\{p\in[0,\fz):\,\text{there exists}\,
\,C\in(0,\fz)\,\,\\
&&\hspace{5 em}\,\text{such that}\,\eqref{2.1} \,\, \text{holds for
all} \,\,t\in[0,1]\,\, \text{and}\,\,s\in[0,\fz)\}.
\end{eqnarray*}

The function $\Phi$ is said to be of {\it strictly lower type $p$}
if for all $t\in[0,1]$ and $s\in[0,\fz)$, $\Phi(st)\le t^p\Phi(s)$,
and define
\begin{equation*}
p_{\Phi}:=\sup\{p\in(0,\fz):\,\Phi(st)\le t^p\Phi(s) \,\text{holds
for all}\ t\in[0,1]\,\text{and}\ s\in[0,\fz)\}.
\end{equation*}

It is easy to see that $p_{\Phi}\le p_{\Phi}^{-}\le p_{\Phi}^{+}$
for all $\Phi$. In what follows, $p_{\bfai},\,p_{\bfai}^{-}$ and
$p_{\bfai}^{+}$ are respectively called the {\it strictly critical
lower type index}, the  {\it critical lower type index} and the {\it
critical upper type index} of $\bfai$. Moreover, it was proved in
\cite[Remark 2.1]{jy10} that $\Phi$ is also of strictly lower type
$p_\Phi$; in other words, $p_\Phi$ is \emph{attainable}.

\begin{remark}\label{r2.1}
We observe that, via the Aoki-Rolewicz theorem in \cite{ao42,ro57},
all results in \cite{yys} are still true if the assumptions on
$p_{\Phi}$ therein are relaxed into the same assumptions on
$p_{\Phi}^-$.
\end{remark}

Given a function $\fai:\,\rn\times[0,\fz)\to[0,\fz)$ such
that for any $x\in\rn$,
$\fai(x,\cdot)$ is an Orlicz function, and we say that $\fai$ is of
\emph{uniformly upper type $p$} (resp. \emph{uniformly lower type
$p$}) for some $p\in[0,\fz)$ if there exists a positive constant $C$
such that for all $x\in\rn$, $t\in[0,\fz)$ and $s\in[1,\fz)$ (resp.
$s\in[0,1]$),
\begin{equation}\label{2.2}
\fai(x,st)\le Cs^p\fai(x,t).
\end{equation}
We say that $\fai$ is of \emph{positive uniformly upper type}
(resp. \emph{uniformly lower type}) if it is of uniformly upper
type (resp. uniformly lower type) $p$ for some $p\in(0,\fz)$, and
let
\begin{equation}\label{2.3}
i(\fai):=\sup\{p\in(0,\fz):\ \fai\ \text{is of uniformly lower
type}\ p\}.
\end{equation}

The following notion of local weights was introduced by Rychkov
\cite{r01} when $q\in(1,\fz]$ and by Tang \cite{ta1} when $p=1$.
Let $Q$ be a cube in $\rn$ and denote its Lebesgue measure by
$|Q|$. Throughout the whole paper, \emph{all cubes are assumed to
be closed with their sides parallel to the coordinate axes}.

\begin{definition}\label{d2.1}
Let $p\in(1,\fz)$. The {\it weight class $A^{\loc}_p (\rn)$} is
defined to be the set of all nonnegative locally integrable
functions $\oz$ on $\rn$ such that
\begin{equation}\label{2.4}
A^{\loc}_p (\oz):=\sup_{|Q|\le1}\frac{1}{|Q|^p}\int_Q \oz(x)\,dx
\lf(\int_Q [\oz(x)]^{-p'/p}\,dx\r)^{p/p'}<\fz,
\end{equation}
where the supremum is taken over all cubes $Q\subset\rn$ with
$|Q|\le1$ and $\frac{1}{p}+\frac{1}{p'}=1$.

When $p=1$, the {\it weight class $A^{\loc}_1 (\rn)$} is defined to
be the set of all nonnegative locally integrable functions $\oz$ on
$\rn$ such that
\begin{equation}\label{2.5}
A^{\loc}_1 (\oz):=\sup_{|Q|\le1}\frac{1}{|Q|}\int_Q \oz(x)\,dx
\lf(\esup_{y\in Q}[\oz(y)]^{-1}\r)<\fz,
\end{equation}
where the supremum is taken over all cubes $Q\subset\rn$ with
$|Q|\le1$.

When $p=\fz$, the {\it weight class $A^{\loc}_{\fz} (\rn)$} is
defined to be the set of all nonnegative locally integrable
functions $\oz$ on $\rn$ such that for any $\az\in(0,1)$,
\begin{equation}\label{2.6}
A^{\loc}_{\fz} (\oz;\,\az):=\sup_{|Q|\le1} \lf[\sup_{F\subset
Q,\,|F|\ge\az|Q|}\frac{\oz(Q)}{\oz(F)}\r]<\fz,
\end{equation}
where the first supremum is taken over all cubes $Q\subset\rn$
with $|Q|\le1$ and the second one over all measurable sets
$F\subset Q$ with the indicated properties. Here and in what
follows, for any subset $E\subset\rn$, let $\oz(E):=\int_E
\oz(x)\,dx$.
\end{definition}

\begin{remark}\label{r2.2}
(i) By H\"older's inequality, we see that $A^{\loc}_{p_1}
(\rn)\subset A^{\loc}_{p_2} (\rn)\subset A^{\loc}_{\fz} (\rn)$, if
$1\le p_1<p_2<\fz$. Conversely, it was proved in \cite[Lemma
1.3]{r01} that if $\oz\in A^{\loc}_{\fz} (\rn)$, then $\oz\in
A^{\loc}_{p} (\rn)$ for some $p\in[1,\fz)$. Thus, we see that
$A^{\loc}_{\fz} (\rn)=\cup_{1\le p<\fz}A^{\loc}_p (\rn)$.

(ii) For any given constant $\wz{C}\in(0,\fz)$, the condition $|Q|\le1$
can be replaced by $|Q|\le\wz{C}$ in \eqref{2.4}, \eqref{2.5} and
\eqref{2.6} (see \cite[Remark\,1.5]{r01}). In this case, $A^{\loc}_p
(\oz)$ with $p\in[1,\fz)$ and $A^{\loc}_{\fz}(\oz,\,\az)$ depend on
$\wz{C}$.

(iii) By the definition of $A^{\loc}_p(\rn)$, we know that if
$\oz\in A^{\loc}_q(\rn)$ for some $q\in[1,\fz)$, then $\oz\in
A^{\loc}_p(\rn)$ for all $p\in (q,\fz]$. It was proved in
\cite[Lemma 2.1(ii)]{ta1} that if $\oz\in A^{\loc}_p(\rn)$ for
some $p\in(1,\fz)$, then $\oz\in A^{\loc}_r(\rn)$ for some
$r\in[1,p)$. Thus, for any given $\oz\in A^{\loc}_{\fz}(\rn)$, we
define the \emph{critical index of $\oz$} by
$$q_{\oz}:=\inf\lf\{p\in[1,\fz):\ \oz\in A^{\loc}_p(\rn)\r\}.$$
Obviously, $q_{\oz}\in[1,\fz)$. When $q_{\oz}\in(1,\fz)$, it is
easy to see that $\oz\not\in A^{\loc}_{q_{\oz}}(\rn)$. Moreover,
motivated by Johnson and Neugebauer \cite[p.\,254,\,Remark]{jn87},
it was proved in \cite[Remark 2.5]{yys} that there exists an
$\oz\not\in A^{\loc}_1 (\rn)$ such that $q_{\oz}=1$.
\end{remark}

Let $\fai:\rn\times[0,\fz)\to[0,\fz)$ satisfy the fact that $x\mapsto\fai(x,t)$
is measurable for all $t\in[0,\fz)$. Following \cite{k},
$\fai(\cdot,t)$ is called \emph{uniformly locally integrable} if for
all compact sets $K$ in $\rn$,
$$\int_{K}\sup_{t\in(0,\fz)}\lf\{\fai(x,t)
\lf[\int_{K}\fai(y,t)\,dy\r]^{-1}\r\}\,dx<\fz.$$

\begin{definition}\label{d2.2}
Let $\fai:\rn\times[0,\fz)\to[0,\fz)$ be uniformly locally
integrable. The function $\fai(\cdot,t)$ is
said to satisfy the \emph{uniformly local weight condition for some
$q\in[1,\fz)$}, denoted by $\fai\in\aa^{\loc}_q(\rn)$, if, when $q\in (1,\fz)$,
\begin{equation}\label{2.7}
\aa^{\loc}_q (\fai):=\sup_{t\in [0,\fz)}\sup_{|Q|\le1}\frac{1}{|Q|^q}\int_Q
\fai(x,t)\,dx \lf(\int_Q [\fai(y,t)]^{-q'/q}\,dy\r)^{q/q'}<\fz,
\end{equation}
where $1/q+1/q'=1$, or
\begin{equation}\label{2.8}
\aa^{\loc}_1 (\fai):=\sup_{t\in [0,\fz)}
\sup_{|Q|\le1}\frac{1}{|Q|}\int_Q \fai(x,t)\,dx
\lf(\esup_{y\in Q}[\fai(y,t)]^{-1}\r)<\fz.
\end{equation}
Here, the first supremums are taken over all $t\in[0,\fz)$ and the
second ones over all cubes $Q\subset\rn$ with $|Q|\le1$.
\end{definition}

Recall that in Definition \ref{2.2}, if $\sup_{|Q|\le1}$ in
\eqref{2.7} and \eqref{2.8} is replaced by $\sup_Q$, then
$\aa^{\loc}_q(\rn)$ is just $\aa_q(\rn)$, which was introduced by
Ky \cite{k}.

Let $\aa_{\fz}^{\loc}(\rn):=\cup_{q\in[1,\fz)}\aa_{q}^{\loc}(\rn)$
and define
\begin{equation}\label{2.9}
q(\fai):=\inf\lf\{q\in[1,\fz):\ \fai\in\aa_{q}^{\loc}(\rn)\r\}.
\end{equation}
Obviously, $q(\fai)\in[1,\fz)$. Similar to the proof of \cite[Lemma
2.1(ii)]{ta1}, we know that if $\fai\in \aa^{\loc}_p(\rn)$ for some
$p\in(1,\fz)$, then $\fai\in \aa^{\loc}_r(\rn)$ for some
$r\in[1,p)$. Thus, when $q(\fai)\in(1,\fz)$, we know that
$\fai\not\in \aa^{\loc}_{q(\fai)}(\rn)$. Moreover, from Remark
\ref{r2.2}(iii), we deduce that there exists $\fai\not\in
\aa^{\loc}_1(\rn)$ such that $q(\fai)=1$.

Now, we introduce the notion of growth functions.

\begin{definition}\label{d2.3}
A function $\fai:\rn\times[0,\fz)\rightarrow[0,\fz)$ is called
 a \emph{growth function} if the following hold: \vspace{-0.25cm}
\begin{enumerate}
\item[(i)] The function $\fai$ is a \emph{Musielak-Orlicz function}, namely,
\vspace{-0.2cm}
\begin{enumerate}
    \item[(i)$_1$] the function $\fai(x,\cdot):[0,\fz)\to[0,\fz)$ is an
    Orlicz function for all $x\in\rn$;
    \vspace{-0.2cm}
    \item [(i)$_2$] the function $\fai(\cdot,t)$ is a measurable
    function for all $t\in[0,\fz)$.
\end{enumerate}
\vspace{-0.25cm}
\item[(ii)] $\fai\in \aa^{\loc}_{\fz}(\rn)$.
\vspace{-0.25cm}
\item[(iii)] The function $\fai$ is of positive uniformly lower
    type $p$ for some $p\in(0,1]$ and of
uniformly upper type 1.
\end{enumerate}
\end{definition}

For a growth function $\fai$, we define
\begin{equation}\label{2.10}
m(\fai):=\lf\lfz n\lf[\frac{q(\fai)}{i(\fai)}-1\r]\r\rfz.
\end{equation}

Clearly, $\fai$ as in \eqref{1.1} is a growth function if
$\oz\in A^{\loc}_{\fz}(\rn)$ and $\Phi$ is an Orlicz function of
lower type $p$ for some $p\in(0,1]$ and of upper type 1. It is
known that, for $p\in(0,1]$, if $\Phi(t):=t^p$ for all $t\in
[0,\fz)$, then $\Phi$ is an Orlicz function with
$p_\Phi=p_\Phi^-=p_\Phi^+=p$; for $p\in[\frac{1}{2},1]$, if
$\Phi(t):= t^p/\ln(e+t)$ for all $t\in [0,\fz)$, then $\Phi$ is an
Orlicz function with $p_\Phi^-=p=p_\Phi^+$, $p_\Phi^-$ is not
attainable, but $p_\Phi^+$ is attainable; for
$p\in(0,\frac{1}{2}]$, if $\Phi(t):=t^p\ln(e+t)$ for all $t\in
[0,\fz)$, then $\Phi$ is an Orlicz function with
$p_\Phi^-=p=p_\Phi^+$, $p_\Phi^-$ is attainable, but $p_\Phi^+$ is
not attainable. Another typical and useful growth function is
$$\fai(x,t):=\frac{t^{\az}}{[\ln(e+|x|)]^{\bz}+[\ln(e+t)]^{\gz}}$$
for all $x\in\rn$ and $t\in[0,\fz)$ with some $\az\in(0,1]$,
$\bz\in[0,\fz)$ and $\gz\in [0,2\az(1+\ln2)]$; more precisely,
$\fai\in \aa_1^{\loc}(\rn)$, $\fai$ is of uniformly upper type
$\az$ and $i(\fai)=\az$ (see \cite{k}).

\subsection{Some basic properties of growth functions\label{s2.2}}

\hskip\parindent From now on till the end of this paper, we \emph{always assume
that $\fai$ is a growth function} as in Definition \ref{d2.3}.
Let us now introduce the Musielak-Orlicz space.

The \emph{Musielak-Orlicz space $L^{\fai}(\rn)$} is defined to be the set of
all measurable functions $f$ such that
$\int_{\rn}\fai(x,|f(x)|)\,dx<\fz$ with \emph{Luxembourg norm}
$$\|f\|_{L^{\fai}(\rn)}:=\inf\lf\{\lz\in(0,\fz):\ \int_{\rn}
\fai\lf(x,\frac{|f(x)|}{\lz}\r)\,dx\le1\r\}.
$$
In what follows, for any measurable subset $E$ of $\rn$, we denote
$\int_E\fai(x,t)\,dx$ by $\fai(E,t)$ for any $t\in[0,\fz)$.

The following Lemmas \ref{l2.1}, \ref{l2.2} and \ref{l2.3}
on the properties of growth functions are, respectively,
\cite[Lemmas 4.1, 4.2 and 4.3]{k}.

\begin{lemma}\label{l2.1}
{\rm(i)} Let $\fai$ be a growth function. Then $\fai$ is uniformly
$\sigma$-quasi-subadditive on $\rn\times[0,\fz)$, namely, there
exists a positive constant $C$ such that for all
$(x,t_j)\in\rn\times[0,\fz)$ with $j\in\zz_+$,
$$\fai\lf(x,\sum_{j=1}^{\fz}t_j\r)\le C\sum_{j=1}^{\fz}\fai(x,t_j).$$

{\rm(ii)} Let $\fai$ be a growth function and
$\wz{\fai}(x,t):=\int_0^t\frac{\fai(x,s)}{s}\,ds$ for all
$(x,t)\in\rn\times[0,\fz)$. Then $\wz{\fai}$ is a growth function,
which is equivalent to $\fai$; moreover, $\wz{\fai}(x,\cdot)$ is continuous
and strictly increasing.

{\rm(iii)} A Musielak-Orlicz function $\fai$ is a growth function if
and only if $\fai$ satisfies Definition \ref{d2.3}(ii),
and is of positive uniformly lower type and uniformly
quasi-concave, namely, there exists a positive constant $C$ such
that for all $x\in\rn$, $t,\,s\in[0,\fz)$ and $\lz\in[0,1]$,
$$\lz\fai(x,t)+(1-\lz)\fai(x,s)\le C\fai(x,\lz t+(1-\lz)s).$$
\end{lemma}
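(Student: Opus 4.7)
The plan is to reduce all three parts to the two defining growth properties (uniformly upper type $1$ and uniformly positive lower type $p\in(0,1]$), plus elementary inequalities; I would not need to touch $\aa^{\loc}_\fz(\rn)$ except to observe that the condition transfers under pointwise equivalence.

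For part (i), the key observation is that for $p\in(0,1]$ the function $x\mapsto x^p$ is subadditive, so if $T:=\sum_j t_j<\fz$, then $(t_j/T)^p$ are nonnegative numbers summing to at least $1$. I would apply uniformly lower type $p$ (where $p\in(0,i(\fai))$) to each term: $\fai(x,t_j)=\fai(x,(t_j/T)T)\le C(t_j/T)^p\fai(x,T)$ is the wrong direction, so instead I would exploit it via $\fai(x,T)=\fai(x,(T/t_j)t_j)$ only indirectly. The cleaner route: rearrange uniformly lower type to get $\fai(x,t_j)\ge C^{-1}(t_j/T)^p\fai(x,T)$, sum over $j$, and use $\sum_j(t_j/T)^p\ge(\sum_j t_j/T)^p=1$. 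The case $T=\fz$ is handled by applying the result to finite partial sums and invoking monotonicity and $\lim_{t\to\fz}\fai(x,t)=\fz$.

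For part (ii), I would prove $\wz{\fai}\sim\fai$ first, which gives everything else essentially for free. The upper bound comes from uniformly lower type $p$: $\wz{\fai}(x,t)=\int_0^t\fai(x,s)s^{-1}\,ds\le C\fai(x,t)t^{-p}\int_0^t s^{p-1}\,ds=Cp^{-1}\fai(x,t)$. The lower bound uses monotonicity of $\fai(x,\cdot)$ and uniformly upper type $1$: $\wz{\fai}(x,t)\ge\int_{t/2}^t\fai(x,s)s^{-1}\,ds\ge\fai(x,t/2)\log 2\ge c\,\fai(x,t)$, where the last step absorbs the constant from $\fai(x,t)\le 2C\fai(x,t/2)$. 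Continuity and strict monotonicity of $\wz{\fai}(x,\cdot)$ are immediate from its integral representation with positive integrand $\fai(x,s)/s$ on $(0,\fz)$. Measurability in $x$ follows from Fubini, and all three growth properties (Musielak-Orlicz, $\aa^{\loc}_\fz(\rn)$, uniformly lower type $p$ and uniformly upper type $1$) transfer along the equivalence $\wz{\fai}\sim\fai$.

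For part (iii), the forward implication needs uniform quasi-concavity from upper type $1$. Assuming $s\le t$ and $\lz\in[0,1]$, set $u:=\lz t+(1-\lz)s\ge s$. Since $t/u\le 1/\lz$ and $1/\lz\ge 1$, uniformly upper type $1$ gives $\fai(x,t)\le C(t/u)\fai(x,u)\le C\lz^{-1}\fai(x,u)$, hence $\lz\fai(x,t)\le C\fai(x,u)$; also $(1-\lz)\fai(x,s)\le\fai(x,u)$ by monotonicity, and adding the two yields quasi-concavity. For the converse, taking $s=0$ in the quasi-concavity inequality and using $\fai(x,0)=0$ gives $\lz\fai(x,t)\le C\fai(x,\lz t)$ for $\lz\in[0,1]$; substituting $\lz=1/\sz$ with $\sz\ge 1$ yields $\fai(x,\sz t)\le C\sz\fai(x,t)$, which is uniformly upper type $1$. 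Combined with the hypothesized lower type and Definition \ref{d2.3}(ii), this recovers all three defining properties of a growth function.

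The main subtlety I expect is in part (i), namely verifying the inequality $\sum_j(t_j/T)^p\ge 1$ when the series is only conditionally summable; handling the infinite sum rigorously (via partial sums, monotone convergence, and the non-decreasing nature of $\fai(x,\cdot)$) is the one place where one must be a bit careful rather than purely algebraic.
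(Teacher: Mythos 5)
The paper itself offers no proof of Lemma \ref{l2.1}: it is quoted verbatim from Ky \cite{k} (Lemma 4.1 there), so your direct argument is the only proof under review. Parts (ii) and (iii) are correct and essentially the standard computations: the upper bound $\wz\fai(x,t)\le Cp^{-1}\fai(x,t)$ from uniformly lower type $p$, the lower bound via $\int_{t/2}^t$ together with upper type $1$, and the transfer of all defining properties along the uniform pointwise equivalence are all fine; likewise your quasi-concavity argument (splitting $\lz\fai(x,t)\le C\fai(x,u)$ and $(1-\lz)\fai(x,s)\le\fai(x,u)$) and its converse (taking $s=0$, then $\lz=1/\sigma$) are sound.

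Part (i), however, contains a direction error at the key step. Uniformly lower type $p$ says $\fai(x,st)\le Cs^p\fai(x,t)$ for $s\in[0,1]$; with $s=t_j/T$ and $t=T$ this yields $\fai(x,t_j)\le C(t_j/T)^p\fai(x,T)$, an \emph{upper} bound on $\fai(x,t_j)$. The inequality you claim by ``rearranging'', namely $\fai(x,t_j)\ge C^{-1}(t_j/T)^p\fai(x,T)$, is equivalent to $\fai(x,T)\le C(T/t_j)^p\fai(x,t_j)$, i.e.\ uniformly \emph{upper} type $p$ with $p<1$ — a strictly stronger property than the assumed upper type $1$ (since $\sz^p\le\sz$ for $\sz\ge1$) and not available. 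So the step as written fails. The repair is one line and uses upper type $1$ with exponent exactly $1$: for each $j$ with $t_j>0$, $\fai(x,T)=\fai\lf(x,\frac{T}{t_j}\,t_j\r)\le C\frac{T}{t_j}\fai(x,t_j)$, hence $\frac{t_j}{T}\fai(x,T)\le C\fai(x,t_j)$; summing in $j$ and using $\sum_j t_j/T=1$ gives $\fai(x,\sum_j t_j)\le C\sum_j\fai(x,t_j)$, and your appeal to the subadditivity of $x\mapsto x^p$ becomes unnecessary. With this substitution, your treatment of the case $\sum_j t_j=\fz$ by partial sums and monotonicity goes through unchanged.
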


\begin{lemma}\label{l2.2}
Let $\fai$ be a growth function. Then

{\rm(i)}
$\int_{\rn}\fai(x,\frac{|f(x)|}{\|f\|_{L^{\fai}(\rn)}})\,dx=1$ for
all $f\in L^{\fai}(\rn)\setminus\{0\}$;

{\rm(ii)} $\lim_{k\to\fz}\|f_k\|_{L^{\fai}(\rn)}=0$ if and only if
$\lim_{k\to\fz}\int_{\rn}\fai(x,|f_k(x)|)\,dx=0$.
\end{lemma}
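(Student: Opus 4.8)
The plan is to deduce both parts from the structural facts about growth functions already recorded, namely uniform lower type $p_0\in(0,1]$ and uniform upper type $1$ (Definition \ref{d2.3}(iii)), together with Lemma \ref{l2.1}, and to reduce to the case where $\fai(x,\cdot)$ is continuous and strictly increasing by replacing $\fai$ with the equivalent growth function $\wz\fai(x,t)=\int_0^t \fai(x,s)/s\,ds$ from Lemma \ref{l2.1}(ii); since $\|\cdot\|_{L^\fai(\rn)}$ and $\|\cdot\|_{L^{\wz\fai}(\rn)}$ are comparable and the Luxembourg functional is unchanged up to constants, it suffices to prove the statements for $\wz\fai$. So from now on I assume $\fai(x,\cdot)$ is continuous and strictly increasing on $[0,\fz)$ for every $x$.

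For part (i): fix $f\in L^\fai(\rn)\setminus\{0\}$ and set $\lz_0:=\|f\|_{L^\fai(\rn)}\in(0,\fz)$. Define $F(\lz):=\int_\rn \fai\lf(x,|f(x)|/\lz\r)\,dx$ for $\lz\in(0,\fz)$. First I would check $F$ is finite, nonincreasing, and continuous on $(0,\fz)$: finiteness on a neighbourhood of $\lz_0$ follows from the definition of the norm (there is $\lz$ arbitrarily close to $\lz_0$ with $F(\lz)\le1$), monotonicity is immediate from $\fai(x,\cdot)$ being nondecreasing, and continuity follows from the monotone/dominated convergence theorem combined with the continuity of $\fai(x,\cdot)$ — here the uniform upper type $1$ estimate \eqref{2.2} gives $\fai(x,|f(x)|/\lz)\le C(\lz_1/\lz)\fai(x,|f(x)|/\lz_1)$ for $\lz\ge\lz_1$, providing the integrable majorant on compact $\lz$-intervals bounded away from $0$. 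By definition of the infimum, $F(\lz)\le1$ for all $\lz>\lz_0$, hence $F(\lz_0)\le1$ by continuity (letting $\lz\downarrow\lz_0$). Conversely, if $F(\lz_0)<1$, then using the uniform lower type estimate $\fai(x,|f(x)|/\lz)\ge C^{-1}(\lz_0/\lz)^{p_0}\,\fai(x,|f(x)|/\lz_0)$ for $\lz<\lz_0$... actually I would argue more simply: uniform lower type gives, for $\theta\in(0,1)$, $\fai(x, \theta^{-1} u)\ge \fai(x,u)$ only after a constant, which is not quite what I want, so instead I use that $\fai(x,\cdot)$ is strictly increasing together with $\lim_{t\to\fz}\fai(x,t)=\fz$ to get that $\lz\mapsto F(\lz)$ is in fact \emph{strictly} decreasing wherever it is positive and finite, so $F(\lz_0-\uc)>F(\lz_0)$ for small $\uc>0$; combined with continuity and $F(\lz_0)<1$ this would force $F(\lz)<1$ for some $\lz<\lz_0$, contradicting minimality of $\lz_0$. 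Hence $F(\lz_0)=1$, which is (i).

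For part (ii): the ``only if'' direction is the routine one — if $\|f_k\|_{L^\fai(\rn)}\to0$, then for large $k$ we have $\|f_k\|_{L^\fai(\rn)}<1$, and the uniform lower type estimate \eqref{2.2} with exponent $p_0$ gives $\int_\rn\fai(x,|f_k(x)|)\,dx \le C\,\|f_k\|_{L^\fai(\rn)}^{p_0}\int_\rn \fai(x,|f_k(x)|/\|f_k\|_{L^\fai(\rn)})\,dx = C\,\|f_k\|_{L^\fai(\rn)}^{p_0}\to0$ by part (i). For the ``if'' direction, suppose $\int_\rn\fai(x,|f_k(x)|)\,dx\to0$ but, for contradiction, $\|f_k\|_{L^\fai(\rn)}\not\to0$; passing to a subsequence there is $\uc_0\in(0,1)$ with $\|f_k\|_{L^\fai(\rn)}\ge\uc_0$ for all $k$. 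By part (i) (when $f_k\ne0$) or trivially, $\int_\rn\fai(x,|f_k(x)|/\uc_0)\,dx\ge\int_\rn\fai(x,|f_k(x)|/\|f_k\|_{L^\fai(\rn)})\,dx=1$ since $\uc_0\le\|f_k\|_{L^\fai(\rn)}$ and $\fai(x,\cdot)$ is nondecreasing. On the other hand, the uniform upper type $1$ estimate \eqref{2.2} gives $\fai(x,|f_k(x)|/\uc_0)\le C\uc_0^{-1}\fai(x,|f_k(x)|)$, whence $1\le\int_\rn\fai(x,|f_k(x)|/\uc_0)\,dx\le C\uc_0^{-1}\int_\rn\fai(x,|f_k(x)|)\,dx\to0$, a contradiction. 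This proves (ii).

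The step I expect to be the main (minor) obstacle is the rigorous justification in part (i) that $F(\lz_0)=1$ rather than merely $F(\lz_0)\le1$ — i.e. ruling out $F(\lz_0)<1$ — since this requires knowing the integrand genuinely decreases as $\lz$ decreases on a set of positive measure, which is why reducing to the strictly increasing, continuous representative $\wz\fai$ at the outset is the cleanest route; once that reduction is in place, the continuity and strict monotonicity of $F$ make the argument go through.
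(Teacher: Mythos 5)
The paper does not prove this lemma: it is quoted verbatim from \cite[Lemma~4.2]{k} (see the remark right after Lemma~\ref{l2.3}), so there is no in-paper argument to compare against; I can only assess your proposal on its own terms. Your proof of part~(ii) is sound (and, as a side note, both directions can be run without ever invoking part~(i): one uses only that $\int_\rn\fai(x,|f|/\lz)\,dx\le1$ for every $\lz>\|f\|_{L^\fai(\rn)}$ and $>1$ for every $\lz<\|f\|_{L^\fai(\rn)}$, together with the uniform upper- and lower-type estimates). Part~(i), however, contains a genuine gap in the opening reduction. Replacing $\fai$ by $\wz\fai$ changes both the integrand and the Luxembourg norm, and these are only \emph{comparable} to the originals: proving $\int_\rn\wz\fai(x,|f(x)|/\|f\|_{L^{\wz\fai}(\rn)})\,dx=1$ does not give $\int_\rn\fai(x,|f(x)|/\|f\|_{L^{\fai}(\rn)})\,dx=1$, only that the latter lies between two fixed positive constants. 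For the limit statement~(ii) the reduction is harmless; for an exact identity it is not.

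In fact the identity in~(i) is false for growth functions whose $t$-section fails to be left-continuous. Take $\fai(x,t)=\Phi(t)$ with $\Phi(t)=t$ for $t\in[0,1)$ and $\Phi(t)=2t$ for $t\ge1$; this is nondecreasing, of uniformly upper and lower type~$1$, and trivially in $\aa^{\loc}_1(\rn)$. With $f=\chi_Q$, $|Q|=1$, one computes $\int_\rn\Phi(|f(x)|/\lz)\,dx=\Phi(1/\lz)\le1$ precisely when $\lz>1$, so $\|f\|_{L^\fai(\rn)}=1$, while $\int_\rn\Phi(|f(x)|)\,dx=\Phi(1)=2\ne1$. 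The correct reading of the lemma — and what your argument actually establishes — is that the identity holds once $\fai(x,\cdot)$ is continuous for all $x$, which one arranges by permanently adopting the representative $\wz\fai$ from Lemma~\ref{l2.1}(ii) as the working growth function; that is the implicit convention in~\cite{k} and in this paper, and your write-up should say that explicitly rather than claim a literal reduction. Two smaller remarks on your argument for~(i) after that point: the inequality $F(\lz_0)\le1$ uses \emph{left}-continuity of $\fai(x,\cdot)$ (monotone convergence as $\lz\downarrow\lz_0$), which is exactly what fails in the counterexample; and to rule out $F(\lz_0)<1$ you do not need strict monotonicity of $F$ at all — left-continuity of $F$ at $\lz_0$ (from your DCT majorant built via the upper type~$1$ estimate) already gives $F(\lz)<1$ on a left neighbourhood of $\lz_0$, contradicting the definition of the infimum.
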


\begin{lemma}\label{l2.3}
Let $c$ be a positive constant. Then, there exists a positive
constant $C$ such that

{\rm(i)} the inequality $\int_\rn \fai(x,\frac{|f(x)|}{\lz})\,dx\le
c$ for some $\lz\in(0,\fz)$ implies that
$\|f\|_{L^{\fai}(\rn)}\le C\lz;$

{\rm(ii)} the inequality $\sum_j\fai(Q_j,\frac{t_j}{\lz})\le c$ for
some $\lz\in(0,\fz)$ implies that
$$\inf\lf\{\az\in(0,\fz):\ \sum_j\fai\lf(Q_j,\frac{t_j}{\az}\r)\le1\r\}\le
C\lz,$$
where $\{t_j\}_j$ is a sequence of positive numbers and $\{Q_j\}_j$
a sequence of cubes.
\end{lemma}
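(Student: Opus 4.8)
The plan is to reduce everything to the uniformly lower type scaling of $\fai$. By Definition \ref{d2.3}(iii) (see also \eqref{2.2}), fix $p\in(0,1]$ and a positive constant $C_0$ so that $\fai(x,st)\le C_0s^p\fai(x,t)$ for all $x\in\rn$, $t\in[0,\fz)$ and $s\in[0,1]$. The heuristic is that replacing the scaling parameter $\lz$ by a larger number $C\lz$ with $C\ge1$ shrinks the argument of $\fai$ by the factor $1/C\le1$, and the lower type property then shrinks the value of $\fai$ by the factor $C_0C^{-p}$; choosing $C$ large enough turns the bound ``$\le c$'' into ``$\le1$'', which is exactly what is needed to read off the Luxembourg norm in (i) (resp.\ the infimum in (ii)).

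For (i), if $f\equiv0$ there is nothing to prove, so assume $f\not\equiv0$ and set $C:=\max\{1,(C_0c)^{1/p}\}$ and $\az:=C\lz$. Since $1/C\in(0,1]$, applying the lower type inequality with $s=1/C$ and $t=|f(x)|/\lz$ gives, for every $x\in\rn$, $\fai(x,|f(x)|/\az)=\fai(x,(1/C)|f(x)|/\lz)\le C_0C^{-p}\fai(x,|f(x)|/\lz)$. Integrating over $\rn$, using the hypothesis $\int_\rn\fai(x,|f(x)|/\lz)\,dx\le c$ and the inequality $C_0C^{-p}c\le1$ (which holds by the choice of $C$), we obtain $\int_\rn\fai(x,|f(x)|/\az)\,dx\le1$; by the very definition of $\|\cdot\|_{L^{\fai}(\rn)}$ this yields $\|f\|_{L^{\fai}(\rn)}\le\az=C\lz$. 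Part (ii) follows verbatim by the same computation with $\int_\rn(\cdot)\,dx$ replaced by $\sum_j\int_{Q_j}(\cdot)\,dx=\sum_j\fai(Q_j,\cdot)$: with the same $C$ and $\az:=C\lz$, integrating the pointwise bound over $Q_j$ and summing over $j$ gives $\sum_j\fai(Q_j,t_j/\az)\le C_0C^{-p}\sum_j\fai(Q_j,t_j/\lz)\le C_0C^{-p}c\le1$, so the infimum on the left-hand side of (ii) is at most $\az=C\lz$.

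I expect no real obstacle: the statement is a soft consequence of a one-line scaling estimate applied once to an integral over $\rn$ and once to a sum of integrals over the $Q_j$'s, and the two parts can be treated in parallel. The only point deserving a word of care is that $C$ must be chosen to be at least $1$, so that $1/C$ lies in $[0,1]$ where \eqref{2.2} is available; the uniformly upper type property cannot be substituted here, since enlarging $\lz$ is the direction that helps, and that is precisely a lower-type phenomenon. Note also that the constant $C$ produced depends only on $c$, $p$ and $C_0$, hence only on $c$ and $\fai$, as required.
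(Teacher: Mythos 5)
Your proof is correct; it is the natural scaling argument from the uniformly lower type property, and both parts are handled in parallel exactly as one would hope. Note, however, that the paper does not actually reproduce a proof of this lemma: it cites it as \cite[Lemma 4.3]{k} (Ky's Lemma 4.3), so there is no in-paper argument to compare against. Your argument is the standard one and is almost certainly what the cited source does; the choice $C=\max\{1,(C_0c)^{1/p}\}$ and the verification $C_0C^{-p}c\le1$ are both correct, and your remark that the uniformly upper type would not help here (since one needs an \emph{upper} bound on $\fai$ at the \emph{smaller} argument, which is the lower-type direction) is accurate.
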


In what follows, $Q(x,t)$ denotes the \emph{closed cube centered
at $x$ and of the sidelength $t$}. Similarly, given $Q=Q(x,t)$ and
$\lz\in(0,\fz)$, we write $\lz Q$ for the {\it$\lz$-dilated cube},
which is the cube with the same \emph{center} $x$ and with
\emph{sidelength} $\lz t$. Given a Lebesgue measurable set $E$ and
a weight $\oz\in A^{\loc}_{\fz} (\rn)$, let
$\oz(E):=\int_{E}\oz(x)\,dx$. For any $\oz\in A^{\loc}_{\fz}
(\rn)$, $L^p_{\oz}(\rn)$, with $p\in(0,\fz)$, denotes the \emph{set
of all measurable functions $f$ such that}
$$\|f\|_{L^p_{\oz}(\rn)}:=\lf\{\int_{\rn}|f(x)|^p
\oz(x)\,dx\r\}^{1/p}<\fz,$$
and $L^{\fz}_{\oz} (\rn):=
L^{\fz}(\rn)$. For a positive constant $\wz{C}$, any locally
integrable function $f$ and $x\in\rn$, the {\it local
Hardy-Littlewood maximal function $M^{\loc}_{\wz{C}}(f)(x)$} is defined
by
$$M^{\loc}_{\wz{C}}(f)(x):=\sup_{Q\ni x,\,|Q|\le \wz{C}}
\frac{1}{|Q|}\int_{Q}|f(y)|\,dy,$$ where the supremum is taken over
all cubes $Q\subset\rn$ such that $Q\ni x$ and $|Q|\le\wz{C}$. If
$\wz{C}=1$, we denote $M^{\loc}_{\wz{C}}(f)$ simply by
$M^{\loc}(f)$.

\begin{lemma}\label{l2.4}
$\mathrm{(i)}$ Let $p\in[1,\fz),\,\fai\in \aa^{\loc}_p(\rn)$, and
$Q$ be a unit cube, namely, $l(Q)=1$. Then there exist a
$\wz{\fai}\in \aa_p (\rn)$, such that
$\wz{\fai}(\cdot,t)=\fai(\cdot,t)$ on $Q$ for all $t\in[0,\fz)$,
and a positive constant $C$, independent of $Q$, $t$ and $\fai$,
such that $\aa_p (\wz{\fai})\le C\aa^{\loc}_p(\fai)$.

$\mathrm{(ii)}$ Let $\fai\in \aa^{\loc}_{q}(\rn)$ with $q\in[1,\fz)$
and $Q:=Q(x_0,l(Q))$. Then there exists a positive constant $C$ such
that for all $t\in[0,\fz)$, $\fai(2Q,t)\le C\fai(Q,t)$ when
$l(Q)<1$, and $\fai(Q(x_0,l(Q)+1),t)\le C\fai(Q,t)$ when $l(Q)\ge1$.

$\mathrm{(iii)}$ If $p\in(1,\fz)$ and $\fai\in \aa^{\loc}_{p}(\rn)$,
then there exists a positive constant $C$ such that for all
measurable functions $f$ on $\rn$ and $t\in[0,\fz)$,
$$\int_{\rn}\lf[M^{\loc}(f)(x)\r]^p\fai(x,t)\,dx\le
C\int_{\rn}|f(x)|^p\fai(x,t)\,dx.$$

$\mathrm{(iv)}$ If $\fai\in \aa_{p}(\rn)$ with $p\in[1,\fz)$, then
there exists a positive constant $C$ such that for all cubes
$Q_1,\,Q_2\subset\rn$ with $Q_1\subset Q_2$ and $t\in[0,\fz)$,
$$\frac{\fai(Q_2,t)}{\fai(Q_1,t)}\le C\lf[\frac{|Q_2|}{|Q_1|}\r]^p.$$
\end{lemma}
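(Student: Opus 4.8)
The plan is to prove the four parts by reducing each local statement to its known global counterpart for the Muckenhoupt-type class $\aa_p(\rn)$, using the unit-cube extension in part (i) as the main tool, together with a covering argument to pass from a single cube to the whole space in parts (iii) and (iv).

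For part (i), I would fix a unit cube $Q=Q(x_0,1)$ and build $\wz\fai(\cdot,t)$ by a periodic-type reflection/extension of $\fai(\cdot,t)|_Q$: more precisely, set $\wz\fai(x,t):=\fai(\cdot,t)$ restricted to $Q$ and extended so that for every cube $P\subset\rn$ the average of $\wz\fai(\cdot,t)$ over $P$ and over $P\cap(\text{translates of }Q)$ are comparable. The standard device (going back to the weighted theory) is to take $\wz\fai(x,t) := \fai(\sigma(x),t)$ where $\sigma$ maps $\rn$ onto $Q$ by folding coordinates modulo the sidelength of $Q$; folding maps preserve Lebesgue measure and map a cube of sidelength $\le 1$ bijectively (up to measure zero, after at most $2^n$ pieces) onto a subcube of $Q$, so the $\aa^\loc_p$ condition on cubes of sidelength $\le 1$ inside $Q$ transfers to the full $\aa_p$ condition on all cubes. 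One checks $\aa_p(\wz\fai)\le C\aa^\loc_p(\fai)$ by splitting an arbitrary cube $P$ into dyadic pieces of sidelength $\le 1$ and using that $\sigma$ distributes each piece over a controlled number of subcubes of $Q$; the constant $C$ depends only on $n$ and $p$. I expect this to be the \emph{main obstacle}: making the folding/extension precise and verifying uniformity of the constant requires care, since $\fai$ also carries the parameter $t$ — but the estimate is uniform in $t$ because $\aa^\loc_p(\fai)$ already takes the supremum over $t\in[0,\fz)$, so $\wz\fai$ inherits this uniformity automatically.

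Part (ii) follows directly from part (i): when $l(Q)<1$, both $Q$ and $2Q$ (after possibly dilating by a fixed factor, $2Q$ still has sidelength $<2$, so replace the unit cube in (i) by a cube of sidelength $2$, using Remark \ref{r2.2}(ii) to allow $|Q|\le\wz C$) sit inside a fixed dilate, so $\wz\fai\in\aa_p(\rn)$ gives $\fai(2Q,t)=\wz\fai(2Q,t)\le C\,(|2Q|/|Q|)^p\,\wz\fai(Q,t)= 2^{np}C\,\fai(Q,t)$, where the first inequality is the global doubling property of $\aa_p$ weights; the case $l(Q)\ge 1$ is handled by the same argument applied after rescaling, noting $|Q(x_0,l(Q)+1)|/|Q|\le 2^n$. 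Part (iv) is even more immediate: it is simply the standard reverse-doubling/doubling inequality for $\aa_p(\rn)$ weights applied to $\wz\fai(\cdot,t)$ — i.e. for $Q_1\subset Q_2$, $\aa_p(\wz\fai)$ bounds $\fai(Q_2,t)/\fai(Q_1,t)$ by $C(|Q_2|/|Q_1|)^p$ uniformly in $t$; since the hypothesis here is $\fai\in\aa_p(\rn)$ (global) this needs no localization argument at all.

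For part (iii), I would first use a standard decomposition of the maximal operator $M^\loc$ into a "local part" built from cubes contained in a fixed unit cube plus translates. Fix a tiling of $\rn$ by unit cubes $\{Q_k\}_k$ and their doubles $\{2Q_k\}_k$; for $x\in Q_k$, any cube $Q\ni x$ with $|Q|\le 1$ satisfies $Q\subset 3Q_k$, so $M^\loc(f)(x)\le C\,M(f\chi_{3Q_k})(x)$ where $M$ is the (global, uncentered) Hardy-Littlewood maximal operator. Then, using part (i) to obtain $\wz\fai_k\in\aa_p(\rn)$ agreeing with $\fai$ on $3Q_k$ (apply (i) with the unit cube replaced by $3Q_k$, legitimate by Remark \ref{r2.2}(ii)) with $\aa_p(\wz\fai_k)\le C\aa^\loc_p(\fai)$ uniformly in $k$, the classical weighted maximal inequality (the Muckenhoupt theorem for Musielak-Orlicz $\aa_p$ weights, valid for $p\in(1,\fz)$ with constant depending only on $n,p,\aa_p(\wz\fai_k)$) gives
$$
\int_{Q_k}[M^\loc(f)(x)]^p\fai(x,t)\,dx \le C\int_{\rn}[M(f\chi_{3Q_k})(x)]^p\wz\fai_k(x,t)\,dx \le C\int_{3Q_k}|f(x)|^p\fai(x,t)\,dx.
$$
Summing over $k$ and using that the family $\{3Q_k\}_k$ has bounded overlap ($\le 3^n$) yields the claim, with $C$ independent of $f$ and $t$.
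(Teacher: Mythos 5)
Your strategy for (i), (iii) and (iv) is essentially the route the paper has in mind: the paper simply cites \cite[Lemma 1.1]{r01} for (i), \cite[Lemma 2.1, Corollary 2.1]{ta1} for (ii)--(iii), and standard $A_p$ theory for (iv), noting that the arguments carry over because the class $\aa^\loc_p(\rn)$ already takes a supremum over $t$, so all constants are automatically uniform in $t$. Your sketch of the periodic reflection for (i) and the unit-cube tiling plus bounded-overlap summation for (iii) are the right constructions, and (iv) is indeed the standard $A_p$ doubling estimate applied at each fixed $t$.

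There is, however, a genuine gap in part (ii) for $l(Q)\ge1$. You propose to handle it ``by the same argument applied after rescaling,'' but the class $\aa^\loc_p(\rn)$ is not scale-invariant: if $\lambda=l(Q)$ and $\fai_\lambda(x,t):=\fai(\lambda x,t)$, then $\fai_\lambda$ only satisfies the $\aa_p$ condition on cubes of volume $\le\lambda^{-n}$, so for $l(Q)$ large you cannot invoke (i) for $\fai_\lambda$ on a cube of sidelength $\sim1$ with a uniform constant. The correct mechanism is a covering of the thin shell: tile $Q(x_0,l(Q)+1)\setminus Q$ by boundedly overlapping unit cubes $\{R_j\}_j$, pick for each $j$ a nearby unit cube $R_j'\subset Q$ so that $R_j\cup R_j'$ is contained in a cube $P_j$ of sidelength $O(1)$, apply (i) together with (iv) on each $P_j$ (legitimate by Remark \ref{r2.3}) to obtain $\fai(R_j,t)\ls\fai(R_j',t)$ with constant independent of $j$ and $t$, and then sum using the bounded overlap of the $R_j'$ inside $Q$. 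This gives $\fai(Q(x_0,l(Q)+1)\setminus Q,t)\ls\fai(Q,t)$, hence $\fai(Q(x_0,l(Q)+1),t)\ls\fai(Q,t)$; the volume ratio $|Q(x_0,l(Q)+1)|/|Q|\le2^n$ you mention plays no role here, since it is never applied to a global $A_p$ weight.
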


\begin{proof}
If $\fai$ is as in \eqref{1.1}, then (i) is just
\cite[Lemma\,1.1]{r01}, (ii) and (iii) are just \cite[Lemma
2.1 and Corollary 2.1]{ta1} and (vi) is included, for example, in
\cite{g79,gr1,st93}. Since $\fai$ satisfies the uniformly local
weight condition on $t$, the proof of the desired conclusions of
Lemma \ref{l2.4} is a slight modification of these classical
results. We omit the details here. This finishes the proof of Lemma
\ref{l2.4}.
\end{proof}

\begin{remark}\label{r2.3}
Let $\wz{C}$ be a positive constant.
Then (i) through (v) of Lemma \ref{l2.4} are also true, if $l(Q)=1$,
$l(Q)\ge1,\,l(Q)<1,\, Q(x_0,\,l(Q)+1)$ and $M^{\loc}$ are
respectively replaced  by $l(Q)=\wz{C}$,
$l(Q)\ge\wz{C},\,l(Q)<\wz{C}$, $Q(x_0,\,l(Q)+\wz{C})$ and
$M^{\loc}_{\wz{C}}$ (see \cite[Remark 1.5]{r01} for the proofs). In
this case, the constants, appearing in (i), (ii) and (iii) of
Lemma \ref{l2.4}, depend on $\wz{C}$.
\end{remark}

For $\cd(\rn),\,\cd'(\rn)$ and $L^{\fz}_{\fai}(\rn)$, we have the
following conclusions.
\begin{lemma}\label{l2.5}
Let $\fai\in \aa^{\loc}_{\fz}(\rn)$, $q(\fai)$ be as in \eqref{2.9}
and $p\in(q(\fai),\fz]$.

$\mathrm{(i)}$ If $\frac{1}{p}+\frac{1}{p'}=1$, then
$\cd(\rn)\subset L^{p'}_{[\fai(\cdot,1)]^{-1/(p-1)}}(\rn)$.

$\mathrm{(ii)}$ $L^{p}_{\fai(\cdot,1)}(\rn)\subset\cd' (\rn)$ and
the inclusion is continuous.

$\mathrm{(iii)}$ Let $\phi\in\cd(\rn)$ and
$\int_{\rn}\phi(x)\,dx=1$. If $q\in(q(\fai),\fz)$, then for any
$f\in L^q_{\fai(\cdot,1)}(\rn)$, $f\ast\phi_t\to f$ in
$L^q_{\fai(\cdot,1)}(\rn)$ as $t\to0$, here and in what follows,
$\phi_t (x):=\frac{1}{t^n} \phi(\frac{x}{t})$ for all $x\in\rn$
and $t\in(0,\fz)$.
\end{lemma}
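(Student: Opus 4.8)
The plan is to prove the three parts in order, since (ii) follows from (i) by duality and (iii) uses a standard approximation argument built on the weighted boundedness in Lemma \ref{l2.4}(iii). For part (i), fix $\phi\in\cd(\rn)$ with $\supp\phi\subset Q$ for some cube $Q$; enlarging $Q$ we may assume $l(Q)\ge1$, and then cover $Q$ by $\sim|Q|$ unit cubes. On each unit cube $Q_0$, Lemma \ref{l2.4}(i) produces $\wz\fai\in\aa_p(\rn)$ agreeing with $\fai$ on $Q_0$ with $\aa_p(\wz\fai)\ls\aa^\loc_p(\fai)$; the classical fact that $\wz\fai(\cdot,1)^{-1/(p-1)}$ is locally integrable (indeed $\wz\fai(\cdot,1)\in\aa_p(\rn)$ means $[\wz\fai(\cdot,1)]^{-p'/p}$ is locally integrable over any cube) gives $\int_{Q_0}[\fai(y,1)]^{-1/(p-1)}\,dy<\fz$. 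Since $p'/p=1/(p-1)$, this shows $\int_Q[\fai(y,1)]^{-p'/p}\,dy<\fz$; because $\phi$ is bounded and supported in $Q$, we get $\|\phi\|_{L^{p'}_{[\fai(\cdot,1)]^{-1/(p-1)}}(\rn)}<\fz$, and when $p=\fz$ (so $p'=1$) the same argument with $\aa^\loc_\fz$ in place of $\aa^\loc_p$, together with $\fai\in\aa^\loc_q$ for some finite $q$ via Remark \ref{r2.2}(iii), gives local integrability of $\fai(\cdot,1)^{-1}$ on a neighborhood of $\supp\phi$ — actually for $p=\fz$ one only needs $[\fai(\cdot,1)]^{0}=1\in L^1_{\loc}$ on $\supp\phi$, which is trivial. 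A short check shows the inclusion map is continuous with respect to the inductive-limit topology on $\cd(\rn)$, since the bound depends only on $\sup|\phi|$ and $\supp\phi$.

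For part (ii), let $f\in L^p_{\fai(\cdot,1)}(\rn)$. For $\phi\in\cd(\rn)$ with support in a cube $Q$, Hölder's inequality with exponents $p$ and $p'$ gives
$$
\lf|\int_{\rn}f(x)\phi(x)\,dx\r|
\le\lf(\int_{\rn}|f(x)|^p\fai(x,1)\,dx\r)^{1/p}
\lf(\int_{Q}|\phi(x)|^{p'}[\fai(x,1)]^{-p'/p}\,dx\r)^{1/p'},
$$
and the second factor is finite and controlled by $\sup|\phi|$ and $\supp\phi$ by part (i) (with the obvious modification when $p=\fz$, using $\|\phi\|_{L^1}$ directly). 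Hence $f$ defines a continuous linear functional on $\cd(\rn)$, i.e. $f\in\cd'(\rn)$, and the estimate shows the inclusion $L^p_{\fai(\cdot,1)}(\rn)\hookrightarrow\cd'(\rn)$ is continuous.

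For part (iii), assume $q\in(q(\fai),\fz)$ and $f\in L^q_{\fai(\cdot,1)}(\rn)$; we must show $f\ast\phi_t\to f$ in $L^q_{\fai(\cdot,1)}(\rn)$ as $t\to0$. The pointwise bound $|f\ast\phi_t(x)|\ls M^\loc(f)(x)+\text{(tail)}$ — more precisely, splitting $\phi$ into its part on $|y|\le1$ and a rapidly decaying remainder, $\sup_{0<t\le1}|f\ast\phi_t|\ls M(f)$ pointwise, and since $\fai(\cdot,1)\in\aa^\loc_q$ one has by Lemma \ref{l2.4}(iii) the uniform bound $\|\sup_{0<t\le1}|f\ast\phi_t|\|_{L^q_{\fai(\cdot,1)}}\ls\|f\|_{L^q_{\fai(\cdot,1)}}$. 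Then one approximates: given $\varepsilon>0$, choose $g\in\cd(\rn)$ (or $C_c(\rn)$) with $\|f-g\|_{L^q_{\fai(\cdot,1)}}<\varepsilon$, which is possible because $C_c(\rn)$ is dense in $L^q_{\fai(\cdot,1)}(\rn)$ for $q<\fz$. For $g$, uniform continuity and compact support give $g\ast\phi_t\to g$ uniformly, hence in $L^q_{\fai(\cdot,1)}$ over a fixed large ball (and the support of $g\ast\phi_t$ stays in a fixed compact set for $t\le1$, so $\fai(\cdot,1)$ integrates over that set to a finite constant). Combining via the triangle inequality, $\|f\ast\phi_t-f\|_{L^q_{\fai(\cdot,1)}}\le\|(f-g)\ast\phi_t\|_{L^q_{\fai(\cdot,1)}}+\|g\ast\phi_t-g\|_{L^q_{\fai(\cdot,1)}}+\|g-f\|_{L^q_{\fai(\cdot,1)}}\ls\varepsilon$, which proves the claim.

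The main obstacle is the handling of the local (as opposed to global) nature of $\aa^\loc$: the weight $\fai(\cdot,1)$ need not be a genuine Muckenhoupt weight, so one cannot directly invoke the classical self-improvement or $L^q$-boundedness results. This is circumvented by the localization device of Lemma \ref{l2.4}(i) — extracting on each unit cube an honest $\aa_p(\rn)$ weight agreeing with $\fai$ there — together with the fact (Remark \ref{r2.2}(iii), adapted to $\aa^\loc$ in Section \ref{s2}) that $q(\fai)$ is finite and $\fai\in\aa^\loc_q$ for $q$ just above it, so that all estimates are applied on compact sets where only finitely many unit cubes intervene.
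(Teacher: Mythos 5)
Your proof is correct, and since the paper itself does not prove Lemma \ref{l2.5} but defers to Tang's paper \cite{ta1} (citing his Lemma 2.2 and Proposition 2.1), the relevant comparison is against the standard argument, which yours essentially reproduces. Parts (i) and (ii) are clean: the key observation that $1/(p-1)=p'/p$ together with the definition of $\aa^{\loc}_p$ (or, as you do it, Lemma \ref{l2.4}(i) plus the classical $\aa_p$ fact that the dual weight is locally integrable) directly yields local integrability of $[\fai(\cdot,1)]^{-1/(p-1)}$ over the finitely many unit cubes covering $\supp\phi$, and then H\"older gives (ii). The treatment of $p=\fz$ is correct. For part (iii), the three-term approximation via density of $C_c(\rn)$, a uniform maximal bound, and uniform convergence $g\ast\phi_t\to g$ on a fixed compact set is the right skeleton and gives the result.

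Two small technical blemishes in (iii), neither fatal. First, the phrase ``splitting $\phi$ into its part on $|y|\le1$ and a rapidly decaying remainder'' is a vestige of the Schwartz-class setting; here $\phi\in\cd(\rn)$ has compact support, so no tail appears and the pointwise bound $\sup_{0<t\le1}|f\ast\phi_t(x)|\ls\|\phi\|_{L^{\fz}(\rn)}\,M^{\loc}_{\wz C}(f)(x)$ (with $\wz C$ depending on the diameter of $\supp\phi$) follows immediately. Second, and more to the point, you should invoke the \emph{local} maximal function $M^{\loc}_{\wz C}$ rather than the global $M$, since Lemma \ref{l2.4}(iii) only concerns $M^{\loc}$; the passage to arbitrary fixed $\wz C$ is licensed by Remark \ref{r2.3}, and this is what keeps everything inside the $\aa^{\loc}_q$ theory, where the local weight $\fai(\cdot,1)$ need not satisfy any global Muckenhoupt condition. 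With those two points made explicit, the argument is complete.
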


We remark that (i) and (ii) of Lemma \ref{l2.5}, and Lemma
\ref{l2.5}(iii) are, respectively, \cite[Lemma 2.2 and Proposition
2.1]{ta1}.

\section{Local Musielak-Orlicz Hardy spaces and their maximal
function characterizations\label{s3}}

\hskip\parindent In this section, we introduce the local Musielak-Orlicz
Hardy space $h_{\fai,\,N}(\rn)$ via the local
grand maximal function and establish its local vertical and
nontangential maximal function characterizations via a local
Calder\'on reproducing formula and some useful estimates obtained
by Rychkov \cite{r01}. We also introduce the atomic local
Musielak-Orlicz Hardy space $h^{\fai,\,q,\,s}(\rn)$ and give some of their basic properties.

First, we introduce some local maximal functions. For $N\in\nn$ and
$R\in(0,\fz)$, let
\begin{eqnarray*}
\cd_{N,\,R}(\rn):=\lf\{\pz\in\cd(\rn):\ \supp(\pz)
\subset B(0,R),\ \|\pz\|_{\cd_{N}(\rn)}:=\sup_{x\in\rn}
\sup_{{\az\in\nn^n},\,{|\az|\le N}}|\partial^{\az}\pz(x)|\le1\r\}.
\end{eqnarray*}

\begin{definition}\label{d3.1}
Let $N\in\nn$ and $R\in(0,\fz)$. For any $f\in\cd'(\rn)$, the {\it
local nontangential grand maximal function} $\wz{\cg}_{N,\,R} (f)$
of $f$ is defined by setting, for all $x\in\rn$,
\begin{eqnarray}\label{3.1}
\wz{\cg}_{N,\,R} (f)(x):=\sup\lf\{|\pz_t\ast
f(z)|:\,|x-z|<t<1,\,\pz\in\cd_{N,\,R}(\rn)\r\},
\end{eqnarray}
and the {\it local vertical grand maximal function} $\cg_{N,\,R}
(f)$ of $f$ is defined by setting, for all $x\in\rn$,
\begin{eqnarray}\label{3.2}
\cg_{N,\,R} (f)(x):=\sup\lf\{|\pz_t\ast
f(x)|:\,t\in(0,1),\,\pz\in\cd_{N,\,R}(\rn)\r\}.
\end{eqnarray}
\end{definition}

For the sake of convenience, when $R=1$, we denote $\cd_{N,\,R}(\rn)$,
$\wz{\cg}_{N,\,R} (f)$ and $\cg_{N,\,R}(f)$ simply by
$\cd^0_{N}(\rn)$, $\wz{\cg}^0_{N}(f)$ and $\cg^0_{N}(f)$,
respectively; when $R=2^{3(10+n)}$, we denote $\cd_{N,\,R}(\rn)$,
$\wz{\cg}_{N,\,R} (f)$ and $\cg_{N,\,R}(f)$ simply by
$\cd_{N}(\rn)$, $\wz{\cg}_{N}(f)$ and $\cg_{N}(f)$, respectively.
For any $N\in\nn$ and $x\in\rn$, obviously,
$$\cg^{0}_N(f)(x)\le\cg_N (f)(x)\le\wz{\cg}_N (f)(x).$$

For the local grand maximal function $\cg^0_N (f)$, we have the
following Proposition \ref{p3.1}, which is just \cite[Proposition
2.2]{ta1}.

\begin{proposition}\label{p3.1}
Let $N\ge2$.

$\mathrm{(i)}$ Then there exists a positive constant $C$ such that
for all $f\in L^1_{\loc}(\rn)\cap\cd'(\rn)$ and almost every
$x\in\rn$, 
$$|f(x)|\le \cg^0_N (f)(x)\le M^{\loc}(f)(x).$$

$\mathrm{(ii)}$ If $\fai\in \aa_p^{\loc}(\rn)$ with $p\in(1,\fz)$,
then $f\in L^p_{\fai(\cdot,1)}(\rn)$ if and only if $f\in \cd'(\rn)$
and $\cg^0_N (f)\in L^p_{\fai(\cdot,1)}(\rn)$; moreover,
$$\|f\|_{L^p_{\fai(\cdot,1)}(\rn)}\sim\|\cg^0_N
(f)\|_{L^p_{\fai(\cdot,1)}(\rn)}$$ 
with the implicit positive constants
independent of $f$.
\end{proposition}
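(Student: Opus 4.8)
The plan is to run the standard grand-maximal-function argument, adapted to the weight $\fai(\cdot,1)\in\aa^{\loc}_p(\rn)$; the only weighted inputs needed are the $L^p_{\fai(\cdot,1)}$-boundedness of $M^{\loc}$ (Lemma~\ref{l2.4}(iii)), the local integrability of the conjugate weight $[\fai(\cdot,1)]^{-1/(p-1)}$ together with the embedding $L^p_{\fai(\cdot,1)}(\rn)\subset\cd'(\rn)$ (Lemma~\ref{l2.5}(i) and (ii)), and the self-improvement $q(\fai)<p$ (Remark~\ref{r2.2}(iii)). For part~(i), I would fix a nonnegative $\phi\in\cd(\rn)$ with $\supp\phi\subset B(0,1)$ and $\int_{\rn}\phi\,dx=1$, put $a:=\|\phi\|_{\cd_N(\rn)}\in(0,\fz)$ and $\psi:=\phi/a\in\cd^0_N(\rn)$. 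For $f\in L^1_{\loc}(\rn)$, at every Lebesgue point $x$ of $f$ one has $\psi_t\ast f(x)\to a^{-1}f(x)$ as $t\to0^+$, hence $|f(x)|\le a\,\cg^0_N(f)(x)$ a.e., which is the lower bound with the fixed constant $a$ absorbed into $C$. For the upper bound, if $\psi\in\cd^0_N(\rn)$ and $t\in(0,1)$ then $\supp\psi_t(x-\cdot)\subset B(x,t)$ and $\|\psi_t\|_{L^\fz(\rn)}\le t^{-n}$, so $|\psi_t\ast f(x)|\le t^{-n}\int_{B(x,t)}|f(y)|\,dy\ls M^{\loc}(f)(x)$, and taking the supremum over admissible $\psi$ and $t\in(0,1)$ gives $\cg^0_N(f)(x)\ls M^{\loc}(f)(x)$ (the localization parameter of the maximal function being immaterial here, cf. Remark~\ref{r2.3}).

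For the forward implication of part~(ii), assume $f\in L^p_{\fai(\cdot,1)}(\rn)$. Since $q(\fai)<p$, Lemma~\ref{l2.5}(ii) gives $f\in\cd'(\rn)$, while H\"older's inequality together with the local integrability of $[\fai(\cdot,1)]^{-1/(p-1)}$ gives $f\in L^1_{\loc}(\rn)$; hence part~(i) applies and, with Lemma~\ref{l2.4}(iii),
$$\|\cg^0_N(f)\|_{L^p_{\fai(\cdot,1)}(\rn)}\le\|M^{\loc}(f)\|_{L^p_{\fai(\cdot,1)}(\rn)}\ls\|f\|_{L^p_{\fai(\cdot,1)}(\rn)}.$$

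For the converse, assume $f\in\cd'(\rn)$ and $\cg^0_N(f)\in L^p_{\fai(\cdot,1)}(\rn)$; H\"older's inequality and the local integrability of the conjugate weight again give $h:=a\,\cg^0_N(f)\in L^1_{\loc}(\rn)$. With $\phi,\psi$ as above, $|\phi_t\ast f(x)|=a\,|\psi_t\ast f(x)|\le h(x)$ for all $t\in(0,1)$ and $x\in\rn$, so the smooth functions $\{\phi_t\ast f\}_{0<t<1}$ are uniformly dominated by $h$; since $\phi_t\ast f\to f$ in $\cd'(\rn)$ as $t\to0^+$, letting $t\to0^+$ yields $|\laz f,\varphi\raz|\le\int_{\rn}h(x)|\varphi(x)|\,dx$ for every $\varphi\in\cd(\rn)$. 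By the Riesz representation theorem (equivalently, Radon--Nikodym for the locally finite measure $h\,dx$), $f$ then coincides with some $g\in L^1_{\loc}(\rn)$ with $|g|\le h$ a.e.; so $f=g\in L^1_{\loc}(\rn)\cap\cd'(\rn)$, and part~(i) gives $|f|\le\cg^0_N(f)$ a.e., whence $f\in L^p_{\fai(\cdot,1)}(\rn)$ with $\|f\|_{L^p_{\fai(\cdot,1)}(\rn)}\le\|\cg^0_N(f)\|_{L^p_{\fai(\cdot,1)}(\rn)}$, and combining with the forward estimate yields the asserted equivalence. The hard part is precisely this passage from the a priori distribution $f$ to a locally integrable representative, via the uniform domination of the mollifications $\phi_t\ast f$ by $\cg^0_N(f)$ and duality against $\cd(\rn)$; everything else is routine once Lemmas~\ref{l2.4} and~\ref{l2.5} are available, which is exactly why this statement matches \cite[Proposition~2.2]{ta1} up to notation.
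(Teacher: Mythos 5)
Your argument is correct and matches the paper's intent exactly: the paper simply cites \cite[Proposition 2.2]{ta1} without reproducing a proof, and your reconstruction (normalized mollifier in $\cd^0_N(\rn)$ for the lower bound, kernel-size estimate for the upper bound, Lemma~\ref{l2.4}(iii) for the forward inequality in (ii), and the uniform-domination/Riesz--Radon--Nikodym passage from the distribution $f$ to an $L^1_{\loc}$ representative $g$ with $|g|\le a\,\cg^0_N(f)$ for the converse) is the standard route that this citation points to. You are also right to flag that both pointwise inequalities in (i) hold only up to constants depending on $n$ and $N$ and that the localization scale of $M^{\loc}$ must be enlarged via Remark~\ref{r2.3}; this is precisely where the constant $C$ in the statement enters.
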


Now, we introduce the local Musielak-Orlicz Hardy space via
the local grand maximal function as follows.

\begin{definition}\label{d3.2}
Let $\fai$ be a growth function as in Definition \ref{d2.3},
$m(\fai)$ as in \eqref{2.10}, and
$\wz{N}_{\fai}:=m(\fai)+2$. For each $N\in\zz_+$ with $N\ge
\wz{N}_{\fai}$, the {\it local Musielak-Orlicz Hardy space $h_{\fai,\,N}(\rn)$}
is defined by
$$h_{\fai,\,N}(\rn):=\lf\{f\in\cd'(\rn):\ \cg_N (f)\in
L^{\fai}(\rn)\r\}.$$
Moreover, let $\|f\|_{h_{\fai,\,N}(\rn)}:=\|\cg_N (f)\|_{L^{\fai}(\rn)}$.
\end{definition}

Obviously, for any integers $N_1$ and $N_2$ with $N_1\ge N_2\ge
\wz{N}_{\fai}$, $h_{\fai,\,\wz{N}_{\fai}}(\rn)\subset
h_{\fai,\,N_2}(\rn) \subset h_{\fai,\,N_1}(\rn)$, and the inclusions
are continuous.

Next, we introduce the weighted local atoms, via which, we introduce
the \emph{atomic local Musielak-Orlicz Hardy space}.

\begin{definition}\label{d3.3}
(i) For each cube $Q\subset\rn$, denote by $L^q_{\fai}(Q)$, $q\in [1, \fz]$,
the \emph{set of all measurable functions $f$ on $\rn$
supported in $Q$ such that}
\begin{equation*}
\|f\|_{L^q_{\fai}(Q)}:=
\begin{cases}\dsup_{t\in (0,\fz)}
\lf[\dfrac{1}
{\fai(Q,t)}\dint_{\rn}|f(x)|^q\fai(x,t)\,dx\r]^{1/q}<\fz,& q\in [1,\fz),\\
\|f\|_{L^{\fz}(Q)}<\fz,&q=\fz.
\end{cases}
\end{equation*}

(ii) Let $\fai$ be a growth function as in Definition
\ref{d2.3} which satisfies the fact that $\int_\rn\fai(x,t)\,dx<\fz$
for all $t\in[0,\fz)$. The \emph{space $L^q_{\fai}(\rn)$} for $q\in[1,\fz]$
is defined as in (i) with $Q$ replaced by $\rn$, here and in what follows,
$$\fai(\rn,t):=\int_\rn\fai(x,t)\,dx.$$
\end{definition}

It is easy to see that both
$(L^q_{\fai}(Q),\|\cdot\|_{L^q_{\fai}(Q)})$ and
$(L^q_{\fai}(\rn),\|\cdot\|_{L^q_{\fai}(\rn)})$ are Banach spaces.

\begin{definition}\label{d3.4}
Let $\fai$ be a growth function as in Definition
\ref{d2.3}, $q(\fai)$ and $m(\fai)$
respectively as in \eqref{2.9} and \eqref{2.10}. A triplet
$(\fai,\,q,\,s)$ is called {\it admissible} if $q\in(q(\fai),\fz]$,
$s\in\nn$ and $s\ge m(\fai)$. A measurable function $a$ on $\rn$ is
called a {\it $(\fai,\,q,\,s)$-atom} if there exists a cube
$Q\subset\rn$ such that

$\mathrm{(i)}$ $\supp(a)\subset Q$;

$\mathrm{(ii)}$
$\|a\|_{L^q_{\fai}(Q)}\le\|\chi_Q\|_{L^\fai(\rn)}^{-1}$;

$\mathrm{(iii)}$ $\int_{\rn}a(x)x^{\az}\,dx=0$ for all $\az\in\nn^n$
with $|\az|\le s$, when $l(Q)<1$.\\ Moreover, a function $a$ on
$\rn$ is called  a {\it $(\fai,\,q)$-single-atom} with
$q\in(q(\fai),\fz]$, if
$$\|a\|_{L^q_{\fai}(\rn)}\le\|\chi_{\rn}\|_{L^\fai(\rn)}^{-1}.$$
\end{definition}

\begin{remark}\label{r3.1}
Let $q\in(q(\fai),\fz]$. It is easy to see that a finite linear
combination of $(\fai,\,q)$-single-atoms is still a multiple of a
$(\fai,\,q)$-single-atom.
\end{remark}

\begin{definition}\label{d3.5}
Let $\fai$ be a growth function as in Definition \ref{d2.3}
and $(\fai,\,q,\,s)$ admissible.
The {\it atomic local Musielak-Orlicz Hardy space},
$h^{\fai,\,q,\,s}(\rn)$, is defined to be the set of all $f\in\cd'(\rn)$
satisfying the fact that $f=\sum_{i=0}^{\fz}b_i$ in $\cd'(\rn)$, where
$\{b_i\}_{i=1}^\fz$ is a sequence of multiples of
$(\fai,\,q,\,s)$-atoms with $\supp(b_i)\subset Q_i$ and $b_0$ is a
multiple of some $(\fai,\,q)$-single-atom, and
$$\sum_{i=1}^{\fz}\fai\lf(Q_i,\|b_i\|_{L^q_{\fai}(Q_i)}\r)
+\fai\lf(\rn,\|b_0\|_{L^q_{\fai}(\rn)}\r)<\fz.$$ Moreover, letting
\begin{eqnarray*}
&&\blz_q(\{b_i\}_{i\in\nn}):= \inf\lf\{\lz\in(0,\fz):\
\sum_{i=1}^{\fz}\fai\lf(Q_i,\frac{\|b_i\|_{L^q_{\fai}(Q_i)}}{\lz}\r)
+\fai\lf(\rn,\frac{\|b_0\|_{L^q_{\fai}(\rn)}}{\lz}\r)\le1\r\},
\end{eqnarray*}
the \emph{quasi-norm} of $f\in h^{\fai,\,q,\,s}_{\oz}(\rn)$ is
defined by 
$$\|f\|_{h^{\fai,\,q,\,s}(\rn)}:=\inf\lf\{
\blz_q\lf(\{b_i\}_{i\in\nn}\r)\r\},$$ 
where the infimum is taken over all
the decompositions of $f$ as above.
\end{definition}

Next, we introduce some local vertical, tangential and
nontangential maximal functions, and establish the
characterizations of the \emph{local Musielak-Orlicz Hardy space
$h_{\fai,\,N}(\rn)$} in terms of  these
local maximal functions.

\begin{definition}\label{d3.6}
Let
\begin{eqnarray}\label{3.3}
\pz_0\in\cd(\rn)\,\, \text{with}\,\,\int_{\rn}\pz_0 (x)\,dx\neq0.
\end{eqnarray}
For $j\in\nn$, $A,\,B\in[0,\fz)$ and $y\in\rn$, let
$m_{j,\,A,\,B}(y):=(1+2^j |y|)^A 2^{B|y|}$. The {\it local vertical
maximal function} $\pz_0^{+}(f)$ of $f$ associated with $\pz_0$ is
defined by setting, for all $x\in\rn$,
\begin{eqnarray}\label{3.4}
\pz_0^{+}(f)(x):= \sup_{j\in\nn}|(\pz_0)_j \ast f(x)|,
\end{eqnarray}
the {\it local tangential Peetre-type  maximal function
$\pz^{\ast\ast}_{0,\,A,\,B}(f)$} of $f$ associated with $\pz_0$ is
defined by setting, for all $x\in\rn$,
\begin{eqnarray}\label{3.5}
\pz^{\ast\ast}_{0,\,A,\,B}(f)(x):=\sup_{j\in\nn,\,y\in\rn}
\frac{|(\pz_0)_j \ast f(x-y)|}{m_{j,\,A,\,B}(y)}
\end{eqnarray}
and the {\it local nontangential maximal function
$(\pz_0)^{\ast}_{\triangledown}(f)$} of $f$ associated with $\pz_0$ is
defined by setting, for all $x\in\rn$,
\begin{eqnarray}\label{3.6}
(\pz_0)^{\ast}_{\triangledown}(f)(x):= \sup_{|x-y|<t<1}|(\pz_0)_t
\ast f(y)|,
\end{eqnarray}
here and in what follows, for all $x\in\rn$,
$(\pz_0)_j(x):=2^{jn}\pz_0 (2^j x)$ for all $j\in\nn$ and $(\pz_0)_t
(x):=\frac{1}{t^n}\pz_0 (\frac{x}{t})$ for all $t\in(0,\fz)$.
\end{definition}

Obviously, for any $x\in\rn$, we have
$$\pz_0^{+}(f)(x)\le(\pz_0)^{\ast}_{\triangledown}(f)(x)
\ls\pz^{\ast\ast}_{0,\,A,\,B}(f)(x).$$ 
We remark that the local
tangential Peetre-type maximal function
$\pz^{\ast\ast}_{0,\,A,\,B}(f)$ was introduced by Rychkov
\cite{r01}.

In order to establish the local vertical and the local
nontangential maximal function characterizations of
$h_{\fai,\,N}(\rn)$, we first establish some relations in the norm
of $L^{\fai}(\rn)$ of the local maximal functions
$\pz^{\ast\ast}_{0,\,A,\,B}(f),\,\pz_0^{+}(f)$ and
$\wz{\cg}_{N,\,R}(f)$. We begin with some technical lemmas and the
following Lemma \ref{l3.1} is just \cite[Theorem\,1.6]{r01}.

\begin{lemma}\label{l3.1}
Let $\pz_0$ be as in \eqref{3.3} and $\pz(x):=\pz_0
(x)-\frac{1}{2^n}\pz_0 (\frac{x}{2})$ for all $x\in\rn$. Then for
any given integer $L\in\nn$, there exist $\ez_0,\,\ez\in\cd(\rn)$
such that $L_{\ez}\ge L$ and for all $f\in\cd'(\rn)$,
$$f=\ez_0 \ast\pz_0 \ast f+\sum_{j=1}^{\fz}\ez_j\ast\pz_j\ast f$$
in $\cd'(\rn)$.
\end{lemma}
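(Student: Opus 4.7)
The desired identity, at the level of distributions, is the convolution formula $\dz=\ez_0\ast\pz_0+\sum_{j=1}^\fz\ez_j\ast\pz_j$ in $\cd'(\rn)$, where $\dz$ denotes the Dirac mass. Taking Fourier transforms, this is formally equivalent to the scalar identity
\[
1=\widehat{\ez_0}(\xi)\widehat{\pz_0}(\xi)+\sum_{j=1}^\fz\widehat{\ez}(2^{-j}\xi)\widehat{\pz}(2^{-j}\xi),\qquad\xi\in\rn.
\]
Two observations drive the construction: since $\int_\rn\pz_0(x)\,dx\ne 0$, one has $\widehat{\pz_0}(0)\ne 0$; and since $\pz(x)=\pz_0(x)-2^{-n}\pz_0(x/2)$ has zero integral, $\widehat{\pz}(0)=0$, and more precisely $\widehat{\pz}(\xi)=\widehat{\pz_0}(\xi)-\widehat{\pz_0}(2\xi)$.

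The plan is as follows. First, I would choose $\ez_0\in\cd(\rn)$ with $\int_\rn\ez_0(x)\,dx\cdot\int_\rn\pz_0(x)\,dx=1$, so that $a(\xi):=\widehat{\ez_0}(\xi)\widehat{\pz_0}(\xi)$ satisfies $a(0)=1$. Setting $b(\xi):=\widehat{\ez}(\xi)\widehat{\pz}(\xi)$, the Fourier identity reduces to $1-a(\xi)=\sum_{j=1}^\fz b(2^{-j}\xi)$. A telescoping ansatz suggests $b(\xi):=a(\xi)-a(2\xi)$, since in that case
\[
\sum_{j=1}^N b(2^{-j}\xi)=a(2^{-N}\xi)-a(\xi)\longrightarrow 1-a(\xi)\quad\text{as }N\to\fz,
\]
by continuity of $a$ and the normalization $a(0)=1$. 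Next, I would construct $\ez\in\cd(\rn)$ realizing the identity $\widehat{\ez}(\xi)\widehat{\pz}(\xi)=a(\xi)-a(2\xi)$ with the required moment property $L_\ez\ge L$; in Fourier terms, this means $\widehat{\ez}$ vanishes to order at least $L+1$ at the origin. Finally, I would verify that the resulting series converges in $\cd'(\rn)$: for any test function $\vz\in\cd(\rn)$,
\[
\laz\ez_j\ast\pz_j\ast f,\vz\raz=\laz f,\wz{\pz_j}\ast\wz{\ez_j}\ast\vz\raz,
\]
where $\wz{g}(x):=g(-x)$, and the vanishing moments of $\ez$ force rapid decay of $\wz{\pz_j}\ast\wz{\ez_j}\ast\vz$ in $j$, uniformly in the seminorms defining $\cd(\rn)$; this yields absolute summability of the pairings, hence convergence in $\cd'(\rn)$.

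The main obstacle is the simultaneous requirement that $\ez\in\cd(\rn)$ (i.e., is \emph{compactly supported}) and that the algebraic identity $\widehat{\ez}(\xi)\widehat{\pz}(\xi)=a(\xi)-a(2\xi)$ hold, together with $L_\ez\ge L$. A naive Fourier-side division by $\widehat{\pz}$ would produce only a Schwartz function rather than a compactly supported one; moreover, since $\widehat{\pz}(\xi)=\widehat{\pz_0}(\xi)-\widehat{\pz_0}(2\xi)$ has only a divided-difference type zero at $\xi=0$, one must engineer enough cancellation in $a(\xi)-a(2\xi)$ near the origin to absorb it. Following Rychkov's physical-space approach, I would bypass Fourier division entirely by first choosing $\ez_0$ so that $\ez_0\ast\pz_0-\dz$ has all moments up to order $L+1$ vanishing, and then constructing $\ez$ as an explicit finite combination of convolutions of $\pz_0$ and its dyadic dilates with bump-type corrections; this produces $\ez\in\cd(\rn)$ with the required vanishing moments, after which the distributional convergence becomes routine via the estimate sketched above.
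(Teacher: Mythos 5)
The paper itself contains no proof of Lemma \ref{l3.1}: it is quoted verbatim as \cite[Theorem 1.6]{r01}, so the only benchmark is Rychkov's construction. Your skeleton is the right one — the telescoping identity $\sum_{j=1}^{N}b(2^{-j}\xi)=a(2^{-N}\xi)-a(\xi)\to 1-a(\xi)$, and the closing convergence argument is indeed routine (after pairing with a test function all factors live in a fixed compact set, only finitely many seminorms of $f$ enter, and already one vanishing moment of $\ez$ gives geometric decay in $j$). But the entire content of the lemma is the existence of $\ez_0,\ez\in\cd(\rn)$, genuinely compactly supported and smooth, with $\widehat{\ez}$ vanishing at the origin to order $L+1$ and with the \emph{exact} factorization $\widehat{\ez}(\xi)\,\widehat{\pz}(\xi)=a(\xi)-a(2\xi)$, and at precisely this point your argument stops: the final paragraph announces that $\ez$ can be built ``as an explicit finite combination of convolutions of $\pz_0$ and its dyadic dilates with bump-type corrections'' without exhibiting the combination or checking any of the three requirements. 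That is the lemma itself, not a detail to be deferred.

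Moreover, the two-step order you propose (fix $\ez_0$ first so that $\widehat{\ez_0}\widehat{\pz_0}=1+O(|\xi|^{L+2})$, then solve for $\ez$) does not work as stated: for a generic such $\ez_0$ the equation $\widehat{\ez}\,\widehat{\pz}=a-a(2\cdot)$ is a division problem in the Paley--Wiener class, and $\widehat{\pz}=\widehat{\pz_0}-\widehat{\pz_0}(2\cdot)$ has zeros in $\ccc^n$ far from the origin which the right-hand side has no reason to inherit; the moment normalization at $\xi=0$ does not help there. The divisibility has to be arranged algebraically, i.e., $\ez_0$ and $\ez$ must be designed simultaneously. For instance (this is the device behind Rychkov's proof): normalize $\int_{\rn}\pz_0=1$, fix $M\ge 2$ and $K:=L+2$, and take $a(\xi):=1-\lf(1-[\widehat{\pz_0}(\xi)]^{M}\r)^{K}$. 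Then $a=\widehat{\ez_0}\widehat{\pz_0}$ with $\ez_0:=\pz_0^{*(M-1)}*\sum_{i=0}^{K-1}(\dz-\pz_0^{*M})^{*i}\in\cd(\rn)$, and the identities $u^{K}-v^{K}=(u-v)\sum_{i=0}^{K-1}u^{i}v^{K-1-i}$ together with $[\widehat{\pz_0}(\xi)]^{M}-[\widehat{\pz_0}(2\xi)]^{M}=\widehat{\pz}(\xi)\sum_{k=0}^{M-1}[\widehat{\pz_0}(\xi)]^{k}[\widehat{\pz_0}(2\xi)]^{M-1-k}$ show that $a(\xi)-a(2\xi)=\widehat{\ez}(\xi)\widehat{\pz}(\xi)$, where $\ez$ is an explicit finite sum of convolution products of $\pz_0$, $2^{-n}\pz_0(\cdot/2)$ and $\dz$; the Dirac summands are harmless because each term still contains the factor $\Lambda:=\sum_{k=0}^{M-1}\pz_0^{*k}*\lf(2^{-n}\pz_0(\cdot/2)\r)^{*(M-1-k)}\in\cd(\rn)$, so $\ez\in\cd(\rn)$, while every summand of $\widehat{\ez}$ carries at least $K-1=L+1$ factors vanishing at the origin, whence $L_\ez\ge L$. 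The partial sums then equal $(\ez_0*\pz_0)_J*f$ with $\int_{\rn}\ez_0*\pz_0=1$, and convergence in $\cd'(\rn)$ follows as you indicated. Without some explicit construction of this kind, your sketch leaves the statement unproved at its core.
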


\begin{remark}\label{r3.2}
Let $\pz_0,\,\pz,\,\ez_0$ and $\ez$ be as in Lemma \ref{l3.1}. From
the proof of \cite[Theorem 1.6]{r01}, it is easy to deduce that for
any $j\in\nn$ and $f\in\cd'(\rn)$,
$$f=(\ez_0)_j\ast(\pz_0)_j \ast f+\sum_{k=j+1}^{\fz}
\ez_k\ast\pz_k \ast f$$ in $\cd'(\rn)$ (see also
\cite[(2.11)]{r01}). We omit the details.
\end{remark}

For $f\in L^1_{\loc}(\rn)$, $B\in[0,\fz)$ and $x\in\rn$, let
\begin{eqnarray}\label{3.7}
K_B f(x):=\int_{\rn}|f(y)|2^{-B|x-y|}\,dy,
\end{eqnarray}
 here and in what follows,
$L^1_{\loc}(\rn)$ denotes the \emph{set of all locally integrable
functions on $\rn$}.
\begin{lemma}\label{l3.2}
Let $p\in(1,\fz)$, $q\in(1,\fz]$, and $\fai\in \aa^{\loc}_p (\rn)$.
Then there exists a positive constant $C$ such that for all sequences
$\{f^j\}_j$ of measurable functions and all $t\in(0,\fz)$,
\begin{eqnarray}\label{3.8}
\lf\|\lf\{M^{\loc}(f^j)\r\}_j\r\|_{L^p_{\fai(\cdot,t)}(l_q)}\le C
\lf\|\lf\{f^j\r\}_j\r\|_{L^p_{\fai(\cdot,t)}(l_q)},
\end{eqnarray}
here and in what follows,
$$\|\{f^j\}_j\|_{L^p_{\fai(\cdot,t)}(l_q)}:=\lf\|\lf\{\sum_{j}|f^j|^q\r\}^{1/q}
\r\|_{L^p_{\fai(\cdot,t)}(\rn)}.$$
Also, there exist positive
constants $C$ and $B_0:=B_0 (\fai,n)$ such that for all $B\ge B_0/p$,
\begin{eqnarray}\label{3.9}
\lf\|\lf\{K_B (f^j)\r\}_j\r\|_{L^p_{\fai(\cdot,t)}(l_q)}\le C
\lf\|\lf\{f^j\r\}_j\r\|_{L^p_{\fai(\cdot,t)}(l_q)}.
\end{eqnarray}
\end{lemma}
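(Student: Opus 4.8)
The plan is to establish \eqref{3.8} and \eqref{3.9} separately, in both cases localizing to a fixed tiling of $\rn$ by unit cubes $\{Q_k\}_{k\in\zz^n}$ (with centers $x_k$, so that $x_k-x_l\in\zz^n$) and transferring to classical statements for Muckenhoupt weights through the extension Lemma \ref{l2.4}(i) together with Remark \ref{r2.3}. Write $Q_k^\ast$ for the cube concentric with $Q_k$ of sidelength $3$, and recall the $\{Q_k^\ast\}_k$ have overlap at most $3^n$.

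First I would prove \eqref{3.8}. For $q=\fz$ this is immediate from the scalar inequality Lemma \ref{l2.4}(iii) applied to $\sup_j|f^j|$, since $M^{\loc}$ is monotone and $\sup_j M^{\loc}(f^j)\le M^{\loc}(\sup_j|f^j|)$. For $q\in(1,\fz)$, note that any cube $Q'$ with $|Q'|\le1$ and $Q'\ni x\in Q_k$ is contained in $Q_k^\ast$, hence $M^{\loc}(f^j)(x)\le M(f^j\chi_{Q_k^\ast})(x)$ for $x\in Q_k$, where $M$ is the global Hardy--Littlewood maximal operator. By Lemma \ref{l2.4}(i) and Remark \ref{r2.3} (with $\wz C=3$), for each $k$ there is $\wz\fai_k\in\aa_p(\rn)$ with $\wz\fai_k(\cdot,\tau)=\fai(\cdot,\tau)$ on $Q_k^\ast$ for all $\tau\in[0,\fz)$ and $\aa_p(\wz\fai_k)\le C\aa^{\loc}_p(\fai)$, with $C$ independent of $k$, $\tau$, $\fai$. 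Applying the classical weighted $l_q$-valued Fefferman--Stein maximal inequality for $A_p(\rn)$ weights --- whose constant depends on the weight only through its $A_p$ constant --- to $\{f^j\chi_{Q_k^\ast}\}_j$ with weight $\wz\fai_k(\cdot,t)$, then restricting the left-hand side to $Q_k$ and using $\wz\fai_k(\cdot,t)=\fai(\cdot,t)$ on $Q_k^\ast$, yields $\int_{Q_k}(\sum_j|M^{\loc}(f^j)|^q)^{p/q}\fai(\cdot,t)\,dx\le C\int_{Q_k^\ast}(\sum_j|f^j|^q)^{p/q}\fai(\cdot,t)\,dx$ with $C$ independent of $k$ and $t$; summing over $k$ and using the bounded overlap gives \eqref{3.8}.

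Next I would prove \eqref{3.9}. By Minkowski's integral inequality for the $l_q$-norm (valid as $q\ge1$), $\|\{K_B(f^j)(x)\}_j\|_{l_q}\le K_B(F)(x)$ with $F:=(\sum_j|f^j|^q)^{1/q}$, so it suffices to show $\|K_B(F)\|_{L^p_{\fai(\cdot,t)}(\rn)}\le C\|F\|_{L^p_{\fai(\cdot,t)}(\rn)}$ for all nonnegative $F$ and all $B\ge B_0/p$. Discretizing, for $x\in Q_k$ I would split $K_B(F)(x)=\sum_l\int_{Q_l}F(y)2^{-B|x-y|}\,dy$: for the finitely many $l$ with $|x_k-x_l|<2\sqrt n$, bounding $2^{-B|x-y|}\le1$ shows their sum is $\le CM^{\loc}_{\wz C}(F)(x)$ for a dimensional constant $\wz C$, which contributes $\ls\|F\|_{L^p_{\fai(\cdot,t)}(\rn)}$ by Lemma \ref{l2.4}(iii) and Remark \ref{r2.3}; for $|x_k-x_l|\ge2\sqrt n$, the estimate $|x-y|\ge\frac12|x_k-x_l|$ produces the term $\sum_l 2^{-(B/2)|x_k-x_l|}\int_{Q_l}F$. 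On this last term, Hölder's inequality and $\fai\in\aa^{\loc}_p(\rn)$ give $\int_{Q_l}F\le[\aa^{\loc}_p(\fai)]^{1/p}a_l\fai(Q_l,t)^{-1/p}$, where $a_l:=(\int_{Q_l}F^p\fai(\cdot,t))^{1/p}$; raising to the power $p$, integrating over $Q_k$ (where the far part is constant), and regrouping $\fai(Q_k,t)\fai(Q_l,t)^{-1}$ reduces matters to the growth estimate $\fai(Q_k,t)/\fai(Q_l,t)\le C_1 2^{C_2|x_k-x_l|}$, with $C_1,C_2$ depending only on $n$ and $\aa^{\loc}_p(\fai)$. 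This estimate follows by chaining the bound $\fai(Q_k,t)\le D\fai(Q_{k'},t)$ for adjacent unit cubes $Q_k,Q_{k'}$, which comes from Lemma \ref{l2.4}(iv) applied to a pair $Q_k\subset R$, $R$ a sidelength-$2$ cube containing $Q_k\cup Q_{k'}$, after extending $\fai(\cdot,t)$ near $R$ to an $A_p(\rn)$ weight via Lemma \ref{l2.4}(i) and Remark \ref{r2.3}. Setting $B_0:=4C_2$, so that $\gz:=B/2-C_2/p\ge C_2/p>0$ whenever $B\ge B_0/p$, Young's inequality for convolution on $l_p(\zz^n)$ gives $\sum_k(\sum_l 2^{-\gz|x_k-x_l|}a_l)^p\le(\sum_{m\in\zz^n}2^{-\gz|m|})^p\sum_k a_k^p$; since $\gz$ is bounded below, the $l_1$-norm of the kernel is finite and independent of $B$ and $t$, and $\sum_k a_k^p=\int_\rn F^p\fai(\cdot,t)\,dx$, whence \eqref{3.9}.

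The hard part will be the growth estimate $\fai(Q_k,t)/\fai(Q_l,t)\le C_1 2^{C_2|x_k-x_l|}$ and, underlying everything, the requirement that \emph{every} constant be uniform in $t\in[0,\fz)$. That uniformity is precisely what the standing hypothesis $\aa^{\loc}_p(\fai)<\fz$ --- a single bound for the whole family $\{\fai(\cdot,t)\}_{t\ge0}$ --- provides; in particular it is why Lemma \ref{l2.4}(i) must be invoked with an $A_p$-bound independent of $t$, and why the $A_p$-quantitative (not merely qualitative) form of the classical vector-valued maximal inequality is needed.
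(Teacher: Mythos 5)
Your proof is correct, and it fills in what the paper does not: the paper offers no argument here, stating only that Lemma \ref{l3.2} ``is just'' Rychkov's Lemma 2.11 (with the uniformity in $t$ coming for free from the single bound $\aa^{\loc}_p(\fai)<\fz$). What you have written is a faithful reconstruction of that Rychkov-style proof --- tiling $\rn$ by unit cubes, extending $\fai(\cdot,t)$ near each cube to a genuine $A_p(\rn)$ weight via Lemma \ref{l2.4}(i)/Remark \ref{r2.3} with a constant controlled uniformly in the cube and in $t$, invoking the quantitative weighted Fefferman--Stein inequality to get \eqref{3.8}, and then handling $K_B$ by Minkowski, a near/far split, H\"older through the $\aa^{\loc}_p$ condition, a chained adjacent-cube doubling bound $\fai(Q_k,t)/\fai(Q_l,t)\ls 2^{C_2|x_k-x_l|}$, and discrete Young's inequality for the far tail; the threshold $B_0$ is manufactured exactly to make the residual kernel $2^{-\gz|\cdot|}$ summable. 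Everything you use is available in the paper's toolkit, the $t$-uniformity is tracked where it matters, and I see no gaps.
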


Lemma \ref{l3.2} is just \cite[Lemma\,2.11]{r01}. Moreover, from the
proof of \cite[Lemma\,2.11]{r01}, it is easy to deduce that
\eqref{3.8} also holds for $M^{\loc}_{\wz{C}}$ with any given
positive constant $\wz{C}$. In this case, the positive constant $C$
in \eqref{3.8} depends on $\wz{C}$.

\begin{lemma}\label{l3.3}
Let $\pz_0$ be as in \eqref{3.3} and $r\in(0,\fz)$. Then there
exists a positive constant $A_0$, depending only on the support of
$\pz_0$, such that for any $A\in(\max\{A_0,\frac{n}{r}\},\fz)$ and
$B\in[0,\fz)$, there exists a positive constant $C$, depending only
on $n,\,r,\,\pz_0,\,A$ and $B$, satisfying the fact that for all
$f\in\cd'(\rn)$, $x\in\rn$ and $j\in\nn$,
$$\lf[(\pz_0 )^{\ast}_{j,\,A,\,B}(f)(x)\r]^r\le C\sum_{k=j}^{\fz}
2^{(j-k)(Ar-n)}\lf\{M^{\loc}(|(\pz_0 )_k \ast
f|^r)(x)+K_{Br}(|(\pz_0 )_k \ast f|^r)(x)\r\},$$
where, for all $x\in\rn$,
$$(\pz_0)^{\ast}_{j,\,A,\,B}(f)(x):=\sup_{y\in\rn}\frac{|(\pz_0 )_j \ast
f(x-y)|}{m_{j,\,A,\,B}(y)}.$$
\end{lemma}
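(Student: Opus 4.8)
The plan is to expand $(\pz_0)_j\ast f$ into an almost-orthogonal series in the functions $(\pz_0)_k\ast f$ with $k\ge j$, using the local discrete Calder\'on reproducing formula, and then to estimate the series term by term. First I would fix $j\in\nn$ and apply Lemma \ref{l3.1} together with Remark \ref{r3.2}, choosing the integer $L$ large enough (depending only on $n$, $r$, $A$ and $B$), to get $\ez_0,\ez\in\cd(\rn)$ with $L_\ez\ge L$ and, for every $f\in\cd'(\rn)$, $f=(\ez_0)_j\ast(\pz_0)_j\ast f+\sum_{k=j+1}^{\fz}\ez_k\ast\pz_k\ast f$ in $\cd'(\rn)$. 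Convolving both sides with $(\pz_0)_j$ and rewriting $\pz_k=(\pz_0)_k-(\pz_0)_{k-1}$ to reindex the sum, one obtains an identity $(\pz_0)_j\ast f=\sum_{k=j}^{\fz}\Theta_{j,k}\ast[(\pz_0)_k\ast f]$, where $\Theta_{j,k}$ equals, up to a diagonal correction at $k=j$, the function $(\pz_0)_j\ast(\ez_k-\ez_{k+1})$.

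The next step is the kernel estimates. Because $\pz_0$, $\ez_0$ and $\ez$ have compact support and $k\ge j$, every $\Theta_{j,k}$ is supported in a fixed dilate $B(0,c2^{-j})$; moreover, expanding $\pz_0$ in its Taylor polynomial of degree $L$ and using that $\ez_k-\ez_{k+1}$ has vanishing moments up to order $L$, one gets $\|\Theta_{j,k}\|_{L^{\fz}(\rn)}\ls 2^{jn}2^{(j-k)(L+1)}$ for $k>j$. The diagonal kernel $\Theta_{j,j}$, on the other hand, has a nonvanishing integral and satisfies only $\|\Theta_{j,j}\|_{L^{\fz}(\rn)}\ls 2^{jn}$; this is the source of the one genuine difficulty, discussed below.

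Granting these bounds, I would estimate, for fixed $y\in\rn$ and $k>j$, the term $\Theta_{j,k}\ast[(\pz_0)_k\ast f](x-y)$. By the support bound it is $\ls 2^{jn}2^{(j-k)(L+1)}\int_{B(x-y,c2^{-j})}|(\pz_0)_k\ast f(w)|\,dw$. To insert the $r$-th power one uses a case split: for $r\ge1$ this is Jensen's inequality in the $k$-sum, while for $r\in(0,1)$ one keeps $r$-th powers throughout (using $(\sum_k a_k)^r\le\sum_k a_k^r$) and handles the average via $\int_B|g|\le(\sup_B|g|)^{1-r}\int_B|g|^r$, controlling $\sup_{B}|(\pz_0)_k\ast f|$ by a local Plancherel--P\'olya-type estimate. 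Either way, matters reduce to bounding $m_{j,A,B}(y)^{-r}\,2^{jn}\int_{B(x-y,c2^{-j})}|(\pz_0)_k\ast f(w)|^r\,dw$, and here one distinguishes two regimes of $y$: if $2^j|y|\ls1$, the ball $B(x-y,c2^{-j})$ lies in a cube about $x$ of measure at most $1$, so this average is $\ls M^{\loc}(|(\pz_0)_k\ast f|^r)(x)$ (cf.\ Remark \ref{r2.3}); if $2^j|y|\gs1$, then $|x-w|\sim|y|$ on $B(x-y,c2^{-j})$, hence $2^{Br|x-w|}\ls 2^{Br|y|}$ and the average is $\ls 2^{Br|y|}K_{Br}(|(\pz_0)_k\ast f|^r)(x)$. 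In both cases the factor $m_{j,A,B}(y)^{-r}=(1+2^j|y|)^{-Ar}2^{-Br|y|}$ swallows the excess, the sum over $k>j$ is a convergent geometric series precisely because $A>n/r$, and one is left with the claimed bound plus the contribution of the diagonal term $\Theta_{j,j}\ast[(\pz_0)_j\ast f]$.

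The remaining, and main, obstacle is this diagonal term: since $\int\Theta_{j,j}\ne0$, running the same estimate for it produces, on the right-hand side, a quantity comparable to $[(\pz_0)^{\ast}_{j,A,B}(f)(x)]^r$ with an absolute constant, the familiar self-referential feature of Peetre-type maximal estimates. I would close the argument in the standard way. If $(\pz_0)^{\ast}_{j,A,B}(f)(x)=\fz$, one checks that the right-hand side is infinite as well (the growth of $(\pz_0)_j\ast f$ forces $K_{Br}(|(\pz_0)_k\ast f|^r)(x)=\fz$), so one may assume finiteness; to make this rigorous one first runs the whole argument with $(\pz_0)^{\ast}_{j,A,B}(f)$ replaced by its truncation $\sup_{|y|\le T}|(\pz_0)_j\ast f(x-y)|/m_{j,A,B}(y)$, which is finite for each $T<\fz$ since $(\pz_0)_j\ast f$ is continuous, and lets $T\to\fz$ by monotone convergence at the end. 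With finiteness in hand, choosing $A$ large relative to $\supp\pz_0$ — which is exactly what the threshold $A_0$ records — makes the diagonal displacement negligible at scale $2^{-j}$, so that its contribution can be absorbed into the left-hand side after a brief iteration; the two requirements $A>A_0$ and $A>n/r$ then merge into the stated hypothesis on $A$, and $L$ is finally pinned down large enough in terms of $n$, $r$, $A$ and $B$. I expect this absorption step, together with the justification of the a priori finiteness, to be the only nontrivial point; the kernel bounds and the two-regime localization are mechanical once the bookkeeping is set up.
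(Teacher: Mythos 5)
The paper does not prove Lemma~\ref{l3.3} directly: it states that the result ``is just \cite[Lemma 3.11]{yys}'', which in turn is a local variant of Rychkov's Peetre--Plancherel--P\'olya estimate \cite[Lemma 2.9]{r01}. Your outline matches that scheme in its skeleton --- the local discrete Calder\'on reproducing formula, the decomposition $(\pz_0)_j\ast f=\sum_{k\ge j}\Theta_{j,k}\ast[(\pz_0)_k\ast f]$ with $\supp\Theta_{j,k}\subset B(0,c2^{-j})$ and $\|\Theta_{j,k}\|_{L^{\fz}}\ls 2^{jn}2^{(j-k)(L+1)}$, and the split of $y$ into a near regime (yielding $M^{\loc}$) and a far regime (yielding $K_{Br}$). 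For $r\ge 1$ your Jensen step is fine and, contrary to what you suggest, the diagonal term $k=j$ then poses no self-reference at all, since $\bigl[\int_B|g|\bigr]^r\ls|B|^{r-1}\int_B|g|^r$ already lands you on $M^{\loc}(|(\pz_0)_j\ast f|^r)$ and $K_{Br}(|(\pz_0)_j\ast f|^r)$.

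The genuine gap is in $r\in(0,1)$, and it is twofold. First, the algebra you propose does not reduce to the claimed form: after $(\sum_k a_k)^r\le\sum_k a_k^r$ each summand is $\bigl[\cdots\int_B|(\pz_0)_k\ast f|\bigr]^r$, and $\int_B|g|\le(\sup_B|g|)^{1-r}\int_B|g|^r$ turns this into $(\sup_B|g|)^{r(1-r)}\bigl[\int_B|g|^r\bigr]^r$, with an extra power $r$ on the ``good'' integral and a stray supremum; moreover a ``local Plancherel--P\'olya'' bound for $\sup_B|(\pz_0)_k\ast f|$ over a ball $B$ of radius $\sim 2^{-j}$ carries the normalization $2^{kn}$, not $2^{jn}$, since $(\pz_0)_k\ast f$ lives at scale $2^{-k}\le 2^{-j}$, and feeding it back does not close. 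Second --- and this is the crucial misstep --- taking $A$ large relative to $\supp\pz_0$ does \emph{not} shrink the self-referential coefficient. The coefficient coming from the diagonal term is bounded below by a constant of the order $|\int\Theta_{j,j}|$ (morally $|\hat\pz_0(0)\hat\ez_0(0)|\ne0$), independent of $A$; and the weight ratio $\sup_{|z|\le c2^{-j}}m_{j,A,B}(y+z)/m_{j,A,B}(y)\ls(1+c)^{A}2^{Bc}$ only \emph{grows} with $A$. The constant $A_0$ records that this ratio is bounded uniformly in $j$ over $\supp\Theta_{j,k}$; the condition $A>n/r$ serves the localization in the far regime; neither produces a small absorption constant. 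What actually closes the argument is Rychkov's ``$r$-trick'': write $|(\pz_0)_k\ast f(w)|=|(\pz_0)_k\ast f(w)|^{r}|(\pz_0)_k\ast f(w)|^{1-r}$ inside the convolution for \emph{every} $k\ge j$ (not only $k=j$), bound the $(1-r)$-power factor by $[\mathcal{G}(x)]^{1-r}$ times the weight, where
$$\mathcal{G}(x):=\sup_{k\ge j}\sup_{|w|\le T}\frac{|(\pz_0)_k\ast f(x-w)|}{2^{(k-j)N}(1+2^j|w|)^{N}2^{B_0|w|}}$$
is a scale-mixed, truncated majorant (a priori finite for each $T<\fz$), arrive at $\mathcal{G}(x)\le C\,[\mathcal{G}(x)]^{1-r}\cdot\{\text{claimed RHS}\}$, divide by $[\mathcal{G}(x)]^{1-r}$ (no smallness needed), and let $T\to\fz$. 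The factor $2^{-(k-j)N}$ in $\mathcal{G}$, tuned against $L$, is what makes the $k$-sum converge; it is the ingredient missing from your outline. You do correctly anticipate the need for truncation and an a priori finiteness step, but the mechanism you describe for closing the absorption is not the correct one, and as written the $r<1$ case does not produce the lemma.
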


Lemma \ref{l3.3} is just \cite[Lemma 3.11]{yys}.

\begin{theorem}\label{t3.1}
Let $\fai$ be a growth function as in Definition \ref{d2.3},
$R\in(0,\fz)$, $\pz_0,\,q(\fai)$ and $i(\fai)$ be respectively as
in \eqref{3.3}, \eqref{2.9} and $\eqref{2.3}$, and $\pz_0^{+}(f),\,
\pz^{\ast\ast}_{0,\,A,\,B}(f)$, and $\wz{\cg}_{N,\,R} (f)$ be
respectively as in \eqref{3.4}, \eqref{3.5} and \eqref{3.1}. Let
$A_1 :=\max\{A_0,\,nq(\fai)/i(\fai)\}$, $B_1:= B_0/i(\fai)$ and
integer $N_0:=\lfz2A_1\rfz+1$, where $A_0$ and $B_0$ are
respectively as in Lemmas \ref{3.3} and \ref{3.2}. Then for any
$A\in(A_1,\fz)$, $B\in(B_1,\fz)$ and integer $N\ge N_0$, there
exists a positive constant $C$, depending only on
$A,\,B,\,N,\,R,\,\pz_0,\,\fai$ and $n$, such that for all
$f\in\cd'(\rn)$,
\begin{eqnarray}\label{3.10}
\lf\|\pz^{\ast\ast}_{0,\,A,\,B}(f)\r\|_{L^{\fai}(\rn)}\le C
\lf\|\pz_0^{+}(f)\r\|_{L^{\fai}(\rn)}
\end{eqnarray}
and
\begin{eqnarray}\label{3.11}
\lf\|\wz{\cg}_{N,\,R} (f)\r\|_{L^{\fai}(\rn)}\le C
\lf\|\pz_0^{+}(f)\r\|_{L^{\fai}(\rn)}.
\end{eqnarray}
\end{theorem}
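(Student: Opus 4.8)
The plan is to establish \eqref{3.10} first and then derive \eqref{3.11} from it through a pointwise domination. For \eqref{3.10} the engine is the pointwise Peetre-type estimate in Lemma \ref{l3.3}, which I combine with a power substitution that transports the argument into a Musielak--Orlicz space whose lower type index exceeds $q(\fai)$, so that the weighted maximal estimates of Lemma \ref{l3.2} and Lemma \ref{l2.4}(iii) apply. Concretely, fix $A>A_1$ and $B>B_1$. Since $A>A_1\ge nq(\fai)/i(\fai)$ and $B>B_1=B_0/i(\fai)$, one can pick $r\in\big(n/A,\,i(\fai)/q(\fai)\big)$ together with exponents $p_{\ast}<p$ satisfying $q(\fai)<p_{\ast}$, $\fai\in\aa^{\loc}_{p}(\rn)$, $\fai$ of uniformly lower type $rp$, $Ar>n$ and $Br\ge B_0/p_{\ast}$; the hypotheses $A>A_1$, $B>B_1$ are precisely what make these constraints jointly solvable. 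With this $r$, Lemma \ref{l3.3} (whose hypothesis $A>\max\{A_0,n/r\}$ then holds) gives
\[
\lf[(\pz_0)^{\ast}_{j,\,A,\,B}(f)(x)\r]^r\ls\sum_{k=j}^{\fz}2^{(j-k)(Ar-n)}\lf\{M^{\loc}\big(|(\pz_0)_k\ast f|^r\big)(x)+K_{Br}\big(|(\pz_0)_k\ast f|^r\big)(x)\r\}
\]
for all $x\in\rn$, $j\in\nn$; taking the supremum over $j$, using $|(\pz_0)_k\ast f|\le\pz_0^{+}(f)$, the monotonicity of $M^{\loc}$ and $K_{Br}$, and summing the geometric series ($Ar-n>0$) yields $[\pz^{\ast\ast}_{0,\,A,\,B}(f)]^r\ls M^{\loc}\big([\pz_0^{+}(f)]^r\big)+K_{Br}\big([\pz_0^{+}(f)]^r\big)$. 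By the uniform quasi-subadditivity of $\fai$ (Lemma \ref{l2.1}(i)) and Lemma \ref{l2.3}(i), \eqref{3.10} is reduced to
\[
\lf\|\big[M^{\loc}(h^r)\big]^{1/r}\r\|_{L^{\fai}(\rn)}+\lf\|\big[K_{Br}(h^r)\big]^{1/r}\r\|_{L^{\fai}(\rn)}\ls\|h\|_{L^{\fai}(\rn)},\qquad h:=\pz_0^{+}(f).
\]

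This displayed estimate is the technical heart. Put $\Psi(x,s):=\fai(x,s^{1/r})$; then $\Psi(\cdot,t)\in A^{\loc}_{p_{\ast}}(\rn)$ with $A^{\loc}_{p_{\ast}}$-constant bounded uniformly in $t\in[0,\fz)$ (so $q(\Psi)=q(\fai)$), $\Psi$ is of uniformly lower type $p>q(\fai)$ and uniformly upper type $1/r$, and $\|v^{1/r}\|_{L^{\fai}(\rn)}=\|v\|_{L^{\Psi}(\rn)}^{1/r}$ for nonnegative measurable $v$; hence the estimate is equivalent to the boundedness of $M^{\loc}$ and $K_{Br}$ on $L^{\Psi}(\rn)$. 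I would prove this by a distribution-function argument. By Lemma \ref{l2.4}(iii) (resp.\ \eqref{3.9} of Lemma \ref{l3.2}, applied to $\Psi$), $M^{\loc}$ (resp.\ $K_{Br}$) satisfies a uniform-in-$t$ weak-type $(p_{\ast},p_{\ast})$ inequality with respect to the measure $\Psi(\cdot,t)\,dx$; decomposing $\rn$ into the level sets $\{x:\,2^{k-1}<M^{\loc}v(x)\le2^k\}$, splitting $v=v\chi_{\{v>2^{k-2}\}}+v\chi_{\{v\le2^{k-2}\}}$ on each, applying the weak inequality there with the frozen weight $\Psi(\cdot,2^k)$, and summing over $k\in\zz$ using the uniformly lower type $p$ (so that the geometric sum concentrates at its top term, since $p>p_{\ast}$) and the doubling of $\Psi(x,\cdot)$ in $s$, one arrives at $\int_{\rn}\Psi(x,M^{\loc}v(x))\,dx\ls\int_{\rn}\Psi(x,v(x))\,dx$, and likewise for $K_{Br}$. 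With Lemma \ref{l2.3}(i) this gives the displayed estimate, hence \eqref{3.10}.

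Now fix an integer $N\ge N_0$. Since $N_0=\lfz2A_1\rfz+1>A_1$, one can fix $A\in(A_1,N)$ and $B\in(B_1,\fz)$, and by \eqref{3.10} it suffices to prove the pointwise domination $\wz{\cg}_{N,\,R}(f)(x)\ls\pz^{\ast\ast}_{0,\,A,\,B}(f)(x)$. Given $\pz\in\cd_{N,\,R}(\rn)$, $t\in(0,1)$ and $z$ with $|x-z|<t$, choose $j\in\nn$ with $2^{-j-1}<t\le2^{-j}$, and apply the shifted local Calder\'on reproducing formula of Remark \ref{r3.2}, with $\ez_0,\ez\in\cd(\rn)$ chosen via Lemma \ref{l3.1} so that $L_{\ez}\ge N-1$ (and $\pz_k=(\pz_0)_k-(\pz_0)_{k-1}$ for $k\ge1$):
\[
\pz_t\ast f(z)=\pz_t\ast(\ez_0)_j\ast(\pz_0)_j\ast f(z)+\sum_{k=j+1}^{\fz}\pz_t\ast\ez_k\ast\pz_k\ast f(z).
\]
The kernel $\pz_t\ast(\ez_0)_j$ is supported in a ball of radius $\ls2^{-j}$ and has $L^1(\rn)$-norm $\ls1$; pairing it with $(\pz_0)_j\ast f$ and using $|(\pz_0)_j\ast f(u)|\le m_{j,\,A,\,B}(x-u)\,\pz^{\ast\ast}_{0,\,A,\,B}(f)(x)$ with $|x-u|\ls2^{-j}\le1$ bounds the first term by $C\pz^{\ast\ast}_{0,\,A,\,B}(f)(x)$. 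For $k\ge j+1$, using $N-1$ of the vanishing moments of $\ez$ against the $\|\cdot\|_{\cd_N(\rn)}$-control of $\pz$ gives $\|\pz_t\ast\ez_k\|_{L^1(\rn)}\ls2^{-(k-j)N}$, while $|\pz_k\ast f(u)|\ls2^{(k-j)A}\pz^{\ast\ast}_{0,\,A,\,B}(f)(x)$ for $u$ in the relevant (radius $\ls2^{-j}$) support, again via $m_{k,\,A,\,B}$; since $A<N$, summing $\sum_{k>j}2^{-(k-j)(N-A)}\ls1$ controls the series by $C\pz^{\ast\ast}_{0,\,A,\,B}(f)(x)$. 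Taking the supremum over $\pz$, $t$ and $z$ yields the pointwise domination, and \eqref{3.11} follows.

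The step I expect to be the main obstacle is the core estimate in the second paragraph --- the passage from the uniform-in-$t$ weighted $L^{p_{\ast}}$ (weak-type) bounds to the genuine $L^{\fai}$ (equivalently $L^{\Psi}$) bound for $M^{\loc}$ and $K_{Br}$. Everything else amounts to standard, if somewhat lengthy, bookkeeping with convolution kernels, geometric series, and the type/weight inequalities recorded in Lemmas \ref{l2.1}--\ref{l2.4} and Lemmas \ref{l3.1}--\ref{l3.3}.
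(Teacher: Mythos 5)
Your proof is correct and follows essentially the same route as the paper: the pointwise Peetre-type estimate from Lemma \ref{l3.3} reduces matters to controlling $M^{\loc}$ and $K_{Br}$ on the modular, and then a distribution-function argument combines the uniformly weighted weak-type bounds of Lemmas \ref{l2.4}(iii) and \ref{l3.2} with the uniformly lower type property (with index strictly larger than the weight exponent) to make the tail sum converge, exactly as in the paper's (3.14)--(3.18); the proof of \eqref{3.11} by a local Calder\'on reproducing formula matches the paper's reference to \cite{yys}. Your power substitution $\Psi(x,s):=\fai(x,s^{1/r})$ and the dyadic (rather than continuous Fubini) summation over level sets are only cosmetic reorganizations of the same calculation.
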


\begin{proof}
Let $f\in\cd'(\rn)$. First, we prove \eqref{3.10}. Let
$A\in(A_1,\fz)$ and $B\in(B_1,\fz)$. By $A_1
=\max\{A_0,\,nq(\fai)/i(\fai)\}$ and $B_1=B_0 /i(\fai)$, we know
that there exists $r_0\in(0,\frac{i(\fai)}{q(\fai)})$ such that
$A>\frac{n}{r_0}$ and $Br_0>\frac{B_0}{q(\fai)}$,  where $A_0$ and
$B_0$ are respectively as in Lemmas \ref{3.3} and \ref{l3.2}. Thus,
by Lemma \ref{l3.3}, for all $x\in\rn$, we know that
\begin{eqnarray}\label{3.12}
\lf[(\pz_0)^{\ast}_{j,\,A,\,B}(f)(x)\r]^{r_0}&\ls&\sum_{k=j}^{\fz}
2^{(j-k)(Ar_0 -n)}\lf\{M^{ \loc}\lf(|(\pz_0)_k\ast f|^{r_0}\r)(x)
+K_{Br_0}\lf(|(\pz_0)_k\ast f|^{r_0}\r)(x)\r\}.\qquad
\end{eqnarray}
 Let $\pz^{+}_0 (f)$ and $\pz^{\ast\ast}_{0,\,A,\,B}(f)$ be
respectively as in \eqref{3.4} and \eqref{3.5}. We notice that for
any $x\in\rn$ and $k\in\nn$, $|(\pz_0)_k \ast f (x)|\le\pz^{+}_0
(f)(x)$, which, together with \eqref{3.12}, implies that for all
$x\in\rn$,
\begin{eqnarray}\label{3.13}
\lf[\pz^{\ast\ast}_{0,\,A,\,B}(f)(x)\r]^{r_0}\ls
M^{\loc}\lf([\pz^{+}_0 (f)]^{r_0})(x)+K_{Br_0}([\pz^{+}_0
(f)]^{r_0}\r)(x).
\end{eqnarray}
From \eqref{3.13} and Lemma \ref{l2.1}(i), we deduce that
\begin{eqnarray}\label{3.14}
\int_{\rn}\fai\lf(x,\pz^{\ast\ast}_{0,\,A,\,B}(f)(x)\r)\,dx
&&\ls\int_{\rn}\fai\lf(x,\lf\{M^{\loc}\lf([\pz^{+}_0
(f) ]^{r_0}\r)(x)\r\}^{1/{r_0}}\r)\,dx\nonumber \\
&&\hs\hs+\int_{\rn}\fai\lf(x,\lf\{K_{Br_0}\lf([\pz^{+}_0
(f)]^{r_0}\r)(x)\r\}^{1/{r_0}}\r)\,dx\nonumber\\
&&=:\mathrm{I_1}+\mathrm{I_2}.
\end{eqnarray}

Now, we estimate $\mathrm{I_1}$.  By $r_0 <\frac{i(\fai)}{q(\fai)}$,
we see that there exist $q\in(q(\fai),\fz)$ and $p_0\in(0,i(\fai))$
such that $r_0 q<p_0$, $\fai\in \aa^{\loc}_q (\rn)$ and $\fai$ is of
uniformly lower type $p_0$. For any $\az\in(0,\fz)$ and $g\in
L^1_{\loc} (\rn)$, let $g=g \chi_{\{x\in\rn:\ |g(x)|\le\az\}}+g
\chi_{\{x\in\rn:\ |g(x)|>\az\}}=: g_1 +g_2$. It is easy to see that
$$\{x\in\rn:\ M^{\loc}(g)(x)>2\az\}\subset \{x\in\rn:\
M^{\loc}(g_2)(x)>\az\},$$
which, together with Lemma \ref{l2.4}(iii),
implies that for all $t\in(0,\fz)$,
\begin{eqnarray}\label{3.15}
\int_{\{x\in\rn:\ M^{\loc}(g)(x)>2\az\}}\fai(x,t)\,dx
&&\le\int_{\{x\in\rn:\ M^{\loc}(g_2)(x)>\az\}}\fai(x,t)\,dx\noz\\
&&\le\frac{1}{\az^q}\int_{\rn} \lf[M^{\loc}(g_2)(x)\r]^q\fai(x,t)\,dx\nonumber \\
&&\ls\frac{1}{\az^q}\int_{\rn}|g_2 (x)|^q\fai(x,t)\,dx\noz\\
&&\sim\frac{1}{\az^q}\int_{\{x\in\rn:\
|g(x)|>\az\}}|g(x)|^q\fai(x,t)\,dx.
\end{eqnarray}
Thus, for any $\az\in(0,\fz)$, from \eqref{3.15}, we infer that
\begin{eqnarray}\label{3.16}
&&\int_{\{x\in\rn:\ [M^{\loc}([\pz^+_0
(f)]^{r_0})(x)]^{1/{r_0}}>\az\}}\fai(x,t)\,dx\noz\\
&&\hs\ls\frac{1}{\az^{r_0 q}}\int_{\{x\in\rn:\ [\pz^{+}_0
(f)(x)]^{r_0}>\frac{\az^{r_0}}{2}\}}[\pz^{+}_0 (f)(x)]^{r_0
q}\fai(x,t)\,dx\nonumber\\
&&\hs\sim\sz_{\pz^+_0
(f),\,t}\lf(\frac{\az}{2^{1/{r_0}}}\r)+\frac{1}{\az^{r_0
q}}\int^{\fz}_{\frac{\az}{2^{1/{r_0}}}}r_0 qs^{r_0 q-1}\sz_{\pz^+_0
(f),\,t}(s)\,ds,
\end{eqnarray}
here and in what follows,
$$\sz_{\pz^+_0(f),\,t}(s):=\int_{\{x\in\rn:\ \pz^+_0 (f)(x)>s\}}\fai(x,t)\,dx.$$
By Lemma \ref{l2.1}(ii), we know that $\fai(x,t)\sim\int_0^t
\frac{\bfai(x,s)}{s}\,ds$ for all $x\in\rn$ and $t\in(0,\fz)$. From
this, \eqref{3.16} and the uniformly lower type $p_0$ property of
$\fai$, $r_0 q<p_0$ and Fubini's theorem, it follows that
\begin{eqnarray}\label{3.17}
\hs\mathrm{I_1}&\sim&\int_{\rn}\lf\{\int^{\lf\{M^{\loc}([\pz^{+}_0
(f)]^{r_0})(x)\r\}^{1/{r_0}}}_0\frac{\fai(x,t)}{t}\,dt\r\}\,dx\nonumber\\
 &\sim&\int^{\fz}_0\frac{1}{t}\int_{\{x\in\rn:\
\lf[M^{\loc}([\pz^{+}_0 (f)]^{r_0})\r]^{1/{r_0}}>t\}}
\fai(x,t)\,dx\,dt\nonumber\\
&\ls&\int^{\fz}_0\frac{1}{t}\int_{\{x\in\rn:\ \pz^+_0
(f)>\frac{t}{2^{1/r_0}}\}}\fai(x,t)\,dx\,dt\nonumber\\
&&\hs+\int_0^{\fz}\frac{1}{t^{r_0 q+1}}
\int^{\fz}_{\frac{t}{2^{1/{r_0}}}}r_0 qs^{r_0 q-1}\int_{\{x\in\rn:\
\pz^+_0(f)>s\}}\fai(x,s)\,dx\,ds\,dt\nonumber\\
&\ls& \int_{\rn}\fai\lf(x,\pz^+_0(f)(x)\r)\,dx
+\int^{\fz}_0 r_0 q s^{r_0 q-1}\int_{\{x\in\rn:\
\pz^+_0(f)(x)>s\}}\int_0^{2^{\frac{1}{r_0}}s}\frac{\fai(x,t)}{t^{r_0
q+1}}\,dt\,dx\,ds\nonumber\\
&\sim&\int_{\rn}\fai\lf(x,\pz^+_0(f)(x)\r)\,dx.
\end{eqnarray}

Now, we estimate $\mathrm{I_2}$. For any $\az\in(0,\fz)$ and $g\in
L^1_{\loc} (\rn)$, let $g_1$ and $g_2$ be as above. For
$H\in[\frac{B_0}{q},\fz)$, let $\int_{\rn}2^{-H|x-y|}\,dy=: c_H$. It
is easy to see that for all $x\in\rn$, $K_H (g_1) (x)\le c_H \az$,
which implies that
$$\lf\{x\in\rn:\,K_H (g)(x)>(c_H +1)\az\r\}\subset\lf\{x\in\rn:\,K_H
(g_2) (x)>\az\r\},$$ where $K_H$ is as in \eqref{3.7}. Thus, by
Lemma \ref{l3.2}, we know that for all $t\in(0,\fz)$,
\begin{eqnarray*}
\int_{\{x\in\rn:\ K_H g (x)>(c_H
+1)\az\}}\fai(x,t)&\le&\int_{\{x\in\rn:\ K_H g_2
(x)>\az\}}\fai(x,t)\,dx\\
&\ls&\frac{1}{\az^q}\int_{\{x\in\rn:\,|g(x)|>\az\}}|g(x)|^q\fai(x,t)\,dx.
\end{eqnarray*}
Similar to \eqref{3.16}, from the above estimate, $Br_0>B_0/q$ and
Lemma \ref{3.2}, we deduce that
\begin{eqnarray*}
&&\int_{\{x\in\rn:\ [K_{Br_0}([\pz^+_0
(f)]^{r_0})(x)]^{1/{r_0}}>\az\}}\fai(x,t)\,dx\\
&&\hs\ls\sz_{\pz^+_0
(f),\,t}\lf(\frac{\az}{(c_{Br_0}+1)^{1/r_0}}\r)+\frac{1}{\az^{r_0
q}}\int^{\fz}_{\frac{\az}{(c_{Br_0}+1)^{1/r_0}}}r_0 qs^{r_0
q-1}\sz_{\pz^+_0 (f),\,t}(s)\,ds.
\end{eqnarray*}
By this, similar to the estimate of $\mathrm{I_1}$, we know that
\begin{eqnarray}\label{3.18}
\mathrm{I_2}\ls\int_{\rn}\fai\lf(x,\pz^+_0 (f)(x)\r)\,dx.
\end{eqnarray}

Thus, from \eqref{3.14}, \eqref{3.17} and \eqref{3.18}, we infer
that
$$\int_{\rn}\fai\lf(x,\pz^{\ast\ast}_{0,\,A,\,B}(f)(x)\r)\,dx\ls
\int_{\rn}\fai\lf(x,\pz^+_0 (f)(x)\r)\,dx.$$ Replacing $f$ by
$f/\lz$ with $\lz\in(0,\fz)$ in the above inequality, and noticing
that
$$\pz^{\ast\ast}_{0,\,A,\,B}(f/\lz)
=\pz^{\ast\ast}_{0,\,A,\,B}(f)/\lz$$ and $\pz^+_0 (f/\lz)=\pz^+_0
(f)/\lz$, we see that
\begin{eqnarray}\label{3.19}
\int_{\rn}\fai\lf(x,\frac{\pz^{\ast\ast}_{0,\,A,\,B}(f)(x)}{\lz}\r)\,dx\ls
\int_{\rn}\fai\lf(x,\frac{\pz^+_0 (f)(x)}{\lz}\r)\,dx,
\end{eqnarray}
which, together with the arbitrariness of $\lz\in(0,\fz)$, implies
\eqref{3.10}.

 Now, we prove \eqref{3.11}. Similar to the proof of
 \cite[pp.\,20-22]{yys}, by Lemma \ref{l3.1}, for all $x\in\rn$,
 we have
$\wz{\cg}_{N,R}(f)(x)\ls\pz^{\ast\ast}_{0,\,A,\,B}(f)(x)$, which,
together with \eqref{3.19}, implies that for any $\lz\in(0,\fz)$,
$$\int_{\rn}\fai\lf(x,\frac{\wz{\cg}_{N,\,R}(f)(x)}{\lz}\r)\,dx\ls
\int_{\rn}\fai\lf(x,\frac{\pz^{+}_0 (f)(x)}{\lz}\r)\,dx.$$ From
this, we deduce that \eqref{3.11} holds, which completes the proof
of Theorem \ref{t3.1}.
\end{proof}

As a corollary of Theorem \ref{3.1}, we immediately obtain that the
local vertical and the local nontangential maximal function
characterizations of $h_{\fai,\,N}(\rn)$ with $N\ge N_{\fai}$ as
follows. Here and in what follows,
\begin{eqnarray}\label{3.20}
N_{\fai}:=\max\lf\{\wz{N}_{\fai},\,N_0\r\},
\end{eqnarray}
where $\wz{N}_{\fai}$ and $N_0$ are  respectively as in Definition
\ref{d3.2} and Theorem \ref{t3.1}.

\begin{theorem}\label{t3.2}
Let $\fai$ be a growth function as in Definition \ref{d2.3}, $\pz_0$
and $N_{\fai}$ be respectively as in \eqref{3.3} and \eqref{3.20}.
Then for any integer $N\ge N_{\fai}$, the following are equivalent:
\begin{enumerate}
\vspace{-0.25cm}
\item[\rm(i)] $f\in h_{\fai,\,N}(\rn);$
\vspace{-0.25cm}
\item[\rm(ii)] $f\in\cd'(\rn)$ and $\pz^{+}_0 (f)\in L^{\fai}(\rn);$
\vspace{-0.25cm}
\item[\rm(iii)] $f\in\cd'(\rn)$ and $(\pz_0)^{\ast}_{\triangledown}
(f)\in L^{\fai}(\rn);$ \vspace{-0.25cm}
\item[\rm(iv)] $f\in\cd'(\rn)$ and $\wz{\cg}_N (f)\in L^{\fai}(\rn);$
\vspace{-0.25cm}
\item[\rm(v)] $f\in\cd'(\rn)$ and $\wz{\cg}^0_N (f)\in L^{\fai}(\rn);$
\vspace{-0.25cm}
\item[\rm(vi)] $f\in\cd'(\rn)$ and $\cg^0_N (f)\in L^{\fai}(\rn)$.
\vspace{-0.25cm}
\end{enumerate}

Moreover, for all $f\in h_{\fai,\,N}(\rn)$,
\begin{eqnarray*}
\|f\|_{h_{\fai,\,N}(\rn)}&\sim&\lf\|\pz^{+}_0
(f)\r\|_{L^{\fai}(\rn)}\sim
\lf\|(\pz_0)^{\ast}_{\triangledown}(f)\r\|_{L^{\fai}(\rn)}\\
\nonumber &\sim&\lf\|\wz{\cg}_N
(f)\r\|_{L^{\fai}(\rn)}\sim\lf\|\wz{\cg}^0_N
(f)\r\|_{L^{\fai}(\rn)}\sim\lf\|\cg^0_N (f)\r\|_{L^{\fai}(\rn)},
\end{eqnarray*}
where the implicit constants are independent of $f$.
\end{theorem}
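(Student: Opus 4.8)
The plan is to derive all six equivalences from a short cycle of implications whose only substantial ingredient is Theorem \ref{t3.1}, together with the elementary pointwise inequalities noted in this section, namely $\pz^{+}_0 (f)\le(\pz_0)^{\ast}_{\triangledown}(f)\ls\pz^{\ast\ast}_{0,\,A,\,B}(f)$, $\cg^0_N (f)\le\cg_N (f)\le\wz{\cg}_N (f)$ and $\cg^0_N (f)\le\wz{\cg}^0_N (f)\le\wz{\cg}_N (f)$, all valid for every $f\in\cd'(\rn)$ (the last two because $\cd^0_N(\rn)\subset\cd_N(\rn)$ and one may take $z=x$ in \eqref{3.1}); here $N\ge N_{\fai}=\max\{\wz{N}_{\fai},N_0\}$, so that $h_{\fai,\,N}(\rn)$ is defined and Theorem \ref{t3.1} applies for a suitable $A>A_1$ and $B>B_1$. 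First I would prove that (ii) implies each of (i), (iii), (iv), (v) and (vi), with the corresponding quasi-norm controlled by $\|\pz^{+}_0 (f)\|_{L^{\fai}(\rn)}$: if $f\in\cd'(\rn)$ and $\pz^{+}_0 (f)\in L^{\fai}(\rn)$, then \eqref{3.10} gives $\pz^{\ast\ast}_{0,\,A,\,B}(f)\in L^{\fai}(\rn)$, hence $(\pz_0)^{\ast}_{\triangledown}(f)\in L^{\fai}(\rn)$; \eqref{3.11} with $R=2^{3(10+n)}$ gives $\wz{\cg}_N (f)\in L^{\fai}(\rn)$, hence $f\in h_{\fai,\,N}(\rn)$ and $\cg^0_N (f)\in L^{\fai}(\rn)$; and \eqref{3.11} with $R=1$ gives $\wz{\cg}^0_N (f)\in L^{\fai}(\rn)$. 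The pointwise inequalities then immediately yield (iii)$\Rightarrow$(ii), (iv)$\Rightarrow$(i), and each of (i), (iv), (v)$\Rightarrow$(vi). Thus the theorem reduces to the single implication (vi)$\Rightarrow$(ii), that is, to the quantitative estimate
$$\|\pz^{+}_0 (f)\|_{L^{\fai}(\rn)}\ls\|\cg^0_N (f)\|_{L^{\fai}(\rn)};$$
once this is available, chaining it with the norm bounds above and with $\|\pz^{+}_0 (f)\|_{L^{\fai}(\rn)}\le\|(\pz_0)^{\ast}_{\triangledown}(f)\|_{L^{\fai}(\rn)}$, $\|\cg^0_N (f)\|_{L^{\fai}(\rn)}\le\|\cg_N (f)\|_{L^{\fai}(\rn)}=\|f\|_{h_{\fai,\,N}(\rn)}\le\|\wz{\cg}_N (f)\|_{L^{\fai}(\rn)}$ and $\|\cg^0_N (f)\|_{L^{\fai}(\rn)}\le\|\wz{\cg}^0_N (f)\|_{L^{\fai}(\rn)}$ gives the asserted equivalence of all six quasi-norms.

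To prove the displayed estimate I would follow \cite[Theorem 3.14]{yys}. Writing $\supp\pz_0\subset B(0,R_0)$, setting $\psi_0(y):=R_0^n\pz_0(R_0 y)$ (supported in $B(0,1)$) and letting $j_0\in\nn$ be least with $R_0 2^{-j_0}<1$, one has $(\pz_0)_j\ast f=(\psi_0)_{R_0 2^{-j}}\ast f$ with $R_0 2^{-j}\in(0,1)$ for $j\ge j_0$, so $\sup_{j\ge j_0}|(\pz_0)_j\ast f(x)|\ls\cg^0_N (f)(x)$ on $\rn$, because $\psi_0/\max\{1,\|\psi_0\|_{\cd_N(\rn)}\}\in\cd^0_N(\rn)$. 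The finitely many remaining ``large-scale'' terms $(\pz_0)_j\ast f$ with $0\le j<j_0$ I would handle by inserting the local Calder\'on reproducing formula of Lemma \ref{l3.1} and Remark \ref{r3.2}: fixing $L\in\nn$ and an integer $M\ge j_0$ so large that $(\pz_0)_M$ and all $\pz_k$ with $k>M$ are, after rescaling as above, dilations at parameters less than $1$ of functions supported in $B(0,1)$, one writes $f=(\ez_0)_M\ast(\pz_0)_M\ast f+\sum_{k>M}\ez_k\ast\pz_k\ast f$, whence for $j<j_0$
$$(\pz_0)_j\ast f=[(\pz_0)_j\ast(\ez_0)_M]\ast[(\pz_0)_M\ast f]+\sum_{k>M}[(\pz_0)_j\ast\ez_k]\ast[\pz_k\ast f].$$
Here $|(\pz_0)_M\ast f|\ls\cg^0_N (f)$ and $|\pz_k\ast f|\ls\cg^0_N (f)$ by the rescaling argument, the kernel $(\pz_0)_j\ast(\ez_0)_M$ is a fixed element of $\cd(\rn)$, and a Taylor expansion of $(\pz_0)_j$ against the $L_{\ez}\ge L$ vanishing moments of $\ez_k$ shows that $\|(\pz_0)_j\ast\ez_k\|_{L^{\fz}(\rn)}\ls 2^{-k(L+1)}$ with support in a fixed ball; hence, via Jensen's inequality for the local averages (so that no local integrability of $\cg^0_N (f)$ is needed), each term on the right is pointwise dominated by a $k$-summable multiple of $[M^{\loc}_{\wz{C}}((\cg^0_N (f))^{r_0})(x)]^{1/r_0}$ for a fixed $\wz{C}$ and some $r_0\in(0,i(\fai)/q(\fai))$, and summing over $k$ and over $0\le j<j_0$ gives $\pz^{+}_0 (f)(x)\ls\cg^0_N (f)(x)+[M^{\loc}_{\wz{C}}((\cg^0_N (f))^{r_0})(x)]^{1/r_0}$. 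Passing to the integrals $\int_\rn\fai(x,\cdot)\,dx$, using Lemma \ref{l2.1}(i) and then the bound $\int_\rn\fai(x,[M^{\loc}_{\wz{C}}(g^{r_0})(x)]^{1/r_0})\,dx\ls\int_\rn\fai(x,g(x))\,dx$ --- which is precisely the estimate of $\mathrm{I}_1$ in the proof of Theorem \ref{t3.1} (through Lemma \ref{l2.4}(iii), Lemma \ref{l3.2} and the distribution-function computation there) --- and finally replacing $f$ by $f/\lz$, letting $\lz\in(0,\fz)$ vary and invoking Lemma \ref{l2.3}(i) as in \eqref{3.19}, yields the displayed estimate.

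The hardest part is exactly this last reduction. The intrinsic grand maximal function $\cg^0_N (f)$, whose test functions are supported in $B(0,1)$ and whose dilation parameters lie in $(0,1)$, does not directly control the convolutions of $f$ with the (possibly wide) kernel $\pz_0$ at scales $\gtrsim1$, so the finitely many large-scale pieces must be rewritten through Rychkov's local reproducing formula; the remaining difficulty is careful bookkeeping --- taking $N\ge N_0$ so that Theorem \ref{t3.1} applies, choosing $r_0<i(\fai)/q(\fai)$ so that the local maximal-function estimates are available, and carrying everything at the level of $\int_\rn\fai(x,\cdot)\,dx$ via the homogeneity argument, since $\cg^0_N (f)\in L^{\fai}(\rn)$ need not imply $\cg^0_N (f)\in L^1_{\loc}(\rn)$.
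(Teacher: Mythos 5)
Your overall architecture is the same as the paper's: the paper simply invokes Theorem \ref{t3.1} together with the proof of Theorem 3.14 in \cite{yys}, and your reduction — (ii)$\Rightarrow$(iii),(iv),(v),(vi),(i) via \eqref{3.10} and \eqref{3.11}, pointwise inequalities closing everything except (vi)$\Rightarrow$(ii), and then (vi)$\Rightarrow$(ii) via rescaling $\pz_0$ into $\cd^0_N(\rn)$ for scales $j\ge j_0$ and Rychkov's reproducing formula (Lemma \ref{l3.1}, Remark \ref{r3.2}) for the finitely many large scales — is exactly that strategy.

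There is, however, a genuine gap in the step where the power $r_0\in(0,1)$ enters. You claim that ``via Jensen's inequality for the local averages'' one passes from $\int_{B(x,\wz C)}\cg^0_N(f)(y)\,dy$ to $\bigl[M^{\loc}_{\wz C}\bigl((\cg^0_N(f))^{r_0}\bigr)(x)\bigr]^{1/r_0}$. For $r_0<1$ Jensen runs in the wrong direction: since $t\mapsto t^{1/r_0}$ is convex, one gets
$\bigl[\frac{1}{|B|}\int_B h^{r_0}\bigr]^{1/r_0}\le\frac{1}{|B|}\int_B h$, i.e.\ the local average of $h$ \emph{dominates} the $r_0$-mean, which is the opposite of what you need (and one must have $r_0<1$, indeed $r_0<i(\fai)/q(\fai)\le 1$, so there is no escape by taking $r_0\ge 1$). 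The correct mechanism is the Peetre-type sub--mean-value estimate embedded in Lemma \ref{l3.3} (Rychkov's self-improving trick): one writes $|g(y)|=|g(y)|^{1-r_0}|g(y)|^{r_0}$, pulls out a supremum which is then reabsorbed into the quantity being bounded, and thereby trades an $L^1$-average for an $L^{r_0}$-average at the cost of producing $M^{\loc}(|\cdot|^{r_0})$ and $K_{Br_0}(|\cdot|^{r_0})$ — not by Jensen. Concretely, after the reproducing-formula reduction for $j<j_0$ you should bound the resulting convolutions by $(\pz_0)^{\ast}_{M,\,A,\,B}(f)(x)$ for the fixed level $M\ge j_0$ (the factor $(1+2^M|y|)^A2^{B|y|}$ is bounded on the fixed compact support of the kernels), and then apply Lemma \ref{l3.3} at level $j=M$, where every term $|(\pz_0)_k\ast f|$ with $k\ge M$ is controlled by $\cg^0_N(f)$. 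This yields $\bigl[\pz^+_0(f)(x)\bigr]^{r_0}\ls M^{\loc}\bigl((\cg^0_N(f))^{r_0}\bigr)(x)+K_{Br_0}\bigl((\cg^0_N(f))^{r_0}\bigr)(x)$, and from there your final integration against $\fai$ and the homogeneity argument are exactly right. So the plan is sound and matches the paper's route, but the word ``Jensen'' must be replaced by the Peetre/Rychkov self-improving argument of Lemma \ref{l3.3}; as written the key pointwise inequality is unjustified.
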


\begin{proof}
Replacing \cite[Theorem 3.12]{yys} by Theorem \ref{t3.1} and
repeating the proof of \cite[Theorem 3.14]{yys}, we conclude the
proof of Theorem \ref{t3.2}.
\end{proof}

\begin{remark}\label{r3.3}
We point out that when $\fai$ is as in \eqref{1.1}, Theorems
\ref{t3.1} and \ref{t3.2} are respectively Theorems 3.12 and 3.14 in
\cite{yys}.
\end{remark}

As a corollary of Theorems \ref{t3.1} and \ref{t3.2}, we have the
following local tangential maximal function characterization of
$h_{\fai,\,N}(\rn)$. We omit the details.
\begin{corollary}\label{c3.1}
Let $\fai$ be a growth function as in Definition \ref{d2.3}, $\pz_0$
and $N_{\fai}$ as in \eqref{3.3} and \eqref{3.20}, and $A$ and $B$
as in Theorem \ref{t3.1}. Then for integer $N\ge N_{\fai}$, $f\in
h_{\fai,\,N}(\rn)$ if and only if $f\in\cd'(\rn)$ and
$\pz^{\ast\ast}_{0,\,A,\,B}(f)\in{L^{\fai}(\rn)}$; moreover,
$$\|f\|_{h_{\fai,\,N}(\rn)}\sim
\lf\|\pz^{\ast\ast}_{0,\,A,\,B}(f)\r\|_{L^{\fai}(\rn)}.$$
\end{corollary}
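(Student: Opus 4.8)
The plan is to deduce Corollary \ref{c3.1} directly from the two preceding theorems, so that the only real work is to sandwich the tangential Peetre-type maximal function $\pz^{\ast\ast}_{0,\,A,\,B}(f)$ between the local vertical maximal function $\pz_0^{+}(f)$ and the local grand maximal functions, up to $L^{\fai}(\rn)$-(quasi)norm equivalence. First I would observe the pointwise inequalities already recorded in the excerpt: for all $x\in\rn$,
\begin{eqnarray*}
\pz_0^{+}(f)(x)\le(\pz_0)^{\ast}_{\triangledown}(f)(x)\ls\pz^{\ast\ast}_{0,\,A,\,B}(f)(x),
\end{eqnarray*}
where the second estimate comes simply from taking $j=0$ (equivalently $t=1$ in the nontangential sup) and crudely bounding the weight $m_{j,\,A,\,B}(y)$ on the relevant region. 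Taking $L^{\fai}(\rn)$-norms and using monotonicity of $L^{\fai}(\rn)$ with respect to pointwise order, this gives
$$\lf\|\pz_0^{+}(f)\r\|_{L^{\fai}(\rn)}\ls\lf\|\pz^{\ast\ast}_{0,\,A,\,B}(f)\r\|_{L^{\fai}(\rn)}.$$

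For the reverse inequality, I would invoke Theorem \ref{t3.1}: for $A\in(A_1,\fz)$, $B\in(B_1,\fz)$ and integer $N\ge N_0$, estimate \eqref{3.10} states precisely that
$$\lf\|\pz^{\ast\ast}_{0,\,A,\,B}(f)\r\|_{L^{\fai}(\rn)}\le C\lf\|\pz_0^{+}(f)\r\|_{L^{\fai}(\rn)}.$$
Combining this with the previous display, $\|\pz^{\ast\ast}_{0,\,A,\,B}(f)\|_{L^{\fai}(\rn)}\sim\|\pz_0^{+}(f)\|_{L^{\fai}(\rn)}$. Now I would chain with Theorem \ref{t3.2}: since $N\ge N_{\fai}=\max\{\wz{N}_{\fai},N_0\}$ forces both $N\ge N_0$ (so that \eqref{3.10} applies) and $N\ge\wz{N}_{\fai}$ (so that $h_{\fai,\,N}(\rn)$ is defined), Theorem \ref{t3.2} gives $f\in h_{\fai,\,N}(\rn)$ iff $f\in\cd'(\rn)$ and $\pz_0^{+}(f)\in L^{\fai}(\rn)$, with $\|f\|_{h_{\fai,\,N}(\rn)}\sim\|\pz_0^{+}(f)\|_{L^{\fai}(\rn)}$. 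Stitching the two equivalences yields $f\in h_{\fai,\,N}(\rn)$ iff $f\in\cd'(\rn)$ and $\pz^{\ast\ast}_{0,\,A,\,B}(f)\in L^{\fai}(\rn)$, together with $\|f\|_{h_{\fai,\,N}(\rn)}\sim\|\pz^{\ast\ast}_{0,\,A,\,B}(f)\|_{L^{\fai}(\rn)}$, which is the claim.

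There is essentially no hard analytic step here — the corollary is a bookkeeping consequence of Theorems \ref{t3.1} and \ref{t3.2}, which is exactly why the authors say "We omit the details." The only points needing a sentence of care are: (a) checking that the hypotheses on $A$, $B$, $N$ in the corollary's statement are precisely those under which \eqref{3.10} holds (they are, by construction of $A_1$, $B_1$, $N_0$, $N_{\fai}$); (b) noting that $f\in\cd'(\rn)$ and $\pz^{\ast\ast}_{0,\,A,\,B}(f)\in L^{\fai}(\rn)$ already forces $\pz_0^{+}(f)\in L^{\fai}(\rn)$ via the trivial pointwise bound $\pz_0^{+}(f)\le\pz^{\ast\ast}_{0,\,A,\,B}(f)$, so one really does get an ``if and only if'' and not just a norm estimate in one direction; and (c) the norm equivalences are genuine quasi-norm equivalences in $L^{\fai}(\rn)$, which is fine because $\fai$ is quasi-concave (Lemma \ref{l2.1}(iii)). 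If I were writing it out in full, the whole argument would be three or four lines.
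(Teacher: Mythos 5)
Your proposal is correct and is exactly the argument the authors have in mind when they write ``As a corollary of Theorems \ref{t3.1} and \ref{t3.2} \dots we omit the details'': one sandwiches $\pz_0^{+}(f)\le\pz^{\ast\ast}_{0,\,A,\,B}(f)$ pointwise (take $y=0$ so $m_{j,\,A,\,B}(0)=1$) against the reverse $L^{\fai}$-bound from \eqref{3.10}, then chains with the equivalence $\|f\|_{h_{\fai,\,N}(\rn)}\sim\|\pz_0^{+}(f)\|_{L^{\fai}(\rn)}$ from Theorem \ref{t3.2}. One small remark: for the monotonicity of $\|\cdot\|_{L^{\fai}(\rn)}$ under pointwise domination, the relevant property is simply that $\fai(x,\cdot)$ is nondecreasing, not quasi-concavity; and the comparison $(\pz_0)^{\ast}_{\triangledown}(f)\ls\pz^{\ast\ast}_{0,\,A,\,B}(f)$ you sketch is not actually needed here, since the cruder bound $\pz_0^{+}(f)\le\pz^{\ast\ast}_{0,\,A,\,B}(f)$ already suffices.
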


 Now, we give some basic properties concerning $h_{\fai,\,N}(\rn)$ and
$h^{\fai,\,q,\,s}(\rn)$.

\begin{proposition}\label{p3.2}
Let $\fai$ be a growth function as in Definition \ref{d2.3} and
$N_{\fai}$ as in \eqref{3.20}. If integer $N\ge N_{\fai}$, then the
inclusion $h_{\fai,\,N}(\rn)\hookrightarrow\cd'(\rn)$ is continuous.
\end{proposition}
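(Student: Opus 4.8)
The plan is to show that if $f_k\to f$ in $h_{\fai,\,N}(\rn)$, then $f_k\to f$ in $\cd'(\rn)$; since both spaces are (quasi-)metrizable topological vector spaces, it suffices to prove a quantitative bound of the form $|\langle f,\pz\rangle|\lesssim C(\pz)\,\|f\|_{h_{\fai,\,N}(\rn)}$ for every $\pz\in\cd(\rn)$, where $C(\pz)$ depends only on $\pz$ (through its support and finitely many derivatives). Indeed, applying such a bound to $f_k-f$ gives $\langle f_k-f,\pz\rangle\to0$ for each fixed $\pz$.

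To obtain the bound, I would fix $\pz\in\cd(\rn)$ with $\supp\pz\subset B(0,R_0)$ for some $R_0\geq1$, and normalize: after dividing by a suitable constant $c_\pz:=\max\{1,\|\pz\|_{\cd_N(\rn)}\}\cdot R_0^{?}$ and rescaling the dilation parameter, the function $\widetilde\pz(x):=c_\pz^{-1}\pz(x)$ (or an appropriate dilate thereof) lies in $\cd_{N,\,R}(\rn)$ for $R=2^{3(10+n)}$. Then for any $x\in B(0,1)$, say, and $t\in(0,1)$ close to $1$, the value $|\widetilde\pz_t\ast f(x)|$ is pointwise controlled by $\cg_N(f)(x)$. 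Choosing $t$ and $x$ appropriately (e.g. $t=1/2$, and noting $\langle f,\pz\rangle$ can be recovered from $\pz_t\ast f$ at a point up to an explicit dilation/translation), we get $|\langle f,\pz\rangle|\lesssim c_\pz\,\cg_N(f)(x_0)$ for every $x_0$ in a fixed ball $B$ of positive, finite measure. Hence $|\langle f,\pz\rangle|\lesssim c_\pz\,\inf_{x_0\in B}\cg_N(f)(x_0)$.

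The final step is to pass from the pointwise infimum of $\cg_N(f)$ over a fixed ball $B$ to the Luxembourg norm $\|\cg_N(f)\|_{L^\fai(\rn)}=\|f\|_{h_{\fai,\,N}(\rn)}$. For this one uses that $\fai(\cdot,t)\in\aa^\loc_\fz(\rn)$ is locally integrable and that $\fai$ is of positive uniformly lower type, together with Lemma \ref{l2.3}(i): if $\lambda:=\|\cg_N(f)\|_{L^\fai(\rn)}$, then $\inf_{x_0\in B}\cg_N(f)(x_0)\leq c(B)\,\lambda$, where $c(B)$ depends on $|B|$, $\fai(B,\cdot)$ and the lower-type constant of $\fai$ — this is a standard estimate showing that on any fixed ball the $L^\fai$-norm dominates the essential infimum. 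Combining the displays yields $|\langle f,\pz\rangle|\lesssim C(\pz)\,\|f\|_{h_{\fai,\,N}(\rn)}$, which is exactly continuity of the inclusion.

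The main obstacle I anticipate is the bookkeeping in the first two steps: making sure that the test function $\pz$, after normalization and an appropriate choice of dilation parameter $t\in(0,1)$, genuinely produces an element of $\cd_{N,\,R}(\rn)$ with the support constraint $B(0,R)$ (this forces a harmless rescaling of the spatial variable, hence a change of $t$, which must be kept strictly below $1$ so that the \emph{local} grand maximal function applies), and then reconstructing $\langle f,\pz\rangle$ from the single value $\pz_t\ast f(x_0)$ at a well-chosen point. None of this is deep, but it is exactly where the "local" nature of $h_{\fai,\,N}(\rn)$ — as opposed to the global Hardy space — enters, and where an analogous argument for the global space would instead use $\cg_N$ with arbitrary $t\in(0,\fz)$. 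Everything else (local integrability of $\fai(\cdot,t)$, the lower-type estimate, Lemma \ref{l2.3}) is routine and already available in the excerpt.
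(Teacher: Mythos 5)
Your overall strategy — bound $|\langle f,\pz\rangle|$ by a constant (depending on $\pz$) times $\cg_N(f)(x_0)$ for $x_0$ in a fixed ball $B$, then dominate $\inf_{x_0\in B}\cg_N(f)(x_0)$ by $\|\cg_N(f)\|_{L^\fai(\rn)}$ — is the standard one, and is what the paper invokes via the cited Proposition 5.1 of \cite{k}. The infimum-over-ball step is fine, but note that it rests on Lemma \ref{l2.2}(i), not Lemma \ref{l2.3}(i): with $\lambda:=\|\cg_N(f)\|_{L^\fai(\rn)}>0$ one has $\int_\rn\fai(x,\cg_N(f)(x)/\lambda)\,dx=1$, hence $\fai(B,\inf_B\cg_N(f)/\lambda)\le1$, and since $s\mapsto\fai(B,s)$ is finite (local integrability of $\fai(\cdot,s)$), nondecreasing, and tends to $\fz$, this forces $\inf_B\cg_N(f)\le c(B)\lambda$.

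There is, however, a genuine gap in your ``bookkeeping'' step. To recover $\langle f,\pz\rangle$ from a single value $\gamma_t\ast f(x_0)$ with $t\in(0,1)$, one is forced to take $\gamma(z)=c\,\pz(x_0-tz)$, whose support is the ball $B(x_0/t,\,R_0/t)$ when $\supp\pz\subset B(0,R_0)$. For $t<1$ this support is \emph{larger}, not smaller, than $\supp\pz$, so ``rescaling the dilation parameter'' while keeping $t$ strictly below $1$ only makes the fit into $B(0,R)$ with $R=2^{3(10+n)}$ worse. Since $\pz\in\cd(\rn)$ is arbitrary, $R_0$ may exceed $R$, and then no choice of $t\in(0,1)$ and $x_0$ produces an admissible $\gamma\in\cd_{N,R}(\rn)$. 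The correct remedy is not a rescaling but a finite partition of unity: write $\pz=\sum_j\pz_j$ with each $\pz_j\in\cd(\rn)$ of small support, apply your argument to each piece (with its own ball $B_j$ and constant $c(B_j)$), and sum; since $\pz$ has compact support only finitely many $\pz_j$ are nonzero, and the resulting constant still depends only on $\pz$. With this replacement the proof goes through.
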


\begin{proposition}\label{p3.3}
Let $\fai$ be a growth function as in Definition \ref{d2.3} and
$N_{\fai}$ as in \eqref{3.20}. If integer $N\ge N_{\fai}$, then
the space $h_{\fai,\,N}(\rn)$ is complete.
\end{proposition}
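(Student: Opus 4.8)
The plan is to show that every Cauchy sequence in $h_{\fai,\,N}(\rn)$ converges; since $\|\cdot\|_{h_{\fai,\,N}(\rn)}$ is a quasi-norm, it is enough, by the usual subsequence extraction, to treat a sequence $\{f_j\}_{j\in\nn}\subset h_{\fai,\,N}(\rn)$ satisfying $\|f_j-f_{j-1}\|_{h_{\fai,\,N}(\rn)}\le 2^{-(j-1)}$ for all $j\in\zz_+$. Put $g_0:=f_0$, $g_j:=f_j-f_{j-1}$ for $j\in\zz_+$, and $a_j:=2^{-(j-1)}$, so that $f_k=\sum_{j=0}^{k}g_j$ and $\|\cg_N(g_j)\|_{L^{\fai}(\rn)}\le a_j$ for all $j\in\zz_+$.

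First I would pass to $\cd'(\rn)$. By Proposition \ref{p3.2} the inclusion $h_{\fai,\,N}(\rn)\hookrightarrow\cd'(\rn)$ is continuous, so $\{f_k\}_k$ is Cauchy in $\cd'(\rn)$, and, $\cd'(\rn)$ being sequentially complete for the weak-$\ast$ topology, there is $f\in\cd'(\rn)$ with $f_k\to f$ and $f-f_k=\sum_{j>k}g_j$ in $\cd'(\rn)$ for each $k\in\nn$. Testing this last identity against $\pz_t(x-\cdot)\in\cd(\rn)$, for $\pz\in\cd_{N}(\rn)$ and $t\in(0,1)$, gives $\pz_t\ast(f-f_k)(x)=\sum_{j>k}\pz_t\ast g_j(x)$ for every $x\in\rn$; taking absolute values, then the supremum over such $\pz$ and $t$, and using that $\cg_N$ is subadditive yields
\begin{equation*}
\cg_N(f-f_k)(x)\le\sum_{j>k}\cg_N(g_j)(x)\qquad\text{for all }x\in\rn.
\end{equation*}

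The crux is to turn this pointwise bound into $\|\cg_N(f-f_k)\|_{L^{\fai}(\rn)}\to0$; this is the step where the structure of $\fai$ enters and is the main obstacle. Fix $p\in(0,1]$ with $\fai$ of uniformly lower type $p$, and set $\lz_k:=c_0\lf(\sum_{j>k}a_j^p\r)^{1/p}$ with a constant $c_0\ge1$ to be chosen; note $\lz_k\sim 2^{-k}$ and $a_j\le\lz_k$ for $j>k$. Using the monotonicity of $\fai(x,\cdot)$, then the uniform $\sigma$-quasi-subadditivity of Lemma \ref{l2.1}(i), then the uniform lower type $p$ inequality $\fai(x,(a_j/\lz_k)u)\le C(a_j/\lz_k)^p\fai(x,u)$, and finally Lemma \ref{l2.2}(i) together with $\|\cg_N(g_j)\|_{L^{\fai}(\rn)}\le a_j$ (which forces $\int_{\rn}\fai(x,\cg_N(g_j)(x)/a_j)\,dx\le1$), one obtains
\begin{equation*}
\int_{\rn}\fai\lf(x,\frac{\cg_N(f-f_k)(x)}{\lz_k}\r)\,dx\le C\sum_{j>k}\int_{\rn}\fai\lf(x,\frac{a_j}{\lz_k}\,\frac{\cg_N(g_j)(x)}{a_j}\r)\,dx\le C^2\lz_k^{-p}\sum_{j>k}a_j^p=\frac{C^2}{c_0^p}.
\end{equation*}
Choosing $c_0:=C^{2/p}$ and applying Lemma \ref{l2.3}(i) gives $\|\cg_N(f-f_k)\|_{L^{\fai}(\rn)}\ls\lz_k\to0$ as $k\to\fz$. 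In particular $\cg_N(f)\le\cg_N(f_k)+\cg_N(f-f_k)\in L^{\fai}(\rn)$, so $f\in h_{\fai,\,N}(\rn)$ and $f_k\to f$ in $h_{\fai,\,N}(\rn)$.

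Since the original Cauchy sequence admits a subsequence converging to $f$ in $h_{\fai,\,N}(\rn)$, the quasi-triangle inequality for $\|\cdot\|_{h_{\fai,\,N}(\rn)}$ forces the whole sequence to converge to $f$, which proves completeness. Apart from the modular estimate above, whose only delicate point is to rescale each tail term $\cg_N(g_j)$ by its own bound $a_j$ before summing, so that uniform lower type $p$ converts a divergent series of $1$'s into the convergent sum $\sum_{j>k}a_j^p$, everything reduces to Proposition \ref{p3.2} and the sequential completeness of $\cd'(\rn)$.
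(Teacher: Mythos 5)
Your proof is correct and follows essentially the same route the paper indicates (via Ky \cite[Proposition 5.2]{k}): reduction to a rapidly converging telescoping series, the continuous embedding $h_{\fai,\,N}(\rn)\hookrightarrow\cd'(\rn)$ from Proposition \ref{p3.2} to obtain the limit distribution $f$, and the modular estimate via the $\sigma$-quasi-subadditivity of $\fai$ (Lemma \ref{l2.1}(i)) together with its uniform lower type $p$ and the normalization $\int_{\rn}\fai(x,\cg_N(g_j)(x)/a_j)\,dx\le1$ from Lemma \ref{l2.2}(i). The rescaling of each tail term by its own norm bound $a_j$ before invoking the lower type inequality is exactly the standard device needed here and is correctly executed.
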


The proofs of Propositions \ref{p3.2} and \ref{p3.3} are similar
to those of \cite[Propositions 5.1 and 5.2]{k}, respectively. We
omit the details.

\begin{theorem}\label{t3.3}
Let $N_{\fai}$ be as in \eqref{3.20}. If $(\fai,\,q,\,s)$ is
admissible and integer $N\ge N_{\fai}$, then
$h^{\fai,\,q,\,s}(\rn)\subset h_{\fai,\,N_{\fai}}(\rn)\subset
h_{\fai,\,N}(\rn)$ and, moreover, there exists a positive constant
$C$ such that for all $f\in h^{\fai,\,q,\,s}(\rn)$,
$$\|f\|_{h_{\fai,\,N}(\rn)}\le\|f\|_{h_{\fai,\, N_{\fai}}(\rn)} \le
C\|f\|_{h^{\fai,\,q,\,s}(\rn)}.$$
\end{theorem}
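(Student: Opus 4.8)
The plan is to show the continuous embedding $h^{\fai,\,q,\,s}(\rn)\hookrightarrow h_{\fai,\,N_{\fai}}(\rn)$; the further embedding into $h_{\fai,\,N}(\rn)$ for $N\ge N_{\fai}$ is then immediate from the remark after Definition \ref{d3.2} that larger $N$ gives a smaller grand maximal function. So fix $f\in h^{\fai,\,q,\,s}(\rn)$ with an atomic decomposition $f=\sum_{i=0}^{\fz}b_i$ in $\cd'(\rn)$, where each $b_i$ for $i\ge1$ is a multiple of a $(\fai,\,q,\,s)$-atom supported in a cube $Q_i$, $b_0$ is a multiple of a $(\fai,\,q)$-single-atom, and $\sum_{i\ge1}\fai(Q_i,\|b_i\|_{L^q_{\fai}(Q_i)})+\fai(\rn,\|b_0\|_{L^q_{\fai}(\rn)})<\fz$. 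The goal is to estimate $\|\cg_{N_{\fai}}(f)\|_{L^{\fai}(\rn)}$, equivalently (by Lemma \ref{l2.3}) to bound $\int_\rn\fai(x,\cg_{N_{\fai}}(f)(x)/\lz)\,dx$ for a suitable $\lz$ comparable to $\blz_q(\{b_i\})$.

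The key step is a pointwise estimate on $\cg_{N_{\fai}}(b_i)$ for a single atom-multiple $b_i$. Writing $c_i:=\|b_i\|_{L^q_{\fai}(Q_i)}$ and $a_i:=b_i/c_i$ a genuine atom, one shows that $\cg_{N_{\fai}}(b_i)$ is controlled, up to a constant depending on $N_{\fai}$, $n$, by $c_i$ times the sum of a term supported near $Q_i$ (dominated via Hölder's inequality and Lemma \ref{l2.4}(iii) by a multiple of the local Hardy--Littlewood maximal function of the atom) and, when $l(Q_i)<1$, a tail term that decays rapidly away from $Q_i$ because the vanishing moments of order up to $s\ge m(\fai)$ let one subtract a Taylor polynomial of the testing function $\pz\in\cd_{N_{\fai}}(\rn)$ — here one uses $N_{\fai}\ge m(\fai)+2$. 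This is the standard mechanism and follows the arguments of \cite{ta1,yys,k}; since the analogous computation for $\fai$ as in \eqref{1.1} is \cite[Theorem 3.3 or its analogue]{yys}, the only novelty is replacing weighted estimates by their uniformly-local counterparts, which are exactly Lemmas \ref{l2.4} and \ref{l2.5} and the properties of $\fai$ in Section \ref{s2}. Summing in $i$, using the uniform $\sigma$-quasi-subadditivity of $\fai$ from Lemma \ref{l2.1}(i), and the fact that $\fai(x,c_i M^{\loc}(a_i)(x)/\lz)$ integrates (after another application of the atomic size normalization $c_i\le\|\chi_{Q_i}\|_{L^\fai(\rn)}^{-1}$ and Lemma \ref{l2.4}) to something comparable to $\fai(Q_i,c_i/\lz)$, one arrives at
\begin{equation*}
\int_\rn\fai\lf(x,\frac{\cg_{N_{\fai}}(f)(x)}{\lz}\r)\,dx\ls\sum_{i\ge1}\fai\lf(Q_i,\frac{c_i}{\lz}\r)+\fai\lf(\rn,\frac{\|b_0\|_{L^q_{\fai}(\rn)}}{\lz}\r).
\end{equation*}
Choosing $\lz:=\blz_q(\{b_i\})$ makes the right-hand side $\le C$, and Lemma \ref{l2.3}(i) converts this into $\|\cg_{N_{\fai}}(f)\|_{L^{\fai}(\rn)}\ls\blz_q(\{b_i\})$; taking the infimum over all admissible decompositions gives $\|f\|_{h_{\fai,\,N_{\fai}}(\rn)}\ls\|f\|_{h^{\fai,\,q,\,s}(\rn)}$. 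One also needs to note along the way that the series $\sum_i b_i$, which converges in $\cd'(\rn)$ by hypothesis, produces a genuine element of $h_{\fai,\,N_{\fai}}(\rn)$, i.e.\ that the maximal-function bound passes to the limit — this is handled by the same quasi-subadditivity together with Fatou's lemma.

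The main obstacle is the tail estimate away from $Q_i$ in the small-cube case: one must carefully track how the decay rate $2^{-M|x-x_{Q_i}|}$-type factor (or a polynomial $(l(Q_i)/|x-x_{Q_i}|)^{n+s+1}$ factor) interacts with $\fai(\cdot,t)\in\aa^{\loc}_\fz(\rn)$, since $\fai$ is only a \emph{local} weight and the usual global $A_\infty$ doubling is unavailable; this is precisely where Lemma \ref{l2.4}(ii) (local doubling, with the dichotomy $l(Q)<1$ versus $l(Q)\ge1$) and Lemma \ref{l2.4}(iv) (the power-type growth for the genuine $\aa_p(\rn)$ weight that agrees with $\fai$ on a unit cube, via Lemma \ref{l2.4}(i)) do the work. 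Apart from this, everything is a routine adaptation of the proof in \cite{yys} for the weighted case \eqref{1.1}, so after setting up the single-atom estimate the rest is bookkeeping; accordingly I would present the single-atom pointwise bound and its integrated consequence in full, and then simply indicate that the summation and limiting arguments are as in \cite[Section 3]{yys} or \cite{ta1}.
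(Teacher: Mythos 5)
Your proposal is correct and follows essentially the same route as the paper: the paper isolates the single-atom bound as Lemma~\ref{l3.4} (proving $\int_{\rn}\fai(x,\cg_N^0(f)(x))\,dx\ls\fai(Q,\|f\|_{L^q_\fai(Q)})$ by splitting into near/far regions, using H\"older and Lemma~\ref{l2.4}(iii) near the cube and the Taylor-expansion/vanishing-moment decay together with the local-weight replacement of Lemma~\ref{l2.4}(i)--(ii) far away), and then sums over the atomic decomposition via the uniform $\sigma$-quasi-subadditivity of Lemma~\ref{l2.1}(i), exactly as you propose. The only cosmetic difference is that the paper works with the truncated grand maximal function $\cg^0_{N_\fai}$ (radius $R=1$) and invokes Theorem~\ref{t3.2} to pass back to $\|\cdot\|_{h_{\fai,N_\fai}(\rn)}$, rather than estimating $\cg_{N_\fai}$ directly, which keeps the far-field support fixed to a cube of bounded size.
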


To prove Theorem \ref{t3.3}, we need the following lemma.

\begin{lemma}\label{l3.4}
Let $(\fai,\,q,\,s)$ be an admissible triplet and $N\in\zz_+$ with
$N\ge s$. Then there exists a positive constant $C$ such that for
any multiple of $(\fai,\,q,\,s)$-atom or $(\fai,\,q)$-single-atom
$f$,
\begin{equation}\label{3.21}
\int_{\rn}\fai\lf(x,\cg_N^0(f)(x)\r)\,dx\le C
\fai\lf(Q,\|f\|_{L^q_{\fai}(Q)}\r),
\end{equation}
where $\supp(f)\subset Q$ and, when $f$ is a multiple of some
$(\fai,\,q)$-single-atom, $Q:=\rn$.
\end{lemma}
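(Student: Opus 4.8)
The plan is to prove \eqref{3.21} by distinguishing the two geometric regimes of the cube $Q$ carrying $f$, namely whether $Q$ is ``small'' ($l(Q)<1$, in which case $f$ has the vanishing-moment property) or ``large'' ($l(Q)\ge1$, where no cancellation is available but the dilation parameter $t\in(0,1)$ of the test function is controlled), and in each regime to split the integral over $\rn$ into the local part $8\sqrt n\,Q$ (or a comparable enlargement) and its complement. On the local part one uses the size estimate for $\cg_N^0(f)$; on the complement one exploits either the moment condition (small cubes) or the support restriction together with $t<1$ (large cubes) to get rapid decay in the distance to $Q$. Throughout, Lemma \ref{l2.4} (the doubling/comparability properties of $\fai(Q,\cdot)$ and the weighted boundedness of $M^{\loc}$) and the uniform lower/upper type of $\fai$ from Definition \ref{d2.3}(iii), together with Lemma \ref{l2.1}, are the workhorses that convert pointwise decay estimates into the desired integral bound.

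First I would handle the local part. Without loss of generality normalize so that $\|f\|_{L^q_\fai(Q)}=\|\chi_Q\|_{L^\fai(\rn)}^{-1}$, i.e. $f$ is an actual atom. By Proposition \ref{p3.1}(i), $\cg_N^0(f)\le M^{\loc}(f)$ pointwise, so on any fixed enlargement $\wz Q$ of $Q$ one estimates $\int_{\wz Q}\fai(x,\cg_N^0(f)(x))\,dx$ using H\"older's inequality in the form adapted to $L^q_\fai(Q)$: since $q>q(\fai)$, pick $\fai\in\aa^{\loc}_q(\rn)$, apply the weighted $L^q$-boundedness of $M^{\loc}$ (Lemma \ref{l2.4}(iii), with Remark \ref{r2.3} to allow the enlarged cube) and the quasi-subadditivity/upper type of $\fai$ (Lemma \ref{l2.1}(i) and \eqref{2.2}) to reduce matters to $\fai(\wz Q,\|f\|_{L^q_\fai(Q)})$, which by Lemma \ref{l2.4}(ii) is comparable to $\fai(Q,\|f\|_{L^q_\fai(Q)})$. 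For the single-atom case $Q=\rn$ there is no ``outside'' and this local estimate is essentially the whole argument.

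The harder part is the estimate on $(\wz Q)^\complement$ in the two atomic cases. For a small cube $l(Q)<1$: for $x$ far from $Q$ and any $\pz\in\cd_N^0(\rn)$, $t\in(0,1)$, one Taylor-expands $\pz_t$ around the center of $Q$, uses $\int a\cdot x^\az=0$ for $|\az|\le s\ge m(\fai)$, and the support/size of $\pz$, to obtain $|\pz_t\ast f(x)|\lesssim l(Q)^{s+1}|x-x_Q|^{-(n+s+1)}\|a\|_{L^1}$, hence a pointwise bound for $\cg_N^0(f)(x)$ decaying like $(l(Q)/|x-x_Q|)^{n+s+1}$ times an $L^1$-type norm of $a$; then one sums $\fai$ of this over the dyadic annuli $2^k\wz Q\setminus 2^{k-1}\wz Q$, using the uniform lower type $p$ of $\fai$ to pull out the decaying factor raised to power $p$, the doubling of $\fai(\cdot,\cdot)$ from Lemma \ref{l2.4}(i)-(ii) and (iv) to compare $\fai(2^k\wz Q,\cdot)$ with $\fai(Q,\cdot)$ at the cost of $2^{knq(\fai)}$, and the choice $s\ge m(\fai)=\lfz n[q(\fai)/i(\fai)-1]\rfz$ to guarantee the geometric series converges; the $L^1$-norm of $a$ is in turn controlled by $\|a\|_{L^q_\fai(Q)}$ and $\fai(Q,\cdot)$ via H\"older, closing the bound. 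For a large cube $l(Q)\ge1$: here $\supp\pz_t\ast f\subset Q+B(0,2^{3(10+n)}t)\subset Q+B(0,2^{3(10+n)})$ since $t<1$, so $\cg_N^0(f)$ is supported in a fixed bounded enlargement $Q(x_Q,l(Q)+\wz C)$ of $Q$, and the ``outside'' contribution is simply zero; the only point is that this enlargement is still comparable to $Q$ in $\fai$-measure, which is exactly the $l(Q)\ge1$ clause of Lemma \ref{l2.4}(ii) (via Remark \ref{r2.3}).

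Finally I would combine the two regions: the total integral is bounded by $C\fai(Q,\|f\|_{L^q_\fai(Q)})$, and since everything is homogeneous of degree zero under replacing $f$ by $f/\lambda$, the same estimate with the normalized $\|f\|_{L^q_\fai(Q)}$ in place of $1$ holds for arbitrary multiples of atoms, which is precisely \eqref{3.21}. I expect the main obstacle to be the bookkeeping in the small-cube far-field sum: making the interplay between the decay exponent $n+s+1$, the weight growth $2^{knq(\fai)}$, the lower-type exponent $p<i(\fai)$, and the definition of $m(\fai)$ fit together so that the series over $k$ converges — this is where the admissibility condition $s\ge m(\fai)$ is used in an essential way, and where Lemma \ref{l2.4}(iv) (the $A_p$-type doubling with exponent $p$) must be invoked carefully after localizing $\fai$ to a unit-scale cube via Lemma \ref{l2.4}(i).
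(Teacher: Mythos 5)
Your proposal follows essentially the same route as the paper's proof: split according to $l(Q)<1$ versus $l(Q)\ge 1$ (with the single-atom case handled like the large-cube case, purely by H\"older, the uniform upper type~$1$ property, and the $M^{\loc}$-boundedness of Lemma~\ref{l2.4}(iii)), decompose the integral into a bounded enlargement of $Q$ and its complement, and on the far field for small cubes use Taylor expansion against the vanishing moments to get the $|x-x_0|^{-(n+s+1)}$ decay, then sum over dyadic annuli after transferring to a global $\aa_{q_0}$ weight on a unit-scale cube via Lemma~\ref{l2.4}(i) so that Lemma~\ref{l2.4}(iv) applies, with $s\ge m(\fai)$ ensuring the exponent $(n+s+1)p_0-nq_0$ is positive. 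This matches the paper's argument in all essentials, including the crucial observation that $\cg^0_N(f)$ is supported in a fixed bounded neighborhood of $Q$ (so that the annulus sum is over a bounded range and the localization of the weight is legitimate).
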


\begin{proof}
 The proof of the case $q=\fz$ is easy and we omit the details.
 We just consider the case that $q\in(q(\fai),\fz)$.
Let $f$ be a multiple of some $(\fai,\,q)$-single-atom  and
$f\neq0$. Then we know that $\fai(\rn,t)<\fz$ for all $t\in(0,\fz)$.
From the uniformly upper type 1 property of $\fai$, H\"older's
inequality, Lemma \ref{l2.4}(iii), together with the fact that
$\cg^0_N(f)\ls M^{\loc}(f)$ and Definition \ref{3.3}(ii), we deduce
that
\begin{eqnarray}\label{3.22}
&&\int_{\rn}\fai\lf(x,\cg^0_{N}(f)(x)\r)\,dx\nonumber\\
&&\hs\le\int_{\rn}\lf(1+\frac{\cg_N^0(f)(x)}
{\|f\|_{L^q_{\fai}(\rn)}}\r) \fai\lf(x,\|f\|_{L^q_{\fai}(\rn)}\r)\,dx\nonumber\\
&&\hs\le\fai\lf(\rn,\|f\|_{L^q_{\fai}(\rn)}\r)+
\frac{1}{\|f\|_{L^q_{\fai}(\rn)}}\lf\{\int_{\rn}
\lf|\cg^0_N(f)(x)\r|^q\fai\lf(x,\|f\|_{L^q_{\fai}(\rn)}\r)\,dx\r\}^{1/q}\noz\\
&&\hs\hs\times\lf[\fai(\rn,\|f\|_{L^q_{\fai}(\rn)})\r]^{(q-1)/q}\noz\\
&&\hs\ls\fai\lf(\rn,\|f\|_{L^q_{\fai}(\rn)}\r).
\end{eqnarray}
Thus, \eqref{3.21} holds in this case.

Now, let $f$ be a multiple of some $(\fai,\,q,\,s)$-atom with
$f\neq0$, and $\supp(f)\subset Q:=Q(x_0,r_0)$. We consider the
following two cases for $Q$.

{\it Case} 1) $|Q|<1$. In this case, letting $\wz{Q}:=2\sqrt{n}Q$,
then we know that
\begin{eqnarray}\label{3.23}
&&\int_{\rn}\fai\lf(x,\cg^0_N(f)(x)\r)\,dx
=\int_{\wz{Q}}\fai\lf(x,\cg^0_N(f)(x)\r)\,dx
+\int_{\wz{Q}^{\complement}} \cdots=:\mathrm{I_1}+\mathrm{I_2}.
\end{eqnarray}

For $\mathrm{I_1}$, similar to \eqref{3.22}, we see that
\begin{eqnarray}\label{3.24}
\mathrm{I_1}\ls\fai\lf(Q,\|f\|_{L^q_{\fai}(Q)}\r),
\end{eqnarray}
which is desired.

To estimate $\mathrm{I_2}$, we claim that for all
$x\in\wz{Q}^{\complement}$,
\begin{eqnarray}\label{3.25}
\hspace{3 em}\cg^0_N(f)(x)\ls\frac{|Q|^{\frac{s
+n+1}{n}}}{|x-x_0|^{s+n+1}}\|f\|_{L^q_{\fai}(Q)}
\chi_{B(x_0,2\sqrt{n})}(x).
\end{eqnarray}
Indeed, for any $\pz\in\cd^0_{N}(\rn)$ and $t\in(0,1)$, let $P$ be
the Taylor expansion of $\pz$ about $(x-x_0)/t$ with degree $s$. By
Taylor's remainder theorem, for any $y\in\rn$, we know that
$$\lf|\pz\lf(\frac{x-y}{t}\r)-P\lf(\frac{x-y}{t}\r)\r|\ls
\sum_{\gfz{\az\in\nn^n}{|\az|=s+1}}\lf|\lf(\partial^{\az}\pz\r)
\lf(\frac{\theta(x-y)+(1-\theta)(x-x_0)}{t}\r)\r|\lf|\frac{x_0
-y}{t}\r|^{s+1},$$
where $\theta\in(0,1)$. From $t\in(0,1)$ and
$x\in\wz{Q}^{\complement}$, we deduce that $\supp
(f\ast\pz_t)\subset B(x_0,2\sqrt{n})$ and that $f\ast\pz_t
(x)\neq0$ implies that $t>\frac{|x-x_0|}{2}$. Thus, by these
observations, Definition \ref{d3.4}(iii), \eqref{2.7}, H\"older's
inequality and Definition \ref{d3.3}(i), we see that
\begin{eqnarray*}
|f\ast\pz_t (x)|&=&\frac{1}{t^n}\lf|\int_{Q}f(y)
\lf[\pz\lf(\frac{x-y}{t}\r)-P\lf(\frac{x-y}{t}\r)\r]\,dy\r|
\chi_{B(x_0,2\sqrt{n})}(x)\\
&\ls&|x-x_0|^{-(s +n+1)}\lf\{\int_{Q}|f(y)||x_0 -y|^{s
+1}\,dy\r\} \chi_{B(x_0,2\sqrt{n})}(x)\\
&\ls&|Q|^{\frac{s+1}{n}}\lf\{\int_Q|f(y)|^q\fai(x,\lz)\,dy\r\}^{1/q}
\lf\{\int_Q[\fai(y,\lz)]^{-1/(q-1)}\,dy\r\}^{(q-1)/q}\\
&&\times|x-x_0|^{-(s+n+1)}\chi_{B(x_0,2\sqrt{n})}(x)\\
&\ls&\frac{|Q|^{\frac{s
+n+1}{n}}}{|x-x_0|^{s+n+1}}\|f\|_{L^q_{\fai}(Q)}
\chi_{B(x_0,2\sqrt{n})}(x),
\end{eqnarray*}
where $\lz\in(0,\fz)$, which, together with the arbitrariness of
$\pz\in\cd^0_{N}(\rn)$, implies \eqref{3.25}. Thus, the claim holds.

Let $Q_k:=2^k\sqrt{n}Q$ for all $k\in\zz_+$, and $k_0 \in\zz_+$
satisfy $2^{k_0}r\le4<2^{k_0 +1}r$. By $s=\lfz
n[\frac{q(\fai)}{i(\fai)}-1]\rfz$, we know that there exist
$q_0\in(q(\fai),\fz)$ and $p_0\in(0,i(\fai))$ such that $\fai$ is of
uniformly lower type $p_0$ and $p_0(s+n+1)>nq_0$. From Lemma
\ref{l2.4}(i) and Remark \ref{r2.3}, it follows that there exists a
$\wz{\fai}\in \aa_{q_0}(\rn)$ such that
$\fai(\cdot,t)=\wz{\fai}(\cdot,t)$ on $Q(x_0,8\sqrt{n})$ for all
$t\in[0,\fz)$. By this, \eqref{3.25}, the uniformly lower type $p_0$
property of $\fai$ and Lemma \ref{l2.4}(ii), we know that
\begin{eqnarray*}
\mathrm{I_2}&\le&\int_{\sqrt{n}r\le|x-x_0|<2\sqrt{n}}
\fai\lf(x,\cg^0_N(f)(x)\r)\,dx\\
&\ls&\int_{\sqrt{n}r\le|x-x_0|<2\sqrt{n}}\wz{\fai}\lf(x,
\frac{|Q|^{\frac{s
+n+1}{n}}}{|x-x_0|^{s+n+1}}\|f\|_{L^q_{\fai}(Q)}\r)\,dx\\
&\ls&\sum_{k=1}^{k_0}\int_{Q_{k+1}\setminus
Q_{k}}\wz\fai\lf(x,2^{-k(s +n+1)}\|f\|_{L^q_{\fai}(Q)}\r)\,dx\\
&\ls&\sum_{k=1}^{k_0}
2^{-k(s++n+1)p_0}\wz{\fai}\lf(Q_{k+1},\|f\|_{L^q_{\fai}(Q)}\r)\\
&\ls&\sum_{k=1}^{k_0} 2^{-k[(s+n+1)p_0-n
q_0]}\fai\lf(Q,\|f\|_{L^q_{\fai}(Q)}\r)\ls\fai\lf(Q,\|f\|_{L^q_{\fai}(Q)}\r),
\end{eqnarray*}
which, together with \eqref{3.24} and \eqref{3.23}, implies
\eqref{3.21} in Case 1).

{\it Case} 2) $|Q|\ge1$. In this case, let $Q^{\ast}:= Q(x_0,
r+2)$. Thus, from $\supp(\cg^0_N(f))\subset Q^{\ast}$, the
uniformly upper type 1 property of $\fai$, H\"older's inequality,
Proposition \ref{p3.1}(ii) and Lemma \ref{l2.4}(ii), we deduce
that
\begin{eqnarray*}
&&\int_{\rn}\fai\lf(x,\cg^0_N(f)(x)\r)\,dx\\
&&\hs=\int_{Q^{\ast}}\fai\lf(x,\cg^0_N(f)(x)\r)\,dx
\le\int_{Q^{\ast}}\lf(1+\frac{\cg_N^0(f)(x)}
{\|f\|_{L^q_{\fai}(Q)}}\r)\fai\lf(x,\|f\|_{L^q_{\fai}(Q)}\r)\,dx\\
&&\hs\le\fai\lf(Q^{\ast},\|f\|_{L^q_{\fai}(Q)}\r)+
\frac{1}{\|f\|_{L^q_{\fai}(Q)}}\lf\{\int_{Q^{\ast}}
|\cg^0_N(f)(x)|^q\fai\lf(x,\|f\|_{L^q_{\fai}(Q)}\r)\,dx\r\}^{1/q}\\
\nonumber &&\hs\hs\times
\lf[\fai\lf(Q^{\ast},\|f\|_{L^q_{\fai}(Q)}\r)\r]^{(q-1)/q}
\ls\fai\lf(Q^{\ast},\|f\|_{L^q_{\fai}(Q)}\r)\ls
\fai\lf(Q,\|f\|_{L^q_{\fai}(Q)}\r).
\end{eqnarray*}
which proves \eqref{3.21} in Case 2) and hence completes the proof
of Lemma \ref{l3.4}.
\end{proof}

Now, we show Theorem \ref{t3.3} by using Lemma \ref{l3.4}.

\begin{proof}[Proof of Theorem \ref{t3.3}]
Obviously, by Definition \ref{d3.2}, we only need to prove that
$$h^{\fai,\,q,\,s}(\rn)\subset h_{\fai,\,N_{\fai}}(\rn)$$
and, for all $f\in h^{\fai,\,q,\,s}(\rn)$,
$\|f\|_{h_{\fai,\,N_{\fai}}(\rn)} \ls
\|f\|_{h^{\fai,\,q,\,s}(\rn)}$. Indeed, for any $f\in
h^{\fai,\,q,\,s}(\rn)$, $f=\sum_{i=0}^{\fz}f_i$ in $\cd'(\rn)$,
where $f_0$ is a multiple of some $(\fai,\,q)$-single-atom and for
any $i\in\zz_+$, $f_i$ is a multiple of some $(\fai,\,q,\,s)$-atom
supported in the cube $Q_i$. It is easy to see that
$\cg^0_{N_{\fai}}(f)\le\sum_{i=0}^{\fz}\cg^0_{N_{\fai}}(f_i)$. From
this, Lemma \ref{2.1}(i) and Lemma \ref{l3.4}, we deduce that
\begin{eqnarray*}
\int_{\rn}\fai\lf(x,\frac{\cg^0_{N_{\fai}}
(f)(x)}{\blz_q(\{f_i\}_{i=0}^{\fz})}\r)\,dx
&&\ls\sum_{i=0}^{\fz}\int_{\rn}\fai\lf(x,\frac{\cg^0_{N_{\fai}}
(f_i)(x)}{\blz_q(\{f_i\}_{i=0}^{\fz})}\r)\,dx\\
&&\ls\fai\lf(\rn,\frac
{\|f_0\|_{L^q_{\fai}(\rn)}}{\blz_q(\{f_i\}_{i=0}^{\fz})}\r)+\sum_{i=1}^{\fz}
\fai\lf(Q_i,\frac
{\|f_i\|_{L^q_{\fai}(Q_i)}}{\blz_q(\{f_i\}_{i=0}^{\fz})}\r)\ls1,
\end{eqnarray*}
which, together with Theorem \ref{t3.2}, implies that
$\|f\|_{h_{\fai,\,N_{\fai}}(\rn)} \ls
\|f\|_{h^{\fai,\,q,\,s}(\rn)}$. This finishes the proof of Theorem
\ref{t3.3}.
\end{proof}

\section{Calder\'on-Zygmund decompositions\label{s4}}

\hskip\parindent In this section, we recall some subtle estimates
for the Calder\'on-Zygmund decomposition associated with local
grand maximal functions from \cite{ta1}. Notice that the
construction of the Calder\'on-Zygmund decomposition in \cite{ta1}
is similar to those in \cite{b03,blyz08,st93}.

Let $\fai$ be a growth function as in Definition \ref{d2.3} and
$q(\fai)$ as in \eqref{2.9}. Let integer $N\ge 2$, $\cg_N (f)$ and
$\cg_N^0 (f)$ be as in \eqref{3.2}.

\emph{Throughout this section}, let $f\in\cd'(\rn)$ satisfy the fact that for
all $\lz\in(0,\fz)$ and $t\in[0,\fz)$,
$$\int_{\{x\in\rn:\ \cg_N
(f)(x)>\lz\}}\fai(x,t)\,dx<\fz.$$ For a given
$\lz>\inf_{x\in\rn}\cg_{N} (f)(x)$, set
\begin{eqnarray}\label{4.1}
\boz_{\lz}:=\{x\in\rn:\ \cg_{N} (f)(x)>\lz\}.
\end{eqnarray}
It is obvious that $\boz_{\lz}$ is a proper open subset of $\rn$. By
the Whitney decomposition of $\boz_{\lz}$ given in \cite{ta1} (see
also \cite{b03,blyz08,st93}), we have closed cubes $\{Q_i\}_i$ such
that
\begin{eqnarray}\label{4.2}
\boz_{\lz}=\bigcup_{i}Q_i,
\end{eqnarray}
their interiors are away from $\boz_{\lz}^\complement$ and
$\diam(Q_i)\le2^{-(6+n)}\dist(Q_i,\boz_{\lz}^\complement)
\le4\diam(Q_i).$ In what follows, fix $a:=1+2^{-(11+n)}$ and
denote $aQ_i$ by $Q^{\ast}_i$ for all $i$. Then we have
$Q_i\subset Q^{\ast}_i$. Moveover,
$\boz_{\lz}=\cup_{i}Q^{\ast}_i$, and $\{Q^{\ast}_i\}_i$ have the
\emph{bounded interior property}, namely, every point in
$\boz_{\lz}$ is contained in at most a fixed number of
$\{Q^{\ast}_i\}_i$.

Now, we take a function $\xz\in \cd(\rn)$ such that $0\le\xz\le1$,
$\supp\xz\subset aQ(0,1)$ and $\xz\equiv1$ on $Q(0,1)$. For
$x\in\rn$, set $\xz_i (x):=\xz((x-x_i)/l_i)$, here and in what follows,
$x_i$ is the {\it center} of the cube $Q_i$ and $l_i$ its
{\it sidelength}. Obviously, by the constructions of
$\{Q_i^{\ast}\}_i$ and $\{\xz_i\}_i$, for any $x\in\rn$, we have
$1\le\sum_{i}\xz_i (x)\le L$, where $L$ is a fixed positive
integer independent of $x$. Let
\begin{eqnarray}\label{4.3}
\zez_i:=\xz_i\lf[\sum_j\xz_j\r]^{-1}.
\end{eqnarray}
Then $\{\zez_i\}_i$ forms a smooth partition of unity for
$\boz_{\lz}$ subordinate to the locally finite cover
$\{Q_i^{\ast}\}_i$ of $\boz_{\lz}$, namely,
$\chi_{\boz_{\lz}}=\sum_i \zez_i$ with each $\zez_i\in \cd(\rn)$
supported in $Q_i^{\ast}$.

Let $s\in\zz_{+}$ be some fixed integer and $\cp_s (\rn)$ denote the
\emph{linear space of polynomials in $n$ variables of degrees no
more than $s$}. For each $i\in\zz_+$ and $P\in\cp_s (\rn)$, set
\begin{eqnarray}\label{4.4}
\|P\|_i:=\lf\{\lf[\int_{\rn}\zez_i
(y)\,dy\r]^{-1}\int_{\rn}|P(x)|^2\zez_i (x)\,dx\r\}^{1/2}.
\end{eqnarray}
Then it is easy to know that $(\cp_s (\rn),\,\|\cdot\|_i)$ is a
finite dimensional Hilbert space. Let $f\in\cd'(\rn)$. Since $f$
induces a linear functional on $\cp_s (\rn)$ via
$P\mapsto[\int_{\rn}\zez_i (y)\,dy]^{-1}\langle f,
P\zez_i\rangle$, by the Riesz representation theorem, there exists a
unique polynomial
\begin{eqnarray}\label{4.5}
P_i \in\cp_s (\rn)
\end{eqnarray}
 for each $i$ such that for
all $P\in\cp_s (\rn)$, $\langle f, P\zez_i\rangle=\langle P_i,
P\zez_i\rangle$. For each $i$, define the distribution
\begin{eqnarray}\label{4.6}
b_i:=(f-P_i)\zez_i\,\, \text{when}\,\,l_i\in(0,1),\ \text{and} \
b_i:= f\zez_i \,\,\text{when}\,\,l_i\in[1,\fz).
\end{eqnarray}
We show that for suitable choices of $s$ and $N$, the series $\sum_i
b_i$ converges in $\cd'(\rn)$ and, in this case, we let $g:=
f-\sum_i b_i$ in $\cd'(\rn)$. We point out that the representation
\begin{eqnarray}\label{4.7}
f=g+\sum_i b_i,
\end{eqnarray}
where $g$ and $b_i$ are as above, is called a {\it
Calder\'on-Zygmund decomposition} of $f$ of degree $s$ and height
$\lz$ associated with $\cg_N (f)$.

The remainder of this section consists of a series of lemmas. Lemma
\ref{l4.1} gives a property of the smooth partition of unity
$\{\zez_i\}_i$, Lemmas \ref{l4.2} through \ref{l4.5} are devoted to
some estimates for the bad parts $\{b_i\}_i$, and Lemmas \ref{l4.6}
and \ref{l4.7} give some controls over the good part $g$.
Finally, Corollary \ref{c4.1} shows that the density of
$L^{q}_{\fai(\cdot,1)}(\rn)\cap h_{\fai,\,N}(\rn)$ in
$h_{\fai,\,N}(\rn)$, where $q\in(q(\fai),\fz)$. The following Lemmas
\ref{l4.1} through \ref{l4.3} are respectively Lemmas 4.2 through
4.5 in \cite{ta1}.
\begin{lemma}\label{l4.1}
There exists a positive constant $C_1$ such that for all
$f\in\cd'(\rn)$, $\lz>\inf_{x\in\rn}\cg_{N} (f)(x)$ and
$l_i\in(0,1)$, $\sup_{y\in\rn}|P_i (y)\zez_i (y)|\le C_1\lz$.
\end{lemma}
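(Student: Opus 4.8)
The plan is to follow the classical argument (see \cite{st93,b03,blyz08,ta1}): estimate $\sup_{y\in\rn}|P_i(y)\zez_i(y)|$ by first bounding $\|P_i\|_i$, and then converting this $L^2$-type norm on the polynomial into a sup-norm by equivalence of norms on the finite dimensional space $\cp_s(\rn)$. First I would recall that since all norms on the finite dimensional Hilbert space $(\cp_s(\rn),\|\cdot\|_i)$ are equivalent, and since after rescaling to the unit cube the constants of equivalence are uniform in $i$, there is a positive constant $C$, independent of $i$, such that $\sup_{y\in Q_i^{\ast}}|P_i(y)| \le C\|P_i\|_i$; hence it suffices to show $\|P_i\|_i\ls\lz$ for $l_i\in(0,1)$.

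To bound $\|P_i\|_i$, I would use the defining relation $\langle f,P\zez_i\rangle=\langle P_i,P\zez_i\rangle$ for all $P\in\cp_s(\rn)$. Taking $P=P_i$ and using the reproducing structure, one reduces to estimating $|\langle f,P_i\zez_i/(\int\zez_i)\rangle|$. The key point is that, because $l_i<1$ and $\zez_i$ is supported in $Q_i^{\ast}$ with $\|\partial^\az\zez_i\|_\fz\ls l_i^{-|\az|}$, the normalized function
$$\psi:=c\,l_i^{\,n}\,\frac{P_i\,\zez_i}{\int_{\rn}\zez_i(y)\,dy}\Big(\cdot\,l_i+x_i\Big)$$
(with $c$ a small dimensional constant and $P_i$ normalized by dividing by $\|P_i\|_i$) is, up to a uniform constant, an admissible test function in $\cd_N^0(\rn)$: its support lies in a fixed ball, and its $\cd_N$-norm is controlled once $N\ge s$ (since $P_i/\|P_i\|_i$ has coefficients bounded by a dimensional constant on the unit cube). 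Therefore $|\langle f,\psi_t\rangle|\le\cg_N^0(f)(z)$ for a suitable $z$, and writing $\langle f,P_i\zez_i/(\int\zez_i)\rangle$ in terms of a dilate of $\psi$ at scale $t\sim l_i$ and at a point in $Q_i^{\ast}$, we pick up a factor $\cg_N^0(f)(z)\le\cg_N(f)(z)$ for some $z$ in the dilated cube.

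The crucial geometric input is that, by the Whitney decomposition, $Q_i^{\ast}$ (indeed a fixed dilate of it) still meets $\boz_\lz^\complement$, so there is a point $z$ in that dilate with $\cg_N(f)(z)\le\lz$. Combining, $\|P_i\|_i^2 = |\langle P_i,P_i\zez_i\rangle/(\int\zez_i)| = |\langle f,P_i\zez_i\rangle/(\int\zez_i)| \ls \|P_i\|_i\,\lz$, whence $\|P_i\|_i\ls\lz$, and then $\sup_{y}|P_i(y)\zez_i(y)|\le\sup_{y\in Q_i^{\ast}}|P_i(y)|\ls\|P_i\|_i\ls\lz$. I expect the main obstacle to be the bookkeeping in verifying that the rescaled bump $\psi$ genuinely lies (up to a fixed constant) in $\cd_{N}^0(\rn)$ uniformly in $i$ — i.e., tracking how the support radius $R$, the chosen scale $t\sim l_i$, and the derivative bounds on $\zez_i$ and on the normalized polynomial interact so that the pairing really is dominated by $\cg_N(f)$ at a point of $\boz_\lz^\complement$; everything else is the standard finite-dimensional norm-equivalence argument, which is why the statement only requires $N\ge s$ (so that $\cg_N^0$ sees polynomials of degree $s$). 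Since this is exactly \cite[Lemma 4.2]{ta1}, I would cite that proof for the routine details once the test-function normalization is set up.
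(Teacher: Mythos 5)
Your plan is essentially the standard one, and the paper's own treatment of Lemma \ref{l4.1} is simply to cite \cite[Lemma 4.2]{ta1}, which you correctly identify; so the approaches are aligned. One point you flag as ``bookkeeping'' is, however, not merely cosmetic and should be corrected: the rescaled bump cannot be placed in $\cd^0_N(\rn)$ (support in $B(0,1)$). To make the pairing work at a point $z\in\boz_\lz^\complement$, you need $\psi_{l_i}(z-\cdot)$ to be proportional to $\tilde P_i\zez_i$ on all of $Q_i^{\ast}$, hence $\psi(u)\propto\tilde P_i(z-l_iu)\zez_i(z-l_iu)$ (centered at $z$, not at $x_i$ as in your formula). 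By the Whitney bounds $\diam(Q_i)\le 2^{-(6+n)}\dist(Q_i,\boz_\lz^\complement)\le 4\diam(Q_i)$, the nearest such $z$ satisfies $|z-x_i|\sim 2^{6+n}l_i$, so $\supp\psi$ is a ball of radius $O(1)$ centered at $(z-x_i)/l_i$ with $|(z-x_i)/l_i|\sim 2^{6+n}$. Thus $\psi\in\cd_{N,R}(\rn)$ only for a dimensional $R\gg 1$; this is precisely why the paper builds the local grand maximal function $\cg_N$ from $\cd_{N,2^{3(10+n)}}(\rn)$ rather than from $\cd^0_N(\rn)$. The correct chain is $|f\ast\psi_{l_i}(z)|\le\cg_{N}(f)(z)\le\lz$ directly (not via $\cg^0_N$), and with that amendment your sketch closes as intended, with $l_i<1$ ensuring $t=l_i$ is an admissible scale.
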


\begin{lemma}\label{l4.2}
There exists a positive constant $C_2$ such that for all $i\in\zz_+$
and $x\in Q_i^{\ast}$,
\begin{eqnarray}\label{4.8}
\cg^0_N (b_i)(x)\le C_2\cg_N (f)(x).
\end{eqnarray}
\end{lemma}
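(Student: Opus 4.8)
The plan is to fix $i\in\zz_+$ and $x\in Q_i^{\ast}$, and to estimate $|(b_i)\ast\pz_t(x)|$ for an arbitrary $\pz\in\cd^0_N(\rn)$ and $t\in(0,1)$, splitting the argument according to the size of $t$ relative to $l_i$. The key geometric facts are the Whitney properties: $\diam(Q_i)\le 2^{-(6+n)}\dist(Q_i,\boz_\lz^\complement)\le 4\diam(Q_i)$, so $Q_i^{\ast}$ lies well inside $\boz_\lz$ but $\dist(Q_i^{\ast},\boz_\lz^\complement)\sim l_i$; in particular, any point within distance $\sim l_i$ of $x$ still lies in $\boz_\lz$ or at worst close to its boundary, where $\cg_N(f)$ is comparable to $\lz$. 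I will also use Lemma \ref{l4.1}, which bounds $\sup_y|P_i(y)\zez_i(y)|\le C_1\lz$ when $l_i\in(0,1)$.

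The main case is $l_i\in(0,1)$, where $b_i=(f-P_i)\zez_i$. First I would treat $t\le l_i$. Writing $(b_i)\ast\pz_t(x)=\langle f, (\pz_t(x-\cdot))\zez_i\rangle-\langle P_i, (\pz_t(x-\cdot))\zez_i\rangle$, I observe that the function $y\mapsto\pz_t(x-y)\zez_i(y)$, after suitable normalization by a constant depending only on $N$ and the support sidelengths, is (a fixed multiple of) an element of $\cd_{N}(\rn)$ dilated at scale $t$ and centered near $x$; hence the first term is controlled by $\cg_N(f)(x)$ up to a constant, using that $x\in\supp\zez_i\subset Q_i^{\ast}$ and that $\|\zez_i\|_{\cd_N}\ls l_i^{-N}$ is absorbed by the $t^{-N}$-type scaling since $t\le l_i$. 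The second term is $\ls\sup_y|P_i(y)\zez_i(y)|\ls\lz$ by Lemma \ref{l4.1}, and since $x\in\boz_\lz$ we have $\lz<\cg_N(f)(x)$, so this is also $\ls\cg_N(f)(x)$. Next I would treat $l_i<t<1$. Here I use vanishing moments: since $\zez_i$ is supported in $Q_i^{\ast}$ with $l_i<t$, I subtract a Taylor polynomial of $\pz_t(x-\cdot)$ of degree $s$ about $x_i$ and exploit the Riesz-orthogonality defining $P_i$ — that is, $\langle f,P\zez_i\rangle=\langle P_i,P\zez_i\rangle$ for all $P\in\cp_s(\rn)$ — to replace $\pz$ by $\pz$ minus its Taylor polynomial, gaining a factor $(l_i/t)^{s+1}$. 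After this subtraction the resulting test function is again (a constant times) an admissible bump at scale $t$ centered near $x$, so the $f$-term is $\ls\cg_N(f)(x)$; and the $P_i$-term, by the same orthogonality after subtracting the Taylor polynomial, either vanishes or is controlled via Lemma \ref{l4.1} by $\lz\ls\cg_N(f)(x)$. Taking the supremum over $\pz$ and $t$ yields \eqref{4.8} in this case. The case $l_i\in[1,\fz)$ is simpler: here $b_i=f\zez_i$ with $\zez_i$ supported in $Q_i^{\ast}$ and $\|\zez_i\|_{\cd_N}\ls l_i^{-N}\le 1$, and since $t<1<l_i$ one directly has $(b_i)\ast\pz_t(x)=\langle f,(\pz_t(x-\cdot))\zez_i\rangle$, whose absolute value is again $\ls\cg_N(f)(x)$ because the test function is a bounded bump at scale $t\in(0,1)$ centered at $x\in Q_i^{\ast}=\supp\zez_i$.

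The main obstacle, and the point requiring the most care, is the bookkeeping in the $l_i<t<1$ case: one must verify that after subtracting the degree-$s$ Taylor polynomial of $\pz_t(x-\cdot)$ about $x_i$ and dividing by the product $\zez_i\cdot(\text{remainder})$, the resulting object, once rescaled to unit scale, genuinely belongs to $\cd_{N}(\rn)$ up to a controllable constant — this forces the hypothesis $N\ge s$ (used to control the $N$ derivatives of a product involving an $(s+1)$-order Taylor remainder) and the hypothesis $N\ge2$ (so Proposition \ref{p3.1} and the grand-maximal machinery apply). I expect this to reduce to a routine but somewhat tedious estimate on derivatives of products and Taylor remainders of $\pz$, exactly as in the classical constructions of \cite{b03,st93} and as carried out in \cite{ta1}; accordingly I would, as the excerpt indicates, simply invoke \cite[Lemma 4.3]{ta1} for the precise bound, since the argument there is insensitive to the replacement of a single weight by the uniformly-$\aa^{\loc}$ family $\fai(\cdot,t)$ (the estimate \eqref{4.8} is purely pointwise and weight-free).
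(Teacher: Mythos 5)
Your proposal ultimately does what the paper does: the paper gives no proof of Lemma \ref{l4.2} and simply cites it as one of Lemmas 4.2--4.5 of \cite{ta1}, noting (implicitly) that these estimates are purely pointwise and do not see the weight. You reach the same conclusion, but you also supply a reconstruction of the underlying argument, and that reconstruction is essentially right: split on $l_i\in(0,1)$ versus $l_i\ge 1$, and within the first case on $t\le l_i$ versus $l_i<t<1$, using the pairing $\langle f-P_i,\zez_i\pz_t(x-\cdot)\rangle$, Lemma \ref{l4.1} to absorb the $P_i$ term into $\lz\le\cg_N(f)(x)$ (valid because $x\in Q_i^\ast\subset\boz_\lz$), and the Riesz-orthogonality / Taylor-remainder cancellation in the regime $l_i<t$.

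One small inaccuracy worth flagging: in the regime $l_i<t<1$, after subtracting the degree-$s$ Taylor polynomial of $\pz_t(x-\cdot)$ about $x_i$, the resulting test function $y\mapsto\zez_i(y)\bigl[\pz_t(x-y)-T_s(y)\bigr]$ is supported in $Q_i^\ast$, hence at scale $\sim l_i$, not at scale $t$ as you wrote. The correct normalization is at scale $l_i$; after that rescaling the support lands in $B(0,C)$ for a Whitney constant $C$, which is exactly why $\cg_N$ is defined with the enlarged radius $R=2^{3(10+n)}$ rather than $R=1$, and the factor $(l_i/t)^{s+1}$ from the Taylor remainder then keeps the $C^N$-norm bounded. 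This slip does not affect the conclusion, and since you (like the paper) ultimately defer the bookkeeping to \cite[Lemma 4.3]{ta1}, the proposal is acceptable.
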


\begin{lemma}\label{l4.3}
Assume that integers $s$ and $N$ satisfy $0\le s<N$ and $N\ge2$.
Then there exist positive constants $C$, $C_3$ and $C_4$ such that
for all $i\in\zz_+$ and $x\in(Q_i^{\ast})^\complement$,
\begin{eqnarray}\label{4.9}
\cg^0_N (b_i)(x)\le C\frac{\lz l_i^{n+s+1}}{(l_i
+|x-x_i|)^{n+s+1}}\chi_{B(x_i, C_3)}(x),
\end{eqnarray}
where $x_i$ is the center of the cube $Q_i$. Moreover, if
$x\in(Q_i^{\ast})^\complement$ and $l_i\in[C_4,\fz)$, then $\cg^0_N
(b_i)(x)=0$.
\end{lemma}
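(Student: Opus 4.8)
The plan is to control $\cg_N^0(b_i)$ at a point $x\in(Q_i^\ast)^\complement$ by testing $b_i$ against an arbitrary $\pz\in\cd_N^0(\rn)$ and a dilation $\pz_t$ with $t\in(0,1)$, and to split the analysis according to the two defining cases $l_i\in(0,1)$ and $l_i\in[1,\fz)$ in \eqref{4.6}. First I would record the geometric fact that $\zez_i$ is supported in $Q_i^\ast=aQ_i$ with $a=1+2^{-(11+n)}$, so $\supp b_i\subset Q_i^\ast\subset B(x_i,c\,l_i)$ for a dimensional constant $c$; since $x\notin Q_i^\ast$, the set $\supp(\pz_t)(x-\cdot)\cap\supp b_i$ is nonempty only when $t\gtrsim|x-x_i|$ (here one uses that $\pz$ is supported in $B(0,1)$ and $\supp(\pz_t\ast b_i)$ around $x$ forces $t$ comparable to or larger than the distance from $x$ to $Q_i^\ast$). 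Combined with the Whitney property $\diam(Q_i)\sim\dist(Q_i,\boz_\lz^\complement)$ and the uniform finite overlap of $\{Q_i^\ast\}_i$, this will also yield the cutoff $\chi_{B(x_i,C_3)}(x)$: if $|x-x_i|$ is too large relative to $l_i$, then $t$ would have to exceed $1$, which is forbidden, forcing $\cg_N^0(b_i)(x)=0$; this is exactly the last assertion when $l_i\in[C_4,\fz)$, since then even $t$ barely exceeding $|x-x_i|\gtrsim l_i\ge C_4$ is already $\ge1$.

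The heart of the estimate is the case $l_i\in(0,1)$, where $b_i=(f-P_i)\zez_i$ has vanishing moments up to order $s$ by the defining property of $P_i$ in \eqref{4.5} (i.e. $\langle f,P\zez_i\rangle=\langle P_i,P\zez_i\rangle$ for all $P\in\cp_s(\rn)$, so $\langle b_i,P\rangle=0$). Thus, for $\pz\in\cd_N^0(\rn)$ with $N\ge s+1$, I would subtract from $\pz((x-\cdot)/t)$ its Taylor polynomial of degree $s$ centered at $(x-x_i)/t$; the cancellation of $b_i$ against this polynomial lets me replace $\pz$ by its Taylor remainder, which is bounded by $\sum_{|\az|=s+1}\|\partial^\az\pz\|_\infty\,(|x_i-y|/t)^{s+1}\lesssim (l_i/t)^{s+1}$ on $\supp b_i$, using $\|\pz\|_{\cd_N}\le1$ and $N\ge s+1$. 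Then
\begin{eqnarray*}
|\pz_t\ast b_i(x)|
&\lesssim& t^{-n}\lf(\frac{l_i}{t}\r)^{s+1}\int_{Q_i^\ast}|b_i(y)|\,dy
\lesssim t^{-n-s-1}l_i^{s+1}\,\|b_i\|_{L^1}.
\end{eqnarray*}
To bound $\|b_i\|_{L^1}$ one uses Lemma \ref{l4.2}, namely $\cg_N^0(b_i)\le C_2\cg_N(f)$ on $Q_i^\ast$, together with Proposition \ref{p3.1}(i) giving $|b_i|\le\cg_N^0(b_i)$ a.e.; since $Q_i^\ast$ has interior meeting $\boz_\lz^\complement$ in the sense that $\dist(Q_i,\boz_\lz^\complement)\lesssim l_i$, there is a point near $Q_i^\ast$ where $\cg_N(f)\le\lz$, and by the standard local-grand-maximal continuity/Whitney argument (exactly as in \cite{ta1,b03,st93}) one gets $\cg_N(f)\lesssim\lz$ on $Q_i^\ast$, hence $\|b_i\|_{L^1}\lesssim\lz\,|Q_i|\sim\lz\,l_i^n$. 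Plugging this in and recalling $t\gtrsim l_i+|x-x_i|$ (the lower bound $l_i$ coming from $t\ge$ a fixed multiple of $\diam Q_i^\ast$ once $\pz_t\ast b_i(x)\ne0$, and $|x-x_i|$ from the support separation) yields
\begin{eqnarray*}
|\pz_t\ast b_i(x)|\lesssim \frac{\lz\, l_i^{n+s+1}}{(l_i+|x-x_i|)^{n+s+1}},
\end{eqnarray*}
and taking the supremum over $\pz\in\cd_N^0(\rn)$ and $t\in(0,1)$ gives \eqref{4.9}. For the remaining case $l_i\in[1,\fz)$, where $b_i=f\zez_i$ has no moment conditions, the nontrivial range of $t$ is $t\gtrsim|x-x_i|\gtrsim l_i\ge1$, which is empty, so $\cg_N^0(b_i)(x)=0$ directly; this simultaneously disposes of the ``moreover'' clause with $C_4$ a suitable multiple of $1$.

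The main obstacle I anticipate is the bookkeeping of the geometric constants: making precise the implication ``$\pz_t\ast b_i(x)\ne0$ with $x\notin Q_i^\ast$ and $t<1$'' $\Longrightarrow$ ``$c_1 l_i\le t$ and $t\ge c_2|x-x_i|$ and $|x-x_i|\le C_3$,'' using only $\supp\pz\subset B(0,1)$ (so that $R=1$ here, since $\cg_N^0$ uses $\cd_N^0(\rn)=\cd_{N,1}(\rn)$) together with the Whitney relations $\diam(Q_i)\le 2^{-(6+n)}\dist(Q_i,\boz_\lz^\complement)\le4\diam(Q_i)$ and the dilation factor $a=1+2^{-(11+n)}$. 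Everything else — the Taylor-remainder cancellation and the $L^1$ bound on $b_i$ via Lemmas \ref{l4.1} and \ref{l4.2} — is routine once these inclusions are fixed, and indeed the whole argument is the weighted-Musielak analogue of \cite[Lemma 4.5]{ta1}, so I would follow that proof, replacing the scalar weight estimates by the uniform bounds already available.
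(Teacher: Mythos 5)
The paper itself does not prove Lemma~\ref{l4.3}; it cites \cite[Lemma 4.5]{ta1}, so I can only assess your argument on its own merits. Unfortunately it has two central gaps, one analytic and one geometric, and neither can be patched easily.

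The analytic gap is in the $L^1$ bound for $b_i$. You invoke Lemma~\ref{l4.2} and Proposition~\ref{p3.1}(i) and then claim that ``by the standard local-grand-maximal continuity/Whitney argument one gets $\cg_N(f)\lesssim\lz$ on $Q_i^\ast$,'' whence $\|b_i\|_{L^1}\lesssim\lz\,l_i^n$. This is false: by construction $Q_i^\ast\subset\boz_\lz$, so $\cg_N(f)>\lz$ on all of $Q_i^\ast$, and in fact $\cg_N(f)$ can be arbitrarily large there. Lemma~\ref{l4.2} reflects exactly this: on $Q_i^\ast$ the bound for $\cg^0_N(b_i)$ is $\cg_N(f)(x)$ itself, \emph{not} $\lz$. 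The only thing the Whitney structure guarantees is a point $z$ at distance $\lesssim l_i$ from $Q_i^\ast$ with $\cg_N(f)(z)\le\lz$; the correct proof therefore rewrites $\zez_i[\pz_t(x-\cdot)-P]$ as a constant multiple of a normalized test function $\Psi_r(\cdot-z)$ centered at that $z$ (at scale $r\sim l_i$) and reads off $|\pz_t\ast b_i(x)|\le c\,\cg_N(f)(z)\le c\lz$ with $c$ controlled by the normalization, never passing through a bound of $\cg_N(f)$ on $Q_i^\ast$. In addition, the hypotheses of Section~\ref{s4} only require $f\in\cd'(\rn)$, so $b_i$ need not be a function and $\|b_i\|_{L^1}$ need not even be defined; the $L^1$ route is restricted to the auxiliary case $f\in L^q_{\fai(\cdot,1)}(\rn)$ (treated separately in Lemma~\ref{l4.5}) and cannot be the proof of Lemma~\ref{l4.3}.

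The geometric gap is the claim ``$t\gtrsim l_i+|x-x_i|$ once $\pz_t\ast b_i(x)\ne0$.'' For $x\notin Q_i^\ast$, the support constraint gives only $t>\dist(x,\supp b_i)\ge\dist(x,Q_i^\ast)$, and this distance can be arbitrarily small: a point $x$ just outside the boundary of $Q_i^\ast$ has $|x-x_i|\sim l_i$ but $\dist(x,Q_i^\ast)$ as small as one likes, so $t$ is \emph{not} forced to be $\gtrsim l_i$. Consequently the deduction of the cutoff $\chi_{B(x_i,C_3)}$ and of the ``moreover'' clause (``$t$ barely exceeding $|x-x_i|\gtrsim l_i\ge C_4$ is already $\ge1$'') does not follow from the support condition alone. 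Your Taylor-remainder computation then produces a bound $\lesssim t^{-n-s-1}l_i^{n+s+1}\lz$ which blows up as $t\to0$ in precisely this regime, whereas \eqref{4.9} demands the uniform bound $\lesssim\lz$ there. Handling the range $t\ll l_i$, and obtaining a $C_3$ uniform in $i$, requires a more careful decomposition of $\zez_i[\pz_t(x-\cdot)-P]$ that exploits both the smallness of $\zez_i$ near $\partial Q_i^\ast$ and the moment cancellation together, along the lines of \cite[Lemma 4.5]{ta1} (or \cite{b03,st93} in the global case). As written, your proof does not close this case.
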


\begin{lemma}\label{l4.4}
Let $\fai$ be a growth function as in Definition \ref{d2.3} and
$m(\fai)$ as in \eqref{2.10}. If integers $s\ge m(\fai)$, $N>s$
and $N\ge N_{\fai}$, where $N_{\fai}$ is as in \eqref{3.20}, then
there exists a positive constant $C_5$ such that for all $f\in
h_{\fai,\,N}(\rn)$, $\lz>\inf_{x\in\rn}\cg_{N} (f)(x)$ and
$i\in\zz_+$,
\begin{eqnarray}\label{4.10}
\int_{\rn}\fai\lf(x,\cg^0_N (b_i)(x)\r)\,dx\le C_5
\int_{Q_i^{\ast}}\fai\lf(x,\cg_N (f)(x)\r)\,dx.
\end{eqnarray}
Moreover, the series $\sum_i b_i$ converges in $h_{\fai,\,N}(\rn)$
and
\begin{eqnarray}\label{4.11}
\int_{\rn}\fai\lf(x,\cg^0_N \lf(\sum_i b_i\r)(x)\r)\,dx\le C_5
\int_{\boz_{\lz}}\fai\lf(x,\cg_N (f)(x)\r)\,dx.
\end{eqnarray}
\end{lemma}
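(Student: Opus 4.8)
The plan is to prove \eqref{4.10} first, for a single $b_i$, by splitting the integral over $\rn$ into the contribution from $Q_i^{\ast}$ and from its complement, using the pointwise estimates of Lemmas \ref{l4.2} and \ref{l4.3}. On $Q_i^{\ast}$, Lemma \ref{l4.2} gives $\cg^0_N(b_i)(x)\le C_2\cg_N(f)(x)$ directly, so the monotonicity of $\fai(x,\cdot)$ yields $\int_{Q_i^{\ast}}\fai(x,\cg^0_N(b_i)(x))\,dx\ls\int_{Q_i^{\ast}}\fai(x,\cg_N(f)(x))\,dx$, after absorbing the constant $C_2$ using the uniformly upper type $1$ property of $\fai$. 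On $(Q_i^{\ast})^\complement$, Lemma \ref{l4.3} bounds $\cg^0_N(b_i)$ by $C\lz l_i^{n+s+1}(l_i+|x-x_i|)^{-(n+s+1)}\chi_{B(x_i,C_3)}$. Here I would split $B(x_i,C_3)\setminus Q_i^{\ast}$ into dyadic annuli $2^k Q_i^{\ast}\setminus 2^{k-1}Q_i^{\ast}$ for $k=1,\dots,k_0$, where $2^{k_0}l_i\sim C_3$ (the sum is finite because the decay kernel is supported in $B(x_i,C_3)$), and on each annulus the argument is bounded by $\sim\lz 2^{-k(n+s+1)}$. Since $s\ge m(\fai)=\lfz n[q(\fai)/i(\fai)-1]\rfz$, I can choose $q_0\in(q(\fai),\fz)$ and $p_0\in(0,i(\fai))$ with $p_0(n+s+1)>nq_0$; then, invoking Lemma \ref{l2.4}(i) together with Remark \ref{r2.3} to replace $\fai(\cdot,t)$ on a fixed enlargement of $Q_i^{\ast}$ by a genuine global weight $\wz\fai\in\aa_{q_0}(\rn)$, and using the uniformly lower type $p_0$ property of $\fai$ and the doubling estimate of Lemma \ref{l2.4}(iv), each annulus contributes at most $2^{-k[p_0(n+s+1)-nq_0]}\fai(Q_i^{\ast},\lz)$. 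Summing the geometric series over $k$ gives a bound by $C\fai(Q_i^{\ast},\lz)$; since $\cg_N(f)>\lz$ on $\boz_\lz\supset Q_i^{\ast}$, this is dominated by $\int_{Q_i^{\ast}}\fai(x,\cg_N(f)(x))\,dx$, completing \eqref{4.10}.

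The argument for $N\ge N_{\fai}$ being the quasi-norm on $h_{\fai,\,N}(\rn)$ is needed to make $\cg^0_N$ and $\cg_N$ interchangeable up to constants (via Theorem \ref{t3.2}), and the hypotheses $s\ge m(\fai)$, $N>s$ enter precisely to guarantee both the moment cancellation in the definition of $b_i$ (so Lemma \ref{l4.3} applies) and the summability of the annular decay above.

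For the convergence of $\sum_i b_i$ in $h_{\fai,\,N}(\rn)$ and the estimate \eqref{4.11}, I would use the bounded interior property of $\{Q_i^{\ast}\}_i$: since $\cg^0_N(\sum_i b_i)\le\sum_i\cg^0_N(b_i)$, the quasi-subadditivity of $\fai$ in Lemma \ref{l2.1}(i) gives
$$\int_{\rn}\fai\Big(x,\cg^0_N\Big(\sum_i b_i\Big)(x)\Big)\,dx\ls\sum_i\int_{\rn}\fai\big(x,\cg^0_N(b_i)(x)\big)\,dx\ls\sum_i\int_{Q_i^{\ast}}\fai\big(x,\cg_N(f)(x)\big)\,dx,$$
and the last sum is at most $L\int_{\boz_\lz}\fai(x,\cg_N(f)(x))\,dx<\fz$ because $f\in h_{\fai,\,N}(\rn)$ and every point of $\boz_\lz$ lies in at most $L$ of the $Q_i^{\ast}$. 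The same bounded-overlap estimate applied to tails $\sum_{i\ge M}b_i$ shows the partial sums form a Cauchy sequence in $h_{\fai,\,N}(\rn)$, hence converge there by Proposition \ref{p3.3}; homogeneity in $\lz$ (replacing $\sum_i b_i$ by $\sum_i b_i/\lz$) upgrades the modular bound to \eqref{4.11} via Lemma \ref{l2.3}(i) if one wants the norm form, or one simply keeps the modular statement as written.

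The main obstacle is the off-diagonal estimate on $(Q_i^{\ast})^\complement$: one must carefully arrange the exponents so that the dyadic sum converges, and this forces the simultaneous choice of $q_0>q(\fai)$ and $p_0<i(\fai)$ with $p_0(n+s+1)>nq_0$, which is exactly why $s\ge m(\fai)$ is needed; the localization step (passing from $\fai\in\aa^\loc_\fz$ to a global $\wz\fai\in\aa_{q_0}$ on a fixed neighborhood) via Lemma \ref{l2.4}(i) and Remark \ref{r2.3} must be set up on an enlargement of $Q_i^{\ast}$ large enough to contain $B(x_i,C_3)$, and one should track that the resulting constants are uniform in $i$ since the relevant cube has sidelength comparable to $C_3$, a fixed constant.
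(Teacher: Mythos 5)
Your proposal is correct and follows essentially the same route as the paper's proof: split $\int_{\rn}\fai(x,\cg^0_N(b_i)(x))\,dx$ into the contribution over $Q_i^{\ast}$ (handled by Lemma \ref{l4.2} and the uniformly upper type $1$ property) and over its complement (handled by the decay of Lemma \ref{l4.3}, the dyadic annuli $2^kQ_i^{\ast}\setminus 2^{k-1}Q_i^{\ast}$, localization of $\fai$ to a $\aa_{q_0}$ weight via Lemma \ref{l2.4}(i) and Remark \ref{r2.3}, the lower type $p_0$ estimate, and the doubling bound of Lemma \ref{l2.4}(iv) together with the exponent condition $(n+s+1)p_0>nq_0$ coming from $s\ge m(\fai)$); then sum over $i$ using bounded overlap, invoke completeness of $h_{\fai,\,N}(\rn)$ and Proposition \ref{p3.2} for convergence, and use $\cg^0_N(\sum_i b_i)\le\sum_i\cg^0_N(b_i)$ plus quasi-subadditivity for \eqref{4.11}. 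One small aside: your remark that $N\ge N_{\fai}$ serves to make $\cg^0_N$ and $\cg_N$ interchangeable via Theorem \ref{t3.2} is not what is actually used here --- the proof works directly with the pointwise bounds of Lemmas \ref{l4.2} and \ref{l4.3} (which already compare $\cg^0_N(b_i)$ with $\cg_N(f)$), and $N\ge N_{\fai}$ enters only through the standing framework in which $h_{\fai,\,N}(\rn)$ and the Calder\'on--Zygmund decomposition are set up --- but this is tangential and does not affect the argument.
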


\begin{proof}
By Lemmas \ref{l4.2} and \ref{l4.3} and the uniformly upper type 1
property of $\fai$, we know that
\begin{eqnarray}\label{4.12}
\int_{\rn}\fai\lf(x,\cg^0_N (b_i)(x)\r)\,dx\ls
\int_{Q_i^{\ast}}\fai\lf(x,\cg_N (f)(x)\r)\,dx
+\int_{(2C_3 Q_i^0)\setminus
Q_i^{\ast}}\fai\lf(x,\cg^0_N (b_i)(x)\r)\,dx,
\end{eqnarray}
where $Q_i^0:= Q(x_i,1)$. Notice that $s\ge\lfz
n(q(\fai)/i(\fai)-1)\rfz$ implies $(s+n+1)i(\fai)>nq(\fai)$. Thus,
we take $q_0\in(q(\fai),\fz)$ and $p_0\in(0,i(\fai))$ such that
$\fai$ is of uniformly lower type $p_0$, $(s+n+1)p_0>nq_0$ and
$\fai\in \aa^{\loc}_{q_0}(\rn)$. From Lemma \ref{l2.4}(i), it
follows that there exists a $\wz{\fai}\in \aa_{q_0}(\rn)$ such that
$\wz{\fai}(\cdot,t)=\fai(\cdot,t)$ on $2C_3 Q^0_i$ and $\aa_{q_0}
(\wz{\fai})\ls \aa^{\loc}_{q_0}(\fai)$. Using Lemma \ref{4.3}, the
uniformly lower $p_0$ property of $\fai$, Lemma \ref{l2.4}(iv) and
the fact that $\cg_N (f)>\lz$ for all $x\in Q^{\ast}_i$, we see that
\begin{eqnarray}\label{4.13}
\hs&&\int_{(2C_3 Q_i^0)\setminus Q_i^{\ast}}\fai\lf(x,\cg^0_N
(b_i)(x)\r)\,dx\nonumber\\  &&\hs\le\sum_{k=1}^{k_0}\int_{2^k
Q_i^{\ast}\setminus 2^{k-1}Q_i^{\ast}}\fai\lf(x,\cg^0_N
(b_i)(x)\r)\,dx \ls\sum_{k=1}^{k_0}\int_{2^k
Q_i^{\ast}}\wz{\fai}\lf(x,\frac{\lz}{2^{k(n+s+1)}}\r)\,dx\nonumber\\
&&\hs\ls\sum_{k=1}^{k_0} 2^{-k[(n+s+1)p_0-nq_0]}
\wz{\fai}(Q_i^{\ast},\lz)\ls\sum_{k=1}^{k_0}
2^{-k[(n+s+1)p_0-nq_0]} \fai(Q_i^{\ast},\lz)\nonumber\\
&&\hs\ls\int_{Q_i^{\ast}}\fai\lf(x,\lz\r)\,dx
\ls\int_{Q_i^{\ast}}\fai\lf(x,\cg_N (f)(x)\r)\,dx,
\end{eqnarray}
where $k_0\in\zz_+$ satisfies $2^{k_0 -2}\le C_3<2^{k_0-1}$. From
\eqref{4.12} and \eqref{4.13}, we deduce \eqref{4.10}. Then, by
\eqref{4.10}, we know that
\begin{eqnarray*}
\sum_i\int_{\rn}\fai\lf(x,\cg^0_N
(b_i)(x)\r)\,dx&\ls&\sum_i\int_{Q_i^{\ast}}\fai\lf(x,\cg_N
(f)(x)\r)\,dx\ls\int_{\boz_{\lz}}\fai\lf(x,\cg_N (f)(x)\r)\,dx.
\end{eqnarray*}
Combining the above inequality with the completeness of
$h_{\fai,\,N}(\rn)$, we conclude that $\sum_i b_i$ converges in
$h_{\fai,\,N}(\rn)$. So by Proposition \ref{p3.2}, the series
$\sum_i b_i$ converges in $\cd'(\rn)$ and hence
$$\cg^0_N\lf(\sum_i b_i\r)(x)\le \sum_i \cg^0_N (b_i)(x)$$
for all $x\in\rn$, which gives \eqref{4.11} and hence completes
the proof of Lemma \ref{4.4}.
\end{proof}

The following Lemmas \ref{l4.5} and \ref{l4.6} are respectively
\cite[Lemmas 4.7 and 4.8]{ta1}.

\begin{lemma}\label{l4.5}
Let $q\in(q(\fai),\fz)$. If $f\in L^q_{\fai(\cdot,1)}(\rn)$, then
the series $\sum_i b_i$ converges in $L^q_{\fai(\cdot,1)}(\rn)$ and
there exists a positive constant $C_6$, independent of $f$, such
that
$$\lf\|\sum_i |b_i|\r\|_{L^q_{\fai(\cdot,1)}(\rn)}\le C_6
\|f\|_{L^q_{\fai(\cdot,1)}(\rn)}.$$
\end{lemma}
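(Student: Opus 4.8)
The plan is to split the sum $\sum_i|b_i|$ according to the sidelength $l_i$ of $Q_i$, dominating each piece either by $|f|$ itself or by a local Hardy--Littlewood maximal function of $f$, to which Lemma \ref{l2.4}(iii) together with Remark \ref{r2.3} applies, since $q\in(q(\fai),\fz)$ forces $q>1$ and $\fai\in\aa^{\loc}_q(\rn)$. For the cubes with $l_i\in[1,\fz)$ we have $b_i=f\zez_i$, and because $\sum_i\zez_i=\chi_{\boz_{\lz}}$ this gives $\sum_{l_i\ge1}|b_i|\le|f|\sum_i\zez_i\le|f|$ pointwise, which already lies in $L^q_{\fai(\cdot,1)}(\rn)$ with the required control. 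For the cubes with $l_i\in(0,1)$ we have $b_i=(f-P_i)\zez_i$, whence $\sum_{l_i<1}|b_i|\le|f|+\sum_{l_i<1}|P_i|\zez_i$, so the whole problem reduces to estimating $\sum_{l_i<1}|P_i|\zez_i$ in $L^q_{\fai(\cdot,1)}(\rn)$.

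The key step, which I expect to be the main obstacle, is the scale-invariant pointwise bound $\sup_{x\in\rn}|P_i(x)\zez_i(x)|\le\sup_{Q_i^{\ast}}|P_i|\ls\frac1{|Q_i^{\ast}|}\int_{Q_i^{\ast}}|f(y)|\,dy$, with implicit constant independent of $i$; the care required is exactly in keeping the constants uniform. I would derive it by rescaling $Q_i=Q(x_i,l_i)$ to the unit cube: with $\wz f(z):=f(x_i+l_iz)$ and $\wz\zez_i(z):=\zez_i(x_i+l_iz)$, the polynomial $\wz P_i(z):=P_i(x_i+l_iz)$ is the orthogonal projection of $\wz f$ onto $\cp_s(\rn)$ with respect to $\wz\zez_i\,dz$, and from $\sum_j\xz_j\le L$ and $\xz_i\equiv1$ on $Q_i$ one gets $\wz\zez_i\ge1/L$ on $Q(0,1)$. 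Then the Riesz representation identity $\int|\wz P_i|^2\wz\zez_i\,dz=\int\wz f\,\wz P_i\,\wz\zez_i\,dz$, combined with the equivalence on the finite-dimensional space $\cp_s(\rn)$ of the norms $P\mapsto(\int_{Q(0,1)}|P|^2\,dz)^{1/2}$ and $P\mapsto\sup_{Q(0,a)}|P|$ (recall $a=1+2^{-(11+n)}$ and $\supp\xz\subset aQ(0,1)=Q(0,a)$), yields $\sup_{Q(0,a)}|\wz P_i|\ls\int_{Q(0,a)}|\wz f|\,dz$; undoing the dilation gives the claim. This is essentially \cite[Lemma 4.7]{ta1}, which one may also just quote.

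Granting the claim, for each $x\in\rn$ the sum $\sum_{l_i<1}|P_i(x)|\zez_i(x)$ has at most $L$ nonzero terms by the bounded interior property of $\{Q_i^{\ast}\}_i$, and each of them is $\ls M^{\loc}_{a^n}(f)(x)$, since $x\in Q_i^{\ast}$ and $|Q_i^{\ast}|=a^nl_i^n\le a^n$; hence $\sum_{l_i<1}|P_i|\zez_i\ls M^{\loc}_{a^n}(f)$ on $\rn$. Lemma \ref{l2.4}(iii) with Remark \ref{r2.3} then gives $\|\sum_{l_i<1}|P_i|\zez_i\|_{L^q_{\fai(\cdot,1)}(\rn)}\ls\|f\|_{L^q_{\fai(\cdot,1)}(\rn)}$, and adding the two pieces yields the asserted inequality with some $C_6$. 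Finally, for the $L^q_{\fai(\cdot,1)}(\rn)$-convergence of $\sum_ib_i$: this bound shows $\sum_i|b_i|<\fz$ almost everywhere, so the tails $\sum_{i>M}|b_i|$ decrease to $0$ pointwise a.e.\ and are dominated by $\sum_i|b_i|\in L^q_{\fai(\cdot,1)}(\rn)$; by the dominated convergence theorem the partial sums are Cauchy, hence convergent, in the Banach space $L^q_{\fai(\cdot,1)}(\rn)$.
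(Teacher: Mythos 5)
Your argument is correct. The paper itself does not prove Lemma \ref{l4.5}; it simply cites \cite[Lemma 4.7]{ta1}, and your write-up is a sound reconstruction of that argument: split the sum according to whether $l_i\ge 1$ (where $b_i=f\zeta_i$ and the overlap bound $\sum_i\zeta_i\le 1$ suffices) or $l_i<1$, derive the scale-invariant bound $\sup_{Q_i^\ast}|P_i|\lesssim\frac{1}{|Q_i^\ast|}\int_{Q_i^\ast}|f|$ by rescaling to the unit cube and using that $\widetilde\zeta_i\ge 1/L$ on $Q(0,1)$ together with the equivalence of norms on the finite-dimensional space $\cp_s(\rn)$, and then apply the bounded interior property of $\{Q_i^\ast\}_i$ plus Lemma \ref{l2.4}(iii) with Remark \ref{r2.3} (legitimate since $q>q(\fai)\ge 1$ forces $q>1$ and $\fai\in\aa^{\loc}_q(\rn)$). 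The dominated-convergence argument for the $L^q_{\fai(\cdot,1)}(\rn)$-convergence of $\sum_i b_i$ is also fine. One alternative that stays closer to what is already stated in the paper: instead of rederiving the pointwise bound on $P_i\zeta_i$ in terms of $M^{\loc}_{a^n}(f)$, one can invoke the bound $\sup_{y\in\rn}|P_i(y)\zeta_i(y)|\le C_1\lambda$ from Lemma \ref{l4.1}, note that $\lambda\chi_{\boz_\lambda}\le\cg_N(f)\lesssim M^{\loc}_{\widetilde C}(f)$ for a suitable $\widetilde C$ depending on $R=2^{3(10+n)}$, and finish with the same weighted maximal inequality; your route avoids reintroducing $\lambda$ and $\boz_\lambda$ in the final estimate, which is slightly cleaner but otherwise equivalent.
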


\begin{lemma}\label{l4.6}
Let integers $s$ and $N$ satisfy $0\le s<N$ and $N\ge2$,
$f\in\cd'(\rn)$ and $\lz>\inf_{x\in\rn}\cg_{N} (f)(x)$. If $\sum_i
b_i$ converges in $\cd'(\rn)$, then there exists a positive constant
$C_7$, independent of $f$ and $\lz$, such that for all $x\in\rn$,
$$\cg^0_N (g)(x)\le\cg^0_N (f)(x)\chi_{\boz_{\lz}^\complement}(x)+C_7\lz\sum_{i}
\frac{l_i^{n+s+1}}{(l_i
+|x-x_i|)^{n+s+1}}\chi_{B(x_i,C_3)}(x),$$where $x_i$ is the center
of $Q_i$ and $C_3$ as in Lemma \ref{l4.3}.
\end{lemma}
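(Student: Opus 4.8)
The plan is to estimate $\cg^0_N(g)(x)=\sup\{|g\ast\pz_t(x)|:t\in(0,1),\,\pz\in\cd^0_N(\rn)\}$ by splitting $\rn$ into $\boz_{\lz}^\complement$ and $\boz_{\lz}$, using the representation $g=f-\sum_i b_i$ on $\boz_{\lz}^\complement$ and a direct analysis of $\sum_i b_i$ on $\boz_{\lz}$. First I would fix $x\in\rn$, $\pz\in\cd^0_N(\rn)$ and $t\in(0,1)$, and consider $g\ast\pz_t(x)=f\ast\pz_t(x)-\sum_i b_i\ast\pz_t(x)$. Since $\supp\pz\subset B(0,1)$ (as $R=1$), the function $\pz_t(x-\cdot)$ is supported in $B(x,t)$ with $t<1$; thus only those indices $i$ with $Q_i^\ast\cap B(x,t)\neq\emptyset$ contribute. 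On the set $\{i:Q_i^\ast\cap B(x,t)\neq\emptyset\}$ we have $\sum_i\zez_i=\chi_{\boz_{\lz}}$ locally, so $\sum_i b_i\ast\pz_t(x)$ telescopes appropriately against $f$; the key point is that when $B(x,t)\subset\boz_{\lz}^\complement$ this sum is empty, while otherwise we keep it as an error term.

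Next, for $x\in\boz_{\lz}^\complement$: I would argue that for any $t\in(0,1)$ and $\pz\in\cd^0_N(\rn)$, either $B(x,t)$ meets no $Q_i^\ast$ (whence $g\ast\pz_t(x)=f\ast\pz_t(x)$ and we bound it by $\cg^0_N(f)(x)$), or it does, in which case we write $g\ast\pz_t(x)=f\ast\pz_t(x)-\sum_i b_i\ast\pz_t(x)$ and bound the first term by $\cg^0_N(f)(x)$ and the second using Lemma \ref{l4.3}: since $x\in\boz_{\lz}^\complement\subset(Q_i^\ast)^\complement$ for every $i$, each $|b_i\ast\pz_t(x)|\le\cg^0_N(b_i)(x)\le C\lz l_i^{n+s+1}(l_i+|x-x_i|)^{-(n+s+1)}\chi_{B(x_i,C_3)}(x)$; summing over $i$ gives exactly the stated tail sum. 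The subtle point to check here is that $\chi_{\boz_{\lz}^\complement}(x)=1$ lets us drop the $\cg^0_N(f)(x)$ contribution when no cube is near, so that the first term on the right-hand side carries the indicator $\chi_{\boz_{\lz}^\complement}$; this is consistent with the claimed inequality since the tail sum is nonnegative and absorbs the remaining case.

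For $x\in\boz_{\lz}$, the first term on the right-hand side vanishes, so I must show $\cg^0_N(g)(x)\le C_7\lz\sum_i l_i^{n+s+1}(l_i+|x-x_i|)^{-(n+s+1)}\chi_{B(x_i,C_3)}(x)$. Here I would use $g\ast\pz_t(x)=f\ast\pz_t(x)-\sum_i b_i\ast\pz_t(x)$ and the local finiteness of $\{Q_i^\ast\}$: writing $f\ast\pz_t(x)=\sum_i (f\zez_i)\ast\pz_t(x)$ (valid near $x$ since $x\in\boz_{\lz}$ and $\sum_i\zez_i=1$ there, modulo handling the cutoff of $\pz_t$), the terms with $l_i\ge1$ cancel against $b_i=f\zez_i$, and the terms with $l_i\in(0,1)$ leave $\sum_i(P_i\zez_i)\ast\pz_t(x)$. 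Each such term is estimated by a Taylor-expansion argument on the polynomial $P_i$, exactly as in the proof of Lemma \ref{l3.4} and following \cite{ta1,st93,b03}: using Lemma \ref{l4.1} ($\sup_y|P_i(y)\zez_i(y)|\le C_1\lz$) together with the moment and support properties of $\pz$ and the size relation between $l_i$ and $|x-x_i|$, one obtains $|(P_i\zez_i)\ast\pz_t(x)|\le C\lz l_i^{n+s+1}(l_i+|x-x_i|)^{-(n+s+1)}\chi_{B(x_i,C_3)}(x)$. Summing over $i$ and combining with the estimate on $\boz_{\lz}^\complement$ yields the claim; taking the supremum over $\pz\in\cd^0_N(\rn)$ and $t\in(0,1)$ finishes the proof.

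The main obstacle is the bookkeeping in the case $x\in\boz_{\lz}$: one must carefully justify replacing $f\ast\pz_t(x)$ by $\sum_i(f\zez_i)\ast\pz_t(x)$ when $\pz_t(x-\cdot)$ is supported in $B(x,t)$ but this ball may stick out of $\boz_{\lz}$, so that $\sum_i\zez_i$ is not identically $1$ on all of $B(x,t)$. The resolution is that on $B(x,t)\setminus\boz_{\lz}$ the grand maximal function controls $f$ pointwise (we are testing against a bump of scale $t<1$), and that portion is absorbed into the nonnegative tail sum evaluated at nearby cubes whose $Q_i^\ast$ reach $B(x,t)$; alternatively, and more cleanly, one uses that $g$ itself is supported, in the relevant local sense, so that $\cg^0_N(g)(x)$ only sees the cubes $Q_i$ with $x\in B(x_i,C_3)$. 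I expect this to require the same local Whitney geometry estimates ($\diam Q_i\sim\dist(Q_i,\boz_{\lz}^\complement)$, bounded overlap of $\{Q_i^\ast\}$) already invoked in \cite{ta1}, so the argument is a routine adaptation of \cite[Lemma 4.8]{ta1} (the weighted case) to the Musielak--Orlicz setting, where in fact no property of $\fai$ is used at all — the estimate is purely pointwise.
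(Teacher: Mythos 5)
Your treatment of the case $x\in\boz_{\lz}^\complement$ is correct and matches the standard argument (and, implicitly, what the paper cites from \cite[Lemma 4.8]{ta1}): there $x\notin Q_i^\ast$ for every $i$, so Lemma \ref{l4.3} applies to each $b_i$ and, together with the trivial bound $|f\ast\pz_t(x)|\le\cg^0_N(f)(x)$, gives precisely the claimed estimate with $\chi_{\boz_\lz^\complement}(x)=1$.

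The case $x\in\boz_{\lz}$, however, contains a genuine error. After writing $f\ast\pz_t(x)=\sum_i(f\zez_i)\ast\pz_t(x)$ and cancelling against $b_i$ for \emph{every} $i$, you are left with $\sum_{i:\,l_i<1}(P_i\zez_i)\ast\pz_t(x)$, and you then assert a per-term bound
$$|(P_i\zez_i)\ast\pz_t(x)|\ls\lz\,\frac{l_i^{n+s+1}}{(l_i+|x-x_i|)^{n+s+1}}\chi_{B(x_i,C_3)}(x),$$
obtained ``by a Taylor-expansion argument \ldots\ using the moment and support properties of $\pz$.'' This cannot work: $\pz\in\cd^0_N(\rn)$ carries \emph{no} vanishing-moment conditions, and the smooth compactly supported function $P_i\zez_i$ has none either (it is the distribution $b_i=(f-P_i)\zez_i$ that has vanishing moments up to order $s$, by construction \eqref{4.5}--\eqref{4.6}). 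When you Taylor-expand $\pz_t(x-\cdot)$ around $x_i$, the order-$(s+1)$ remainder does give $\lz\,l_i^{n+s+1}t^{-(n+s+1)}$, but the lower-order polynomial terms $\sum_{|\az|\le s}\frac{\partial^\az\pz_t(x-x_i)}{\az!}\int(x_i-y)^\az P_i(y)\zez_i(y)\,dy$ do \emph{not} vanish (e.g.\ $\int P_i\zez_i\,dy=\int f\zez_i\,dy$ is generically nonzero), and their leading contribution is of size $\lz\,l_i^n t^{-n}\sim\lz\,(l_i/(l_i+|x-x_i|))^n$. So you only get $n$-th order decay, not $(n{+}s{+}1)$-th. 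This is strictly weaker than the right-hand side of the lemma, and if one tries to sum over cubes intersecting $B(x,t)$ the $n$-th order bound does not close.

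The remedy is to \emph{not} cancel $f\zez_i$ against $b_i$ for the far cubes. Split $\{i\}$ into $I_x:=\{i:\ x\in Q_i^\ast\}$ and its complement. For $i\notin I_x$ keep $b_i$ intact and apply Lemma \ref{l4.3} directly, which uses the moment conditions of $b_i$ and yields exactly the $(n{+}s{+}1)$-decay tail. Only for the boundedly many $i\in I_x$ do you cancel $f\zez_i$ against $b_i$, leaving at most $L$ polynomial terms $(P_i\zez_i)\ast\pz_t(x)$, each $\le C_1\lz$ by Lemma \ref{l4.1}; together with the residual distributional term $\langle f,(1-\sum_{i\in I_x}\zez_i)\pz_t(x-\cdot)\rangle$ this must be shown to be $\ls\lz$. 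For that residual term your sketch is also too vague: ``$\cg^0_N(f)$ controls $f$ pointwise on $B(x,t)\setminus\boz_\lz$'' presumes $f$ is a function, which is not assumed. The actual mechanism is that the residual test function can be recentred at a point $\bar{x}\in\boz_\lz^\complement$ with $|x-\bar{x}|=\dist(x,\boz_\lz^\complement)\ls t$ and rescaled so as to lie (after a harmless constant) in $\cd_{N}(\rn)$, i.e.\ with support in $B(0,2^{3(10+n)})$; then $|\langle f,\cdot\rangle|\le C\cg_{N}(f)(\bar{x})\le C\lz$. This is precisely why the height set $\boz_\lz$ is defined via the \emph{large-support} grand maximal function $\cg_N$ while the conclusion involves $\cg^0_N$; your proposal never invokes this $R$-mismatch, which is the crux of the boundary estimate.
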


\begin{lemma}\label{l4.7}
Let $\fai$ be a growth function as in Definition \ref{d2.3},
$q(\fai)$ and $i(\fai)$ respectively as in \eqref{2.9} and
\eqref{2.3}, $q\in(q(\fai),\fz)$ and $p_0\in(0,i(\fai))$.

$\mathrm{(i)}$ If integers $s$ and $N$ satisfy $N>s\ge\lfz
n(q(\fai)/p_0-1)\rfz$ and $f\in h_{\fai,\,N}(\rn)$, then $\cg^0_N
(g)\in L^q_{\fai(\cdot,1)}(\rn)$ and there exists a positive
constant $C_8$, independent of $f$ and $\lz$, such that
\begin{eqnarray}\label{4.14}
\hs\int_{\rn}\lf[\cg^0_N (g)(x)\r]^q\fai(x,1)\,dx\le C_8
\lz^q\max\{1/\lz,1/\lz^{p_0}\}\int_{\rn}\fai\lf(x,\cg_N
(f)(x)\r)\,dx.
\end{eqnarray}

$\mathrm{(ii)}$  If $f\in L^q_{\fai(\cdot,1)}(\rn)$, then $g\in
L^{\fz}_{\fai(\cdot,1)}(\rn)$ and there exists a positive constant
$C_9$, independent of $f$ and $\lz$, such that
$\|g\|_{L^{\fz}_{\fai(\cdot,1)}(\rn)}\le C_9 \lz$.
\end{lemma}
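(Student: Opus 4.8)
The plan is to prove parts (i) and (ii) separately, in both cases starting from the pointwise bound on $\cg^0_N(g)$ furnished by Lemma \ref{l4.6}, namely
$$\cg^0_N (g)(x)\le\cg^0_N (f)(x)\chi_{\boz_{\lz}^\complement}(x)+C_7\lz\sum_{i}
\frac{l_i^{n+s+1}}{(l_i +|x-x_i|)^{n+s+1}}\chi_{B(x_i,C_3)}(x)=:\mathrm{A}(x)+\mathrm{B}(x).$$
For part (i), I would raise this to the $q$-th power, use that $(a+b)^q\ls a^q+b^q$, and integrate against $\fai(\cdot,1)$. On $\boz_{\lz}^\complement$ one has $\cg_N(f)\le\cg^0_N(f)\le\cg_N(f)\le\lz$ (using $\cg^0_N\le\cg_N$ pointwise), so $\int_{\rn}\mathrm{A}(x)^q\fai(x,1)\,dx\le\lz^{q-p_0}\int_{\boz_{\lz}^\complement}[\cg^0_N(f)(x)]^{p_0}\fai(x,1)\,dx$; then, since on $\boz_{\lz}^\complement$ we have $\cg^0_N(f)(x)\le\lz$, and using the uniformly lower type $p_0$ property of $\fai$ one can write $[\cg^0_N(f)(x)]^{p_0}\fai(x,1)\ls\lz^{p_0}\fai(x,\cg^0_N(f)(x)/\lz)\ls\max\{1,\lz^{p_0-1}\}\lz^{p_0}\fai(x,\cg_N(f)(x))$ after adjusting for the uniformly upper type $1$ bound; the bookkeeping of the factor $\max\{1/\lz,1/\lz^{p_0}\}$ comes precisely from comparing $\cg_N(f)(x)/\lz$ to $1$ in both directions. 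This is the step that requires care.

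For the term $\mathrm{B}$, the key point is the standard estimate (used already in \cite{ta1,yys}) that for $N>s\ge\lfz n(q(\fai)/p_0-1)\rfz$, hence $(n+s+1)p_0>nq$ for a suitable $q\in(q(\fai),\fz)$ with $\fai\in\aa^{\loc}_q(\rn)$, one has
$$\sum_i\frac{l_i^{n+s+1}}{(l_i+|x-x_i|)^{n+s+1}}\chi_{B(x_i,C_3)}(x)\ls\lf[M^{\loc}_{\wz C}(\chi_{\boz_{\lz}})(x)\r]^{(n+s+1)/n}$$
for an appropriate $\wz C$, because of the bounded interior property of $\{Q_i^\ast\}_i$ and the Whitney geometry. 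Then by the weighted boundedness of the local maximal function $M^{\loc}$ on $L^{q(n+s+1)/n}_{\fai(\cdot,1)}(\rn)$ (Lemma \ref{l2.4}(iii), noting $q(n+s+1)/n>q>q(\fai)$ and adjusting the cutoff radius via Remark \ref{r2.3}), $\int_{\rn}\mathrm{B}(x)^q\fai(x,1)\,dx\ls\lz^q\int_{\boz_{\lz}}\fai(x,1)\,dx\ls\lz^q\max\{1,\lz^{-p_0}\}\int_{\boz_{\lz}}\fai(x,\cg_N(f)(x))\,dx$, again using lower type $p_0$ and $\cg_N(f)>\lz$ on $\boz_{\lz}$. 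Combining the $\mathrm{A}$ and $\mathrm{B}$ contributions gives \eqref{4.14}.

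For part (ii), since $f\in L^q_{\fai(\cdot,1)}(\rn)$ with $q\in(q(\fai),\fz)$, Proposition \ref{p3.1}(i) gives $\cg^0_N(f)\le M^{\loc}(f)$ pointwise a.e.; on $\boz_{\lz}^\complement$ this is $\le\lz$. Hence $\mathrm{A}(x)\le\lz$ a.e. For $\mathrm{B}$, one uses the crude bound $\sum_i l_i^{n+s+1}(l_i+|x-x_i|)^{-(n+s+1)}\chi_{B(x_i,C_3)}(x)\ls1$, which follows from the same Whitney estimates as above together with $M^{\loc}_{\wz C}(\chi_{\boz_{\lz}})\le1$; thus $\mathrm{B}(x)\ls\lz$ uniformly. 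Therefore $\cg^0_N(g)(x)\ls\lz$ a.e., and since $|g(x)|\le\cg^0_N(g)(x)$ a.e. by Proposition \ref{p3.1}(i) (once we know $g\in L^1_{\loc}(\rn)$, which follows from $g=f-\sum_ib_i$ with $f\in L^q_{\fai(\cdot,1)}(\rn)\subset L^1_{\loc}(\rn)$ and $\sum_i|b_i|\in L^q_{\fai(\cdot,1)}(\rn)$ by Lemma \ref{l4.5}), we conclude $\|g\|_{L^\fz_{\fai(\cdot,1)}(\rn)}\ls\lz$, which is the claim with $C_9$ the resulting constant. The main obstacle throughout is the careful tracking, in part (i), of the mixed homogeneity in $\lz$ that produces the factor $\max\{1/\lz,1/\lz^{p_0}\}$; everything else is a routine adaptation of the weighted arguments in \cite{ta1,yys} to the Musielak--Orlicz setting, made legitimate because $\fai$ satisfies the uniformly local weight condition uniformly in the $t$-variable.
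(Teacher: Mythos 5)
Your overall plan is the same one the paper has in mind when it refers the reader to \cite[Lemma 5.3]{k} and \cite[Lemma 4.7]{yys}: start from the pointwise bound of Lemma \ref{l4.6}, split $\cg^0_N(g)\le\mathrm{A}+\mathrm{B}$, control the Whitney sum $\mathrm{B}$ by $\lz\big[M^{\loc}_{\wz{C}}(\chi_{\boz_\lz})\big]^{(n+s+1)/n}$ and invoke Lemma \ref{l2.4}(iii), and then convert $\fai(\cdot,1)$ to $\fai(\cdot,\cg_N f)$ via the uniform type conditions, keeping track of the resulting powers of $\lz$. Part (ii) is handled correctly as well (including the step $|g|\le\cg^0_N(g)$ a.e.\ once $g\in L^1_{\loc}(\rn)$ is known from Lemmas \ref{l2.5} and \ref{l4.5}).

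However, the intermediate inequalities you write down for the term $\mathrm{A}$ are pointing the wrong way and would not compile into a proof as stated. Writing $m:=\cg_N(f)(x)\le\lz$ on $\boz_\lz^\complement$, the uniformly lower type $p_0$ condition gives $\fai(x,m/\lz)\le C(m/\lz)^{p_0}\fai(x,1)$, which is the \emph{reverse} of your assertion $m^{p_0}\fai(x,1)\ls\lz^{p_0}\fai(x,m/\lz)$; and the factor $\max\{1,\lz^{p_0-1}\}$ you obtain does not match the target $\max\{1/\lz,1/\lz^{p_0}\}$. The same slip appears in the $\mathrm{B}$ estimate, where $\max\{1,\lz^{-p_0}\}$ should read $\max\{1/\lz,1/\lz^{p_0}\}$. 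The clean way to do the conversion is to apply the type conditions in their inverse form, splitting on the size of $m$ rather than on $m/\lz$: if $m\ge1$, lower type with $s:=1/m\le1$, $t:=m$ gives $\fai(x,1)\le Cm^{-p_0}\fai(x,m)$, hence $m^q\fai(x,1)\le Cm^{q-p_0}\fai(x,m)\le C\lz^{q-p_0}\fai(x,m)$ on $\boz_\lz^\complement$; if $m<1$, upper type $1$ with $s:=1/m\ge1$, $t:=m$ gives $\fai(x,1)\le Cm^{-1}\fai(x,m)$, hence $m^q\fai(x,1)\le Cm^{q-1}\fai(x,m)$, and since $m\le\lz$ and $q\ge1$ this is $\le C\lz^q\max\{1/\lz,1/\lz^{p_0}\}\fai(x,m)$. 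The same two cases, now with $m>\lz$, give $\fai(x,1)\le C\max\{1/\lz,1/\lz^{p_0}\}\fai(x,m)$ on $\boz_\lz$ for the $\mathrm{B}$ term. With this repair your proof matches the intended argument and is complete.
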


The proof of Lemma \ref{l4.7} is similar to those of \cite[Lemma
5.3]{k} and \cite[Lemma 4.7]{yys}. We omit the details. Moreover,
from Lemma \ref{l4.7}, we deduce the following corollary, whose
proof is similar to that of \cite[Proposition 5.3]{k}. We omit the details.

\begin{corollary}\label{c4.1}
Let $\fai$ be a growth function as in Definition \ref{d2.3},
$q(\fai)$ as in \eqref{2.9}, $q\in(q(\fai),\fz)$ and integer $N\ge
N_{\fai}$, where $N_{\fai}$ is as in \eqref{3.20}. Then
$h_{\fai,\,N}(\rn)\cap L^q_{\fai(\cdot,1)} (\rn)$ is dense in
$h_{\fai,\,N}(\rn)$.
\end{corollary}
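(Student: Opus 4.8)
The plan is to use the Calder\'on-Zygmund decomposition recalled in this section together with the estimates in Lemma \ref{l4.4} and Lemma \ref{l4.7}. Fix $f\in h_{\fai,\,N}(\rn)$ and fix $q\in(q(\fai),\fz)$. For each $\lz>\inf_{x\in\rn}\cg_N(f)(x)$, write the Calder\'on-Zygmund decomposition $f=g^{\lz}+\sum_i b_i^{\lz}$ of degree $s$ and height $\lz$ associated with $\cg_N(f)$, where $s$ is chosen so that $N>s\ge\lfz n(q(\fai)/p_0-1)\rfz$ for some $p_0\in(0,i(\fai))$ (such $s$ exists since $N\ge N_\fai$). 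By Lemma \ref{l4.7}(i), $\cg^0_N(g^{\lz})\in L^q_{\fai(\cdot,1)}(\rn)$, and since $\cg^0_N(g^\lz)$ controls $g^\lz$ pointwise a.e.\ by Proposition \ref{p3.1}(i) (using $g^\lz\in L^1_{\loc}(\rn)$, which follows from the structure of the decomposition), we get $g^{\lz}\in L^q_{\fai(\cdot,1)}(\rn)$; moreover $g^{\lz}\in h_{\fai,\,N}(\rn)$ because $\cg^0_N(g^\lz)\in L^\fai(\rn)$ (indeed $g^\lz=f-\sum_ib_i^\lz$ with both terms in $h_{\fai,\,N}(\rn)$ by Lemma \ref{l4.4}). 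Hence $g^{\lz}\in h_{\fai,\,N}(\rn)\cap L^q_{\fai(\cdot,1)}(\rn)$ for every admissible $\lz$.

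It remains to show $g^{\lz}\to f$ in $h_{\fai,\,N}(\rn)$ as $\lz\to\fz$, i.e.\ $\sum_i b_i^{\lz}\to0$ in $h_{\fai,\,N}(\rn)$. Since $f-g^{\lz}=\sum_ib_i^\lz$ and, by Lemma \ref{l4.4}, $\cg^0_N(\sum_ib_i^\lz)\le\sum_i\cg^0_N(b_i^\lz)$ with
$$\int_{\rn}\fai\lf(x,\cg^0_N\lf(\sum_ib_i^\lz\r)(x)\r)\,dx\le C_5\int_{\boz_{\lz}}\fai\lf(x,\cg_N(f)(x)\r)\,dx,$$
where $\boz_{\lz}=\{x\in\rn:\ \cg_N(f)(x)>\lz\}$, I would argue as follows. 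Because $f\in h_{\fai,\,N}(\rn)$, by Theorem \ref{t3.2} (equivalence of the grand maximal functions $\cg_N$ and $\cg_N^0$ up to the maximal-function estimates, together with \eqref{3.11}) we have $\cg_N(f)\in L^\fai(\rn)$; in particular $\int_{\rn}\fai(x,\cg_N(f)(x))\,dx<\fz$ after normalizing $f$ by a suitable dilation. Since $|\boz_{\lz}|\to0$ and $\fai(\cdot,\cg_N(f)(\cdot))\in L^1(\rn)$, the absolute continuity of the Lebesgue integral gives $\int_{\boz_{\lz}}\fai(x,\cg_N(f)(x))\,dx\to0$ as $\lz\to\fz$. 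Applying this to $f/\az$ for each $\az\in(0,\fz)$ in place of $f$ (noting $\cg^0_N(h/\az)=\cg^0_N(h)/\az$ and that the height should then be scaled to $\lz/\az$), one obtains
$$\int_{\rn}\fai\lf(x,\frac{\cg^0_N(f-g^\lz)(x)}{\az}\r)\,dx\le C_5\int_{\{x:\ \cg_N(f)(x)>\lz\}}\fai\lf(x,\frac{\cg_N(f)(x)}{\az}\r)\,dx\longrightarrow0$$
as $\lz\to\fz$, for every $\az$. By the definition of the Luxembourg norm this forces $\|f-g^\lz\|_{h_{\fai,\,N}(\rn)}=\|\cg_N(f-g^\lz)\|_{L^\fai(\rn)}\to0$ (again using Theorem \ref{t3.2} to pass between $\cg_N$ and $\cg^0_N$), which is the desired density.

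The main obstacle I anticipate is the bookkeeping around the dilation $f\mapsto f/\az$: the Calder\'on-Zygmund decomposition, the height $\lz$, and the sets $\boz_\lz$ all transform, and one must verify that the constant $C_5$ in Lemma \ref{l4.4} is independent of these choices (which it is, as stated) so that the tail integral over $\boz_\lz$ controls the modular of $\cg^0_N(f-g^\lz)/\az$ \emph{uniformly} in $\az$ on compact subsets of $(0,\fz)$. A secondary technical point is justifying $g^\lz\in L^1_{\loc}(\rn)$ so that Proposition \ref{p3.1}(i) applies; this follows from the explicit formula $g^\lz=f\chi_{\boz_\lz^\complement}+\sum_iP_i^\lz\zez_i^\lz$ on $l_i<1$ together with Lemma \ref{l4.1} bounding the polynomial pieces by $C_1\lz$ and the local finiteness of $\{Q_i^{\ast}\}_i$. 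Since the corollary is asserted to follow as in \cite[Proposition 5.3]{k}, these points are routine, and I would simply indicate the adaptation of that argument to the local setting, citing Lemmas \ref{l4.4} and \ref{l4.7}.
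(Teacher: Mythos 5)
Your proposal follows the same approach the paper has in mind (the paper omits the proof, pointing to \cite[Proposition 5.3]{k}, which is exactly the Calder\'on--Zygmund route you take): use $f=g^{\lz}+\sum_ib_i^{\lz}$, show $g^{\lz}\in h_{\fai,N}(\rn)\cap L^q_{\fai(\cdot,1)}(\rn)$ via Lemma~\ref{l4.7}(i), and show $\sum_ib_i^{\lz}\to0$ in $h_{\fai,N}(\rn)$ via Lemma~\ref{l4.4}. That is correct.

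One point is slightly off, though. To conclude $g^{\lz}\in L^q_{\fai(\cdot,1)}(\rn)$ you invoke Proposition~\ref{p3.1}(i), which presupposes $g^{\lz}\in L^1_{\loc}(\rn)$, and you propose to justify this via the formula $g^{\lz}=f\chi_{\boz_{\lz}^\complement}+\sum_{l_i<1}P_i^{\lz}\zez_i^{\lz}$. But this in turn requires $f$ itself to be a locally integrable function on $\boz_{\lz}^\complement$, which is not given for a general distribution $f\in h_{\fai,N}(\rn)$; the argument as written is circular. The correct and cleaner route is Proposition~\ref{p3.1}(ii): one has $g^{\lz}\in\cd'(\rn)$ (since $f\in\cd'(\rn)$ and $\sum_ib_i^{\lz}$ converges in $\cd'(\rn)$ by Lemma~\ref{l4.4}), and $\cg^0_N(g^{\lz})\in L^q_{\fai(\cdot,1)}(\rn)$ by Lemma~\ref{l4.7}(i); since $\fai\in\aa^\loc_q(\rn)$ with $q\in(1,\fz)$, Proposition~\ref{p3.1}(ii) then gives $g^{\lz}\in L^q_{\fai(\cdot,1)}(\rn)$ directly. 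With that substitution your proof is sound. (Also: the $\az$-dilation bookkeeping at the end is valid but heavier than needed; it suffices to note $\int_{\rn}\fai(x,\cg_N(f)(x))\,dx<\fz$ by Lemma~\ref{l2.2}(i) together with the uniformly upper type~$1$ property, apply dominated convergence to $\chi_{\boz_{\lz}}\fai(\cdot,\cg_N(f)(\cdot))$, and then use Lemma~\ref{l2.2}(ii) with Theorem~\ref{t3.2} to convert the modular convergence into norm convergence.)
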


\section{Weighted atomic characterizations of $h
_{\fai,\,N}(\rn)$\label{s5}}

\hskip\parindent In this section, we establish the equivalence
between $h_{\fai,\,N}(\rn)$ and $h^{\fai,\,q,\,s}(\rn)$ by using the
Calder\'on-Zygmund decomposition associated with the local grand
maximal function stated in Section \ref{s4}.

Let $\fai$ be a growth function, $q(\fai)$, $i(\fai)$ and
$N_{\fai}$ respectively as in \eqref{2.9}, \eqref{2.3} and
\eqref{3.20}, integer $N\ge N_{\fai}$ and $s_0:=\lfz
n[q(\fai)/i(\fai)-1]\rfz$. {\it Throughout this section, let $f\in
h_{\fai,\,N}(\rn)$.} We take $k_0\in\zz$ such that
$$2^{k_0-1}<\inf_{x\in\rn}\cg_N (f)(x)\le2^{k_0}$$
when $\inf_{x\in\rn}\cg_N (f)(x)>0$ and, when $\inf_{x\in\rn}\cg_N
(f)(x)=0$, let $k_0 :=-\fz$. {\it Throughout the whole section, we
always assume that $k\ge k_0$.} For each integer $k\ge k_0$,
consider the Calder\'on-Zygmund decomposition of $f$ of degree $s$
and height $\lz=2^k$ associated with $\cg_N (f)$. Namely, for any
$k\ge k_0$, by taking $\lz:=2^k$ in \eqref{4.1}, we now write the
Calder\'on-Zygmund decomposition in \eqref{4.7} by
\begin{eqnarray}\label{5.1}
f=g^k +\sum_{i}b_i^k,
\end{eqnarray}
where and in the remainder of this section, we write $\{Q_i\}_i$ in
\eqref{4.2}, $\{\zez_i\}_i$ in \eqref{4.3}, $\{P_i\}_i$ in
\eqref{4.5} and $\{b_i\}_i$ in \eqref{4.6}, respectively, as
$\{Q^k_i\}_i$, $\{\zez^k_i\}_i$, $\{P^k_i\}_i$ and $\{b^k_i\}_i$.
Now, the {\it center} and the {\it sidelength} of $Q^k_i$ are
respectively denoted by $x^k_i$ and $l^k_i$. Recall that for all $i$
and $k$,
\begin{eqnarray}\label{5.2}
\sum_i \zez^k_i =\chi_{\boz_{2^k}},\ \supp(b^k_i)\subset\supp
(\zez^k_i)\subset Q^{k\ast}_i,
\end{eqnarray}
$\{Q^{k\ast}_i\}_i$ has the bounded interior property and, for all
$P\in\cp_s (\rn)$,
\begin{eqnarray}\label{5.3}
\langle f, P\zez^k_i\rangle=\langle P^k_i,P\zez^k_i\rangle.
\end{eqnarray}
Recall that $a:=1+2^{-(11+n)}$ and $Q_i^{k\ast}:=aQ_i^{k}$.

For each integer $k\ge k_0$ and $i,\,j\in\zz_+$, let
$P^{k+1}_{i,\,j}$ be the \emph{orthogonal projection} of
$(f-P^{k+1}_j)\zez^k_i$ on $\cp_s (\rn)$ with respect to the norm
$$\|P\|_j^2:=\lf[\int_{\rn}\zez^{k+1}_j (y)\,dy\r]^{-1}
\int_{\rn}|P(x)|^2 \zez^{k+1}_j (x)\,dx,$$ namely, $P^{k+1}_{i,\,j}$
is the \emph{unique polynomial} of $\cp_s (\rn)$ such that for any
$P\in\cp_s (\rn)$,
\begin{eqnarray}\label{5.4}
\langle(f-P^{k+1}_j)\zez^k_i,P\zez^{k+1}_j\rangle=
\int_{\rn}P^{k+1}_{i,\,j} (x)P(x)\zez^{k+1}_j (x)\,dx.
\end{eqnarray}
 In what follows, let $E^k_1:=\{i\in\zz_+:\ |Q^k_i|\ge1/(2^4 n)\}$,
$E^k_2:=\{i\in\zz_+:\ |Q^k_i|<1/(2^4 n)\}$,
$$F^k_1:=\lf\{i\in\zz_+:\ |Q^k_i|\ge1\r\}$$
and $F^k_2:=\{i\in\zz_+:\ |Q^k_i|<1\}$.

Observe that
\begin{eqnarray}\label{5.5}
P^{k+1}_{i,\,j}\neq0 \,\,\text{if and only if}\,\,Q_i^{k\ast}\cap
Q_j^{(k+1)\ast}\neq\emptyset.
\end{eqnarray} Indeed, this follows directly from
the definition of $P^{k+1}_{i,\,j}$. The following Lemmas
\ref{l5.1} through \ref{l5.3} are just \cite[Lemmas 5.1 through
5.3]{ta1}.

\begin{lemma}\label{l5.1}
Let $\boz_{2^k}$ be as in \eqref{4.1} with $\lz=2^k$,
$Q_i^{k\ast}$ and $l^k_i$ as above.

$\mathrm{(i)}$ If $Q_i^{k\ast}\cap Q_j^{(k+1)\ast}\neq\emptyset$,
then $l^{k+1}_j\le2^4\sqrt{n}l^k_i$ and $ Q_j^{(k+1)\ast}\subset2^6
nQ_i^{k\ast}\subset\boz_{2^k}$.

$\mathrm{(ii)}$ There exists a positive integer $L$ such that for
each $i\in\zz_+$, the cardinality of
$$\lf\{j\in\zz_+:\ Q_i^{k\ast}\cap Q_j^{(k+1)\ast}\neq\emptyset\r\}$$
is bounded by $L$.
\end{lemma}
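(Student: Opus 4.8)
The plan is to obtain both assertions from the geometric properties of the Whitney decomposition recorded in Section \ref{s4} together with one elementary observation: the level sets involved are nested. Since $\cg_N(f)(x)>2^{k+1}$ forces $\cg_N(f)(x)>2^k$, we have $\boz_{2^{k+1}}\subset\boz_{2^k}$, hence $\boz_{2^k}^{\complement}\subset\boz_{2^{k+1}}^{\complement}$. I will also use repeatedly that each Whitney cube $Q$ of an open set $\boz$ obeys $2^{6+n}\diam(Q)\le\dist(Q,\boz^{\complement})\le2^{8+n}\diam(Q)$, that $Q^{\ast}=aQ$ with $a=1+2^{-(11+n)}$, and hence that $\dist(z,Q)\le\frac{a-1}{2}\sqrt{n}\,l(Q)$ and $|z-x_Q|_{\infty}\le\frac{a}{2}l(Q)$ for every $z\in Q^{\ast}$, where $x_Q$ and $l(Q)$ denote the center and sidelength of $Q$.

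For part (i), I would fix $i,j$ with $Q_i^{k\ast}\cap Q_j^{(k+1)\ast}\neq\emptyset$, pick $z$ in this intersection, and write $l_i:=l_i^k$, $l_j:=l_j^{k+1}$. Using $z\in Q_i^{k\ast}$ and the upper Whitney bound for $Q_i^k$,
\[
\dist\lf(z,\boz_{2^k}^{\complement}\r)\le\dist\lf(z,Q_i^k\r)+\dist\lf(Q_i^k,\boz_{2^k}^{\complement}\r)\le\lf(\frac{a-1}{2}+2^{8+n}\r)\sqrt{n}\,l_i,
\]
while $z\in Q_j^{(k+1)\ast}$ and the lower Whitney bound for $Q_j^{k+1}$ give
\[
\dist\lf(z,\boz_{2^{k+1}}^{\complement}\r)\ge\dist\lf(Q_j^{k+1},\boz_{2^{k+1}}^{\complement}\r)-\dist\lf(z,Q_j^{k+1}\r)\ge\lf(2^{6+n}-\frac{a-1}{2}\r)\sqrt{n}\,l_j.
\]
Because $\boz_{2^k}^{\complement}\subset\boz_{2^{k+1}}^{\complement}$, the first left-hand side dominates the second, and keeping track of the constants yields $l_j\le2^4\sqrt{n}\,l_i$. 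Next, the triangle inequality through $z$ gives $|x_j^{k+1}-x_i^k|_{\infty}\le\frac{a}{2}(l_i+l_j)$, and combining this with the sidelength bound shows every $y\in Q_j^{(k+1)\ast}$ satisfies $|y-x_i^k|_{\infty}\le2^5a\sqrt{n}\,l_i$, i.e. $Q_j^{(k+1)\ast}\subset 2^6 n\,Q_i^{k\ast}$. Finally, any $y\in 2^6 n\,Q_i^{k\ast}$ obeys $|y-x_i^k|\le2^5 n^{3/2}a\,l_i$, which is strictly less than $\dist(Q_i^k,\boz_{2^k}^{\complement})\ge2^{6+n}\sqrt{n}\,l_i$ because $2^{1+n}>na$; hence $2^6 n\,Q_i^{k\ast}\subset\boz_{2^k}$, finishing (i).

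For part (ii), I would fix $i$ and set $J:=\{j\in\zz_+:\ Q_i^{k\ast}\cap Q_j^{(k+1)\ast}\neq\emptyset\}$. By part (i), each $Q_j^{(k+1)\ast}$ with $j\in J$ lies inside the single cube $2^6 n\,Q_i^{k\ast}$, whose measure is comparable to $|Q_i^{k\ast}|$, with sidelength controlled by part (i); since the family $\{Q_j^{(k+1)\ast}\}_j$ has the bounded interior property, a volume-counting argument of exactly the type used for the classical Whitney decomposition (cf. \cite{b03,st93,ta1}) then bounds $\#J$ by a constant $L$ depending only on $n$.

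I expect the only genuinely delicate step to be the bookkeeping of dimensional constants in part (i) — in particular, verifying that the inflation factor $2^6 n$ in ``$Q_j^{(k+1)\ast}\subset 2^6 n\,Q_i^{k\ast}\subset\boz_{2^k}$'' is compatible with the Whitney safety margin $2^{6+n}$, which reduces to the elementary inequality $2^{1+n}>na$ together with $a$ being close to $1$. Granting part (i), part (ii) is a purely combinatorial count powered by the finite-overlap property of the Whitney cubes and requires no further analysis.
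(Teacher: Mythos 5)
The paper gives no proof of Lemma \ref{l5.1}; it is stated to be identical with \cite[Lemma 5.1]{ta1} and no argument appears in the text, so there is nothing in the paper itself to compare your proposal against. The remarks below therefore evaluate your argument on its own.

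\emph{Part (i).} This is essentially right. One small slip: your first displayed inequality,
$$\dist\bigl(z,\boz_{2^k}^{\complement}\bigr)\le\dist\bigl(z,Q_i^k\bigr)+\dist\bigl(Q_i^k,\boz_{2^k}^{\complement}\bigr),$$
is not generally valid, because the point of $Q_i^k$ realizing $\dist(z,Q_i^k)$ need not be the one realizing $\dist(Q_i^k,\boz_{2^k}^{\complement})$; the correct estimate carries an extra $\diam(Q_i^k)=\sqrt{n}\,l_i^k$. Since the Whitney constants $2^{6+n}$ and $2^{8+n}$ from Section \ref{s4} leave ample slack, this does not change the outcome: the bound $l_j^{k+1}\le 2^4\sqrt{n}\,l_i^k$ and the containments $Q_j^{(k+1)\ast}\subset 2^6n\,Q_i^{k\ast}\subset\boz_{2^k}$ still follow along the lines you sketch.

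\emph{Part (ii).} Here there is a genuine gap. The volume-counting argument you invoke divides $|2^6n\,Q_i^{k\ast}|$ by the smallest $|Q_j^{(k+1)\ast}|$, so it needs a \emph{lower} bound $l_j^{k+1}\gtrsim l_i^k$ uniformly over all $j$ in your set $J$. Part (i) gives only the reverse bound $l_j^{k+1}\lesssim l_i^k$, and nothing among the Whitney properties recorded in Section \ref{s4} supplies a lower one: $l_j^{k+1}$ is comparable to $\dist(\cdot,\boz_{2^{k+1}}^{\complement})$, and since $\boz_{2^{k+1}}\subset\boz_{2^k}$ that distance can be arbitrarily small at a point of $Q_i^{k\ast}\cap Q_j^{(k+1)\ast}$ where $\dist(\cdot,\boz_{2^k}^{\complement})\sim l_i^k$ is not. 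To see that the count cannot be closed by ``the classical Whitney volume argument'' alone, note that if $\boz_{2^{k+1}}$ were a small ball lying well inside a single $Q_i^{k\ast}$, then every one of its Whitney cubes --- and there are infinitely many, shrinking toward the ball's boundary --- would have its $\ast$-dilate contained in $Q_i^{k\ast}$. The bounded interior property of $\{Q_j^{(k+1)\ast}\}_j$ only controls how many of these cubes cover a fixed \emph{point}, which is weaker than the cardinality bound your argument needs. You must either establish the missing lower bound $l_j^{k+1}\gtrsim l_i^k$ by some input beyond the Whitney inequalities, or observe that what is actually used downstream, in the estimate of $h_i^k$ via \eqref{5.9}--\eqref{5.10}, is the pointwise bounded overlap of the $\zez_j^{k+1}$ (each $\zez_j^{k+1}$ is supported in $Q_j^{(k+1)\ast}$), which holds regardless of the cardinality of $J$ and does not require part (ii) in the form stated.
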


\begin{lemma}\label{l5.2}
 There exists a positive constant $C$, independent
of $f$, such that for all $i,\,j\in\zz_+$ and integer $k\ge k_0$
with $l^{k+1}_j \in(0,1)$,
\begin{eqnarray}\label{5.6}
\sup_{y\in\rn}\lf|P^{k+1}_{i,\,j}(y)\zez^{k+1}_j (y)\r|\le C2^{k+1}.
\end{eqnarray}
\end{lemma}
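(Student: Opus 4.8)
The plan is to reduce everything to the single estimate $\|P^{k+1}_{i,\,j}\|_j\ls2^{k+1}$ and then prove the latter by testing the defining identity \eqref{5.4} of $P^{k+1}_{i,\,j}$ against $P^{k+1}_{i,\,j}$ itself. First I would dispose of the trivial case $P^{k+1}_{i,\,j}=0$; otherwise \eqref{5.5} forces $Q_i^{k\ast}\cap Q_j^{(k+1)\ast}\neq\emptyset$, and then Lemma \ref{l5.1}(i) gives $l^{k+1}_j\le2^4\sqrt n\,l^k_i$ and $Q_j^{(k+1)\ast}\subset\boz_{2^k}$. Next, recalling that $0\le\zez_j^{k+1}\le1$, $\supp\zez_j^{k+1}\subset Q_j^{(k+1)\ast}$, and $\zez_j^{k+1}\ge 1/L$ on $Q^{k+1}_j$ (by \eqref{4.3} and $1\le\sum_i\xz_i\le L$), so that $\int_\rn\zez_j^{k+1}\sim|Q^{k+1}_j|\sim|Q_j^{(k+1)\ast}|$, I would invoke the equivalence of all norms on the finite-dimensional space $\cp_s(\rn)$ restricted to a cube (which, after rescaling to the unit cube, only costs constants depending on $n$ and $s$) to get, for every $P\in\cp_s(\rn)$,
$$\sup_{y\in\rn}|P(y)\zez_j^{k+1}(y)|\le\sup_{Q_j^{(k+1)\ast}}|P|\ls\|P\|_j,\qquad\sup_{Q_j^{(k+1)\ast}}|\partial^\az P|\ls(l^{k+1}_j)^{-|\az|}\|P\|_j\quad(|\az|\le N).$$
The first of these reduces the lemma to proving $\|P^{k+1}_{i,\,j}\|_j\ls2^{k+1}$.

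Taking $P=P^{k+1}_{i,\,j}$ in \eqref{5.4} and splitting off the polynomial $P^{k+1}_j$, I obtain
$$\lf[\int_\rn\zez_j^{k+1}\r]\|P^{k+1}_{i,\,j}\|_j^2=\langle f,\,\phi\rangle-\int_\rn P^{k+1}_j\zez_i^k P^{k+1}_{i,\,j}\zez_j^{k+1}\,dx,\qquad\phi:=\zez_i^k P^{k+1}_{i,\,j}\zez_j^{k+1}\in\cd(\rn),$$
with $\supp\phi\subset Q_j^{(k+1)\ast}$. The integral term is handled directly: on $Q_j^{(k+1)\ast}$ one has $|\zez_i^k|\le1$, $|P^{k+1}_j\zez_j^{k+1}|\le C_1 2^{k+1}$ by Lemma \ref{l4.1} applied at height $2^{k+1}$ (legitimate since $l^{k+1}_j\in(0,1)$), and $|P^{k+1}_{i,\,j}|\ls\|P^{k+1}_{i,\,j}\|_j$ by the first paragraph, so this term is $\ls2^{k+1}\|P^{k+1}_{i,\,j}\|_j|Q_j^{(k+1)\ast}|\ls2^{k+1}\|P^{k+1}_{i,\,j}\|_j\int_\rn\zez_j^{k+1}$.

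The main work is the term $\langle f,\phi\rangle$. Since $Q^{k+1}_j$ is a Whitney cube of $\boz_{2^{k+1}}$, I would choose a point $\ol y_j\in\rn\setminus\boz_{2^{k+1}}$ with $\dist(\ol y_j,Q^{k+1}_j)\le2^{9+n}\sqrt n\,l^{k+1}_j$; then $\cg_N(f)(\ol y_j)\le2^{k+1}$ and $\supp\phi\subset Q_j^{(k+1)\ast}\subset B(\ol y_j,2^{10+n}\sqrt n\,l^{k+1}_j)$. Setting $t:=l^{k+1}_j\in(0,1)$ and $\pz(z):=t^n\phi(\ol y_j-tz)$, a direct computation gives $\pz_t(\ol y_j-\cdot)=\phi$, hence $\langle f,\phi\rangle=(f\ast\pz_t)(\ol y_j)$, while $\supp\pz\subset B(0,2^{10+n}\sqrt n)\subset B(0,2^{3(10+n)})$; here the large value of $R=2^{3(10+n)}$ in Definition \ref{d3.1} is exactly what makes this work with $t<1$. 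Expanding $\phi$ by Leibniz' rule and using $\|\partial^\az\zez_i^k\|_\fz\ls(l^k_i)^{-|\az|}\ls(l^{k+1}_j)^{-|\az|}$, $\|\partial^\az\zez_j^{k+1}\|_\fz\ls(l^{k+1}_j)^{-|\az|}$ together with the Markov-type estimate for $P^{k+1}_{i,\,j}$ from the first paragraph, I get $\sup_\rn|\partial^\az\phi|\ls(l^{k+1}_j)^{-|\az|}\|P^{k+1}_{i,\,j}\|_j$ for $|\az|\le N$, hence $\|\pz\|_{\cd_N(\rn)}\ls(l^{k+1}_j)^n\|P^{k+1}_{i,\,j}\|_j$. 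By the definition \eqref{3.2} of $\cg_N$ this yields
$$|\langle f,\phi\rangle|=\lf|(f\ast\pz_t)(\ol y_j)\r|\ls(l^{k+1}_j)^n\|P^{k+1}_{i,\,j}\|_j\,\cg_N(f)(\ol y_j)\ls2^{k+1}\|P^{k+1}_{i,\,j}\|_j\int_\rn\zez_j^{k+1}.$$
Combining the three displays and dividing by $\int_\rn\zez_j^{k+1}>0$ gives $\|P^{k+1}_{i,\,j}\|_j^2\ls2^{k+1}\|P^{k+1}_{i,\,j}\|_j$, hence $\|P^{k+1}_{i,\,j}\|_j\ls2^{k+1}$, and the conclusion follows from the first paragraph.

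The delicate point is the third paragraph: realizing $\langle f,\phi\rangle$ as $(f\ast\pz_t)(\ol y_j)$ with dilation $t=l^{k+1}_j<1$ and $\pz$ supported in the ball $B(0,2^{3(10+n)})$ that appears in $\cg_N$ requires the auxiliary point $\ol y_j$ to lie outside $\boz_{2^{k+1}}$ but still within $O(l^{k+1}_j)$ of $Q_j^{(k+1)\ast}$, which is precisely what the Whitney property of the cover $\{Q^{k+1}_j\}_j$ and the (generously large) choice of $R=2^{3(10+n)}$ provide; all the remaining ingredients (the polynomial inequalities, the Leibniz bookkeeping) are routine and the weight $\fai$ never enters, so this is essentially \cite[Lemma 5.2]{ta1}.
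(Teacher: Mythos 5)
Your argument is correct; the paper gives no proof of this lemma, simply noting that Lemmas~\ref{l5.1} through~\ref{l5.3} are taken from \cite[Lemmas~5.1 through~5.3]{ta1}. Your reconstruction, which tests \eqref{5.4} against $P^{k+1}_{i,j}$ itself, controls the polynomial terms via equivalence of norms on $\cp_s(\rn)$ and Lemma~\ref{l4.1}, and realizes $\langle f,\phi\rangle$ as $(f\ast\pz_t)(\ol{y}_j)$ at a Whitney-distance point $\ol{y}_j\notin\boz_{2^{k+1}}$ with $t=l^{k+1}_j<1$ and $\supp\pz\subset B(0,2^{3(10+n)})$, is precisely the argument that the cited reference contains.
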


\begin{lemma}\label{l5.3}
For any $k\in\zz$ with $k\ge k_0$, 
$$\sum_{i\in\zz_+}\lf(\sum_{j\in
F^{k+1}_2}P^{k+1}_{i,\,j}\zez^{k+1}_j\r)=0,$$ 
where the series
converges both in $\cd'(\rn)$ and pointwise.
\end{lemma}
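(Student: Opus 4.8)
The plan is to prove the sharper, pointwise algebraic fact that for every fixed $k\ge k_0$ and every $j\in F^{k+1}_2$ the polynomial $\pi_j:=\sum_{i\in\zz_+}P^{k+1}_{i,\,j}$ vanishes identically; the two convergence assertions will then follow from the local finiteness of the Whitney cubes. First I would check that $\pi_j$ is a well-defined element of $\cp_s(\rn)$: by \eqref{5.5}, $P^{k+1}_{i,\,j}\ne0$ only when $Q_i^{k\ast}\cap Q_j^{(k+1)\ast}\ne\emptyset$, and since $\{Q_i^{k\ast}\}_i$ is locally finite and $Q_j^{(k+1)\ast}$ is compact, only finitely many $i$ contribute to the sum defining $\pi_j$.

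Next, to see $\pi_j\equiv0$, I would test against an arbitrary $P\in\cp_s(\rn)$. Using the defining relation \eqref{5.4} of $P^{k+1}_{i,\,j}$ and linearity,
$$\int_{\rn}\pi_j(x)P(x)\zez^{k+1}_j(x)\,dx=\sum_{i\in\zz_+}\lf\langle(f-P^{k+1}_j)\zez^k_i,\,P\zez^{k+1}_j\r\rangle .$$
Only finitely many terms are nonzero (those $i$ with $\zez^k_i\zez^{k+1}_j\not\equiv0$), so I may move the sum inside and use $\sum_i\zez^k_i=\chi_{\boz_{2^k}}$ together with $\supp(\zez^{k+1}_j)\subset Q_j^{(k+1)\ast}\subset\boz_{2^{k+1}}\subset\boz_{2^k}$ to collapse the expression to $\lf\langle f-P^{k+1}_j,\,P\zez^{k+1}_j\r\rangle$, which equals $\langle f,P\zez^{k+1}_j\rangle-\langle P^{k+1}_j,P\zez^{k+1}_j\rangle=0$ by the projection identity \eqref{5.3} written at level $k+1$. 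Hence $\int_{\rn}\pi_j(x)P(x)\zez^{k+1}_j(x)\,dx=0$ for all $P\in\cp_s(\rn)$; choosing $P=\pi_j$ gives $\int_{\rn}|\pi_j(x)|^2\zez^{k+1}_j(x)\,dx=0$, and since $\zez^{k+1}_j\equiv1$ on the cube $Q^{k+1}_j$, which has nonempty interior, this forces $\pi_j\equiv0$.

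For the convergence statements I would argue as follows. For each fixed $i$, Lemma \ref{l5.1}(ii) shows that $\sum_{j\in F^{k+1}_2}P^{k+1}_{i,\,j}\zez^{k+1}_j$ has at most $L$ nonzero terms, and Lemma \ref{l5.1}(i) shows each is supported in $2^6nQ_i^{k\ast}$, so this inner function is supported in $2^6nQ_i^{k\ast}$. Since $\{2^6nQ_i^{k\ast}\}_i$ is a locally finite family of bounded overlap (a standard property of the Whitney decomposition, just as for $\{Q_i^{k\ast}\}_i$), at each $x\in\rn$ only finitely many $i$ contribute and, for each $\phi\in\cd(\rn)$, only finitely many $i$ satisfy $2^6nQ_i^{k\ast}\cap\supp\phi\ne\emptyset$; this gives convergence of $\sum_{i\in\zz_+}\big(\sum_{j\in F^{k+1}_2}P^{k+1}_{i,\,j}\zez^{k+1}_j\big)$ both pointwise and in $\cd'(\rn)$. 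At a fixed $x$ the sum is finite, so I may interchange the order of summation and, by the previous paragraph,
$$\sum_{i\in\zz_+}\lf(\sum_{j\in F^{k+1}_2}P^{k+1}_{i,\,j}(x)\zez^{k+1}_j(x)\r)=\sum_{j\in F^{k+1}_2}\zez^{k+1}_j(x)\,\pi_j(x)=0 ,$$
so the limit is $0$ pointwise, hence also in $\cd'(\rn)$.

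The main obstacle I anticipate is purely the bookkeeping that licenses the several interchanges of summation (inside the computation of $\int\pi_jP\zez^{k+1}_j$ and in the convergence step): one must track carefully the cube, or dilated cube, in which each term is supported and invoke the appropriate local finiteness property of the Whitney decompositions of $\boz_{2^k}$ and $\boz_{2^{k+1}}$, so that every sum manipulated term-by-term is genuinely locally finite. Once this is settled, the algebraic core — the collapse $\int\pi_jP\zez^{k+1}_j=\langle f-P^{k+1}_j,P\zez^{k+1}_j\rangle=0$ via \eqref{5.3} and \eqref{5.4} — is short.
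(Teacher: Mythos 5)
Your proof is correct and follows the standard argument (the paper itself simply cites Lemma 5.3 of Tang \cite{ta1} without reproducing it, but this is the canonical route, as in Stein's and Bownik's treatments). The algebraic core — pairing $\pi_j:=\sum_i P^{k+1}_{i,j}$ against an arbitrary $P\in\cp_s(\rn)$, collapsing via $\sum_i\zez^k_i\zez^{k+1}_j=\zez^{k+1}_j$, and applying the projection identity \eqref{5.3} at level $k+1$ — is exactly right, and the bookkeeping about which sums are locally finite is handled sensibly.

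One small inaccuracy worth fixing: you assert that $\zez^{k+1}_j\equiv 1$ on $Q^{k+1}_j$. This is not true; from \eqref{4.3} one only has $\zez^{k+1}_j=\xz^{k+1}_j/\sum_l\xz^{k+1}_l$, and the denominator lies between $1$ and $L$ on $Q^{k+1}_j$, so $\zez^{k+1}_j\ge 1/L$ there rather than $\zez^{k+1}_j\equiv 1$. But the argument only requires that $\zez^{k+1}_j$ be \emph{positive} on a set with nonempty interior, and $\zez^{k+1}_j\ge 1/L>0$ on $Q^{k+1}_j$ certainly suffices: $\int|\pi_j|^2\zez^{k+1}_j=0$ then forces $\pi_j$ to vanish on an open set, hence identically since it is a polynomial. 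Replace "$\zez^{k+1}_j\equiv1$ on $Q^{k+1}_j$" by "$\zez^{k+1}_j>0$ on the interior of $Q^{k+1}_j$" and the proof is airtight.
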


\begin{lemma}\label{l5.4}
Let $f\in h_{\fai,\,N}(\rn)$, $k_0=-\fz$ and $\boz_k:=\{x\in\rn:\
\cg_N(f)(x)>2^k\}$. Then there exists a positive constant $C$ such
that for all $\lz>\inf_{x\in\rn}\cg_N(f)(x)$,
$$\sum_{k=-\fz}^{\fz}\fai\lf(\boz_k,\frac{2^k}{\lz}\r)\le
C\int_{\rn}\fai\lf(x,\frac{\cg_N(f)(x)}{\lz}\r)\,dx.$$
\end{lemma}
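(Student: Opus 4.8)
The plan is to convert the bi-infinite series into a single pointwise estimate via Tonelli's theorem, and then absorb the summation over the dyadic levels into a convergent geometric series, using the positive uniformly lower type of $\fai$.

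First, since every summand is nonnegative, Tonelli's theorem together with the definition $\boz_k=\{x\in\rn:\ \cg_N(f)(x)>2^k\}$ gives
\begin{align*}
\sum_{k=-\fz}^{\fz}\fai\lf(\boz_k,\frac{2^k}{\lz}\r)
&=\sum_{k\in\zz}\int_{\rn}\chi_{\boz_k}(x)\,\fai\lf(x,\frac{2^k}{\lz}\r)\,dx\\
&=\int_{\rn}\lf[\sum_{\gfz{k\in\zz}{2^k<\cg_N(f)(x)}}\fai\lf(x,\frac{2^k}{\lz}\r)\r]\,dx,
\end{align*}
where the inner sum is understood to be $0$ when $\cg_N(f)(x)=0$.

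Next I would fix $x\in\rn$ with $\cg_N(f)(x)>0$ and let $k(x)\in\zz$ be the unique integer with $2^{k(x)}<\cg_N(f)(x)\le2^{k(x)+1}$, so that $\{k\in\zz:\ 2^k<\cg_N(f)(x)\}=\{k\in\zz:\ k\le k(x)\}$. Since $\fai$ is of positive uniformly lower type $p$ for some $p\in(0,1]$ (Definition \ref{d2.3}(iii)), applying \eqref{2.2} with $s:=2^{k-k(x)}\in(0,1]$ and $t:=2^{k(x)}/\lz$ yields, for every integer $k\le k(x)$,
$$\fai\lf(x,\frac{2^k}{\lz}\r)\le C\,2^{(k-k(x))p}\,\fai\lf(x,\frac{2^{k(x)}}{\lz}\r).$$
Summing the resulting geometric series in $j:=k(x)-k\ge0$ and then invoking the monotonicity of $\fai(x,\cdot)$ together with $2^{k(x)}<\cg_N(f)(x)$, we obtain
$$\sum_{k\le k(x)}\fai\lf(x,\frac{2^k}{\lz}\r)\le C\,\fai\lf(x,\frac{2^{k(x)}}{\lz}\r)\sum_{j=0}^{\fz}2^{-jp}\ls\fai\lf(x,\frac{2^{k(x)}}{\lz}\r)\le\fai\lf(x,\frac{\cg_N(f)(x)}{\lz}\r).$$

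Finally, inserting this pointwise bound (which holds trivially when $\cg_N(f)(x)=0$) back into the identity from the first step gives the claimed inequality
$$\sum_{k=-\fz}^{\fz}\fai\lf(\boz_k,\frac{2^k}{\lz}\r)\ls\int_{\rn}\fai\lf(x,\frac{\cg_N(f)(x)}{\lz}\r)\,dx.$$
The only delicate point is the geometric summation in the third step, which depends essentially on the strict positivity $p>0$ of the uniformly lower type index of the growth function $\fai$; the rest is bookkeeping with the dyadic levels. If the right-hand side is infinite the inequality is trivial, and in fact it is finite for every $\lz\in(0,\fz)$ by combining Lemma \ref{l2.2}(i) with the uniformly upper type $1$ and uniformly lower type $p$ of $\fai$, but this finiteness is not needed for the estimate.
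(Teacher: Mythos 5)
Your proof is correct, and it is essentially the argument the paper alludes to when it says the proof ``is similar to that of \cite[Lemma 5.4]{k}'': swap sum and integral by Tonelli, identify the dyadic levels below $\cg_N(f)(x)$, and sum the resulting geometric series using the positive uniformly lower type $p$ of $\fai$. The steps — including the use of $2^{k(x)}<\cg_N(f)(x)\le2^{k(x)+1}$ and the monotonicity of $\fai(x,\cdot)$ — are exactly as intended, and your remark that finiteness of the right-hand side is not needed is a nice observation.
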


The proof of Lemma \ref{l5.4} is similar to that of \cite[Lemma
5.4]{k}. We omit the details.

\begin{lemma}\label{l5.5}
Let $\fai$ be a growth function as in Definition \ref{d2.3},
$q(\fai)$ and $N_{\fai}$ respectively as in \eqref{2.9} and
\eqref{3.20}, integers $N$ and $s$ satisfy $N\ge s\ge N_{\fai}$,
and $q\in(q(\fai),\fz)$. Then
$$\lf[h_{\fai,\,N}(\rn)\cap L^q_{\fai(\cdot,1)}(\rn)\r]\subset h^{\fai,\,\fz,\,s}(\rn)$$
and the inclusion is continuous.
\end{lemma}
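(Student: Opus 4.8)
The plan is to prove the set inclusion and the norm bound $\|f\|_{h^{\fai,\,\fz,\,s}(\rn)}\ls\|f\|_{h_{\fai,\,N}(\rn)}$ simultaneously, by constructing, for any $f\in h_{\fai,\,N}(\rn)\cap L^q_{\fai(\cdot,1)}(\rn)$, an explicit atomic decomposition out of the Calder\'on--Zygmund decompositions of $f$ at the dyadic heights $\lz=2^k$. By homogeneity I may assume $f\not\equiv0$ and, after normalizing, that $\|f\|_{h_{\fai,\,N}(\rn)}=\|\cg_N(f)\|_{L^{\fai}(\rn)}=1$; then $\int_{\rn}\fai(x,\cg_N(f)(x))\,dx\le1$ by Lemma \ref{l2.2}(i). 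For each integer $k\ge k_0$, take the Calder\'on--Zygmund decomposition $f=g^k+\sum_i b^k_i$ of degree $s$ and height $2^k$ as in \eqref{5.1}, with the Whitney cubes $\{Q^k_i\}_i$, the partition of unity $\{\zez^k_i\}_i$, and the polynomials $\{P^k_i\}_i$, $\{P^{k+1}_{i,\,j}\}_{i,\,j}$ of Section \ref{s4} and \eqref{5.4}.

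First I record the behaviour of $\{g^k\}_k$ in the limit. By Lemma \ref{l4.5}, the difference $f-g^k=\sum_i b^k_i$ is supported in $\overline{\boz_{2^k}}$ and has $L^q_{\fai(\cdot,1)}(\rn)$-norm bounded uniformly in $k$; since $\cg_N(f)<\fz$ a.e.\ the sets $\boz_{2^k}$ decrease to a Lebesgue-null set, so H\"older's inequality (with $[\fai(\cdot,1)]^{-1/(q-1)}\in L^1_{\loc}(\rn)$, valid since $\fai\in\aa^{\loc}_q(\rn)$) and the absolute continuity of the Lebesgue integral give $g^k\to f$ in $\cd'(\rn)$ as $k\to\fz$. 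Also, Lemma \ref{l4.7}(ii) gives $\|g^k\|_{L^{\fz}(\rn)}=\|g^k\|_{L^{\fz}_{\fai(\cdot,1)}(\rn)}\ls2^k$. Hence, if $k_0=-\fz$ then $g^k\to0$ in $\cd'(\rn)$ as $k\to-\fz$; while if $k_0>-\fz$ then $\cg_N(f)>2^{k_0-1}$ everywhere forces $\fai(\rn,t)<\fz$ for all $t$, so $g^{k_0}$ is a fixed multiple of a $(\fai,\,\fz)$-single-atom with $\fai(\rn,2^{k_0})\ls\int_{\rn}\fai(x,\cg_N(f)(x))\,dx\le1$. In either case one has the telescoping identity $f=g^{k_0}+\sum_{k\ge k_0}(g^{k+1}-g^k)$ in $\cd'(\rn)$, with $g^{k_0}:=0$ when $k_0=-\fz$.

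The core is to decompose each $g^{k+1}-g^k=\sum_i b^k_i-\sum_j b^{k+1}_j$ into atoms attached to the level-$k$ cubes. Following \cite{ta1,yys} (cf.\ \cite{b03,blyz08,st93}), using $\sum_i\zez^k_i=\chi_{\boz_{2^k}}$, $\sum_j\zez^{k+1}_j=\chi_{\boz_{2^{k+1}}}$, $\boz_{2^{k+1}}\subset\boz_{2^k}$ and $\zez^{k+1}_j=\sum_i\zez^k_i\zez^{k+1}_j$, one rewrites $g^{k+1}-g^k=\sum_i h^k_i$ in $\cd'(\rn)$, where $h^k_i$ arises from $b^k_i$ by removing its part supported in $\boz_{2^{k+1}}$ and inserting the polynomial corrections built from $P^k_i$ and $\{P^{k+1}_{i,\,j}\}_j$; here the cancellation $\sum_i\sum_{j\in F^{k+1}_2}P^{k+1}_{i,\,j}\zez^{k+1}_j=0$ of Lemma \ref{l5.3} is what makes this reorganization legitimate. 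One then checks, exactly as in \cite{ta1}: (a) $\supp(h^k_i)\subset\wz{Q}^k_i:=2^6nQ^{k\ast}_i$, with $\wz{Q}^k_i\subset\boz_{2^k}$ by Lemma \ref{l5.1}(i); (b) $\|h^k_i\|_{L^{\fz}(\rn)}\ls2^k$, using Lemma \ref{l4.1} for $P^k_i\zez^k_i$, Lemma \ref{l5.2} for $P^{k+1}_{i,\,j}\zez^{k+1}_j$, and $|f|\le\cg^0_N(f)\le\cg_N(f)\le2^{k+1}$ on $(\boz_{2^{k+1}})^\complement$ (Proposition \ref{p3.1}(i)); (c) $\int_{\rn}h^k_i(x)x^{\az}\,dx=0$ for all $\az\in\nn^n$ with $|\az|\le s$ whenever $l(\wz{Q}^k_i)<1$ (so that $i\in F^k_2$ and, by Lemma \ref{l5.1}(i), the relevant level-$(k+1)$ cubes lie in $F^{k+1}_2$), which follows from the defining moment properties of $P^k_i$ and $P^{k+1}_{i,\,j}$ together with Lemma \ref{l5.3}. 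Thus, with $C$ the constant from (b), $a^k_i:=C^{-1}2^{-k}\|\chi_{\wz{Q}^k_i}\|_{L^{\fai}(\rn)}^{-1}h^k_i$ is a $(\fai,\,\fz,\,s)$-atom and $f=g^{k_0}+\sum_{k\ge k_0}\sum_i C2^k\|\chi_{\wz{Q}^k_i}\|_{L^{\fai}(\rn)}\,a^k_i$ in $\cd'(\rn)$.

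It remains to estimate the quasi-norm. Since $\|h^k_i\|_{L^{\fz}_{\fai}(\wz{Q}^k_i)}=\|h^k_i\|_{L^{\fz}(\rn)}\ls2^k$ and, by the bounded interior property of $\{Q^{k\ast}_i\}_i$ together with $\wz{Q}^k_i\subset\boz_{2^k}$, one has $\sum_i\fai(\wz{Q}^k_i,t)\ls\fai(\boz_{2^k},t)$ uniformly in $t$, it suffices, by the uniformly upper type $1$ property of $\fai$ and Lemma \ref{l2.3}(ii), to bound $\fai(\rn,2^{k_0})+\sum_{k\ge k_0}\fai(\boz_{2^k},2^k)$ by an absolute constant. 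The first summand was handled in the second paragraph, and the sum satisfies $\sum_{k\ge k_0}\fai(\boz_{2^k},2^k)\ls\int_{\rn}\fai(x,\cg_N(f)(x))\,dx\le1$ by Lemma \ref{l5.4} when $k_0=-\fz$, the same uniformly-lower-type summation handling the case $k_0>-\fz$. Undoing the normalization then gives $\|f\|_{h^{\fai,\,\fz,\,s}(\rn)}\ls\|f\|_{h_{\fai,\,N}(\rn)}$ and, in particular, the inclusion. The hard part is (a)--(c): verifying that $\sum_i h^k_i$ genuinely represents $g^{k+1}-g^k$ and that each $h^k_i$ is a bounded multiple of an honest $(\fai,\,\fz,\,s)$-atom requires careful bookkeeping of the two families of polynomial projections, of the split into small cubes ($l^k_i<1$, carrying moment conditions) and large cubes ($l^k_i\ge1$, carrying none), and of the various $\cd'(\rn)$-convergences; all of this is, however, already carried out in \cite{ta1}, and only the uniform-type properties of $\fai$ need to be invoked to transplant it to the present Musielak--Orlicz setting.
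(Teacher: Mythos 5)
Your argument follows the paper's own proof nearly verbatim: normalize so that $\|\cg_N(f)\|_{L^{\fai}(\rn)}=1$, telescope $f$ through the Calder\'on--Zygmund good parts $g^k$, regroup $g^{k+1}-g^k=\sum_i h^k_i$ via Lemma~\ref{l5.3} and the partition of unity, verify that each $h^k_i$ is a multiple of a $(\fai,\,\fz,\,s)$-atom, and close the estimate with \eqref{5.12} and Lemma~\ref{l5.4}. The two endpoint-limit arguments you give (Lemma~\ref{l4.5} plus H\"older and absolute continuity for $g^k\to f$ as $k\to\fz$, and Lemma~\ref{l4.7}(ii) for $g^k\to0$ as $k\to-\fz$) differ from the paper's choices (Corollary~\ref{c4.1} and Lemma~\ref{l4.7}(i) with Lemma~\ref{l2.5}(ii)), but they are interchangeable routes to the same facts.

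One detail would fail as written and needs to be tightened. You set $\wz{Q}^k_i:=2^6n\,Q^{k\ast}_i$ uniformly in $i$, and your bound $\sum_i\fai(\wz{Q}^k_i,t)\ls\fai(\boz_{2^k},t)$ implicitly uses the comparison $\fai(\wz{Q}^k_i,t)\ls\fai(Q^{k\ast}_i,t)$ (the paper's \eqref{5.12}) \emph{in addition to} the bounded interior property, which only holds for $\{Q^{k\ast}_i\}_i$. That comparison is \emph{not} available for a fixed multiplicative dilate of a large Whitney cube: for $\fai\in\aa^{\loc}_q(\rn)$, Lemma~\ref{l2.4}(ii) (and Remark~\ref{r2.3}) only controls $\fai(2Q,t)\ls\fai(Q,t)$ when $l(Q)<1$, while for $l(Q)\ge1$ it only controls the additive extension $\fai(Q(x_0,l(Q)+\wz C),t)\ls\fai(Q,t)$. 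This is precisely why, in Case~1) of Lemma~\ref{l5.5}, the paper takes $\wz{Q}^k_i:=Q(x^k_i,a(l^k_i+2))$ when $l^k_i\ge 2/(\gz-1)$ and only reverts to $\wz{Q}^k_i:=2^6n\,Q^{k\ast}_i$ when $l^k_i<2/(\gz-1)$; with that case split, \eqref{5.12} holds and your concluding summation goes through. (The rest of your bookkeeping -- supports via Lemma~\ref{l5.1}(i), $L^\fz$-bounds via Lemmas~\ref{l4.1} and~\ref{l5.2}, moment conditions only for small cubes -- matches the paper.)
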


\begin{proof}
Let $f\in (L^q_{\fai(\cdot,1)}(\rn)\cap h_{\fai,\,N}(\rn))$. We
first consider the case that $k_0 =-\fz$. As above, for each
$k\in\zz$, $f$ has a Calder\'on-Zygmund decomposition of degree $s$
and height $\lz=2^k$ associated with $\cg_N (f)$ as in \eqref{5.1},
namely,
 $f=g^k +\sum_{i}b_i^k$. By Corollary \ref{c4.1} and
 Proposition \ref{3.2}, we have that $g^k \to f$ in both
 $h_{\fai,\,N}(\rn)$ and $\cd'(\rn)$ as $k\to\fz$. By
 Lemma \ref{l4.7}(i), $\|g^k\|_{L^q_{\fai(\cdot,1)}(\rn)}\to0$ as
 $k\to-\fz$ and, furthermore, by Lemma \ref{l2.5}(ii),
 $g^k\to0$ in $\cd'(\rn)$ as $k\to-\fz$. Therefore,
 \begin{eqnarray}\label{5.7}
 f=\sum_{k=-\fz}^{\fz}\lf(g^{k+1}-g^k\r)
 \end{eqnarray}
 in $\cd'(\rn)$. Moreover,
 since $\supp(\sum_i b^k_i)\subset\boz_{2^k}$ and $\fai(\boz_{2^k},1)\to 0$ as
 $k\to\fz$, then $g^k\to f$ almost everywhere as
 $k\to\fz$. Thus, \eqref{5.7} also holds almost everywhere.
Similar to \cite[(5.10)]{yys}, from Lemma \ref{l5.3} and
\eqref{5.2} with $\boz_{2^{k+1}}\subset
 \boz_{2^k}$, we deduce that
\begin{eqnarray}\label{5.8}
g^{k+1}-g^k=\sum_i
\lf(b_i^k-\sum_j b^{k+1}_j \zez^k_i +\sum_{j\in
F^{k+1}_2}P^{k+1}_{i,\,j}\zez^{k+1}_j\r)=:\sum_i h^k_i,
\end{eqnarray}
where all the series converge in both $\cd'(\rn)$ and almost
everywhere. Furthermore, by the definitions of $b_j^k$ and
$b_j^{k+1}$ as in \eqref{4.6}, we know that when $l^k_i\in(0,1)$,
\begin{eqnarray}\label{5.9}
h^k_i=f\chi_{\boz_{2^{k+1}}^\complement}\zez^k_i-P^k_i \zez^k_i
+\sum_{j\in F^{k+1}_2} P^{k+1}_j \zez^k_i \zez^{k+1}_j+\sum_{j\in
F^{k+1}_2}P^{k+1}_{i,\,j}\zez^{k+1}_j
\end{eqnarray}
and, when $l^k_i\in[1,\fz)$,
\begin{eqnarray}\label{5.10}
h^k_i=f\chi_{\boz_{2^{k+1}}^\complement}\zez^k_i +\sum_{j\in
F^{k+1}_2} P^{k+1}_j \zez^k_i \zez^{k+1}_j+\sum_{j\in
F^{k+1}_2}P^{k+1}_{i,\,j}\zez^{k+1}_j.
\end{eqnarray}
From Proposition \ref{p3.1}(i), we infer that  for almost every
$x\in\boz_{2^{k+1}}^\complement$, $|f(x)|\le\cg_N
(f)(x)\le2^{k+1}$, which, together with Lemmas \ref{l4.1} and
\ref{l5.1}(ii), \eqref{5.5}, Lemma \ref{5.2}, \eqref{5.9} and
\eqref{5.10}, implies that there exists a positive constant
$C_{10}$ such that for all $i\in\zz_+$,
\begin{eqnarray}\label{5.11}
\|h^k_i\|_{L^{\fz}_{\fai}(\rn)}\le C_{10} 2^k.
\end{eqnarray}
 Now, we show that for each $i$ and $k$, $h^k_i$ is a multiple of some
$(\fai,\,\fz,\,s)$-atom by considering the following two cases for
$i$.

{\it Case} 1) $i\in E^k_1$.  In this case, by the fact that
$l^{k+1}_j <1$ for $j\in F^{k+1}_2$, we see that $Q^{(k+1)\ast}_j
\subset Q(x^k_i,a(l^k_i +2))$ for $j$ satisfying $Q^{k\ast}_i\cap
Q^{(k+1)\ast}_j\neq\emptyset$.  Let $\gz:=1+2^{-12-n}$. Thus, when
$l^k_i \ge 2/(\gz-1)$, if letting $\wz{Q}^k_i:=Q(x^k_i,a(l^k_i
+2))$, then, $\supp(h^k_i)\subset \wz{Q}^k_i\subset\gz Q^{k\ast}_i
\subset\boz_{2^k}$. When $l^k_i <2/(\gz-1)$, if letting
$\wz{Q}^k_i:=2^6 n Q^{k\ast}_i$, then from Lemma \ref{5.1}(i), we
deduce that $\supp(h^k_i)\subset\wz{Q}^k_i\subset\boz_{2^k}$. By
this and \eqref{5.11}, we conclude that $h^k_i$ is a multiple  of
$(\fai,\,\fz,\,s)$-atom. Moreover, from the definition of
$\wz{Q}^k_i$, Lemma \ref{l2.4}(ii) and Remark \ref{r2.3} with
$\wz{C}:=2/(\gz-1)$, we deduce that there exists a positive
constant $C_{11}$ such that for all $t\in[0,\fz)$,
\begin{eqnarray}\label{5.12}
\fai(\wz{Q}^k_i,t)\le C_{11}\fai(Q^{k\ast}_i,t).
\end{eqnarray}

{\it Case} 2) $i\in E^k_2$. In this case, if $j\in F^{k+1}_1$, then
$l^k_i<l^{k+1}_j /(2^4 n)$. By Lemma \ref{l5.1}(i), we know that
$Q^{k\ast}_i \cap Q^{(k+1)\ast}_j=\emptyset$ for $j\in F^{k+1}_1$.
From this, \eqref{5.2} and \eqref{5.8}, we infer that
\begin{eqnarray}\label{5.13}
h^k_i&=&\lf(f-P^k_i\r)\zez^k_i-\sum_{j\in F^{k+1}_1}f\zez^{k+1}_j
\zez^k_i -\sum_{j\in F^{k+1}_2}\lf(f-P^{k+1}_{j}\r)\zez^{k+1}_j
\zez^k_i+\sum_{j\in F^{k+1}_2}P^{k+1}_{i,\,j}\zez^{k+1}_j\nonumber\\
&=&\lf(f-P^k_i\r)\zez^k_i -\sum_{j\in
F^{k+1}_2}\lf\{\lf(f-P^{k+1}_{j}\r)\zez^{k+1}_j \zez^k_i
-P^{k+1}_{i,\,j} \zez^{k+1}_j\r\}.
\end{eqnarray}
Let $\wz{Q}^k_i:=2^6 n Q^{k\ast}_i$. Then $\supp
(h^k_i)\subset\wz{Q}^k_i$. Moveover, $h^k_i$ satisfies the desired
moment conditions, which are deduced from the moment conditions of
$(f-P^k_i)\zez^k_i$ (see \eqref{5.3}) and
$(f-P^{k+1}_{j})\zez^{k+1}_j \zez^k_i-P^{k+1}_{i,\,j}\zez^{k+1}_j$
(see \eqref{5.4}). Thus, $h^k_i$ is a multiple of some
$(\fai,\,\fz,\,s)$-atom. Moreover, similar to the proof of
\eqref{5.12}, we see that \eqref{5.12} also holds in this case.

By \eqref{5.11}, \eqref{5.12} and  Lemma \ref{l5.4}, we know that
for all $\lz\in(0,\fz)$,
\begin{eqnarray}\label{5.14}
\sum_{k\in\zz}\sum_i\fai\lf(\wz{Q}^k_i,\frac{\|h^k_i\|_{L^{\fz}_{\fai}
(\rn)}}{\lz}\r)&&\ls\sum_{k\in\zz}\sum_i\fai\lf(Q^{k\ast}_i,
\frac{\|h^k_i\|_{L^{\fz}_{\fai}
(\rn)}}{\lz}\r)\nonumber\\
&&\ls\sum_{k\in\zz}\fai\lf(\boz_k,\frac{2^k}{\lz}\r)
\ls\int_{\rn}\fai\lf(x,\frac{\cg_N(f)(x)}{\lz}\r)\,dx,\hs
\end{eqnarray}
which implies that $f\in h_{\fai,\,N}(\rn)$ and
$\|f\|_{h^{\fai,\,\fz,\,s}(\rn)}\ls\|f\|_{h_{\fai,\,N}(\rn)}$.

Finally, we consider the case that $k_0>-\fz$. In this case, from
$f\in h_{\fai,\,N}(\rn)$, it follows that $\fai(\rn,t)<\fz$ for all
$t\in[0,\fz)$. Adapting the previous arguments, we see that
\begin{eqnarray}\label{5.15}
f=\sum_{k=k_0}^{\fz}\lf(g^{k+1}-g^k\r)+g^{k_0}=:\wz{f}+g^{k_0}
\end{eqnarray}
and, for the function $\wz{f}$, we have a
$(\fai,\,\fz,\,s)$-atomic decomposition same as above, namely,
\begin{eqnarray}\label{5.16}
\wz{f}= \sum_{k\ge k_0,\,i} h^k_i
\end{eqnarray}
and
\begin{eqnarray}\label{5.17}
 &&\blz_{\fz}\lf(\{h^k_i
\}_{k\ge k_0,\,i}\r)\ls\|f\|_{h_{\fai,\,N}(\rn)}.
\end{eqnarray}
By Lemma \ref{l4.7}(ii), we know that
\begin{eqnarray}\label{5.18}
\|g^{k_0}\|_{L^{\fz}_{\fai}(\rn)}\le C_9 2^{k_0}\le2C_9
\inf_{x\in\rn}\cg_N (f)(x)
\end{eqnarray}
and hence, by the nondecreasing property on $t$ and the uniformly
upper type 1 property of $\fai$, we see that
\begin{eqnarray}\label{5.19}
\hs\int_{\rn}\fai\lf(x,\frac{\|g^{k_0}\|_{L^{\fz}_{\fai}
(\rn)}}{\lz}\r)\,dx\ls
\int_{\rn}\fai\lf(x,\frac{2^{k_0}}{\lz}\r)\,dx\ls
\int_{\rn}\fai\lf(x,\frac{\cg_N(f)(x)}{\lz}\r)\,dx,\hs\hs\hs
\end{eqnarray}
where $C_9$ is the same as in Lemma \ref{4.7}(ii). Thus, we see
that $g^{k_0}$ is a multiple of some $(\fai,\,\fz)$-single-atom.
From \eqref{5.15}, \eqref{5.16}, \eqref{5.17} and \eqref{5.19}, it
follows that $f\in h_{\fai,\,N}(\rn)$ and
$\|f\|_{h^{\fai,\,\fz,\,s}(\rn)}\ls\|f\|_{h_{\fai,\,N}(\rn)}$ in
the case that $k_0>-\fz$. This finishes the proof of Lemma
\ref{l5.4}.
\end{proof}

\begin{remark}\label{r5.1}
By its proof, any multiple of $(\fai,\,\fz,\,s)$-atoms in Lemma
\ref{l5.5} can be taken to have supports $Q$ satisfying
$l(Q)\in(0,2]$. Indeed, for any multiple of some
$(\fai,\,\fz,\,s)$-atom $b$ supported in a cube $Q_0$ with
$l(Q_0)>2$, we see that there exist $N_0\in\zz_+$, depending on
$l(Q_0)$ and $n$, and cubes $\{Q_i\}_{i=1}^{N_0}$ satisfying
$l(Q_i)\in[1,2]$ with $i\in\{1,\,\cdots,\,N_0\}$ such that
$\cup_{i=1}^{N_0}Q_i=Q_0$,
\begin{equation}\label{5.20}
1\le\sum_{i=1}^{N_0}\chi_{Q_i}(x)\le C(n)
\end{equation}
for all $x\in Q_0$, and
$b=\frac{1}{\sum_{j=1}^{N_0}\chi_{Q_j}}\sum_{i=1}^{N_0}b\chi_{Q_i}$,
where $C(n)$ is a positive integer, only depending on $n$. For
$i\in\{1,\,\cdots,\,N_0\}$, let
$b_i:=\frac{1}{\sum_{j=1}^{N_0}\chi_{Q_j}}b\chi_{Q_i}$. Then
$\supp(b_i)\subset Q_i$. Moreover, from $b\in
L^{\fz}_{\fai}(Q_0)$, we deduce that $b_i\in L^{\fz}_{\fai}(Q_i)$.
Thus, for any $i\in\{1,\,2,\,\cdots,\,N_0\}$, $b_i$ is a multiple
of some $(\fai,\fz,s)$-atom supported in the cube $Q_i$ and
$b=\sum_{i=1}^{N_0}b_i.$ By the definition of $b_i$,
$\cup_{i=1}^{N_0}Q_i=Q_0$ and \eqref{5.20}, we
know that for any $\lz\in(0,\fz)$,
\begin{eqnarray*}
\sum_{i=1}^{N_0}\fai\lf(Q_i,\frac{\|b_i\|_{L^{\fz}_{\fai}(Q_i)}}{\lz}\r)\ls
\fai\lf(Q_0,\frac{\|b\|_{L^{\fz}_{\fai}(Q_0)}}{\lz}\r).
\end{eqnarray*}
Thus, by the proof of Lemma \ref{l5.4}, we see that the claim holds.
\end{remark}

Now, we state the weighted atomic decompositions of
$h_{\fai,\,N}(\rn)$ as follows.

\begin{theorem}\label{t5.1}
Let $\fai$ be a growth function as in Definition \ref{d2.3},
$q(\fai)$, $m(\fai)$ and $N_{\fai}$ respectively as in
\eqref{2.9}, \eqref{2.10} and \eqref{3.20}. If
$q\in(q(\fai),\fz]$, integers $s$ and $N$ satisfy $N\ge N_{\fai}$
and $N>s\ge m(\fai)$, then
$$h^{\fai,\,q,\,s}(\rn)=h_{\fai,\,N}(\rn)=
h_{\fai,\,N_{\fai}}(\rn)$$ 
with equivalent norms.
\end{theorem}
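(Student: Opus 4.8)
The plan is to establish the chain of inclusions
$$h^{\fai,\,q,\,s}(\rn)\subset h_{\fai,\,N}(\rn)\subset h_{\fai,\,N_{\fai}}(\rn)\subset h^{\fai,\,q,\,s}(\rn),$$
together with the corresponding norm estimates, and then observe that they force all three spaces to coincide with equivalent norms. The first inclusion $h^{\fai,\,q,\,s}(\rn)\subset h_{\fai,\,N}(\rn)$ together with $\|f\|_{h_{\fai,\,N}(\rn)}\ls\|f\|_{h^{\fai,\,q,\,s}(\rn)}$ is exactly Theorem \ref{t3.3}, so there is nothing new to do there. The inclusion $h_{\fai,\,N}(\rn)\subset h_{\fai,\,N_{\fai}}(\rn)$ with $\|f\|_{h_{\fai,\,N_{\fai}}(\rn)}\le\|f\|_{h_{\fai,\,N}(\rn)}$ is immediate from Definition \ref{d3.2} (larger $N$ gives a smaller grand maximal function, hence a smaller space with larger norm) once $N\ge N_{\fai}$. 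Thus the whole theorem reduces to proving
$$h_{\fai,\,N_{\fai}}(\rn)\subset h^{\fai,\,q,\,s}(\rn),\qquad \|f\|_{h^{\fai,\,q,\,s}(\rn)}\ls\|f\|_{h_{\fai,\,N_{\fai}}(\rn)},$$
and this is where the real work lies.

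For the remaining inclusion I would argue in two stages. First, I would handle the case $q\in(q(\fai),\fz)$. Given $f\in h_{\fai,\,N_{\fai}}(\rn)$, Corollary \ref{c4.1} provides a sequence $\{f_k\}_k\subset h_{\fai,\,N_{\fai}}(\rn)\cap L^q_{\fai(\cdot,1)}(\rn)$ converging to $f$ in $h_{\fai,\,N_{\fai}}(\rn)$; passing to a subsequence I may assume $\|f_{k+1}-f_k\|_{h_{\fai,\,N_{\fai}}(\rn)}$ decays geometrically, and $f=f_1+\sum_{k\ge1}(f_{k+1}-f_k)$ in $h_{\fai,\,N_{\fai}}(\rn)$ and in $\cd'(\rn)$. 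Each difference $f_{k+1}-f_k$ lies in $h_{\fai,\,N_{\fai}}(\rn)\cap L^q_{\fai(\cdot,1)}(\rn)$, so by Lemma \ref{l5.5} it admits a $(\fai,\,\fz,\,s)$-atomic decomposition controlled in quasi-norm by $\|f_{k+1}-f_k\|_{h_{\fai,\,N_{\fai}}(\rn)}$. Concatenating these decompositions over $k$ and using the geometric decay together with the subadditivity and lower-type properties of $\fai$ (Lemma \ref{l2.1}(i), Lemma \ref{l2.3}) to sum the resulting series $\sum_{k,i}\fai(\cdot,\cdot)$, I obtain a single $(\fai,\,\fz,\,s)$-atomic decomposition of $f$ with $\|f\|_{h^{\fai,\,\fz,\,s}(\rn)}\ls\|f\|_{h_{\fai,\,N_{\fai}}(\rn)}$. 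Since $s\ge m(\fai)$ and $\fz\ge q\ge q(\fai)$, a $(\fai,\,\fz,\,s)$-atom is also a $(\fai,\,q,\,s)$-atom (by H\"older's inequality and Definition \ref{d3.3}), and likewise for single-atoms, so $h^{\fai,\,\fz,\,s}(\rn)\subset h^{\fai,\,q,\,s}(\rn)$ continuously; this yields $\|f\|_{h^{\fai,\,q,\,s}(\rn)}\ls\|f\|_{h_{\fai,\,N_{\fai}}(\rn)}$.

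For the case $q=\fz$ the same scheme applies but is in fact shorter: one runs Lemma \ref{l5.5} directly on the approximants and assembles the global decomposition exactly as above, obtaining $\|f\|_{h^{\fai,\,\fz,\,s}(\rn)}\ls\|f\|_{h_{\fai,\,N_{\fai}}(\rn)}$. Combining all the inclusions, for any admissible $q\in(q(\fai),\fz]$ we get
$$\|f\|_{h^{\fai,\,q,\,s}(\rn)}\ls\|f\|_{h_{\fai,\,N_{\fai}}(\rn)}\le\|f\|_{h_{\fai,\,N}(\rn)}\ls\|f\|_{h^{\fai,\,q,\,s}(\rn)},$$
so all three quasi-norms are equivalent and the three spaces coincide. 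I expect the main obstacle to be the bookkeeping in the concatenation step: one must verify that, when the geometrically small atomic decompositions of the successive differences $f_{k+1}-f_k$ are merged into one family $\{h^{k}_i\}$, the control $\sum_{k,i}\fai(\wz Q^k_i,\|h^k_i\|/\lz)\le1$ holds for $\lz\sim\|f\|_{h_{\fai,\,N_{\fai}}(\rn)}$; this requires combining the uniformly lower type $p_0$ of $\fai$ with the geometric decay $\sum_k 2^{-kp_0}<\fz$ and Lemma \ref{l2.3}(ii) to absorb the sum, and it is essentially the argument already used in \eqref{5.14}. The convergence of $\sum_{k,i}h^k_i$ to $f$ in $h_{\fai,\,N_{\fai}}(\rn)$ (not merely in $\cd'(\rn)$) also needs a word, but follows from the completeness of $h_{\fai,\,N}(\rn)$ (Proposition \ref{p3.3}) once the $\fai$-sum is finite.
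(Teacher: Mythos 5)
Your overall strategy --- reduce to one hard inclusion, then obtain it by approximating $f$ from $h_{\fai,N}(\rn)\cap L^q_{\fai(\cdot,1)}(\rn)$ via Corollary \ref{c4.1}, decomposing each approximant by Lemma \ref{l5.5}, and summing using the uniformly lower type of $\fai$ --- is exactly the paper's, including the ``$\sum_m 2^{-mp_0}<\fz$'' concatenation step. But your reduction contains a direction error that breaks the chain. You assert that $h_{\fai,\,N}(\rn)\subset h_{\fai,\,N_{\fai}}(\rn)$, claiming ``larger $N$ gives a smaller grand maximal function, hence a smaller space with larger norm.'' The first half of that parenthetical is right, but the conclusion is backwards: a larger $N$ imposes more constraints on $\pz$, hence $\cd_{N,R}(\rn)$ shrinks, hence $\cg_N(f)$ decreases, hence more distributions qualify and the space gets \emph{bigger} with a \emph{smaller} norm. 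The correct inclusion (which the paper states right after Definition \ref{d3.2}) is $h_{\fai,\,N_{\fai}}(\rn)\subset h_{\fai,\,N}(\rn)$ for $N\ge N_{\fai}$. With the direction corrected, proving only $h_{\fai,\,N_{\fai}}(\rn)\subset h^{\fai,\,q,\,s}(\rn)$ does not close the loop: together with Theorem \ref{t3.3} it gives $h^{\fai,\,q,\,s}(\rn)=h_{\fai,\,N_{\fai}}(\rn)\subset h_{\fai,\,N}(\rn)$, but the containment $h_{\fai,\,N}(\rn)\subset h^{\fai,\,q,\,s}(\rn)$ for general $N>N_{\fai}$ is still open. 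The fix is to run the Corollary \ref{c4.1}--Lemma \ref{l5.5} argument at a level $N_1\ge N$ (and $s_1\ge s$, chosen so Lemma \ref{l5.5}'s hypothesis $s_1\ge N_{\fai}$ holds) and close via the chain $h^{\fai,\,\fz,\,s_1}(\rn)\subset h^{\fai,\,q,\,s}(\rn)\subset h_{\fai,\,N_{\fai}}(\rn)\subset h_{\fai,\,N}(\rn)\subset h_{\fai,\,N_1}(\rn)\subset h^{\fai,\,\fz,\,s_1}(\rn)$, which is precisely what the paper does.

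A secondary gap: Lemma \ref{l5.5} requires $s\ge N_{\fai}$, whereas Theorem \ref{t5.1} permits $s$ as small as $m(\fai)<N_{\fai}$. You invoke Lemma \ref{l5.5} at the theorem's $s$ without noticing the mismatch; the paper avoids this by applying the lemma at the enlarged $s_1$ and then using $h^{\fai,\,\fz,\,s_1}(\rn)\subset h^{\fai,\,q,\,s}(\rn)$ (atoms with more vanishing moments are still atoms with fewer). Once these bookkeeping corrections are made, the rest of your argument, including the geometric-decay concatenation, coincides with the paper's proof.
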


\begin{proof}
By Theorem \ref{t3.3} and the definitions of $h^{\fai,\,q,\,s}(\rn)$
and $h_{\fai,\,N}(\rn)$, we know that
$$ h^{\fai,\,\fz,\,s_1}(\rn)\subset
h^{\fai,\,q,\,s}(\rn)\subset h_{\fai,\,N_{\fai}}(\rn)\subset
h_{\fai,\,N}(\rn)\subset h_{\fai,\,N_1}(\rn),$$ where the integers
$s_1$ and $N_1$ are respectively not less than $s$ and $N$, and
the inclusions are continuous. Thus, to prove Theorem \ref{t5.1},
it suffices to prove that for any integers $N$, $s$ satisfying
$N>s\ge m(\fai)$, $h_{\fai,\,N}(\rn)\subset
h^{\fai,\,\fz,\,s}(\rn)$ and, for all $f\in h_{\fai,\,N}(\rn)$,
$\|f\|_{h^{\fai,\,\fz,\,s}(\rn)}\ls \|f\|_{h_{\fai,\,N}(\rn)}$.

Let $f\in h_{\fai,\,N}(\rn)$. From Corollary \ref{c4.1}, we deduce
that there exists a sequence of functions,
$$\{f_m\}_{m\in\zz_+}\subset h_{\fai,\,N}(\rn)\cap
L^q_{\fai(\cdot,1)}(\rn),$$
such that for all $m\in\zz_+$,
\begin{eqnarray}\label{5.21}
\|f_m\|_{h_{\fai,\,N}(\rn)}\le2^{-m} \|f\|_{h_{\fai,\,N}(\rn)}
\end{eqnarray}
and $f=\sum_{m\in\zz_+} f_m$ in $h_{\fai,\,N}(\rn)$. By Lemma
\ref{l5.5}, we know that for each $m\in\zz_+$, $f_m$ has an atomic
decomposition $f=\sum_{i\in\nn}h^m_i$ in $\cd'(\rn)$ with
$\blz_q(\{h^m_i\}_{i})\ls\|f_m\|_{h_{\fai,\,N}(\rn)}$, where
$\{h^m_i\}_{i\in\zz_+}$ is a sequence of multiples of
$(\fai,\,\fz,\,s)$-atoms and $h^m_0$ is a multiple of some
$(\fai,\,\fz)$-single-atom.

Let $h_0:=\sum^{\fz}_{m=1}h^m_0$. Then $h_0$ is a multiple of some
$(\fai,\fz)$-single-atom and
$$f=\sum_{m=1}^{\fz}\sum_{i\in\zz_+}h^m_i+h_0.$$
Take $p_0\in(0,i(\fai))$. Then $\fai$ is of uniformly lower type $p_0$.
From this, Lemma \ref{l2.1}(i) and \eqref{5.21}, we deduce that
\begin{eqnarray*}
&&\sum_{m=1}^{\fz}\sum_{i=1}^{\fz}\fai\lf(Q_{m,i},
\frac{\|h^m_i\|_{L^{\fz}_{\fai}(Q_{m,i})}}
{\|f\|_{h_{\fai,\,N}(\rn)}}\r)+\fai\lf(\rn,
\frac{\|h_0\|_{L^{\fz}_{\fai}(\rn)}}
{\|f\|_{h_{\fai,\,N}(\rn)}}\r)\\
&&\hs\ls\sum_{m=1}^{\fz}\sum_{i=1}^{\fz}\fai\lf(Q_{m,i},
\frac{\|h^m_i\|_{L^{\fz}_{\fai}(Q_{m,i})}}
{2^{m}\|f_m\|_{h_{\fai,\,N}(\rn)}}\r)+\sum_{m=1}^{\fz}\fai\lf(\rn,
\frac{\|h^m_0\|_{L^{\fz}_{\fai}(\rn)}}
{2^{m}\|f_m\|_{h_{\fai,\,N}(\rn)}}\r)\\
&&\hs\ls\sum_{m=1}^{\fz}2^{-mp_0}\ls1,
\end{eqnarray*}
where for each $m\in\zz_+$ and $i\in\zz_+$, $\supp(h^m_i)\subset
Q_{m,\,i}$,
 which implies that $f\in h^{\fai,\,\fz,\,s}(\rn)$ and
$\|f\|_{h^{\fai,\,\fz,\,s}(\rn)}\ls\|f\|_{h_{\fai,\,N}(\rn)}$. This
finishes the proof of Theorem \ref{t5.1}.
\end{proof}

\begin{remark}\label{r5.2}
Theorem \ref{t5.1} completely covers \cite[Theorem\,5.6]{yys} by
taking $\fai$ as in \eqref{1.1}.
\end{remark}

For simplicity, from now on, we denote by $h_{\fai}(\rn)$ the {\it
local Musielak-Orlicz Hardy space $h_{\fai,\,N}(\rn)$}
when $N\ge N_{\fai}$.

\section{Spaces of finite weighted atoms\label{s6}}

\hskip\parindent In this section, we prove the existence of finite
atomic decompositions achieving the norm in some dense subspaces
of $h_{\fai}(\rn)$. This extends the main results in
\cite{msv08,msv09,yz09} to the setting of the local Musielak-Orlicz
Hardy space. As applications, we obtain that for a given
admissible triplet $(\fai,\,q,\,s)$ and a $\beta$-quasi-Banach
space $\mathcal{B}_{\beta}$ with $\beta\in(0,1]$, if $T$ is a
$\mathcal{B}_{\beta}$-sublinear operator, then $T$ uniquely
extends to a bounded $\mathcal{B}_{\beta}$-sublinear operator from
$h_{\fai}(\rn)$ to $\mathcal{B}_{\beta}$ if and only if $T$ maps
all $(\fai,\,q,\,s)$-atoms and $(\fai,\,q)$-single-atoms with
$q<\infty$ (or all continuous $(\fai,\,q,\,s)$-atoms with
$q=\infty$) into uniformly bounded elements of
$\mathcal{B}_{\beta}$.

\begin{definition}\label{d6.1}
Let $(\fai,\,q,\,s)$ be admissible as in Definition \ref{d3.3}.
The \emph{space $h^{\fai,\,q,\,s}_{\fin}(\rn)$} is defined
to be the vector space of all finite linear combinations
of $(\fai,\,q,\,s)$-atoms or $(\fai,\,q)$-single-atoms,
and the \emph{norm} of $f$ in $h^{\fai,\,q,\,s}_{\fin}(\rn)$ by
\begin{eqnarray*}
&&\|f\|_{h^{\fai,\,q,\,s}_{\fin}(\rn)}:=\inf\lf
\{\blz_q(\{h_i\}_{i=0}^k):\ f =\sum_{i=0}^{k}h_i,\,k\in\nn,\ h_i
\ \text{is a multiple of a}\ \r.\\
&&\hspace{9 em}
(\fai,\,q,\,s)\text{-atom}\ \text{or}  \,\,\text{ a}\,\,
(\fai,\,q)\text{-single-atom}\Bigg\}.
\end{eqnarray*}
\end{definition}

Obviously, for any admissible triplet $(\fai,\,q,\,s)$,
$h^{\fai,\,q,\,s}_{\fin}(\rn)$ is dense in $h^{\fai,\,q,\,s}(\rn)$
with respect to the quasi-norm $\|\cdot\|_{h^{\fai,\,q,\,s}(\rn)}$.

Similar to the notion of the uniformly locally dominated
convergence condition introduced in \cite{k}, we introduce the
following notion of the uniformly locally $q$-dominated
convergence condition with $q\in(q(\fai),\fz)$.

\begin{definition}\label{d6.2}
Let $q\in(q(\fai),\fz)$. A growth function $\fai$ is said to
satisfy \emph{uniformly locally $q$-dominated convergence
condition} if the following holds:

Let $K$ be a compact subset of $\rn$ and $\{f_m\}_{m\in\zz_+}$ a
sequence of measurable functions such that $f_m(x)$ tends to
$f(x)$ for almost every $x\in\rn$. If there exists a nonnegative
measurable function $g$ such that $|f_m(x)|\le g(x)$ for almost
every $x\in\rn$ and
$$\sup_{t\in(0,\fz)}\lf[\int_K
\fai(y,t)\,dy\r]^{-1}\int_K|g(x)|^q\fai(x,t)\,dx<\fz$$
when $\fai(\rn,1)=\fz$, or
$$\sup_{t\in(0,\fz)}\lf[\int_{\rn}
\fai(y,t)\,dy\r]^{-1}\int_{\rn}|g(x)|^q\fai(x,t)\,dx<\fz$$
when $\fai(\rn,1)<\fz$, then
\begin{eqnarray}\label{6.1}
\sup_{t\in(0,\fz)}\lf[\int_K
\fai(y,t)\,dy\r]^{-1}\int_K|f_m(x)-f(x)|^q\fai(x,t)\,dx\to0
\end{eqnarray}
as $k\to\fz$ when $\fai(\rn,1)=\fz$, or \eqref{6.1} is true with
$K$ replaced by $\rn$ when $\fai(\rn,1)<\fz$.
\end{definition}

We point out that the growth functions $\fai(x,t):=\oz(x)\Phi(t)$,
with $\oz\in A^{\loc}_{\fz}(\rn)$ and $\Phi$ being an Orlicz
function, or
$$\fai(x,t):=\frac{t^p}{[\log(e+|x|)+\log(e+t^p)]^p}$$
for all $x\in\rn$ and $t\in[0,\fz)$, with
$p\in(0,1]$, satisfy the uniformly locally $q$-dominated
convergence condition with $q\in(q(\fai),\fz)$; see \cite{k}.
In what follows, let $h^{\fai,\,\fz,\,s}_{\fin,\,c}(\rn)$
denote the \emph{set of all $f\in h^{\fai,\,\fz,\,s}_{\fin}(\rn)$
with compact support}.

\begin{theorem}\label{t6.1}
Let $q(\fai)$ be as in \eqref{2.9} and $(\fai,\,q,\,s)$
admissible.

$\mathrm{(i)}$ If $q\in(q(\fai),\fz)$ and $\fai$ satisfies the
uniformly locally $q$-dominated convergence condition, then
$\|\cdot\|_{h^{\fai,\,q,\,s}_{\fin}(\rn)}$ and
$\|\cdot\|_{h_{\fai}(\rn)}$ are equivalent quasi-norms on
$h^{\fai,\,q,\,s}_{\fin}(\rn)\cap L^q_{\fai(\cdot,1)}(\rn)$.

$\mathrm{(ii)}$  The quasi-norms $\|\cdot\|_{h^{\fai,\,\fz,\,s}_{\fin}(\rn)}$ and
$\|\cdot\|_{h_{\fai}(\rn)}$ are equivalent  on
$h^{\fai,\,\fz,\,s}_{\fin,\,c}(\rn)\cap C(\rn)$.
\end{theorem}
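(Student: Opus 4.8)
The plan is to follow the now-standard scheme for finite atomic decompositions (Meda--Sj\"ogren--Vallarino \cite{msv08,msv09}, Yang--Zhou \cite{yz09}), adapted to the Musielak--Orlicz weighted local setting. First, since every $(\fai,q,s)$-atom (or $(\fai,q)$-single-atom) is itself a member of $h^{\fai,q,s}(\rn)$ with the obvious control on $\blz_q$, finiteness of the sum gives immediately, via Theorem \ref{t3.3} together with Lemma \ref{l2.1}(i), that $\|f\|_{h_{\fai}(\rn)}\ls\|f\|_{h^{\fai,q,s}_{\fin}(\rn)}$; this is the easy inequality and applies in both cases (i) and (ii). The real content is the reverse inequality $\|f\|_{h^{\fai,q,s}_{\fin}(\rn)}\ls\|f\|_{h_{\fai}(\rn)}$ for $f$ in the indicated dense subspace.

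For the reverse inequality I would first reduce to the case $q=\fz$. In case (i), given $f\in h^{\fai,q,s}_{\fin}(\rn)\cap L^q_{\fai(\cdot,1)}(\rn)$, I apply the Calder\'on--Zygmund machinery of Section \ref{s4} exactly as in the proof of Lemma \ref{l5.5}: since $f\in L^q_{\fai(\cdot,1)}(\rn)$, Lemma \ref{l4.5} and Lemma \ref{l4.7}(ii) apply, and one obtains the atomic decomposition $f=\sum_{k\ge k_0,\,i}h^k_i$ (plus a single-atom term when $k_0>-\fz$) with $\blz_{\fz}(\{h^k_i\})\ls\|f\|_{h_{\fai}(\rn)}$, where now, crucially, each $h^k_i$ is a \emph{bounded} (continuous, in case (ii)) multiple of a $(\fai,\fz,s)$-atom supported in a cube $\wz{Q}^k_i$ with $l(\wz{Q}^k_i)\in(0,2]$ by Remark \ref{r5.1}. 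The next step is the truncation argument: write $f=\ell^N_1+\ell^N_2$ where $\ell^N_1=\sum_{|k|\le K,\, \text{finitely many }i}h^k_i$ collects the ``large'' atoms and $\ell^N_2$ the tail. Because $f$ itself lies in $L^q_{\fai(\cdot,1)}(\rn)$ (or is continuous with compact support, in case (ii)), the tail $\ell^N_2=f-\ell^N_1$ has a pointwise bound of the form $\|\ell^N_2\|_{L^\fz}\ls 2^{K_0}$ or $\ell^N_2$ is supported in a fixed large cube with small $L^q_{\fai}$-norm; hence after renormalizing, $\ell^N_2$ is itself a fixed multiple of a single $(\fai,q,s)$-atom (or $(\fai,q)$-single-atom) whose $\blz_q$-size is controlled by $\|f\|_{h_{\fai}(\rn)}$ — here I use Lemma \ref{l2.3}(ii) and Lemma \ref{l5.4} to sum the $\fai(\boz_k,\cdot)$ contributions. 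Thus $f=\ell^N_1+\ell^N_2$ is a \emph{finite} linear combination of atoms with $\blz_q$ of the whole combination $\ls\|f\|_{h_{\fai}(\rn)}$, giving $f\in h^{\fai,q,s}_{\fin}(\rn)$ with the desired estimate.

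The technical heart — and the main obstacle — is justifying that the finite partial sum $\ell^N_1$ really captures $f$ up to an error that is a \emph{single} normalized atom, i.e.\ controlling the sup-norm (or the local $L^q_{\fai}$-norm, weighted with the varying weight $\fai(\cdot,t)$ uniformly in $t$) of the tail $f-\ell^N_1$. This is exactly where the \emph{uniformly locally $q$-dominated convergence condition} enters in case (i): it lets one pass to the limit inside $\sup_{t\in(0,\fz)}[\fai(K,t)]^{-1}\int_K|\cdot|^q\fai(x,t)\,dx$, so that $f-\ell^N_1\to 0$ in the relevant $L^q_{\fai}$-sense as $N\to\fz$, with a dominating function supplied by $\sum_i|h^k_i|$ (controlled via Lemma \ref{l4.5}). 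In case (ii), continuity of $f$ together with compact support makes this step elementary: uniform convergence of the atomic series on the fixed compact support replaces the dominated-convergence argument, and Remark \ref{r5.1} guarantees the atoms can be taken continuous with uniformly bounded supports, so the error term is a continuous $(\fai,\fz,s)$-atom after normalization. In both cases one must also track carefully, using Lemma \ref{l2.4}(ii) and Remark \ref{r2.3}, that enlarging cubes from $Q^{k\ast}_i$ to $\wz{Q}^k_i$ costs only a bounded factor in $\fai(\cdot,t)$, so that the bounded-interior property of $\{Q^{k\ast}_i\}_i$ and Lemma \ref{l5.4} still yield $\sum_{k,i}\fai(\wz{Q}^k_i,\|h^k_i\|_{L^\fz_{\fai}}/\lz)\ls\int_\rn\fai(x,\cg_N(f)(x)/\lz)\,dx$, and hence via Lemma \ref{l2.3}(ii) the uniform bound on $\blz_{\fz}$.
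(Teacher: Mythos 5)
Your overall scheme is the same as the paper's: establish the easy embedding $\|f\|_{h_{\fai}(\rn)}\ls\|f\|_{h^{\fai,q,s}_{\fin}(\rn)}$ via Theorem~\ref{t3.3}, then run the Calder\'on--Zygmund atomic decomposition from Lemma~\ref{l5.5} and Remark~\ref{r5.1}, truncate to a finite partial sum (the paper uses $F_M=\{(i,k):|k|+i\le M\}$), and absorb the tail into a single atom (a $(\fai,q,s)$-atom in Case~1, a $(\fai,q)$-single-atom in Case~2), with the uniformly locally $q$-dominated convergence condition doing the work in part~(i) and uniform continuity on a fixed compact support in part~(ii). The control $\blz_q\ls\|f\|_{h_{\fai}(\rn)}$ for the finite part comes from Lemma~\ref{l5.4} and Lemma~\ref{l2.3}(ii), exactly as you say.

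One detail is off and worth flagging. For the dominated-convergence step in~(i) you propose the dominating function $g:=\sum_{k,i}|h^k_i|$ from the infinite Calder\'on--Zygmund decomposition, ``controlled via Lemma~\ref{l4.5}.'' But Lemma~\ref{l4.5} only yields an $L^q_{\fai(\cdot,1)}$ bound (a fixed $t=1$ weight), whereas Definition~\ref{d6.2} requires the \emph{uniform-in-$t$} bound
$\sup_{t>0}[\fai(K,t)]^{-1}\int_K|g(x)|^q\fai(x,t)\,dx<\infty$. That uniformity is not supplied by Lemma~\ref{l4.5}, and the pointwise majorant $\sum_{k,i}|h^k_i|\ls\cg_N(f)$ likewise gives you $\cg_N(f)\in L^q_{\fai(\cdot,1)}$ but not the uniform-in-$t$ condition. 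The paper avoids this entirely by taking $g:=\sum_{i=1}^{N}|h_i|$ with $\{h_i\}_{i=1}^{N}$ a \emph{finite} atomic decomposition of $f$, which exists precisely because $f\in h^{\fai,q,s}_{\fin}(\rn)$; each atom satisfies the required $\sup_t$ bound by Definition~\ref{d3.3}(i), and the finite sum inherits it. In other words, the hypothesis $f\in h^{\fai,q,s}_{\fin}(\rn)$ is used not merely to make sense of the left-hand norm, but to manufacture the correct dominating function. With that substitution your argument is in line with the paper's.
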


\begin{proof}
We first show (i). Let $q\in(q(\fai),\fz)$ and $(\fai,\,q,\,s)$ be
admissible. Obviously, from Theorem \ref{t5.1}, we deduce that
$h^{\fai,\,q,\,s}_{\fin}(\rn)\subset
h^{\fai,\,q,\,s}(\rn)=h_{\fai}(\rn)$ and, for all $f\in
h^{\fai,\,q,\,s}_{\fin}(\rn)$,
$\|f\|_{h_{\fai}(\rn)}\ls
\|f\|_{h^{\fai,\,q,\,s}_{\fin}(\rn)}.$
Thus, to prove (i), we only need to show that for all $f\in
h^{\fai,\,q,\,s}_{\fin}(\rn)\cap L^q_{\fai(\cdot,1)}(\rn)$,
\begin{eqnarray}\label{6.2}
\|f\|_{h^{\fai,\,q,\,s}_{\fin}(\rn)}\ls \|f\|_{h_{\fai}(\rn)}.
\end{eqnarray}
Moreover, by homogeneity, without loss of generality, we may
assume that $f\in h^{\fai,\,q,\,s}_{\fin}(\rn)\cap
L^q_{\fai(\cdot,1)}(\rn)$ with $\|f\|_{h_{\fai}(\rn)}=1$. In the
remainder of this section, for any $f\in
h^{\fai,\,q,\,s}_{\fin}(\rn)$, let $k_0$ be as in Section \ref{s5}
and $\boz_{2^k}$ with $k\ge k_0$ as in \eqref{4.1} with $\lz=2^k$.
Since $f\in(h_{\fai,\,N}(\rn)\cap L^{q}_{\fai(\cdot,1)}(\rn))$, by
Lemma \ref{l5.5}, there exist a multiple of some
$(\fai,\,\fz)$-singe-atom $h_0$ and a sequence $\{h^k_i\}_{k\ge
k_0,\,i}$ of multiples of $(\fai,\,\fz,\,s)$-atoms such that
\begin{eqnarray}\label{6.3}
f=\sum_{k\ge k_0}\sum_{i}h^k_i+h_0
\end{eqnarray}
holds both in $\cd'(\rn)$ and almost everywhere.  First, we claim
that \eqref{6.3} also holds in $L^q_{\fai(\cdot,1)}(\rn)$. For any
$x\in\rn$, by $\rn=\cup_{k\ge k_0-1}(\boz_{2^k}\setminus
\boz_{2^{k+1}})$, we see that there exists $j\in\zz$ such that
$x\in(\boz_{2^j}\setminus \boz_{2^{j+1}})$. By the proof of Lemma
\ref{l5.5}, we know that for all $k>j$, $\supp(h^k_i)\subset
\wz{Q}^k_i\subset\boz_{2^k}\subset\boz_{2^{j+1}}$; then from
\eqref{5.11} and \eqref{5.18}, it follows that
$$\lf|\sum_{k\ge k_0}\sum_{i}h^k_i
(x)\r|+|h_0 (x)|\ls\sum_{k_0\le k\le j}2^k +2^{k_0}\ls
2^j\ls\cg_N(f)(x).$$ Since $f\in L^q_{\fai(\cdot,1)}(\rn)$, by
Proposition \ref{p3.1}(ii), we know that $\cg_N (f)\in
L^q_{\fai(\cdot,1)}(\rn)$. This, combined with the Lebesgue
dominated convergence theorem, implies that $\sum_{k\ge
k_0}\sum_{i}h^k_i +h_0$ converges to $f$ in
$L^q_{\fai(\cdot,1)}(\rn)$, which proves the claim.

Now, we show \eqref{6.2} by considering the following two cases for
$\fai$.

{\it Case} 1) $\fai(\rn,1)=\fz$. In this case, we know that
$k_0=-\fz$ and $h_0 (x)=0$ for almost every $x\in\rn$. Thus, in
this case, \eqref{6.3} has the version $f=\sum_{k\in\zz}
\sum_{i}h^k_i$. Since, when $\fai(\rn,1)=\fz$, all
$(\fai,\,q)$-single-atoms are 0, and hence if $f\in
h^{\fai,\,q,\,s}_{\fin}(\rn)$, then $f$ has compact support.
Assume that $\supp f\subset Q_0:= Q(x_0,r_0)$ and let $\wz{Q}_0:=
Q(x_0,\sqrt{n}r_0 +2^{3(10+n)+1})$. Then for any $\pz\in\cd_N
(\rn)$, $x\in\rn\setminus\wz{Q}_0$ and $t\in(0,1)$, we see that
\begin{eqnarray*}
\pz_t \ast f(x)=\int_{Q(x_0,r_0)}\pz_t
(x-y)f(y)\,dy=\int_{B(x,2^{3(10+n)})\cap Q(x_0,r_0)}\pz_t
(x-y)f(y)\,dy=0.
\end{eqnarray*}
Thus, for any $k\in\zz$, $\boz_{2^k}\subset\wz{Q}_0$, which
implies that  $\supp(\sum_{k\in\zz} \sum_{i}h^k_i)\subset
\wz{Q}_0$. For each positive integer $M$, let
$$F_M:=\{(i,k):\
k\in\zz,\ k\ge k_0, i\in\zz_+,\ |k|+i\le M\}$$
and $f_M:=\sum_{(k,i)\in F_M}h^k_i$. Then, from the above claim, we
deduce that $f_M$ converges to $f$ in $L^q_{\fai(\cdot,1)}(\rn)$.
Moreover, by $f\in h^{\fai,\,q,\,s}(\rn)$, we see that there exist
$N\in\zz_+$ and a sequence $\{h_i\}_{i=1}^N$ of multiples of some
$(\fai,\,q,\,s)$-atoms such that $f=\sum_{i=1}^N h_i$ almost
everywhere. Let $g:=\sum_{i=1}^N |h_i|$. It is easy to see that
for any compact set $K$ of $\rn$,
$$\sup_{t\in(0,\fz)}\lf\{\lf[\int_K
\fai(y,t)\,dy\r]^{-1}\int_{K}|g(x)|^q\fai(x,t)\,dx\r\}<\fz.$$
Then from the assumption that $\fai$ satisfies the uniformly locally
$q$-dominated convergence condition, we infer that
$$\sup_{t\in(0,\fz)}\lf\{\lf[\int_K
\fai(y,t)\,dy\r]^{-1}\int_{K}|f_M(x)-f(x)|^q\fai(x,t)\,dx\r\}\to0$$
as $M\to\fz$. Thus, there exists $M_0\in\zz_+$ such that $f-f_{M_0}$
is a multiple of some $(\fai,\,q,\,s)$-atom and
$\fai(\wz{Q}_0,\|f-f_{M_0}\|_{L^q_{\fai}(\wz{Q}_0)})\ls1$. By this
and Lemma \ref{l5.5}, we see that
$$\|f\|_{h^{\fai,\,q,\,s}(\rn)}\ls\blz_q(\{h^k_i\}_{(k,i)\in
F_{M_0}})+\blz_q(\{f-f_{M_0}\})\ls1,
$$
which implies \eqref{6.2} in Case 1).

{\it Case} 2) $\fai(\rn,1)<\fz$. In this case, $f$ may not have
compact support. For any positive integer $M$, let
$f_M:=\sum_{(k,i)\in F_M}h^k_i+h_0$ and $b_M:=f-f_M$, where $F_M$ is
as in Case 1). Similar to the proof of Case 1), there exists a
positive integer $M_1\in\zz_+$ large enough such that $b_{M_1}$ is a
multiple of some $(\fai,\,q)$-single-atom and
$\fai(\rn,\|b_{M_1}\|_{L^q_{\fai}(\rn)})\ls1$. Thus,
$f=f_{M_1}+b_{M_1}$ is a finite linear atom combination of $f$ and
\begin{eqnarray*}
\|f\|_{h^{\fai,\,q,\,s}_{\fin}(\rn)}&\ls&\blz_q\lf(\{h^k_i\}
_{(i,k)\in F_{M_1}}\r)+\blz_q\lf(\{b_{M_1}\}\r)\\
&\ls&\|f\|_{h^{\fai,\,q,\,s}(\rn)}+\inf\lf\{\lz\in(0,\fz):\
\fai\lf(\rn,\frac{\|b_{M_1}\|_{L^q_{\fai}(\rn)}}{\lz}\r)\le1\r\}\ls1,
\end{eqnarray*}
which implies \eqref{6.2} in Case 2). This finishes the proof of
(i).

We now prove (ii). In this case, similar to the proof of (i), we
only need to prove that for all $f\in
h^{\fai,\,\fz,\,s}_{\fin,\,c}(\rn)$,
$\|f\|_{h^{\fai,\,\fz,\,s}_{\fin}(\rn)}\ls \|f\|_{h_{\fai}(\rn)}$.
Again, by homogeneity, without loss of generality, we may assume
that $\|f\|_{h_{\fai}(\rn)}=1$. Since $f$ has compact support, by
the definition of $\cg_N (f)$, we know that $\cg_N (f)$ also has
compact support. Assume that $\supp(\cg_N (f))\subset B(0,R_0)$
for some $R_0 \in(0,\fz)$. From $f\in L^{\fz}(\rn)$, we deduce
that $\cg_N (f)\in L^{\fz}(\rn)$. Thus, there exists $k_1\in\zz$
such that $\boz_{2^k}=\emptyset$ for any $k\in\zz$ with $k\ge k_1
+1$. By Lemma \ref{l5.5}, there exist a multiple of some
$(\fai,\,\fz)$-singe-atom $h_0$ and a sequence $\{h^k_i\}_{k_1 \ge
k\ge k_0,\,i}$ of multiples of $(\fai,\,\fz,\,s)$-atoms such that
$f=\sum^{k_1}_{k=k_0}\sum_{i}h^k_i+h_0$ holds both  in $\cd'(\rn)$
and almost everywhere. From the fact that $f$ is uniformly
continuous, it follows that for any given $\varepsilon\in(0,\fz)$,
there exists a $\delta\in(0,\fz)$ such that if
$|x-y|<\sqrt{n}\delta/2$, then $|f(x)-f(y)|<\varepsilon$. Without
loss of generality, we may assume that $\delta<1$. Write
$f=f^{\varepsilon}_1 +f^{\varepsilon}_2$ with
$f^{\varepsilon}_1:=\sum_{(i,k)\in G_1}h^k_i +h_0$ and
$f^{\varepsilon}_2:=\sum_{(i,k)\in G_2}h^k_i $, where
$$G_1:=\lf\{(i,k):\ l(\wz{Q}^k_i)\ge\delta,\ k_0 \le k\le k_1\r\},$$
$G_2:=\{(i,k):\ l(\wz{Q}^k_i)<\delta,\ k_0 \le k\le k_1\}$, and
$\wz{Q}^k_i$ is the support of $h^k_i$ (see the proof of Lemma
\ref{l5.5}). For any fixed integer $k\in[k_0,k_1]$, by Lemma
\ref{l5.1}(ii) and $\boz_{2^k}\subset B(0,R_0)$, we see that $G_1$
is a finite set.

For $f^{\varepsilon}_2$, similar to the proof of
\cite[pp.\,44-45]{yys}, we know that
$|f^{\varepsilon}_2|\ls\sum_{k=k_0}^{k_1}\varepsilon\ls(k_1-k_0)
\varepsilon.$ By the arbitrariness of $\varepsilon$, $\supp
(f^{\varepsilon}_2)\subset B(0,R_0)$ and
$|f^{\varepsilon}_2|\ls(k_1-k_0) \varepsilon$, we choose
$\varepsilon$ small enough such that $f^{\varepsilon}_2$ is an
arbitrarily small multiple of some $(\fai,\,\fz,\,s)$-atom. In
particular, we choose $\varepsilon_0\in(0,\fz)$ such that
$\fai(B(0,R_0),\|f^{\varepsilon_0}_2\|_{L^{\fz}_{\fai}(\rn)})\ls1$.
Then $f=\sum_{(i,k)\in G_1}h^k_i+h_0 +f^{\varepsilon_0}_2$ is a
finite atomic decomposition of $f$, and
$\|f\|_{h^{\fai,\,\fz,\,s}(\rn)}\ls\|f\|_{h_{\fai}(\rn)}+1\ls1$,
which completes the proof of Theorem \ref{t6.1}.
\end{proof}

\begin{remark}\label{r6.1}
(i) Let $q(\fai)$ be as in \eqref{2.9} and $(\fai,\,q,\,s)$
admissible with $q\in(q(\fai),\fz)$. From the proof of Lemma
\ref{l5.5}, we infer that $h^{\fai,\,q,\,s}_{\fin}(\rn)\cap
L^q_{\fai(\cdot,\,1)}(\rn)$ is dense in $h_{\fai}(\rn)\cap
L^q_{\fai(\cdot,\,1)}(\rn)$ with respect to the quasi-norm
$\|\cdot\|_{h_{\fai}(\rn)}$, which, together with Corollary
\ref{c4.1}, implies that $h^{\fai,\,q,\,s}_{\fin}(\rn)\cap
L^q_{\fai(\cdot,\,1)}(\rn)$ is dense in $h_{\fai}(\rn)$ with
respect to the quasi-norm $\|\cdot\|_{h_{\fai}(\rn)}$.

(ii) Obviously, when $\fai(\rn,1)=\fz$,
$$h^{\fai,\,\fz,\,s}_{\fin,\,c}(\rn)\cap
C(\rn)=h^{\fai,\,\fz,\,s}_{\fin}(\rn)\cap C(\rn).$$
\end{remark}

As an application of Theorem \ref{t6.1}, we establish the
boundedness on $h_{\fai}(\rn)$ of quasi-Banach-valued sublinear
operators.

Recall that a {\it quasi-Banach space $\cb$} is a vector space
endowed with a quasi-norm $\|\cdot\|_{\cb}$ which is nonnegative,
non-degenerate (namely, $\|f\|_{\cb}=0$ if and only if $f= 0$),
homogeneous, and obeys the quasi-triangle inequality, namely,
there exists a positive constant $K\in[1,\fz)$ such that for all
$f,\,g\in\cb$,
$$\|f + g\|_{\cb}\le K(\|f\|_{\cb} + \|g\|_{\cb}).$$

Let $\bz\in(0, 1]$. As in \cite{k,yz08,yz09}, a \emph{quasi-Banach
space $\cb_{\bz}$} with the quasi-norm $\|\cdot\|_{\cb_{\bz}}$ is
called a {\it $\bz$-quasi-Banach space} if there exists a positive
constant $\kappa\in[1,\fz)$ such that for all $f_j\in\cb_{\bz}$
with $j\in\{1,\,\cdots,\,m\}$,
$$\lf\|\sum_{j=1}^mf_j\r\|^{\bz}_{\cb_{\bz}}\le\kappa\sum_{j=1}^m
\|f_j\|^{\bz}_{\cb_{\bz}}.$$

Notice that any Banach space is a 1-quasi-Banach space, and the
quasi-Banach spaces $l^{\bz}$ and $L^{\bz}_{\oz}(\rn)$ with $\oz\in
A^{\loc}_{\fz}(\rn)$ are typical $\bz$-quasi-Banach spaces. Let
$\fai$ be a growth function as in Definition \ref{d2.3} with the
uniformly lower type $p_0\in(0,1]$. We know that $h_{\fai}(\rn)$ is
a $p_0$-quasi-Banach space.

For any given $\bz$-quasi-Banach space $\cb_{\bz}$ with $\bz\in(0,
1]$ and a linear space $\cy$, an operator $T$ from $\cy$ to
$\cb_{\bz}$ is called {\it $\cb_{\bz}$-sublinear} if there exists
a positive constant $\kappa\in[1,\fz)$  such that for all
$f_j\in\cb_{\bz}$ and $\lz_j\in\ccc$ with $j\in\{1,\,\cdots,\,m\}$,
$$\lf\|T\lf(\sum_{j=1}^m \lz_j f_j\r)\r\|^{\bz}_{\cb_{\bz}}\le\kappa\sum_{j=1}^m
|\lz_j|^{\bz}\|T(f_j)\|_{\cb_{\bz}}^{\bz},$$ and
$\|T(f)-T(g)\|_{\cb_{\bz}}\le \kappa\|T(f-g)\|_{\cb_{\bz}}$ for
all $f,\,g\in\cy$ (see \cite{yz08,yz09}).

We remark that if $T$ is linear, then $T$ is
$\cb_{\bz}$-sublinear. Moreover, if $\cb_{\bz}$ is a space of
functions, and $T$ is nonnegative and sublinear in the classical
sense, then $T$ is also $\cb_{\bz}$-sublinear.

\begin{theorem}\label{t6.2}
Let $q(\fai)$ be as in \eqref{2.9} and $(\fai,\,q,\,s)$
admissible. Let $\cb_{\bz}$ be a $\bz$-quasi-Banach space with
$\bz\in(0,1]$ and $\wz{p}$ a uniformly upper type of $\fai$
satisfying $\wz{p}\in(0,\bz]$.

$\mathrm{(i)}$ Let $q\in(q(\fai),\fz)$, $\fai$ satisfy the
uniformly locally $q$-dominated convergence condition and $T:\
h^{\fai,\,q,\,s}_{\fin}(\rn)\to \cb_{\bz}$ be a
$\cb_{\bz}$-sublinear operator. Then $T$ uniquely extends to a
bounded $\cb_{\bz}$-sublinear operator from $h_{\fai}(\rn)$ to
$\cb_{\bz}$ if and only if
\begin{eqnarray*}
&&S:=\sup\lf\{\|T(a)\|_{\cb_{\bz}}:\ a \ \text{is
any}\,(\fai,\,q,\,s)\text{-atom with}\,\supp (a)\subset
Q\ \r.\\
&& \hspace{4 em}\text{or\ any}\ (\fai,\,q)\text{-single-atom}\big\}<\fz.
\end{eqnarray*}

$\mathrm{(ii)}$ Let $\fai$ satisfy the uniformly locally
$q_0$-dominated convergence condition for some
$q_0\in(q(\fai),\fz)$ and $T$ be a $\cb_{\bz}$-sublinear operator
defined on all continuous $(\fai,\,\fz,\,s)$-atoms. Then $T$
uniquely extends to a bounded $\cb_{\bz}$-sublinear operator from
$h_{\fai}(\rn)$ to $\cb_{\bz}$ if and only if
$$S:=\sup\{\|T(a)\|_{\cb_{\bz}}:\ a \ \text{is any continuous}\
(\fai,\,\fz,\,s)\text{-atom}\}<\fz.$$
\end{theorem}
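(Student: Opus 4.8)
The plan is to prove the two directions of each equivalence, the necessity being immediate and the sufficiency being the substantial part. For necessity in (i): if $T$ extends to a bounded $\cb_\bz$-sublinear operator from $h_\fai(\rn)$ to $\cb_\bz$, then for any $(\fai,q,s)$-atom $a$ supported in a cube $Q$ (or any $(\fai,q)$-single-atom) one has $a\in h^{\fai,\,q,\,s}_{\fin}(\rn)$ and, by Theorem \ref{t3.3} together with Definition \ref{d3.4}(ii), $\|a\|_{h_\fai(\rn)}\ls\|a\|_{h^{\fai,\,q,\,s}(\rn)}\ls1$ uniformly in $a$; hence $\|T(a)\|_{\cb_\bz}\ls\|T\|_{h_\fai(\rn)\to\cb_\bz}$, so $S<\fz$. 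The same argument, using continuous atoms and the density in Remark \ref{r6.1}(ii) when $\fai(\rn,1)=\fz$, handles necessity in (ii).

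For sufficiency in (i), first I would show $T$ is bounded on the dense subspace $h^{\fai,\,q,\,s}_{\fin}(\rn)\cap L^q_{\fai(\cdot,1)}(\rn)$. Fix such an $f$ with $\|f\|_{h_\fai(\rn)}=1$. By Theorem \ref{t6.1}(i), $\|f\|_{h^{\fai,\,q,\,s}_{\fin}(\rn)}\ls1$, so there is a \emph{finite} decomposition $f=\sum_{i=0}^{k}h_i$, with each $h_i$ a multiple of a $(\fai,q,s)$-atom (supported in a cube $Q_i$) or of a $(\fai,q)$-single-atom, such that $\blz_q(\{h_i\}_{i=0}^k)\ls1$. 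Writing $h_i=\lz_i a_i$ with $a_i$ an actual atom and $\lz_i:=\|h_i\|_{L^q_{\fai}(Q_i)}\|\chi_{Q_i}\|_{L^\fai(\rn)}$ (and analogously for the single-atom), the $\cb_\bz$-sublinearity of $T$ gives
\begin{equation*}
\|T(f)\|_{\cb_\bz}^\bz\le\kappa\sum_{i=0}^{k}|\lz_i|^\bz\|T(a_i)\|_{\cb_\bz}^\bz
\le\kappa S^\bz\sum_{i=0}^{k}|\lz_i|^\bz.
\end{equation*}
The key step is then to bound $\sum_i|\lz_i|^\bz$ by a constant multiple of $\|f\|_{h_\fai(\rn)}^\bz=1$ using the machinery behind Lemma \ref{l5.5}: the $h_i$ come from the Calderón–Zygmund decompositions at heights $2^k$, so $\lz_i\sim2^k\|\chi_{\wz Q_i^k}\|_{L^\fai(\rn)}$ on the relevant pieces, and the uniformly upper type $\wz p\le\bz$ of $\fai$ together with Lemma \ref{l2.3}(ii), estimate \eqref{5.14} and Lemma \ref{l5.4} yield, after replacing $\fai$ by $\fai(\cdot/\lz)$ and optimizing in $\lz$, exactly $\sum_i|\lz_i|^\bz\ls\|f\|_{h_\fai(\rn)}^\bz$. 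Hence $T$ is bounded on the dense subspace; a standard density and completeness argument (Proposition \ref{p3.3}, Remark \ref{r6.1}(i), and the continuity property $\|T(f)-T(g)\|_{\cb_\bz}\le\kappa\|T(f-g)\|_{\cb_\bz}$ of $\cb_\bz$-sublinear operators) then produces the unique bounded extension to all of $h_\fai(\rn)$.

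For sufficiency in (ii) I would argue the same way, but replacing $h^{\fai,\,q,\,s}_{\fin}(\rn)\cap L^q_{\fai(\cdot,1)}(\rn)$ by $h^{\fai,\,\fz,\,s}_{\fin,\,c}(\rn)\cap C(\rn)$ and invoking Theorem \ref{t6.1}(ii): every $f$ in this class admits a finite atomic decomposition into \emph{continuous} $(\fai,\fz,s)$-atoms (plus, when $\fai(\rn,1)<\fz$, a continuous single-atom), with comparable quasi-norm, so that $\|T(f)\|_{\cb_\bz}^\bz\ls S^\bz\sum_i|\lz_i|^\bz\ls S^\bz\|f\|_{h_\fai(\rn)}^\bz$. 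To see the decomposition can be taken continuous: the construction in the proof of Theorem \ref{t6.1}(ii) already produces, from a continuous compactly supported $f$, finitely many $h_i^k$ that are finite sums of $f$ times smooth cut-offs and polynomials times smooth cut-offs, hence continuous, together with the small error $f^{\varepsilon_0}_2$ which is also continuous. The density of $h^{\fai,\,\fz,\,s}_{\fin,\,c}(\rn)\cap C(\rn)$ in $h_\fai(\rn)$ follows from Corollary \ref{c4.1} together with the mollification in Lemma \ref{l2.5}(iii), after which the extension is again obtained by continuity.

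I expect the main obstacle to be the estimate $\sum_i|\lz_i|^\bz\ls\|f\|_{h_\fai(\rn)}^\bz$: because $\fai$ is non-convex and $x$-dependent, one cannot simply pass through an $\ell^\bz$ triangle inequality on the atomic coefficients, and one must instead run the "Musielak" version of the argument — normalize by $\lz$, use the uniformly lower/upper type bounds to compare $\fai(Q_i,\lz_i/\lz)$ with $(\lz_i/\lz)^\bz$ suitably, sum via \eqref{5.14} and Lemma \ref{l5.4}, and then apply Lemma \ref{l2.3}(ii) to return to the quasi-norm. A secondary technical point in (ii) is verifying that the finite decomposition really can be chosen with continuous atoms; this is where the explicit form of $h^k_i$ in the proof of Lemma \ref{l5.5} and Theorem \ref{t6.1}(ii), rather than an abstract atomic decomposition, is essential.
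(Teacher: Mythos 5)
Your overall plan matches the paper's: necessity via Theorem \ref{t3.3} with the $(\fai,q,s)$-atom normalization, sufficiency by first bounding $T$ on the dense subspace provided by Theorem \ref{t6.1} and then extending by density and completeness. The necessity argument is fine.

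However, the crucial step in the sufficiency argument --- passing from the bound $\blz_q(\{\lz_i a_i\}_{i=0}^l)\ls\|f\|_{h_\fai(\rn)}$ (which Theorem \ref{t6.1}(i) supplies) to the bound $\sum_i|\lz_i|^{\wz p}\ls\|f\|_{h_\fai(\rn)}^{\wz p}$ --- is not correctly formulated. You propose to ``go back'' to the Calder\'on--Zygmund decomposition, read off $\lz_i\sim 2^k\|\chi_{\wz Q^k_i}\|_{L^\fai(\rn)}$, and then ``sum via \eqref{5.14} and Lemma \ref{l5.4} and apply Lemma \ref{l2.3}(ii).'' This detour is both unnecessary and problematic: the finite decomposition given by Theorem \ref{t6.1}(i) is a \emph{truncation} of the CZ decomposition plus an error atom $f-f_{M_0}$, so the coefficients do not all have the form $2^k\|\chi_{\wz Q^k_i}\|_{L^\fai(\rn)}$; and estimate \eqref{5.14}, Lemma \ref{l5.4} and Lemma \ref{l2.3}(ii) produce bounds on $\blz_q$ itself, not bounds of the type $\sum_i|\lz_i|^{\wz p}\ls\blz_q^{\wz p}$ that you actually need. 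The paper's argument at this point does \emph{not} return to the CZ machinery: it is a direct algebraic step on the abstract finite decomposition. One sets $\wz\lz_0:=\{\sum_{i}|\lz_i|^{\wz p}\}^{1/\wz p}$, observes that $|\lz_i|/\wz\lz_0\le 1$ for every $i$, applies the uniformly upper type $\wz p$ inequality to get $\fai(Q_i,|\lz_i|\|\chi_{Q_i}\|_{L^\fai(\rn)}^{-1}/\wz\lz_0)\gs(|\lz_i|/\wz\lz_0)^{\wz p}\fai(Q_i,\|\chi_{Q_i}\|_{L^\fai(\rn)}^{-1})$, uses Lemma \ref{l2.2}(i) (not Lemma \ref{l2.3}(ii)) to evaluate $\fai(Q_i,\|\chi_{Q_i}\|_{L^\fai(\rn)}^{-1})=1$, and then sums to get a lower bound $\gs 1$ for the normalized $\fai$-sum at $\lz=\wz\lz_0$. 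Combined with the uniformly lower type of $\fai$, this forces $\wz\lz_0\ls\blz_q(\{\lz_ia_i\})\ls\|f\|_{h_\fai(\rn)}$, which together with $\|T(f)\|_{\cb_\bz}\ls\{\sum_i|\lz_i|^{\wz p}\}^{1/\wz p}=\wz\lz_0$ (using $\wz p\le\bz$ and $S<\fz$) closes the argument. In short: you correctly identify that the uniformly upper type $\wz p\le\bz$ is the key hypothesis and that one must compare $\fai$ with a power, but you route the comparison through the CZ estimates, where it does not belong. Once this step is replaced by the direct argument, the rest of (i), and your treatment of (ii) (which follows the same pattern and correctly uses mollification to obtain continuous atoms), go through.
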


\begin{proof}
We first show (i). Obviously, it suffices to show that when
$S<\fz$, $T$ uniquely extends to a bounded $\cb_{\bz}$-sublinear
operator from $h_{\fai}(\rn)$ to $\cb_{\bz}$.   By Theorem
\ref{t6.1}(i) and Remark \ref{r3.1}, without loss of generality,
we may assume that for any $f\in h^{\fai,\,q,\,s}_{\fin}(\rn)$,
there exist a sequence $\{\lz_j\}^l_{j=0}\subset\ccc$ with some
$l\in\zz_+$, a $(\fai,\,q)$-single-atom $a_0$ and $(\fai,\,q,\,
s)$-atoms $\{a_j\}^l_{j=1}$ satisfying $\supp (a_j)\subset Q_j$
for $j\in\{1,\,2,\,\cdots,\,l\}$ such that $f =\sum^l_{j=0}\lz_j
a_j$ almost everywhere and
\begin{eqnarray}\label{6.4}
\blz_q\lf(\{\lz_j a_j\}_{j=0}^l\r)
&&=\inf\lf\{\lz\in(0,\fz):\ \sum_{j=1}^l\fai\lf(Q_j,\frac{|\lz_j|
\|\chi_{Q_j}\|_{L^{\fai}(\rn)}^{-1}}{\lz}\r)\r.\nonumber\\
&&\hs\hs\lf.+\fai\lf(\rn,\frac{|\lz_0|
\|\chi_{\rn}\|_{L^{\fai}(\rn)}^{-1}}{\lz}\r)\le 1\r\}\ls\|f\|_{h_{\fai}(\rn)}.
\end{eqnarray}
Then from $S<\fz$ and the assumption that $T$ is
$\cb_{\bz}$-sublinear, we infer that
\begin{eqnarray}\label{6.5}
\|T(f)\|_{\cb_{\bz}}&\ls&\lf\{\sum_{i=0}^l
|\lz_i|^{\bz}\|T(a)\|_{\cb_{\bz}}^{\bz}\r\}^{1/\bz}\ls
\lf\{\sum_{i=0}^l |\lz_i|^{\wz{p}}\|T(a)\|_{\cb_{\bz}}^{\wz{p}}\r\}
^{1/\wz{p}}\ls\lf\{\sum_{i=0}^l
|\lz_i|^{\wz{p}}\r\}^{1/\wz{p}}.\hs\hs\hs
\end{eqnarray}
Since $\fai$ is of uniformly upper type $\wz{p}$, then for all
$x\in\rn$, $t\in(0,1]$ and $s\in(0,\fz)$, we know that
$\fai(x,st)\gs t^{\wz{p}}\fai(x,s)$. Let
$\wz{\lz}_0:=\{\sum_{i=0}^l |\lz_i|^{\wz{p}}\}^{1/\wz{p}}$. Then,
\begin{eqnarray*}
&&\sum_{i=0}^l\fai\lf(Q_i,\frac{|\lz_i|
\|\chi_{Q_i}\|_{L^{\fai}(\rn)}^{-1}}{\wz{\lz}_0}\r)+
\fai\lf(\rn,\frac{|\lz_0|
\|\chi_{\rn}\|_{L^{\fai}(\rn)}^{-1}}{\wz{\lz}_0}\r)\\
&&\hs\gs\lf[\sum_{j=0}^l|\lz_j|^{\wz{p}}\r]^{-1}\lf[\sum_{i=1}^l
|\lz_i|^{\wz{p}}\fai\lf(Q_i,\|\chi_{Q_i}\|_{L^{\fai}(\rn)}^{-1}\r)\r.\\
&&\hs\hs+
|\lz_0|^{\wz{p}}
\fai\lf(\rn,\|\chi_{\rn}\|_{L^{\fai}(\rn)}^{-1}\r)\bigg] \sim1.
\end{eqnarray*}
Thus, from this, we deduce that $\wz{\lz}_0\ls\blz_q(\{\lz_i
a_i\}_{i=0}^l)$, which, together with \eqref{6.4} and \eqref{6.5},
implies that
$$\|T(f)\|_{\cb_{\bz}}\ls\wz{\lz}_0\ls\blz_q(\{\lz_i
a_i\}_{i=0}^l)\ls\|f\|_{h_{\fai}(\rn)}.$$
By Remark \ref{r6.1}(i),
we know that $h^{\fai,\,q,\,s}_{\fin}(\rn)\cap
L^q_{\fai(\cdot,1)}(\rn)$ is dense in $h_{\fai}(\rn)$, which,
together with a density argument, implies the desired conclusion
in this case.

Now, we prove (ii). Similar to the proof of (i), it suffices to
show that when $S<\fz$, $T$ uniquely extends to a bounded
$\cb_{\bz}$-sublinear operator from $h_{\fai}(\rn)$ to
$\cb_{\bz}$. We prove this by considering the following two cases
for $\fai$.

{\it Case} 1) $\fai(\rn,1)=\fz$. In this case, similar to the
proof of (i), using Theorem \ref{t6.1}(ii) and Remark
\ref{r6.1}(ii), we know that for all $f\in
h^{\fai,\,\fz,\,s}_{\fin}(\rn)\cap C(\rn)$,
$\|T(f)\|_{\cb_{\bz}}\ls\|f\|_{h_{\fai}(\rn)}$. To extend $T$ to
the whole $h_{\fai}(\rn)$, we only need to prove that
$h^{\fai,\,\fz,\,s}_{\fin}(\rn)\cap C(\rn)$ is dense in
$h_{\fai}(\rn)$. Since $h^{\fai,\,\fz,\,s}_{\fin}(\rn)$ is dense
in $h_{\fai}(\rn)$, it suffices to prove that
$h^{\fai,\,\fz,\,s}_{\fin}(\rn)\cap C(\rn)$ is dense in
$h^{\fai,\,\fz,\,s}_{\fin}(\rn)$ with respect to the quasi-norm
$\|\cdot\|_{h_{\fai}(\rn)}$.

To see this, let $f\in h^{\fai,\,\fz,\,s}_{\fin}(\rn)$. In this
case, for any $(\fai,\,\fz)$-single-atom $b$, $b(x)=0$ for almost
every $x\in\rn$. Thus, $f$ is a finite linear combination of
$(\fai,\,\fz,\,s)$-atoms. Then there exists a cube $Q_0:=Q(x_0,
r_0)$ such that $\supp(f)\subset Q_0$. Take $\phi\in\cd(\rn)$ such
that $\supp\phi\subset Q(0,1)$ and $\int_{\rn}\phi(x)\,dx=1$. Then
it is easy to see that for any $k\in\zz_+$, $\supp(\phi_k \ast f)
\subset Q(x_0, r_0 +1)$ and $\phi_k \ast f\in\cd(\rn)$. Assume that
$f=\sum_{i=1}^{N}\lz_i a_i$ with some $N\in\zz_+$,
 $\{\lz_i\}_{i=1}^N \subset\ccc$ and $\{a_i\}_{i=1}^N $ being
$(\fai,\,\fz,\,s)$-atoms. Then for any $k\in\zz_+$, $\phi_k \ast
f=\sum_{i=1}^{N}\lz_i \phi_k \ast a_i$. For any $k\in\zz_+$ and
$i\in\{1,\,2\,\cdots,\,N\}$, we now prove that $\phi_k \ast a_i$
 is a  multiple of some continuous $(\fai,\,\fz,\,s)$-atom, which
implies that for any $k\in\zz_+$,
\begin{eqnarray}\label{6.6}
\phi_k \ast f\in h^{\fai,\,\fz,\,s}_{\fin}(\rn)\cap C(\rn).
\end{eqnarray}
For $i\in\{1,\,2,\,\cdots,\,N\}$, assume that
$$\supp(a_i) \subset Q_i := Q(x_i,r_i).$$
Then $\supp(\phi_k \ast a_i)\subset
\wz{Q}_{i,\,k}:= Q(x_i,r_i+1/2^k)$. Moreover,
$$\|\phi_k \ast a_i\|_{L^{\fz}_{\fai}(\rn)}\le\|a_i\|_{L^{\fz}_{\fai}(\rn)}\le
\|\chi_{Q_i}\|_{L^{\fai}(\rn)}^{-1}.$$
Furthermore, for any $\az\in\nn^n$, $\int_{\rn}a_i (x)x^{\az}\,dx=0$ implies that
$\int_{\rn}\phi_k \ast a_i (x) x^{\az}\,dx=0$. Thus,
$$\frac{\|\chi_{Q_i}\|_{L^{\fai}(\rn)}}{\|\chi_{\wz{Q}_{i,\,k}}
\|_{L^{\fai}(\rn)}}\phi_k \ast a_i$$
is a $(\fai,\,\fz,\,s)$-atom.

Likewise, $\supp(f-\phi_k \ast f)\subset Q(x_0, r_0 +1)$ and
$f-\phi_k \ast f$ has the same vanishing moments as $f$. By the
assumption that $\fai$ satisfies the uniformly locally
$q_0$-dominated convergence condition, we know that for any compact
set $K$ of $\rn$,
$$\sup_{t\in(0,\fz)}\lf\{\lf[\int_K\fai(y,t)\,dy\r]^{-1}
\int_K|f(x)-\phi_k\ast f(x)|^{q_0}\fai(x,t)
\,dx\r\}\to0,$$
as $k\to\fz$. Let
\begin{eqnarray*}
c_k:=\sup_{t>0}\lf\{\lf[\int_{Q(x_0,r_0+1)}\fai(y,t)\,dy\r]^{-1}
\int_{Q(x_0,r_0+1)}
|f(x)-\phi_k\ast
f(x)|^{q_0}\fai(x,t)\,dx\r\}^{1/q_0}\|\chi_{Q(x_0,r_0+1)}\|_{L^{\fai}(\rn)}
\end{eqnarray*}
and $a_k:=(f-\phi_k\ast f)/c_k$. Then $a_k$ is a
$(\fai,\,q_0,\,s)$-atom, $f-\phi_k\ast f=c_ka_k$ and $|c_k|\to0$,
as $k\to\fz$. Then
\begin{eqnarray}\label{6.7}
\|f-\phi_k\ast f\|_{h_{\fai}(\rn)}
&&\ls\blz_q(\{c_ka_k\})\nonumber\\
&&=\inf\lf\{\lz\in(0,\fz):\
\fai\lf(Q(x_0,r_0+1),\frac{\|f-\phi_k\ast
f\|_{L^{q_0}_{\fai}(Q(x_0,r_0+1))}}{\lz}\r)\le1\r\}\nonumber\\
&&\hs\ls|c_k|\to0,
\end{eqnarray}
as $k\to\fz$, which, together with \eqref{6.6}, shows the desired
conclusion in this case.

{\it Case} 2) $\fai(\rn,1)<\fz$. In this case, similar to the
proof of Case 1), by Theorem \ref{t6.1}(ii), it suffices to prove
that $h^{\fai,\,\fz,\,s}_{\fin,\,c}(\rn)\cap C(\rn)$ is dense in
$h^{\fai,\,\fz,\,s}_{\fin}(\rn)$ with respect to the quasi-norm
$\|\cdot\|_{h_{\fai}(\rn)}$.

For any $f\in h^{\fai,\,\fz,\,s}_{\fin}(\rn)$, by Remark
\ref{r3.1}, without loss of generality, we may assume that
$$f=\sum_{i=1}^{N_1} \lz_i a_i+\lz_0 a_0,$$ 
where $N_1\in\zz_+$,
$\{\lz_i\}_{i=0}^{N_1}\subset\ccc$ and $a_0$ is a
$(\fai,\,\fz)$-single-atom and $\{a_i\}_{i=1}^{N_1}$
 are $(\fai,\,\fz,\,s)$-atoms. Let
$\{\psi_k\}_{k\in\zz_+}\subset\cd(\rn)$ satisfy $0\le\psi_k\le1$,
$\psi_k\equiv1$ on the cube $Q(0,2^k)$ and $\supp\psi_k\subset
Q(0,2^{k+1})$. We assume that $\supp(\sum_{i=1}^{N_1} \lz_i
a_i)\subset Q(0,R_0)$ for some $R_0\in(0,\fz)$ and $k_0$ is the
smallest nonnegative integer such that $2^{k_0}\ge R_0$. For any
integer $k\ge k_0$, let $f_k:= f\psi_k$. Then $f_k\in
h^{\fai,\,\fz,\,s}_{\fin,\,c}(\rn)$. Indeed, by the choice of
$\pz_k$, we know that 
$$f_k=\sum_{i=1}^{N_1}\lz_i a_i+\lz_0 a_0\pz_k$$ 
and $\supp f_k\subset Q(0,2^{k+1})$. Furthermore, from
$\supp(a_0 \pz_k)\subset Q(0,2^{k+1})$ and
$$\|a_0\pz_k\|_{L^{\fz}_{\fai}(\rn)}\le\|a_0\|_{L^{\fz}_{\fai}(\rn)}
\le\|\chi_{\rn}\|^{-1}_{L^{\fai}(\rn)}\le\|\chi_{Q(0,2^{k+1})}\|
^{-1}_{L^{\fai}(\rn)},$$ we infer that $a_0 \pz_k$ is a
$(\fai,\,\fz,\,s)$-atom. Thus, $f_k\in
h^{\fai,\,\fz,\,s}_{\fin,\,c}(\rn)$. For any fixed integer $k\ge
k_0$ and any $i\in\zz_+$, let $\wz{f}_{k,\,i}:= f_k \ast\phi_i$,
where $\phi$ is as in Case 1). Similar to the proof of
\eqref{6.6}, we know that $\wz{f}_{k,\,i}\in
h^{\fai,\,\fz,\,s}_{\fin,\,c}(\rn)\cap C(\rn)$. For any
$q\in(q(\fai),\fz)$, by the choice of $f_k$ and $\fai(\rn,1)<\fz$,
we see that
\begin{eqnarray}\label{6.8}
\|f-f_k\|_{L^q_{\fai(\cdot,1)}(\rn)}&\le&
\lf\{\int_{[Q(0,2^{k})]^{\complement}}|f(x)|^q \fai(x,1)\,dx\r\}^{1/q}\nonumber \\
&\le&\|\lz_0
a_0\|_{L^{\fz}_{\fai}(\rn)}\lf\{\int_{[Q(0,2^{k})]^{\complement}}
\fai(x,1)\,dx\r\}^{1/q}\to0,
\end{eqnarray}
as $k\to\fz$. Furthermore, for any fixed $k\in\zz$ with $k\ge
k_0$, similar to the proof of \eqref{6.8}, we see that
$\|f_k-\wz{f}_{k,\,i}\|_{L^q_{\fai(\cdot,1)}(\rn)}\to0\,
\,\text{as}\,\,i\to\fz$, which, together with \eqref{6.8}, implies
that $\|f-\wz{f}_{k,\,i}\|_{L^q_{\fai(\cdot,1)}(\rn)}\to0,\,
\,\text{as}\,\,k,\,i\to\fz$. Then similar to the proof of
\eqref{6.7}, we know that $\|f-\wz{f}_{k,\,i}
\|_{h_{\fai}(\rn)}\to0$, as $k,\,i\to\fz$, which completes the
proof of Case 2) and hence Theorem \ref{t6.2}.
\end{proof}

\begin{remark}\label{r6.2}
Theorems \ref{t6.1} and \ref{t6.2} completely cover
\cite[Theorem\,6.2]{yys} and \cite[Theorem \,6.4]{yys},
respectively, by taking $\fai$ as in \eqref{1.1}.
\end{remark}

\section{The dual space of $h_\fai(\rn)$ with applications to
pointwise multipliers on local $\bbmo$-type spaces\label{s7}}

\hskip\parindent In Subsection \ref{s7.1}, we give out the dual
space, $\bmo_\fai(\rn)$, of $h_\fai(\rn)$. As an application, in
Subsection \ref{s7.2}, we characterize the class of pointwise
multipliers for the local $\bbmo$-type space $\bmo^\phi(\rn)$,
where $\bmo^\phi(\rn)$ is the local $\bbmo$-type space introduced
by Nakai and Yabuta \cite{ny85}.

\subsection{The dual space of $h_\fai(\rn)$ \label{s7.1}}

\hskip\parindent In this subsection, we introduce the $\bbmo$-type
space $\bmo_{\fai}(\rn)$ and show that the dual space of
$h_{\fai}(\rn)$ is $\bmo_{\fai}(\rn)$. We begin with some notions.

For any locally integrable function $f$ on $\rn$, we denote the
{\it minimizing polynomial} of $f$ on the cube $Q$ with degree at most
$s$ by $P^s_Q f$, namely, for all multi-indices $\tz\in\nn^n$ with
$0\le|\tz|\le s$,
\begin{eqnarray}\label{7.1}
\int_{Q}\lf[f(x)-P^s_Q f(x)\r]x^{\tz}\,dx=0.
\end{eqnarray}
It is well known that if $f$ is locally integrable, then $P^s_Q f$
uniquely exists (see, for example, \cite{tw80}). Now, we introduce
the $\bbmo$-type space $\bmo_{\fai}(\rn)$ as follows.
\begin{definition}\label{d7.1}
Let $\fai$ be a growth function as in Definition \ref{d2.3} and
$m(\fai)$ as in \eqref{2.10}. Let $s\in\nn$ with $s\ge m(\fai)$.
When $\fai(\rn,1)=\fz$, a locally integrable function $f$ on $\rn$
is said to belong to the {\it space $\bmo_{\fai}(\rn)$}, if
\begin{eqnarray*}
\|f\|_{\bmo_{\fai}
(\rn)}&:=&\sup_{Q\subset\rn,\,|Q|<1}\frac{1}{\|\chi_{Q}
\|_{L^{\fai}(\rn)}}
\int_{Q}\lf|f(x)-P^s_Q f(x)\r|\,dx\\
&&+\sup_{Q\subset\rn,\,|Q|\ge1}\frac{1}{\|\chi_{Q}\|_{L^{\fai}(\rn)}}
\int_{Q}\lf|f(x)\r|\,dx<\fz,
\end{eqnarray*}
where the supremums are taken over all the cubes $Q\subset\rn$ with
the indicated properties, and $P^s_Q f$ as in \eqref{7.1}. When
$\fai(\rn,1)<\fz$, a function $f$ on $\rn$ is said to belong to
the {\it space $\bmo_{\fai} (\rn)$} if
\begin{eqnarray*}
\|f\|_{\bmo_{\fai} (\rn)}&:=&\sup_{Q\subset\rn,\,|Q|<1}\frac{1}
{\|\chi_{Q}\|_{L^{\fai}(\rn)}}
\int_{Q}\lf|f(x)-P^s_Q f(x)\r|\,dx\\
&&+\sup_{Q\subset\rn,\,|Q|\ge1}\frac{1}{\|\chi_{Q}\|_{L^{\fai}(\rn)}}
\int_{Q}\lf|f(x)\r|\,dx+\frac{1}{\|\chi_{\rn}\|_{L^{\fai}(\rn)}}
\int_{\rn}\lf|f(x)\r|\,dx<\fz,
\end{eqnarray*}
where the supremums are taken over all the cubes $Q\subset\rn$ with
the indicated properties, and $P^s_Q f$ as in \eqref{7.1}.
\end{definition}

When $\fai(x,t):= t$ for all $x\in\rn$ and $t\in[0,\fz)$,
$\bmo_{\fai} (\rn)$ is just $\bmo(\rn)$ introduced in \cite{go79};
when $\fai$ is as in \eqref{1.1}, $\bmo_{\fai}(\rn)$ is just
$\bmo^1_{\rho,\,\oz}(\rn)$ introduced in \cite{yys}.

Now, we show that the dual space of $h^{\fai,\,\fz,\,s}(\rn)$ is
$\bmo_{\fai}(\rn)$. We first recall the notion of the \emph{atomic
Musielak-Orlicz Hardy space $H^{\fai,\,\fz,\,s} (\rn)$},
which, when $\fai\in\aa_q(\rn)$ with $q\in(q(\fai),\fz)$, was
introduced by Ky \cite{k}.

\begin{definition}\label{d7.2}
Let $\fai$ be a growth function as in Definition \ref{d2.3} and
$m(\fai)$ as in \eqref{2.10}. Let $s\in\nn$ with $s\ge m(\fai)$. A
measurable function $a$ on $\rn$ is called an {\it
$H^{\fai,\,\fz,\,s}(\rn)$-atom} if there exists a cube $Q\subset\rn$
such that

$\mathrm{(i)}$ $\supp(a)\subset Q$;

$\mathrm{(ii)}$
$\|a\|_{L^{\fz}_{\fai}(Q)}\le\|\chi_Q\|_{L^\fai(\rn)}^{-1}$;

$\mathrm{(iii)}$ $\int_{\rn}a(x)x^{\az}\,dx=0$ for all
$\az\in\nn^n$ with $|\az|\le s$.\\
The {\it atomic Musielak-Orlicz Hardy space}
$H^{\fai,\,\fz,\,s}(\rn)$ is defined to be the set of all
$f\in\cd'(\rn)$ satisfying the fact that $f=\sum_{i=1}^{\fz}b_i$ in
$\cd'(\rn)$, where $\{b_i\}_{i=1}^\fz$ is a sequence of multiples
of $H^{\fai,\,\fz,\,s}(\rn)$-atoms with $\supp(b_i)\subset Q_i$, and
$\sum_{i=1}^{\fz}\fai(Q_i,\|b_i\|_{L^{\fz}_{\fai}(Q_i)}) <\fz.$
Moreover, letting
\begin{eqnarray*}
&&\blz_{\fz}(\{b_i\}_i):= \inf\lf\{\lz\in(0,\fz):\
\sum_{i=1}^{\fz}\fai\lf(Q_i,\frac
{\|b_i\|_{L^{\fz}_{\fai}(Q_i)}}{\lz}\r)\le1\r\},
\end{eqnarray*}
the \emph{quasi-norm} of $f\in H^{\fai,\,\fz,\,s}(\rn)$ is defined by 
$$\|f\|_{H^{\fai,\,\fz,\,s}(\rn)}:=\inf\lf\{
\blz_{\fz}\lf(\{b_i\}_{i\in\zz_+}\r)\r\},$$ 
where the infimum is taken over
all the decompositions of $f$ as above.

Furthermore, the {\it space} $H^{\fai,\,\fz,\,s}_{\fin}(\rn)$ is
defined to be the set of all finite linear combinations of
$H^{\fai,\,\fz,\,s}(\rn)$-atoms.
\end{definition}

Obviously, $H^{\fai,\,\fz,\,s}_{\fin}(\rn)$ is dense in
$H^{\fai,\,\fz,\,s}(\rn)$ with respect to the quasi-norm
$\|\cdot\|_{H^{\fai,\,\fz,\,s}(\rn)}$.

The following $\bbmo_{\fai}(\rn)$, when $\fai\in\aa_q(\rn)$ with
$q\in(q(\fai),\fz)$, was introduced by Ky \cite{k}.

\begin{definition}\label{d7.3}
Let $\fai$ be a growth function as in Definition \ref{d2.3} and
$m(\fai)$ as in \eqref{2.10}. Let $s\in\nn$ with $s\ge m(\fai)$. A
locally integrable function $f$ on $\rn$ is said to belong to the
{\it space $\bbmo_{\fai}(\rn)$} if
\begin{eqnarray*}
\|f\|_{\bbmo_{\fai}
(\rn)}&:=&\sup_{Q\subset\rn}\frac{1}{\|\chi_{Q}\|_{L^{\fai}(\rn)}}
\int_{Q}\lf|f(x)-P^s_Q f(x)\r|\,dx<\fz,
\end{eqnarray*}
where the supremum is taken over all cubes $Q\subset\rn$ and
$P^s_Q f$ as in \eqref{7.1}.
\end{definition}

The next Lemma \ref{l7.1}, when $\fai\in\aa_q(\rn)$ with
$q\in(q(\fai),\fz)$, is just \cite[Theorem 3.2]{k}, whose proof for
its current version is similar to that. We omit the details.

\begin{lemma}\label{l7.1}
Let $\fai$ be a growth function as in Definition \ref{d2.3} and
$m(\fai)$ as in \eqref{2.10}. Let $s\in\nn$ with $s\ge m(\fai)$.
Then the dual space of $H^{\fai,\,\fz,\,s}(\rn)$,
$[H^{\fai,\,\fz,\,s}(\rn)]^{\ast}$, coincides with
$\bbmo_{\fai}(\rn)$ in the following sense.

$\mathrm{(i)}$ Let $g\in \bbmo_{\fai}(\rn)$. Then the linear
functional $L$, which is initially defined on
$H^{\fai,\,\fz,\,s}_{\fin}(\rn)$ by
\begin{eqnarray}\label{7.2}
L(f)=\langle g,f\rangle,
\end{eqnarray}
has a unique extension to $H^{\fai,\,\fz,\,s}(\rn)$ with
$\|L\|_{\lf[H^{\fai,\,\fz,\,s}(\rn)\r]^{\ast}}\le
C\|g\|_{\bbmo_{\fai}(\rn)}$, where $C$ is a positive constant
independent of $g$.

$\mathrm{(ii)}$ Conversely, for any
$L\in\lf[H^{\fai,\,\fz,\,s}(\rn)\r]^{\ast}$, there exists $g\in
\bbmo_{\fai}(\rn)$ such that \eqref{7.2} holds for all $f\in
H^{\fai,\,\fai,\,s}_{\fin}(\rn)$ and $\|g\|_{\bbmo_{\fai} (\rn)}\le
C\|L\|_{\lf[H^{\fai,\,\fz,\,s}(\rn)\r]^{\ast}}$, where $C$ is a
positive constant independent of $L$.
\end{lemma}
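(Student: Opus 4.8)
The plan is to follow the classical Fefferman--Stein duality scheme, adapted to the Musielak--Orlicz atomic setting exactly as in \cite{k}, using throughout that every $H^{\fai,\,\fz,\,s}(\rn)$-atom has vanishing moments up to order $s\ge m(\fai)$ (no ``single-atom'' exception occurs here, which makes the argument cleaner than its local counterpart).

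For part (i), I would first record the basic atom estimate: if $a$ is an $H^{\fai,\,\fz,\,s}(\rn)$-atom supported in a cube $Q$, then, subtracting the minimizing polynomial $P^s_Q g$ and using H\"older's inequality,
$$|\langle g,a\rangle|=\left|\int_Q a(x)\bigl[g(x)-P^s_Q g(x)\bigr]\,dx\right|\le\|a\|_{L^{\fz}(Q)}\int_Q|g-P^s_Q g|\le\|\chi_Q\|_{L^{\fai}(\rn)}^{-1}\|\chi_Q\|_{L^{\fai}(\rn)}\|g\|_{\bbmo_{\fai}(\rn)}=\|g\|_{\bbmo_{\fai}(\rn)}.$$
Next, for $f=\sum_i b_i\in H^{\fai,\,\fz,\,s}_{\fin}(\rn)$ a finite sum of multiples of atoms supported in cubes $Q_i$, I would use the uniformly lower type $p$ and uniformly upper type $1$ of $\fai$, together with the identity $\fai\bigl(Q,\|\chi_Q\|_{L^{\fai}(\rn)}^{-1}\bigr)=1$ (immediate from the definition of the Luxembourg norm and the continuity of $\wz\fai(x,\cdot)$ in Lemma \ref{l2.1}(ii)), to obtain the summation estimate $\sum_i\|b_i\|_{L^{\fz}_{\fai}(Q_i)}\|\chi_{Q_i}\|_{L^{\fai}(\rn)}\ls\blz_{\fz}(\{b_i\}_i)$. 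Combining the two gives $|\langle g,f\rangle|\le\sum_i|\langle g,b_i\rangle|\ls\|g\|_{\bbmo_{\fai}(\rn)}\blz_{\fz}(\{b_i\}_i)$; taking the infimum over finite atomic decompositions and invoking the norm equivalence $\|\cdot\|_{H^{\fai,\,\fz,\,s}_{\fin}(\rn)}\sim\|\cdot\|_{H^{\fai,\,\fz,\,s}(\rn)}$ on $H^{\fai,\,\fz,\,s}_{\fin}(\rn)$ (the finite atomic decomposition theorem of \cite{k}), one gets $|\langle g,f\rangle|\ls\|g\|_{\bbmo_{\fai}(\rn)}\|f\|_{H^{\fai,\,\fz,\,s}(\rn)}$. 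Since $H^{\fai,\,\fz,\,s}_{\fin}(\rn)$ is dense in $H^{\fai,\,\fz,\,s}(\rn)$, the functional $L$ in \eqref{7.2} extends uniquely with the asserted norm control.

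For part (ii), given $L\in[H^{\fai,\,\fz,\,s}(\rn)]^{\ast}$, I would fix $q\in(q(\fai),\fz)$ and, for each cube $Q$, note that a function $f\in L^q(Q)$ supported in $Q$ with vanishing moments up to $s$ becomes, after division by $\|\chi_Q\|_{L^{\fai}(\rn)}\|f\|_{L^q_{\fai}(Q)}$, a $(\fai,\,q,\,s)$-atom; by the atomic characterization $H^{\fai,\,q,\,s}(\rn)=H^{\fai,\,\fz,\,s}(\rn)$ with equivalent norms (from \cite{k}) this yields $\|f\|_{H^{\fai,\,\fz,\,s}(\rn)}\ls\|\chi_Q\|_{L^{\fai}(\rn)}\|f\|_{L^q_{\fai}(Q)}$. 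Hence $L$ restricts to a bounded linear functional on the closed subspace of vanishing-moment functions of the reflexive weighted space $L^q_{\fai(\cdot,1)}(Q)$ (the weight $\fai(\cdot,1)$ restricted to the bounded cube $Q$ extends to an $A_q(\rn)$ weight by Lemma \ref{l2.4}(i) and Remark \ref{r2.3}); extending by Hahn--Banach and applying the Riesz representation theorem produces $g_Q\in L^1(Q)$ with $L(f)=\int_Q fg_Q$ for all such $f$ (integrability of $g_Q$ follows since the conjugate weight paired against $\fai(\cdot,1)$ integrates to $\fai(Q,1)<\fz$). For nested cubes the representatives agree modulo $\cp_s(\rn)$, so, normalizing by minimizing polynomials and letting $Q$ run through a fixed exhaustion of $\rn$, one patches the $g_Q$ into a single $g\in L^1_{\loc}(\rn)$ with $L(f)=\langle g,f\rangle$ for every $f\in H^{\fai,\,\fz,\,s}_{\fin}(\rn)$. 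To bound $\|g\|_{\bbmo_{\fai}(\rn)}$, fix $Q$, put $h:=\mathrm{sgn}\bigl(\overline{g-P^s_Q g}\bigr)$ on $Q$ and $f:=h-P^s_Q h$; then $f$ is supported in $Q$, has vanishing moments, $\|f\|_{L^{\fz}(Q)}\ls1$, and, using $\int_Q(g-P^s_Q g)P=0$ for $P\in\cp_s(\rn)$,
$$\int_Q|g-P^s_Q g|=\int_Q(g-P^s_Q g)f=\langle g,f\rangle=L(f),$$
so that $\|\chi_Q\|_{L^{\fai}(\rn)}^{-1}\int_Q|g-P^s_Q g|\le\|\chi_Q\|_{L^{\fai}(\rn)}^{-1}\|L\|_{[H^{\fai,\,\fz,\,s}(\rn)]^{\ast}}\|f\|_{H^{\fai,\,\fz,\,s}(\rn)}\ls\|L\|_{[H^{\fai,\,\fz,\,s}(\rn)]^{\ast}}$; taking the supremum over $Q$ gives $g\in\bbmo_{\fai}(\rn)$ with the desired bound.

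The main obstacle is part (ii): the Riesz representation must be performed in a weighted space whose weight is a Musielak--Orlicz weight (only \emph{locally} $A_q$ and varying in $t$), and one must justify that $\|\cdot\|_{L^q_{\fai}(Q)}$ appearing in the atom condition is, for a fixed cube, dominated by the single-weight norm $\|\cdot\|_{L^q_{\fai(\cdot,1)}(Q)}$ used to get reflexivity and a Lebesgue-integrable representative --- this uses the uniform $A_q$ and uniform type conditions of $\fai$ in an essential way. The subsequent gluing of the local representatives modulo polynomials into one $L^1_{\loc}$ function is the usual bookkeeping with minimizing polynomials. In part (i) the only non-immediate point is the summation estimate $\sum_i\|b_i\|_{L^{\fz}_{\fai}(Q_i)}\|\chi_{Q_i}\|_{L^{\fai}(\rn)}\ls\blz_{\fz}(\{b_i\}_i)$, which rests on the lower and upper type of the growth function (cf. Lemma \ref{l2.3}).
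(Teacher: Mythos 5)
The paper does not actually prove Lemma \ref{l7.1}; it simply records that the lemma ``is just \cite[Theorem 3.2]{k}, whose proof for its current version is similar to that. We omit the details.'' So there is no written proof in the paper to compare against, and your proposal is a from-scratch reconstruction of the standard Fefferman--Stein duality argument in the Musielak--Orlicz atomic setting, which is indeed the route that \cite{k} takes.

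Your part (i) is sound. The atom estimate via subtraction of $P^s_Q g$ and H\"older is exactly right, and the summation estimate $\sum_i\|b_i\|_{L^{\fz}_{\fai}(Q_i)}\|\chi_{Q_i}\|_{L^{\fai}(\rn)}\ls\blz_{\fz}(\{b_i\}_i)$ does hold: using $\fai(Q_i,\|\chi_{Q_i}\|_{L^{\fai}(\rn)}^{-1})=1$ and the uniform quasi-concavity of Lemma \ref{l2.1}(iii) (which gives $\mu\,\fai(x,u)\ls\fai(x,\mu u)$ for $\mu\in[0,1]$), one can pass from $\sum_i\fai(Q_i,t_i/\lz)\le1$ to $\sum_i t_i/(\lz\|\chi_{Q_i}\|_{L^{\fai}(\rn)}^{-1})\ls1$; the lower type $p$ and the strict monotonicity of the equivalent function $\wz\fai$ take care of the degenerate case. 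Citing Lemma \ref{l2.3} is slightly off-target (that lemma goes the other way), but the content is correct.

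Part (ii) has a real gap in the step you flagged yourself. You need, for $f$ supported in a fixed cube $Q$ with vanishing moments, the bound $\|f\|_{H^{\fai,\,\fz,\,s}(\rn)}\ls_Q\|f\|_X$ on a reflexive space $X$ of functions on $Q$, and via the atomic characterization this reduces to controlling $\|f\|_{L^q_\fai(Q)}=\sup_{t>0}\lf[\fai(Q,t)^{-1}\int_Q|f|^q\fai(\cdot,t)\r]^{1/q}$. You propose $X=L^q_{\fai(\cdot,1)}(Q)$, i.e.\ the domination $\|f\|_{L^q_\fai(Q)}\ls_Q\|f\|_{L^q_{\fai(\cdot,1)}(Q)}$, and assert that this ``uses the uniform $A_q$ and uniform type conditions in an essential way.'' But the uniform type conditions only pin down the ratio $\fai(x,t)/\fai(x,1)$ to within a factor $\max(t^{1-p},t^{p-1})$, which blows up as $t\to0$ or $t\to\fz$; they do \emph{not} give a pointwise bound $\sup_t\fai(x,t)/\fai(Q,t)\ls_Q\fai(x,1)$, and the uniform $\aa_q$ condition does not fill the gap, because it controls averages of each $\fai(\cdot,t)$ over cubes, not the ratio of two different weights in the family. (When $\fai(x,t)=\omega(x)\Phi(t)$, as in \cite{yys}, the $t$-dependence factors out and the domination is trivial, which may be what suggested it; for a genuine Musielak--Orlicz $\fai$ the spatial profile of $\fai(\cdot,t)$ changes with $t$ and the argument does not go through as stated.) The standard repair, which is effectively what \cite{k} does, is to discard the weight: by the \emph{uniform} reverse H\"older inequality for the family $\{\fai(\cdot,t)\}_{t>0}$ (a consequence of the uniform $\aa_{\fz}$ condition), there is $r>1$ with $\lf(\fint_Q\fai(\cdot,t)^r\r)^{1/r}\ls\fint_Q\fai(\cdot,t)$ uniformly in $t$, and then H\"older gives $\|f\|_{L^q_\fai(Q)}\ls|Q|^{(1/r-1)/q}\|f\|_{L^{qr'}(Q)}$. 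Taking $X=L^{q_0}(Q)$ with $q_0=qr'$, the restriction of $L$ to the vanishing-moment subspace of $L^{q_0}(Q)$ is bounded, Hahn--Banach plus Riesz in the \emph{unweighted} reflexive space $L^{q_0}(Q)$ give $g_Q\in L^{q_0'}(Q)\subset L^1(Q)$, and the patching via minimizing polynomials you describe then completes the argument exactly as you wrote.
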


Now, we show that the dual space of $h^{\fai,\,\fz,\,s}(\rn)$ is
$\bmo_{\fai}(\rn)$ by invoking Lemma \ref{l7.1}.

\begin{theorem}\label{t7.1}
Let $\fai$ be a growth function as in Definition \ref{d2.3} and
$m(\fai)$ as in \eqref{2.10}. Let $s\in\nn$ with $s\ge m(\fai)$.
Then the dual space of $h^{\fai,\,\fz,\,s}(\rn)$,
$\lf[h^{\fai,\,\fz,\,s}(\rn)\r]^{\ast}$, coincides with
$\bmo_{\fai}(\rn)$ in the following sense.

$\mathrm{(i)}$ Let $g\in \bmo_{\fai}(\rn)$. Then the linear
functional $L$, which is initially defined on
$h^{\fai,\,\fz,\,s}_{\fin}(\rn)$ by
\begin{eqnarray}\label{7.3}
L(f)=\langle g,f\rangle,
\end{eqnarray}
has a unique extension to $h^{\fai,\,\fz,\,s}(\rn)$ with
$\|L\|_{\lf[h^{\fai,\,\fz,\,s}(\rn)\r]^{\ast}}\le
C\|g\|_{\bmo_{\fai}(\rn)}$, where $C$ is a positive constant
independent of $g$.

$\mathrm{(ii)}$ Conversely, for any
$L\in\lf[h^{\fai,\,\fz,\,s}(\rn)\r]^{\ast}$, there exists $g\in
\bmo_{\fai}(\rn)$ such that \eqref{7.3} holds for all $f\in
h^{\fai,\,\fai,\,s}_{\fin}(\rn)$ and $\|g\|_{\bmo_{\fai} (\rn)}\le
C\|L\|_{\lf[h^{\fai,\,\fz,\,s}(\rn)\r]^{\ast}}$, where $C$ is a
positive constant independent of $L$.
\end{theorem}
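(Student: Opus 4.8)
The plan is to reduce Theorem \ref{t7.1} to Lemma \ref{l7.1} by relating the local atomic space $h^{\fai,\,\fz,\,s}(\rn)$ to the (global) atomic Musielak-Orlicz Hardy space $H^{\fai,\,\fz,\,s}(\rn)$, exploiting the fact that $(\fai,\,\fz,\,s)$-atoms supported in cubes of sidelength less than $1$ are precisely $H^{\fai,\,\fz,\,s}(\rn)$-atoms (they carry all moment conditions up to order $s$), while the two remaining types of building blocks — atoms supported in large cubes (with $l(Q)\ge1$, hence \emph{no} moment conditions) and $(\fai,\,\fz)$-single-atoms (when $\fai(\rn,1)<\fz$) — are handled directly using only the size condition. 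Thus I would split the argument according to the behaviour on these three classes of atoms and according to whether $\fai(\rn,1)=\fz$ or $\fai(\rn,1)<\fz$.

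For part (i): given $g\in\bmo_{\fai}(\rn)$, I first observe that $g$, restricted to its "small-cube" oscillation seminorm, controls the pairing with small-cube $(\fai,\,\fz,\,s)$-atoms exactly as in the proof of Lemma \ref{l7.1}(i): for such an atom $a$ supported in $Q$ with $l(Q)<1$, the vanishing moments give $\langle g,a\rangle=\langle g-P^s_Qg,\,a\rangle$, and H\"older's inequality together with $\|a\|_{L^\fz_\fai(Q)}\le\|\chi_Q\|_{L^\fai(\rn)}^{-1}$ and Definition \ref{d7.1} bounds this by $\ls\|g\|_{\bmo_\fai(\rn)}$. For a large-cube atom, i.e. $\supp(a)\subset Q$ with $l(Q)\ge1$, there are no moments, so I estimate $|\langle g,a\rangle|\le\int_Q|g(x)||a(x)|\,dx\le\|a\|_{L^\fz(Q)}\int_Q|g(x)|\,dx\ls\|\chi_Q\|_{L^\fai(\rn)}^{-1}\int_Q|g|\ls\|g\|_{\bmo_\fai(\rn)}$ by the second supremum in Definition \ref{d7.1}. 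When $\fai(\rn,1)<\fz$, the single-atom case is identical, using instead the extra term $\|\chi_\rn\|_{L^\fai(\rn)}^{-1}\int_\rn|g|$. Then, given a finite combination $f=\sum_{i=0}^kh_i\in h^{\fai,\,\fz,\,s}_{\fin}(\rn)$, I apply the uniform atomic estimate to each $h_i$, use the uniformly lower type $p_0$ of $\fai$ (via Lemma \ref{l2.1}(i) and Lemma \ref{l2.3}) to pass from $\sum_i\fai(Q_i,\|h_i\|_{L^\fz_\fai(Q_i)}/\blz_\fz(\{h_i\}))\le1$ to the bound $|L(f)|\ls\|g\|_{\bmo_\fai(\rn)}\blz_\fz(\{h_i\})$, and take the infimum to get $|L(f)|\ls\|g\|_{\bmo_\fai(\rn)}\|f\|_{h^{\fai,\,\fz,\,s}(\rn)}$. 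Density of $h^{\fai,\,\fz,\,s}_{\fin}(\rn)$ in $h^{\fai,\,\fz,\,s}(\rn)$ then yields the unique bounded extension.

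For part (ii): given $L\in[h^{\fai,\,\fz,\,s}(\rn)]^\ast$, I note that the inclusion $H^{\fai,\,\fz,\,s}(\rn)\hookrightarrow h^{\fai,\,\fz,\,s}(\rn)$ is continuous (every $H^{\fai,\,\fz,\,s}(\rn)$-atom is a $(\fai,\,\fz,\,s)$-atom, since the local atom definition only \emph{drops} the moment requirement on large cubes), so $L$ restricts to a bounded functional on $H^{\fai,\,\fz,\,s}(\rn)$; by Lemma \ref{l7.1}(ii) there is $g\in\bbmo_\fai(\rn)$ with $\langle g,f\rangle=L(f)$ on $H^{\fai,\,\fz,\,s}_{\fin}(\rn)$ and $\|g\|_{\bbmo_\fai(\rn)}\ls\|L\|$. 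It remains to upgrade $g\in\bbmo_\fai(\rn)$ to $g\in\bmo_\fai(\rn)$, i.e. to control $\sup_{|Q|\ge1}\|\chi_Q\|_{L^\fai(\rn)}^{-1}\int_Q|g|$ (and, when $\fai(\rn,1)<\fz$, also $\|\chi_\rn\|_{L^\fai(\rn)}^{-1}\int_\rn|g|$). For a cube $Q$ with $l(Q)\ge1$, the function $a:=\|\chi_Q\|_{L^\fai(\rn)}^{-1}\,\overline{\operatorname{sgn}(g)}\,\chi_Q$ is (a multiple of) a large-cube $(\fai,\,\fz,\,s)$-atom with $\blz_\fz\ls1$, and $L(a)=\langle g,a\rangle=\|\chi_Q\|_{L^\fai(\rn)}^{-1}\int_Q|g|$, whence this quantity is $\le\|L\|_{[h^{\fai,\,\fz,\,s}(\rn)]^\ast}\|a\|_{h^{\fai,\,\fz,\,s}(\rn)}\ls\|L\|$. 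The global term when $\fai(\rn,1)<\fz$ is handled the same way with a single-atom. Combining these with the $\bbmo_\fai(\rn)$ bound gives $\|g\|_{\bmo_\fai(\rn)}\ls\|L\|$, and \eqref{7.3} holds on $h^{\fai,\,\fz,\,s}_{\fin}(\rn)$ because the restricted functional $g$ agrees with $L$ on $H^{\fai,\,\fz,\,s}_{\fin}(\rn)$ and both extend continuously; to see the agreement on \emph{all} finite local atoms I test $L-\langle g,\cdot\rangle$ separately against large-cube atoms and single-atoms using the identity just derived.

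The main obstacle I anticipate is the reduction in part (ii): while Lemma \ref{l7.1} supplies a $\bbmo_\fai(\rn)$ representative from the $H^{\fai,\,\fz,\,s}(\rn)$-restriction of $L$, one must check carefully that the \emph{same} locally integrable function $g$ represents $L$ on the larger class of finite local atoms (large-cube atoms and single-atoms), not merely on $H^{\fai,\,\fz,\,s}_{\fin}(\rn)$; this is where the explicit test-function computation $L(a)=\langle g,a\rangle$ with $a=\|\chi_Q\|_{L^\fai(\rn)}^{-1}\overline{\operatorname{sgn}(g)}\chi_Q$ is essential, and it also simultaneously delivers the missing $\bmo_\fai(\rn)$ estimate. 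A secondary technical point is the usual passage, for a convergent (infinite) atomic decomposition of $f\in h^{\fai,\,\fz,\,s}(\rn)$, from the finite partial-sum pairing to $L(f)$; this requires knowing that the decomposition converges in $h^{\fai,\,\fz,\,s}(\rn)$ (hence in $\cd'(\rn)$ by Theorem \ref{t3.3} and Proposition \ref{p3.2}) so that $L$ may be applied term by term, exactly as in the proof of Lemma \ref{l7.1}.
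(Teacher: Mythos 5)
Part (i) of your sketch is essentially the argument the paper delegates to Ky's Theorem~3.2(i), and it is fine at the level of detail given. The genuine problem is in part (ii), where you invert the order of operations that the paper uses, and this creates a circular step.

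You propose to first apply Lemma~\ref{l7.1}(ii) to $L|_{H^{\fai,\fz,s}(\rn)}$ to obtain $g\in\bbmo_\fai(\rn)$ with $L(f)=\langle g,f\rangle$ on $H^{\fai,\fz,s}_{\fin}(\rn)$, and then to ``upgrade'' $g$ to $\bmo_\fai(\rn)$ by testing the identity $L(a)=\langle g,a\rangle$ against $a:=\|\chi_Q\|_{L^\fai(\rn)}^{-1}\operatorname{sgn}(g)\chi_Q$ for $l(Q)\ge1$. But this identity is exactly what is \emph{not} supplied by Lemma~\ref{l7.1}(ii): a large-cube atom $a$ has no vanishing moments and hence does not lie in $H^{\fai,\fz,s}_{\fin}(\rn)$, so you cannot conclude $L(a)=\langle g,a\rangle$ from the agreement on $H^{\fai,\fz,s}_{\fin}(\rn)$. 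Worse, the equivalence class $g\in\bbmo_\fai(\rn)$ is only determined modulo polynomials of degree $\le s$, and the pairing $\langle\cdot,a\rangle$ with a momentless atom is \emph{not} invariant under adding such a polynomial; so which locally integrable representative of $g$ is meant in $\langle g,a\rangle$ matters, and Lemma~\ref{l7.1}(ii) gives you no way to pick a privileged one. You flag this issue in your final paragraph, but the proposed remedy (``this is where the explicit test-function computation is essential'') assumes the very equality that needs to be justified; it is circular.

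The paper's proof avoids the circularity by reversing the order: it first \emph{constructs} a genuine function $g$ directly, via the cube-exhaustion argument (analogous to \cite[(7.16)]{yys} when $\fai(\rn,1)=\fz$, and to \cite[(7.20)]{yys}, \cite[p.~28]{k} when $\fai(\rn,1)<\fz$), so that $Lf=\int_{\rn}fg$ holds for \emph{every} $f\in L^\fz_\fai(Q_j)$, not just for atoms with moments. With that in hand, the pairing against large-cube atoms and $(\fai,\fz)$-single-atoms is legitimate and yields the extra terms in $\|g\|_{\bmo_\fai(\rn)}$ (this is \eqref{7.6} and \eqref{7.9} in the paper), while Lemma~\ref{l7.1}(ii) (and, in the case $\fai(\rn,1)<\fz$, its variant for the auxiliary space $\widetilde{h^{\fai,\fz,s}}(\rn)$ with dual $\widetilde{\bmo_\fai}(\rn)$) is invoked only afterwards to supply the small-cube oscillation bound. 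To repair your argument you would need to carry out this direct construction of $g$ before any testing; the reduction to Lemma~\ref{l7.1} can only furnish the $\bbmo_\fai$ seminorm estimate once the representing function is already pinned down.

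A further point you do not address is the case $\fai(\rn,1)<\fz$, where the local atomic space also contains single-atoms. The paper handles this by introducing the intermediate space $\widetilde{h^{\fai,\fz,s}}(\rn)$ (atoms but no single-atoms) and establishing $[\widetilde{h^{\fai,\fz,s}}(\rn)]^{\ast}=\widetilde{\bmo_\fai}(\rn)$; this is not a minor technicality because the normalization of $\bmo_\fai(\rn)$ changes (it uses $\|\chi_Q\|_{L^\fai(Q)}$ rather than $\|\chi_Q\|_{L^\fai(\rn)}$ in the auxiliary space), and your sketch does not account for this splitting.
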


\begin{proof}
The proof of $\mathrm{(i)}$ is similar to that of \cite[Theorem
3.2(i)]{k}. We omit the details.

Now, we prove $\mathrm{(ii)}$ by considering the following two
cases for $\fai$.

{\it Case} 1) $\fai(\rn,1)=\fz$. In this case, take a sequence
$\{Q_j\}_{j\in\zz_+}$ of cubes  such that for any $j\in\zz_+$,
$Q_j \subset Q_{j+1}$, $\lim_{j\to\fz}Q_j =\rn$ and
$l(Q_1)\in[1,\fz)$. Assume that
$L\in\lf[h^{\fai,\,\fz,\,s}(\rn)\r]^{\ast}$. Similar to the proof
of \cite[(7.16)]{yys}, we know that there exists a function $g$ on
$\rn$ such that for all $f\in L^{\fz}_{\fai}(Q_j)$ with
$j\in\zz_+$,
\begin{eqnarray}\label{7.4}
Lf=\int_{Q_j}f(x)g (x)\,dx.
\end{eqnarray}

Now, we show that $g\in\bmo_{\fai}(\rn)$ and for all $f\in
h^{\fai,\,\fz,\,s}_{\fin}(\rn)$,
\begin{eqnarray}\label{7.5}
Lf=\int_{\rn}f(x)g(x)\,dx.
\end{eqnarray}
Indeed, since $\fai(\rn,1)=\fz$, all $(\fai,\,\fz)$-single-atoms
are 0 and, for any $(\fai,\,\fz,\,s)$-atom $b$, there exists a
$j_0\in\zz_+$ such that $b\in L^{\fz}_{\fai}(Q_{j_0})$. By this
and the fact that \eqref{7.4} holds for all $j\in\zz_+$, we see
that \eqref{7.5} holds.

Next, we prove that $g \in\bmo_{\fai}(\rn)$. Take any cube
$Q\subset\rn$ with $l(Q)\in[1,\fz)$ as well as any $f\in
L^{\fz}_{\fai}(Q)$ with $\|f\|_{L^{\fz}_{\fai}(Q)}\le1$. Let
$b:=\|\chi_Q\|_{L^\fai(\rn)}^{-1}f \chi_Q$. Then $b$ is a
$(\fai,\,\fz,\,s)$-atom and $\supp(b)\subset Q$. From the equality
$Lb=\int_{Q}b(x)g(x)\,dx$ and $L\in
[h^{\fai,\,\fz,\,s}(\rn)]^{\ast}$, we infer that
$$|Lb|=\lf|\int_{Q}b(x)g(x)\,dx\r|\le
\|L\|_{[h^{\fai,\,\fz,\,s}(\rn)]^{\ast}}.$$ 
Thus, for any $f\in L^{\fz}_{\fai}(Q)$ with 
$\|f\|_{L^{\fz}_{\fai}(Q)}\le1$, we know that
$$\|\chi_Q\|_{L^\fai(Q)}^{-1}
\lf|\int_{Q}f(x)g(x)\,dx\r|\ls
\|L\|_{\lf[h^{\fai,\,\fz,\,s}(\rn)\r]^{\ast}}.$$ Take
$f:=\mathrm{sign}(g)$. Then
\begin{eqnarray}\label{7.6}
\frac{1}{\|\chi_Q\|_{L^\fai(Q)}} \int_{Q}|g(x)|\,dx\ls
\|L\|_{\lf[h^{\fai,\,\fz,\,s}(\rn)\r]^{\ast}}.
\end{eqnarray}
Furthermore, by $h^{\fai,\,\fz,\,s}(\rn)\supset
H^{\fai,\,\fz,\,s}(\rn)$ and $\|f\|_{h^{\fai,\,\fz,\,s}(\rn)}\le
\|f\|_{H^{\fai,\,\fz,\,s}(\rn)}$ for  all $f\in
H^{\fai,\,\fz,\,s}(\rn)$, we know that
$[h^{\fai,\,\fz,\,s}(\rn)]^{\ast}\subset[H^{\fai,\,\fz,\,s}(\rn)]
^{\ast}$ and
$L\mid_{H^{\fai,\,\fz,\,s}(\rn)}\in[H^{\fai,\,\fz,\,s}(\rn)]
^{\ast}$. Since \eqref{7.5} holds for all $f\in
h^{\fai,\,\fz,\,s}_{\fin}(\rn)$, from Lemma \ref{l7.1}(ii), we
deduce that $g\in \bbmo_{\fai}(\rn)$ and
$$\|g\|_{\bbmo_{\fai}(\rn)}\ls
\|L\mid_{H^{\fai,\,\fz,\,s}(\rn)}\|_{\lf[H^{\fai,\,\fz,\,s}(\rn)\r]^{\ast}}
\ls\|L\|_{\lf[h^{\fai,\,\fz,\,s}(\rn)\r]^{\ast}}.$$ This, together
with \eqref{7.6}, implies that $g\in\bmo_{\fai}(\rn)$ and
$\|g\|_{\bmo_{\fai}(\rn)}\ls
\|L\|_{\lf[h^{\fai,\,\fz,\,s}(\rn)\r]^{\ast}}$, which completes
the proof of (ii) in Case 1).

{\it Case} 2) $\fai(\rn,1)<\fz$. In this case, let
\begin{eqnarray*}
\widetilde{h^{\fai,\,\fz,\,s}}(\rn):=&&\lf\{f=\sum_{i=1}^{\fz}b_i
\,\,\text{in}\,\,\cd'(\rn):\ \text{For}\ i\in\zz_+,\ b_i\,\,\text{is
a multiple of some}
\r.\\
&&\hs(\fai,\,\fz,\,s)\text{-atom,}\ \supp (b_i)\subset Q_i,
\,\text{and}\ \sum_{i=1}^{\fz}\fai\lf(Q_i,
\|b_i\|_{L^{\fz}_{\fai}(Q_i)}\r)<\fz\Bigg\}
\end{eqnarray*}
and for all $f\in\widetilde{h^{\fai,\,\fz,\,s}}(\rn)$,
$\|f\|_{\widetilde{h^{\fai,\,\fz,\,s}}(\rn)}:=\inf\{\blz_{\fz}
(\{b_i\}_{i=1}^{\fz})\}$, where the infimum is taken over all the
decompositions of $f$ as above. For any $f\in L^1_{\loc}(\rn)$, let
\begin{eqnarray*}
\|f\|_{\widetilde{\bmo_{\fai}} (\rn) }&:=&
\sup_{Q\subset\rn,\,|Q|<1}\frac{1} {\|\chi_{Q}\|_{L^{\fai}(Q)}}
\int_{Q}\lf|f(x)-P^s_Q f(x)\r|\,dx\\
&&+\sup_{Q\subset\rn,\,|Q|\ge1}\frac{1}{\|\chi_{Q}\|_{L^{\fai}(Q)}}
\int_{Q}\lf|f(x)\r|\,dx,
\end{eqnarray*}
where the supremums are taken over all the cubes $Q\subset\rn$
with the indicated properties and $P^s_Q f$ as in \eqref{7.1},
and
\begin{eqnarray*}
\widetilde{\bmo_{\fai}} (\rn):=\lf\{f\in L^1_{\loc}(\rn):\
\|f\|_{\widetilde{\bmo_{\fai}}(\rn)}<\fz\r\}.
\end{eqnarray*}
Similar to the proofs of \cite[Theorem 3.2(i)]{k} and Case 1), we
have
\begin{eqnarray}\label{7.7}
\lf[\widetilde{h^{\fai,\,\fz,\,s}}(\rn)\r]^{\ast}
=\widetilde{\bmo_{\fai}} (\rn).
\end{eqnarray}

Assume that $L\in\lf[h^{\fai,\,\fz,\,s}(\rn)\r]^{\ast}$. Similar
to the proofs of \cite[(7.20)]{yys} and \cite[p.\,28]{k}, we know
that there exists  $g\in L^1(\rn)$ such that for all $f\in
L^{\fz}_{\fai}(\rn)$,
\begin{equation}\label{7.8}
 Lf=\int_{\rn}f(x)g(x)\,dx.
\end{equation}

Finally, we prove that $g\in\bmo_{\fai}(\rn)$ and
$\|g\|_{\bmo_{\fai}(\rn)}\ls\|L\|_{\lf[h^{\fai,\,\fz,\,s}(\rn)\r]^{\ast}}$.
Obviously, \eqref{7.8} holds for all $f\in
h^{\fai,\,\fz,\,s}_{\fin}(\rn)$. For any $f\in
L^{\fz}_{\fai}(\rn)$ with $\|f\|_{L^{\fz}_{\fai}(\rn)}\le1$, let
$b:=\|\chi_{\rn}\|^{-1}_{L^\fai(\rn)}f$. Then $b$ is a
$(\fai,\,\fz)$-single-atom. From \eqref{7.8} with $f:=b$ and
$L\in [h^{\fai,\,\fz,\,s}(\rn)]^{\ast}$, we deduce that
$$|Lb|=\lf|\int_{\rn}b(x)g(x)\,dx\r|\le
\|L\|_{[h^{\fai,\,\fz,\,s}(\rn)]^{\ast}},$$ 
which implies that
$$\|\chi_{\rn}\|_{L^\fai(\rn)}^{-1}
\lf|\int_{\rn}f(x)g(x)\,dx\r|\le
\|L\|_{\lf[h^{\fai,\,\fz,\,s}(\rn)\r]^{\ast}}.$$ Take
$f:=\mathrm{sign}(g)$. Then
\begin{eqnarray}\label{7.9}
\hspace{2 em}\|\chi_{\rn}\|_{L^\fai(\rn)}^{-1}\int_{\rn} |g(x)|\,dx
\le\|L\|_{\lf[h^{\fai,\,\fz,\,s}(\rn)\r]^{\ast}}.
\end{eqnarray}
Moveover, by $h^{\fai,\,\fz,\,s}(\rn)\supset
\widetilde{h^{\fai,\,\fz,\,s}}(\rn)$ and
$\|f\|_{h^{\fai,\,\fz,\,s}(\rn)}\le
\|f\|_{\widetilde{h^{\fai,\,\fz,\,s}}(\rn)}$
 for all $f\in
\widetilde{h^{\fai,\,\fz,\,s}}(\rn)$, we know that
$[h^{\fai,\,\fz,\,s}(\rn)]^{\ast}\!\subset\![\widetilde{
h^{\fai,\,\fz,\,s}}(\rn)]^{\ast}$ and
$L\!\mid_{\widetilde{h^{\fai,\,\fz,\,s}}(\rn)}
\in[\widetilde{h^{\fai,\,\fz,\,s}}(\rn)]^{\ast}$. Thus, from
\eqref{7.7} and \eqref{7.8}, we deduce that
$g\in\widetilde{\bmo_{\fai}}(\rn)$ and
$$\|g\|_{\widetilde{\bmo_{\fai}}(\rn)}\ls
\|L\mid_{\widetilde{h^{\fai,\,\fz,\,s}}(\rn)}\|_{[
\widetilde{h^{\fai,\,\fz,\,s}}(\rn)]^{\ast}}\ls
\|L\|_{\lf[h^{\fai,\,\fz,\,s}(\rn)\r]^{\ast}},$$ which, together
with \eqref{7.9}, implies that $g\in\bmo_{\fai}(\rn)$ and
$\|g\|_{\bmo_{\fai}(\rn)}\ls\|L\|_{\lf[h^{\fai,\,\fz,\,s}
(\rn)\r]^{\ast}}.$ This finishes the proof of Theorem \ref{t7.1}.
\end{proof}

From Theorems \ref{t5.1} and \ref{t7.1}, we deduce the following
conclusion.

\begin{corollary}\label{c7.1}
Let $\fai$ be a growth function as in Definition \ref{d2.3}. Then
$[h_{\fai} (\rn)]^{\ast}=\bmo_{\fai}(\rn)$.
\end{corollary}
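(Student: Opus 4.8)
The plan is to read off Corollary \ref{c7.1} directly from Theorems \ref{t5.1} and \ref{t7.1}, the only real work being to pick one admissible triplet whose parameters are compatible with the hypotheses of both theorems. First I would fix $s:=m(\fai)$ and choose an integer $N:=\max\{N_{\fai},\,m(\fai)+1\}$, so that $(\fai,\,\fz,\,s)$ is an admissible triplet in the sense of Definition \ref{d3.4} and the constraints $N\ge N_{\fai}$, $N>s\ge m(\fai)$ of Theorem \ref{t5.1} all hold. Theorem \ref{t5.1} applied with $q=\fz$ then gives
$$h^{\fai,\,\fz,\,s}(\rn)=h_{\fai,\,N}(\rn)=h_{\fai,\,N_{\fai}}(\rn)$$
with equivalent quasi-norms; recalling the convention made after Remark \ref{r5.2} that $h_{\fai}(\rn)$ denotes $h_{\fai,\,N}(\rn)$ for any $N\ge N_{\fai}$, this says precisely that $h^{\fai,\,\fz,\,s}(\rn)=h_{\fai}(\rn)$ as topological vector spaces, the two quasi-norms mutually dominating each other up to a multiplicative constant independent of the element.

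The next step is to transfer this identification to the dual spaces. Since a bijection between two quasi-normed spaces whose quasi-norms are equivalent induces a bijection of the corresponding continuous-dual spaces preserving the dual norm up to constants, we obtain $[h_{\fai}(\rn)]^{\ast}=[h^{\fai,\,\fz,\,s}(\rn)]^{\ast}$ with equivalent norms. Here I would also record that $h^{\fai,\,\fz,\,s}_{\fin}(\rn)$ is dense in $h^{\fai,\,\fz,\,s}(\rn)$ (noted right after Definition \ref{d6.1}), hence dense in $h_{\fai}(\rn)$, so the concrete pairing $L(f)=\langle g,f\rangle$ appearing in Theorem \ref{t7.1} is unambiguously defined on all of $h_{\fai}(\rn)$ and the two realizations of the dual coincide in the stated pointwise sense, not merely up to isomorphism. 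Then invoking Theorem \ref{t7.1}, which identifies $[h^{\fai,\,\fz,\,s}(\rn)]^{\ast}$ with $\bmo_{\fai}(\rn)$ for this same $s\ge m(\fai)$ (the value of $s$ used in Definition \ref{d7.1}), and chaining the two identifications yields $[h_{\fai}(\rn)]^{\ast}=\bmo_{\fai}(\rn)$, as desired.

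There is no genuine analytic obstacle: all the substantive work — the atomic decomposition behind Theorem \ref{t5.1} and the duality argument via Lemma \ref{l7.1} behind Theorem \ref{t7.1} — has already been carried out. The only point requiring care is the bookkeeping of parameters: one must ensure that $N$ and $s$ can be chosen \emph{simultaneously} to satisfy both the constraints of Theorem \ref{t5.1} and the requirement $s\ge m(\fai)$ underlying Definition \ref{d7.1} and Theorem \ref{t7.1}, which the choice $s=m(\fai)$, $N=\max\{N_{\fai},\,m(\fai)+1\}$ does, and that the equality $h_{\fai}(\rn)=h^{\fai,\,\fz,\,s}(\rn)$ is understood with equivalent quasi-norms, so that the passage to duals is legitimate.
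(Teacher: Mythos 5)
Your argument is exactly what the paper intends: Corollary \ref{c7.1} is stated without any written proof beyond the remark that it follows from Theorems \ref{t5.1} and \ref{t7.1}, and your chaining of the atomic identification $h^{\fai,\,\fz,\,s}(\rn)=h_{\fai}(\rn)$ (with equivalent quasi-norms) through the duality $[h^{\fai,\,\fz,\,s}(\rn)]^{\ast}=\bmo_{\fai}(\rn)$ is the intended deduction. The one minor inefficiency is that your $\max\{N_{\fai},\,m(\fai)+1\}$ is redundant, since $N_{\fai}\ge \wz N_{\fai}=m(\fai)+2$ already guarantees $N_{\fai}>s=m(\fai)$, so $N=N_{\fai}$ alone suffices; apart from that your parameter bookkeeping and the density observation are correct.
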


\subsection{Characterizations of pointwise multipliers for
$\bmo^\phi(\rn)$\label{s7.2}}

\hskip\parindent We first recall some notions about pointwise
multipliers for functions of bounded mean oscillation from
\cite{ny85}.

\begin{definition}\label{d7.4}
Let $\phi$ be a positive increasing function on $\rr_+$. A locally
integrable function $f$ on $\rn$ is said to belong to the {\it space
$\bbmo^{\phi}(\rn)$}, if
\begin{eqnarray*}
\|f\|_{\bbmo^{\phi}(\rn)}:=\sup_{Q\subset\rn}
\frac{1}{\phi(l(Q))|Q|} \int_{Q}\lf|f(x)-f_Q\r|\,dx<\fz,
\end{eqnarray*}
where the supremum is taken over all the cubes $Q\subset\rn$ and
$f_Q:=\frac{1}{|Q|}\int_{Q}f(y)\,dy$. Moreover, a locally integrable
function $f$ on $\rn$ is said to belong to the {\it space
$\bmo^{\phi}(\rn)$}, if
\begin{eqnarray*}
\|f\|_{\bmo^{\phi}(\rn)}&:=&\sup_{Q\subset\rn,\,|Q|<1}
\frac{1}{\phi(l(Q))|Q|} \int_{Q}\lf|f(x)-f_Q\r|\,dx \\
&&\hs+\sup_{Q\subset\rn,\,|Q|\ge1}\frac{1}{\phi(l(Q))|Q|}
\int_{Q}\lf|f(x)\r|\,dx<\fz,
\end{eqnarray*}
where the supremums are taken over all the cubes $Q\subset\rn$ with
the indicated properties.

A function $g$ on $\rn$ is called a \emph{pointwise multiplier} on
$\bbmo^{\phi}(\rn)$ (resp. $\bmo^{\phi}(\rn)$), if the pointwise
multiplication $fg$ belongs to $\bbmo^{\phi}(\rn)$ (resp.
$\bmo^{\phi}(\rn)$) for all $f\in\bbmo^{\phi}(\rn)$ (resp.
$f\in\bmo^{\phi}(\rn)$).
\end{definition}

The following Proposition \ref{p7.1} is
just \cite[Theorem 3]{ny85}.

\begin{proposition}\label{p7.1}
Let $\phi$ be a positive increasing function on $\rr_+$ and
$\phi(r)/r$ almost decreasing. Then a function $g$ on $\rn$ is a
pointwise multiplier on $\bmo^{\phi}(\rn)$ if and only if $g\in
\bbmo^{\pz}(\rn)\cap L^{\fz}(\rn)$, where, for all
$r\in(0,\fz)$,
$$\pz(r):=\phi(r)\lf[\int^2_{\min(1,r)}\phi(t)t^{-1}\,dt\r]^{-1}.$$ 
\end{proposition}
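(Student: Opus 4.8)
\emph{The plan.} This statement is a local, weighted analogue of Stegenga's theorem on pointwise multipliers of $\bbmo(\rn)$, and I would prove it by the classical scheme: sufficiency via a Leibniz-type splitting, necessity via the closed graph theorem together with a carefully chosen family of test functions. A recurring ingredient is the following telescoping estimate, valid for any $f$ with finite $\bmo^\phi(\rn)$-norm and any cube $Q$ of sidelength $r\in(0,1)$: comparing the mean $f_Q$ with the mean of $f$ over the concentric unit cube along the dyadic chain $Q\subset 2Q\subset\cdots$, and using that $\phi$ is increasing with $\phi(r)/r$ almost decreasing (whence $\phi$ is doubling and $\sum_{j}\phi(2^jr)\sim\int_{r}^2\phi(t)t^{-1}\,dt$), one obtains
$$|f_Q|\ls\Big(\int_{\min(1,r)}^2\frac{\phi(t)}{t}\,dt\Big)\|f\|_{\bmo^\phi(\rn)}=\frac{\phi(r)}{\pz(r)}\,\|f\|_{\bmo^\phi(\rn)}.$$

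\emph{Sufficiency.} Assume $g\in\bbmo^{\pz}(\rn)\cap L^\fz(\rn)$ and $f\in\bmo^\phi(\rn)$, and estimate the two suprema defining $\|fg\|_{\bmo^\phi(\rn)}$. For a cube $Q$ with $|Q|\ge1$ one bounds $\int_Q|fg|\le\|g\|_{L^\fz(\rn)}\int_Q|f|\le\|g\|_{L^\fz(\rn)}\phi(l(Q))|Q|\,\|f\|_{\bmo^\phi(\rn)}$. For a cube $Q$ with $|Q|<1$, since the mean is within a factor $2$ of the best constant, write $fg-f_Qg_Q=(f-f_Q)g+f_Q(g-g_Q)$, giving
$$\frac1{|Q|}\int_Q|fg-(fg)_Q|\ls\|g\|_{L^\fz(\rn)}\frac1{|Q|}\int_Q|f-f_Q|+|f_Q|\,\frac1{|Q|}\int_Q|g-g_Q|;$$
the first term is $\ls\|g\|_{L^\fz(\rn)}\phi(l(Q))\|f\|_{\bmo^\phi(\rn)}$, and in the second I use $\frac1{|Q|}\int_Q|g-g_Q|\le\pz(l(Q))\|g\|_{\bbmo^{\pz}(\rn)}$ together with the telescoping bound for $|f_Q|$; since $\pz(r)\big(\int_{\min(1,r)}^2\phi(t)t^{-1}\,dt\big)=\phi(r)$, the factors combine to $\phi(l(Q))$. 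Dividing by $\phi(l(Q))|Q|$ yields $\|fg\|_{\bmo^\phi(\rn)}\ls(\|g\|_{L^\fz(\rn)}+\|g\|_{\bbmo^{\pz}(\rn)})\|f\|_{\bmo^\phi(\rn)}$, so $g$ is a pointwise multiplier.

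\emph{Necessity.} Let $g$ be a pointwise multiplier on $\bmo^\phi(\rn)$. Since $\bmo^\phi(\rn)$ is a Banach space and convergence in it forces convergence in $L^1_{\loc}(\rn)$ (through the large-cube term), the graph of $f\mapsto gf$ is closed, so by the closed graph theorem $\|gf\|_{\bmo^\phi(\rn)}\le C_g\|f\|_{\bmo^\phi(\rn)}$ for some finite $C_g$. Testing on $f\equiv1$, which lies in $\bmo^\phi(\rn)$ with positive finite norm $1/\phi(1)$, gives $g\in\bmo^\phi(\rn)\subset L^1_{\loc}(\rn)$, and since $\pz(l(Q))=\phi(l(Q))/\int_1^2\phi(t)t^{-1}\,dt$ when $|Q|\ge1$, this already yields the large-cube part of the $\bbmo^{\pz}(\rn)$-estimate for $g$. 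For boundedness, I would iterate: $f\mapsto g^nf$ is again a multiplier with $\|g^nf\|_{\bmo^\phi(\rn)}\le C_g^{\,n}\|f\|_{\bmo^\phi(\rn)}$, so with $f\equiv1$ and a fixed unit cube $Q_0$ one gets $\int_{Q_0}|g|^n\le C_g^{\,n}$, i.e. $\|g\|_{L^n(Q_0)}\le C_g$; letting $n\to\fz$ and translating $Q_0$ (the $\bmo^\phi(\rn)$-norm is translation invariant) gives $g\in L^\fz(\rn)$ with $\|g\|_{L^\fz(\rn)}\ls C_g$. It remains to bound $\frac1{|Q|}\int_Q|g-g_Q|$ for small cubes $Q=Q(x_0,r)$; here I would test the multiplier against
$$\beta(x):=\int_{\min(|x-x_0|,\,1)}^2\frac{\phi(t)}{t}\,dt,$$
the natural analogue of $\log(1/|x-x_0|)$. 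One checks (using that $\phi$ is doubling, $\phi(r)/r$ almost decreasing, and $|\nabla\beta(x)|\sim\phi(|x-x_0|)/|x-x_0|$ for $|x-x_0|<1$) that $\beta\in\bmo^\phi(\rn)$ with norm independent of $x_0$, while $\beta\gs\phi(r)/\pz(r)$ on $Q$ once $r$ is small. Splitting $g\beta-g_Q\beta_Q=(g-g_Q)\beta+g_Q(\beta-\beta_Q)$, bounding $|g_Q|\le\|g\|_{L^\fz(\rn)}$, $\frac1{|Q|}\int_Q|\beta-\beta_Q|\ls\phi(r)$ and $\|g\beta\|_{\bmo^\phi(\rn)}\le C_g\|\beta\|_{\bmo^\phi(\rn)}$, one arrives at $\frac{\phi(r)}{\pz(r)}\frac1{|Q|}\int_Q|g-g_Q|\ls\phi(r)(C_g+\|g\|_{L^\fz(\rn)})$, i.e. $\frac1{|Q|}\int_Q|g-g_Q|\ls\pz(r)$; the remaining bounded range of $r$ is trivial from $g\in L^\fz(\rn)$. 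Hence $g\in\bbmo^{\pz}(\rn)\cap L^\fz(\rn)$.

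\emph{Main obstacle.} The Leibniz splitting is quick; the real work lies in the necessity, and specifically in the two test-function steps — feeding $f\equiv1$ and its powers into the multiplier to extract $L^\fz$-control, and verifying that $\beta$ simultaneously has uniformly bounded $\bmo^\phi(\rn)$-norm and stays comparable to $\phi(r)/\pz(r)$ on a definite portion of $Q$. The quantitative bookkeeping for $\beta$ near and far from $x_0$, and the dyadic sum/integral comparison $\sum_j\phi(2^jr)\sim\int_{\min(1,r)}^2\phi(t)t^{-1}\,dt$, are precisely where the hypotheses that $\phi$ is increasing and $\phi(r)/r$ almost decreasing are used essentially, and they form the delicate part of the proof.
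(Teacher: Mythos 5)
The paper does not prove this proposition; it is cited verbatim as \cite[Theorem 3]{ny85} (Nakai--Yabuta), so there is no in-paper proof to compare against. Your argument is nevertheless worth assessing on its own merits, and it is essentially the correct classical Stegenga-type proof as adapted to this local, weighted setting.

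The sufficiency direction is sound: the telescoping estimate $|f_Q|\ls\big(\int_{\min(1,r)}^2\phi(t)t^{-1}\,dt\big)\|f\|_{\bmo^\phi(\rn)}$ is exactly what the doubling of $\phi$ (from $\phi$ increasing and $\phi(r)/r$ almost decreasing) supplies, and the Leibniz split $(f-f_Q)g+f_Q(g-g_Q)$ then makes the factors $\pz(r)\cdot\phi(r)/\pz(r)=\phi(r)$ cancel correctly. The necessity direction is where the real content sits and your scheme is the right one. Two remarks on the bookkeeping you gloss over. First, when you split $g\beta-g_Q\beta_Q$, the multiplier bound controls $\int_Q|g\beta-(g\beta)_Q|$, not $\int_Q|g\beta-g_Q\beta_Q|$; the discrepancy $|Q|\,|(g\beta)_Q-g_Q\beta_Q|=\big|\int_Q(g-g_Q)(\beta-\beta_Q)\big|$ must be absorbed separately, which is harmless once you already know $g\in L^\fz(\rn)$, since it is $\ls\|g\|_{L^\fz(\rn)}\int_Q|\beta-\beta_Q|$ --- but this step should be made explicit, as your displayed chain as written conflates the two oscillations. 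Second, the assertion that $\beta\gs\phi(r)/\pz(r)$ on all of $Q(x_0,r)$ needs a small cushion: on $Q$ one has $|x-x_0|\le\sqrt n\,r/2$, so $\beta(x)\ge\int_{\min(\sqrt n\,r/2,1)}^2\phi(t)t^{-1}\,dt$, which differs from $\int_{\min(r,1)}^2\phi(t)t^{-1}\,dt$ by an additive $O(\phi(r))$; this is negligible when $\int_r^2\phi(t)t^{-1}\,dt\to\fz$ as $r\to0$, and when it stays bounded one has $\pz\sim\phi$ and the small-cube estimate is immediate from $g\in L^\fz(\rn)$. Your iteration $\|g^n\|_{\bmo^\phi(\rn)}\le C_g^n\|1\|_{\bmo^\phi(\rn)}$ via the closed graph theorem to extract $g\in L^\fz(\rn)$ is correct and exploits precisely that $\bmo^\phi(\rn)$ (unlike $\bbmo^\phi(\rn)$) is a genuine Banach space because the large-cube term involves $|f|$ rather than an oscillation. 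With these two small repairs spelled out, the proof is complete.
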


Another main result of this section is as follows.

\begin{theorem}\label{t7.2}
Let $\phi$ and $\pz$ be as in Proposition \ref{p7.1}.

$\mathrm{(i)}$ If  $\phi$ is further assumed to satisfy the fact that for
all $t\in(0,\fz)$,
\begin{eqnarray}\label{7.10}
\phi(t)\sim\int_0^t\frac{\phi(r)}{r}\,dr,
\end{eqnarray}
then there exists an Orlicz function $\Phi_0$, which is of type
$(n/(n+1),1)$, such that the class of pointwise multipliers for
$\bmo^{\phi}(\rn)$ is the dual of $L^1(\rn)+h_{\Phi_0}(\rn)$.

$\mathrm{(ii)}$ The class of pointwise multipliers for $\bmo(\rn)$
is the dual of $L^1(\rn)+h_{\tz}(\rn)$, where $h_{\tz}(\rn)$ is
the local Musielak-Orlicz Hardy space related to the
Orlicz function $\tz(t):=\frac{t}{\ln(e+t)}$.
\end{theorem}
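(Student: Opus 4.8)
## Proof Proposal for Theorem \ref{t7.2}

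The plan is to reduce Theorem \ref{t7.2} to Corollary \ref{c7.1} by identifying, for a suitable Musielak–Orlicz growth function $\Phi_0$ (and, in part (ii), $\tz$), the local $\bbmo$-type space $\bmo_{\fai}(\rn)$ of Definition \ref{d7.1} with the space of pointwise multipliers on $\bmo^{\phi}(\rn)$ characterized by Nakai and Yabuta in Proposition \ref{p7.1}, namely $\bbmo^{\pz}(\rn)\cap L^{\fz}(\rn)$. First I would observe that the dual of $L^1(\rn)+h_{\Phi_0}(\rn)$ is $L^{\fz}(\rn)\cap[h_{\Phi_0}(\rn)]^{\ast}$, since the dual of a sum of two (quasi-)Banach spaces continuously embedded in a common Hausdorff topological vector space is the intersection of their duals with the intersection norm; here $[L^1(\rn)]^{\ast}=L^{\fz}(\rn)$ and, by Corollary \ref{c7.1}, $[h_{\Phi_0}(\rn)]^{\ast}=\bmo_{\Phi_0}(\rn)$. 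So the theorem amounts to the identification
$$L^{\fz}(\rn)\cap\bmo_{\Phi_0}(\rn)=\bbmo^{\pz}(\rn)\cap L^{\fz}(\rn),$$
with equivalent norms, together with Proposition \ref{p7.1}. The role of the intersection with $L^{\fz}(\rn)$ is crucial: on $L^{\fz}$, the ``$|Q|\ge1$'' part of the $\bmo_{\Phi_0}$ norm and the difference between $\bmo_{\Phi_0}$ and $\bbmo_{\Phi_0}$ become harmless, so one is reduced to comparing the ``$|Q|<1$'' oscillation seminorms of $\bmo_{\Phi_0}(\rn)$ and $\bbmo^{\pz}(\rn)$.

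Next I would construct $\Phi_0$. Given that $\phi$ is positive, increasing, $\phi(r)/r$ almost decreasing, of lower type $p_0\in(0,1]$ and satisfies \eqref{7.10}, I would define $\Phi_0$ so that its associated quantity $\|\chi_Q\|_{L^{\Phi_0}(\rn)}$ matches $|Q|/\pz(l(Q))$ up to constants, which is exactly what is needed for the $\bmo_{\Phi_0}$ oscillation seminorm over cubes with $|Q|<1$ to coincide with the $\bbmo^{\pz}$ seminorm. Concretely, one takes $\Phi_0$ to be (an Orlicz function equivalent to) $t\mapsto t/\psi(t)$ or its integral-smoothed version $\int_0^t \Phi_0(s)/s\,ds$ from Lemma \ref{l2.1}(ii) — note the typical example $\fai(x,t)=t/[\ln(e+|x|)+\ln(e+t)]$ and the function $\tz(t)=t/\ln(e+t)$ in part (ii) are precisely of this shape with $\phi\equiv1$, $\pz(r)=\{\int^2_{\min(1,r)}t^{-1}\,dt\}^{-1}\sim 1/\ln(e+1/r)$. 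One then checks: (a) $\Phi_0$ is an Orlicz function of type $(n/(n+1),1)$ — the lower index $n/(n+1)$ comes from the constraint $s\ge m(\Phi_0)=\lfz n[q(\Phi_0)/i(\Phi_0)-1]\rfz=0$ being achievable, i.e. $i(\Phi_0)\ge n q(\Phi_0)/(n+1)$ with $q(\Phi_0)=1$ (here $\Phi_0(\cdot,t)$ is $x$-independent, so $\Phi_0\in\aa_1^{\loc}(\rn)$); the precise constant is forced by the growth of $\phi$ and the lower-type hypothesis on it; (b) for cubes $Q$ with $|Q|<1$, $\|\chi_Q\|_{L^{\Phi_0}(\rn)}\sim |Q|\,\pz(l(Q))^{-1}$ by a direct computation from the Luxembourg norm and Lemma \ref{l2.2}(i); and (c) for cubes $Q$ with $|Q|\ge1$, $\|\chi_Q\|_{L^{\Phi_0}(\rn)}\sim|Q|$ (up to lower-order logarithmic factors absorbed by $L^{\fz}$). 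Part (ii) is then the special case $\phi\equiv1$, for which $\psi(r)\sim(\ln(e+1/r))^{-1}$ for small $r$ and $\Phi_0$ can be taken to be exactly $\tz(t)=t/\ln(e+t)$, and the hypothesis \eqref{7.10} is vacuous.

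With these estimates in hand, the remaining step is bookkeeping: given $g\in L^{\fz}(\rn)\cap\bmo_{\Phi_0}(\rn)$, comparing $\frac{1}{\|\chi_Q\|_{L^{\Phi_0}(\rn)}}\int_Q|g-P^0_Q g|$ (note $s=0$ suffices since $m(\Phi_0)=0$, and $P^0_Q g = g_Q$) with $\frac{1}{\phi(l(Q))|Q|}\int_Q|g-g_Q|$ yields, via (b), equivalence of the ``small cube'' parts with $\|g\|_{\bbmo^{\pz}(\rn)}$ restricted to small cubes; the ``large cube'' parts of both $\bmo$-type norms are $\ls\|g\|_{L^{\fz}(\rn)}$ by (c); and the passage between $\bmo^{\pz}$ (local, small cubes only) and $\bbmo^{\pz}$ (all cubes) uses that large-cube averages of an $L^{\fz}$ function are bounded, together with the standard fact that controlling oscillation on small cubes plus boundedness controls it on all cubes (a chaining/John–Nirenberg-type argument). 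Combining this identification with Proposition \ref{p7.1} completes both parts. I expect the main obstacle to be step (a)–(b): verifying that the specific $\Phi_0$ built from $\phi$ is genuinely a growth function of the asserted type $(n/(n+1),1)$, that $\pz$ and $\Phi_0$ satisfy the precise reciprocity $\|\chi_Q\|_{L^{\Phi_0}(\rn)}\sim|Q|/\pz(l(Q))$ uniformly, and that the hypothesis \eqref{7.10} on $\phi$ is exactly what makes $\Phi_0$ (via Lemma \ref{l2.1}(ii)) smooth and of controlled lower type — a somewhat delicate interplay between the almost-decreasing condition on $\phi(r)/r$, the lower-type index $p_0$, and the logarithmic integral in the definition of $\pz$.
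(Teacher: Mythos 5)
Your overall strategy mirrors the paper's exactly: reduce to Corollary \ref{c7.1}, use $[L^1(\rn)+h_{\Phi_0}(\rn)]^{\ast}=L^{\fz}(\rn)\cap\bmo_{\Phi_0}(\rn)$, identify $\bmo_{\Phi_0}(\rn)$ with $\bmo^{\pz}(\rn)$ via a reciprocity formula for $\|\chi_Q\|_{L^{\Phi_0}(\rn)}$, observe that intersecting with $L^{\fz}(\rn)$ makes $\bmo^{\pz}$ and $\bbmo^{\pz}$ coincide, and finish with Proposition \ref{p7.1}. The paper packages the last passage as Lemma \ref{l7.2}, which is an entirely elementary observation (the extra large-cube term in the $\bmo^{\pz}$ norm is controlled by $\|f\|_{L^{\fz}(\rn)}$ because $\pz$ is bounded below on $[1,\fz)$); your appeal to a ``chaining/John--Nirenberg-type argument'' is unnecessary overkill, though not incorrect.

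However, there is a genuine error in your construction of $\Phi_0$ and in the reciprocity formula you assert for it. You claim $\|\chi_Q\|_{L^{\Phi_0}(\rn)}\sim|Q|\,\pz(l(Q))^{-1}$ and propose $\Phi_0(t)\sim t/\pz(t)$; both are wrong. To make the two $\bbmo$-type seminorms comparable on small cubes you need $\|\chi_Q\|_{L^{\Phi_0}(\rn)}\sim\pz(l(Q))\,|Q|$ (a product, not a quotient --- note $\pz(l(Q))<1$ for small cubes and $\|\chi_Q\|_{L^{\Phi_0}(\rn)}<|Q|$ for a sublinear Orlicz function), and the Orlicz function realizing this identity is \emph{not} $t/\pz(t)$: since $\|\chi_Q\|_{L^{\Phi_0}(\rn)}=1/\Phi_0^{-1}(|Q|^{-1})$ and the argument $|Q|^{-1}$ of $\Phi_0^{-1}$ lives on the volume scale while $\pz$ is evaluated on the side-length scale $l(Q)=|Q|^{1/n}$, the correct definition (Lemma \ref{l7.3}) is $\Phi_0:=\eta^{-1}(\,\cdot^{-1})$ where $\eta(t):=\pz(t^{-1/n})\,t^{-1}$. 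Your formula $\Phi_0(t)=t/\pz(t)$ omits the $t\mapsto t^{-1/n}$ substitution entirely; in the model case $\phi\equiv1$, $n=1$ it yields $\Phi_0(t)\sim t$ for $t\ge1$ (because $\pz$ is constant there), which is \emph{not} comparable to $\tz(t)=t/\ln(e+t)$ and does not give $\|\chi_Q\|_{L^{\Phi_0}(\rn)}\sim|Q|/\ln(e+|Q|^{-1})$ for small cubes. So as written, step (b) of your argument fails, and Lemma \ref{l7.4} (the small-cube asymptotics for $\tz$) is exactly what your Case (ii) silently needs but does not deliver.

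A minor point: you write the comparison against $\frac{1}{\phi(l(Q))|Q|}\int_Q|g-g_Q|$, but the multiplier class per Proposition \ref{p7.1} is $\bbmo^{\pz}(\rn)\cap L^{\fz}(\rn)$, so the denominator in the target seminorm should be $\pz(l(Q))\,|Q|$, not $\phi(l(Q))\,|Q|$.
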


\begin{remark}\label{r7.1}
(i) A similar result to Theorem \ref{t7.2}(ii) for $\bbmo(\rn)$
was obtained by \cite[Theorem 3.3]{k}.

(ii) From the assumption that $\phi(r)/r$ is almost decreasing, we
infer that $\phi$ is of upper type 1. Moreover, if $\phi$ is of
lower type $p_0\in(0,1]$, then $\phi$ satisfies \eqref{7.10}.
Conversely, if $\phi$ satisfies \eqref{7.10}, then $\phi$ is of
lower type 0; however, if $\phi(r)\equiv1$ for all $r\in(0,\fz)$,
then $\phi(r)/r=1/r$ for $r\in(0,\fz)$ is decreasing and
$\phi$ is of lower type $0$, but
$\phi$ does not satisfy \eqref{7.10} and hence the assumptions of
Theorem \ref{t7.2}(i).

(iii) We point out that when $\phi\equiv1$, $\bmo^\phi(\rn)$ in
Definition \ref{d7.4} is just $\bmo(\rn)$ introduced by Goldberg
in \cite{go79}. Since the function $\phi\equiv1$ does not satisfy
the assumption in Theorem \ref{t7.1}(i), Theorem \ref{t7.2}(ii) is
not contained in Theorem \ref{t7.2}(i).
\end{remark}

To show Theorem \ref{t7.2}, we need the following lemmas.

\begin{lemma}\label{l7.2}
Let $\phi$ and $\pz$ be as in Proposition \ref{p7.1}. Then
$$\bbmo^{\pz}(\rn)\cap L^{\fz}(\rn)=\bmo^{\pz}(\rn)\cap L^{\fz}(\rn)$$
with equivalent norms.
\end{lemma}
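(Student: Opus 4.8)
The plan is to prove the two inclusions $\bmo^{\pz}(\rn)\cap L^{\fz}(\rn)\subset\bbmo^{\pz}(\rn)\cap L^{\fz}(\rn)$ and $\bbmo^{\pz}(\rn)\cap L^{\fz}(\rn)\subset\bmo^{\pz}(\rn)\cap L^{\fz}(\rn)$ separately, in each case with the accompanying norm estimate; since $\|\cdot\|_{L^{\fz}(\rn)}$ occurs on both sides, it suffices to compare $\|\cdot\|_{\bmo^{\pz}(\rn)}$ with $\|\cdot\|_{\bbmo^{\pz}(\rn)}$ modulo an $L^{\fz}$-error that can be controlled.

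First I would handle $\bmo^{\pz}(\rn)\cap L^{\fz}(\rn)\subset\bbmo^{\pz}(\rn)\cap L^{\fz}(\rn)$, which in fact requires no boundedness assumption. Splitting the supremum defining $\|f\|_{\bbmo^{\pz}(\rn)}$ according to $|Q|<1$ and $|Q|\ge1$, the part over cubes with $|Q|<1$ is precisely the first summand of $\|f\|_{\bmo^{\pz}(\rn)}$, while for a cube $Q$ with $|Q|\ge1$ the elementary estimate $\int_Q|f(x)-f_Q|\,dx\le2\int_Q|f(x)|\,dx$ bounds $\frac{1}{\pz(l(Q))|Q|}\int_Q|f(x)-f_Q|\,dx$ by twice the second summand of $\|f\|_{\bmo^{\pz}(\rn)}$. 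Hence $\|f\|_{\bbmo^{\pz}(\rn)}\le2\|f\|_{\bmo^{\pz}(\rn)}$.

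The substance of the lemma is the reverse inclusion, and its heart is the remark that $\pz$ is bounded away from $0$ on $[1,\fz)$. Indeed, when $r\ge1$ one has $\min(1,r)=1$, so $\pz(r)=\phi(r)\big[\int_1^2\phi(t)t^{-1}\,dt\big]^{-1}$; since $\phi$ is positive and increasing, $0<\phi(1)\le\phi(r)$ and $0<\int_1^2\phi(t)t^{-1}\,dt\le\phi(2)\ln2<\fz$, so $\pz(r)\ge c_0$ for all $r\ge1$ with $c_0:=\phi(1)\big[\int_1^2\phi(t)t^{-1}\,dt\big]^{-1}\in(0,\fz)$. Thus, for $f\in\bbmo^{\pz}(\rn)\cap L^{\fz}(\rn)$ and any cube $Q$ with $|Q|\ge1$, $\frac{1}{\pz(l(Q))|Q|}\int_Q|f(x)|\,dx\le c_0^{-1}\|f\|_{L^{\fz}(\rn)}$; and the first summand of $\|f\|_{\bmo^{\pz}(\rn)}$ equals the contribution to $\|f\|_{\bbmo^{\pz}(\rn)}$ of cubes with $|Q|<1$, hence is at most $\|f\|_{\bbmo^{\pz}(\rn)}$. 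Adding these, $\|f\|_{\bmo^{\pz}(\rn)}\le\|f\|_{\bbmo^{\pz}(\rn)}+c_0^{-1}\|f\|_{L^{\fz}(\rn)}<\fz$, so $f\in\bmo^{\pz}(\rn)\cap L^{\fz}(\rn)$; combining this with the previous paragraph and adjoining $\|\cdot\|_{L^{\fz}(\rn)}$ to both sides gives the asserted equality with equivalent norms.

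I expect no genuine obstacle: the only point requiring care is the lower bound $\inf_{r\ge1}\pz(r)>0$ (together with the finiteness and positivity of $\int_1^2\phi(t)t^{-1}\,dt$), which rests solely on the positivity and monotonicity of $\phi$. In particular, neither the almost-decreasing hypothesis on $\phi(r)/r$ nor condition \eqref{7.10} is needed for this lemma.
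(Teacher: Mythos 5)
Your proof is correct and follows essentially the same route as the paper's, which simply asserts the key inequality $\|f\|_{\bmo^{\pz}(\rn)}\ls\|f\|_{\bbmo^{\pz}(\rn)}+\|f\|_{L^{\fz}(\rn)}$ as ``easy to see''; you have supplied the content of that step, namely that $\pz$ is bounded away from $0$ on $[1,\fz)$ because $\phi$ is positive and increasing. Your observation that neither the almost-decreasing hypothesis nor \eqref{7.10} is needed here is also accurate.
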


\begin{proof}
First, we recall that for all $f\in\bbmo^{\pz}(\rn)\cap
L^{\fz}(\rn)$ (resp. $f\in\bmo^{\pz}(\rn)\cap L^{\fz}(\rn)$),
$$\|f\|_{\bbmo^{\pz}(\rn)\cap
L^{\fz}(\rn)}=\|f\|_{\bbmo^{\pz}(\rn)}+\|f\|_{L^\fz(\rn)}$$
(resp.
$\|f\|_{\bmo^{\pz}(\rn)\cap
L^{\fz}(\rn)}=\|f\|_{\bmo^{\pz}(\rn)}+\|f\|_{L^\fz(\rn)}).$ By
the definitions of $\bbmo^{\pz}(\rn)$ and $\bmo^{\pz}(\rn)$, we
know that $\bmo^{\pz}(\rn)\subset\bbmo^{\pz}(\rn)$ and for all
$f\in \bmo^{\pz}(\rn)$,
$\|f\|_{\bbmo^{\pz}(\rn)}\ls\|f\|_{\bmo^{\pz}(\rn)}$, which
implies that $\bmo^{\pz}(\rn)\cap
L^{\fz}(\rn)\subset\bbmo^{\pz}(\rn)\cap L^{\fz}(\rn)$ and for all
$f\in\bmo^{\pz}(\rn)\cap L^{\fz}(\rn)$,
$\|f\|_{\bbmo^{\pz}(\rn)\cap L^{\fz}(\rn)}\ls
\|f\|_{\bmo^{\pz}(\rn)\cap L^{\fz}(\rn)}.$

Now, we prove that $\bbmo^{\pz}(\rn)\cap
L^{\fz}(\rn)\subset\bmo^{\pz}(\rn)\cap L^{\fz}(\rn)$. Let
$f\in\bbmo^{\pz}(\rn)\cap L^{\fz}(\rn)$. Then it is easy to see that
$$\|f\|_{\bmo^{\pz}(\rn)}\ls
\|f\|_{\bbmo^{\pz}(\rn)}+\|f\|_{L^{\fz}(\rn)}
\ls\|f\|_{\bbmo^{\pz}(\rn)\cap L^{\fz}(\rn)},$$
which implies that
$f\in\bmo^{\pz}(\rn)\cap L^{\fz}(\rn)$ and
$$\|f\|_{\bmo^{\pz}(\rn)\cap
L^{\fz}(\rn)}\ls\|f\|_{\bbmo^{\pz}(\rn)\cap L^{\fz}(\rn)}.$$
This finishes the proof of Lemma \ref{l7.2}.
\end{proof}

\begin{lemma}\label{l7.3}
Let $\phi$ and $\pz$ be as in Proposition \ref{p7.1} and,
moreover, let $\phi$ satisfy \eqref{7.10}. Then there exists an Orlicz
function $\Phi_0$ such that $\Phi_0$ is of type $(n/(n+1), 1)$
and, for all $Q\subset\rn$,
$$\|\chi_{Q}\|_{L^{\Phi_0}(\rn)}=\pz(l(Q))|Q|.$$
\end{lemma}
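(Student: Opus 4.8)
The plan is to construct $\Phi_0$ by prescribing its inverse function in terms of $\pz$, and then to verify in turn that it is an Orlicz function, that it has the asserted type, and that it realises the norm $\pz(l(Q))|Q|$. The starting observation is that for \emph{any} Orlicz function $\Phi$ and any cube $Q$, since $\chi_Q/\lz$ equals $1/\lz$ on $Q$ and $0$ off $Q$ and $\Phi(0)=0$,
$$\lf\|\chi_Q\r\|_{L^{\Phi}(\rn)}=\inf\lf\{\lz\in(0,\fz):\ |Q|\,\Phi(1/\lz)\le1\r\},$$
so that, when $\Phi$ is continuous, strictly increasing and surjective onto $[0,\fz)$, $\lf\|\chi_Q\r\|_{L^{\Phi}(\rn)}=1/\Phi^{-1}(1/|Q|)$. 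Writing $|Q|=l(Q)^n$ and $t=l(Q)^{-n}$, the goal is therefore an Orlicz function $\Phi_0$ with $\Phi_0^{-1}(t)=t/\pz(t^{-1/n})$ for all $t\in(0,\fz)$.

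Before constructing it, I record two facts about $\pz$. First, $\pz$ is positive and nondecreasing on $(0,\fz)$: its numerator $\phi$ is nondecreasing, while its denominator $r\mapsto\int_{\min(1,r)}^2\phi(s)s^{-1}\,ds$ is continuous, strictly positive and nonincreasing. Second, $\pz(r)/r$ is almost decreasing: $\phi(r)/r$ is almost decreasing by hypothesis, and by \eqref{7.10} one has $\int_0^2\phi(s)s^{-1}\,ds\sim\phi(2)<\fz$, so the factor $[\int_{\min(1,r)}^2\phi(s)s^{-1}\,ds]^{-1}$ stays between the two positive constants $[\int_0^2\phi(s)s^{-1}\,ds]^{-1}$ and $[\int_1^2\phi(s)s^{-1}\,ds]^{-1}$; since the product of an almost decreasing function with a function bounded between two positive constants is again almost decreasing, the claim follows. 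This is the only point at which the extra assumption \eqref{7.10} enters.

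Now set $\Psi(0):=0$ and $\Psi(t):=t/\pz(t^{-1/n})$ for $t\in(0,\fz)$. By the first fact, $\Psi$ is strictly increasing (it is $t$ times a positive nondecreasing function), $\Psi(t)\to0$ as $t\to0^+$, and $\Psi(t)\to\fz$ as $t\to\fz$. Let $\Phi_0$ be the generalized inverse of $\Psi$ (taking, at jumps of $\Psi$, the one-sided convention matching the one-sided continuity of $\pz$; if $\phi$, hence $\pz$ and $\Psi$, is continuous, then $\Phi_0:=\Psi^{-1}$ is a genuine continuous strictly increasing Orlicz function and there is nothing to fuss over here). Then $\Phi_0$ is nondecreasing with $\Phi_0(0)=0$, $\Phi_0>0$ on $(0,\fz)$ and $\Phi_0(v)\to\fz$ as $v\to\fz$, so $\Phi_0$ is an Orlicz function, and for a cube $Q$ with $l(Q)=r$,
$$\lf\|\chi_Q\r\|_{L^{\Phi_0}(\rn)}=\inf\lf\{\lz\in(0,\fz):\ \Phi_0(1/\lz)\le r^{-n}\r\}=\frac1{\Psi(r^{-n})}=r^n\pz(r)=\pz(l(Q))\,|Q|.$$

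It remains to check that $\Phi_0$ is of type $(n/(n+1),1)$; we do this through $\Psi=\Phi_0^{-1}$, using that $\Phi$ is of upper type $p$ iff $\Phi^{-1}$ is of lower type $1/p$ and $\Phi$ is of lower type $q$ iff $\Phi^{-1}$ is of upper type $1/q$. If $\mu\in(0,1]$ then $\mu^{-1/n}\ge1$, so $\pz(\mu^{-1/n}t^{-1/n})\ge\pz(t^{-1/n})$ by the first fact and $\Psi(\mu t)=\mu t/\pz(\mu^{-1/n}t^{-1/n})\le\mu\,\Psi(t)$; hence $\Psi$ is of lower type $1$ and $\Phi_0$ of upper type $1$. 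If $\lz\in[1,\fz)$ then the almost decreasingness of $\pz(r)/r$ gives a constant $C$ with $\pz(\lz^{-1/n}t^{-1/n})\ge C^{-1}\lz^{-1/n}\pz(t^{-1/n})$, whence $\Psi(\lz t)=\lz t/\pz(\lz^{-1/n}t^{-1/n})\le C\lz^{(n+1)/n}\Psi(t)$; hence $\Psi$ is of upper type $(n+1)/n$ and $\Phi_0$ of lower type $n/(n+1)$. The two steps that carry the weight are the second fact above — that the passage $\phi\mapsto\pz$ preserves almost decreasingness, which is exactly what \eqref{7.10} buys via $\int_0^2\phi(s)s^{-1}\,ds<\fz$ — and the bookkeeping with the generalized inverse guaranteeing that the norm identity is an equality rather than merely an equivalence when $\phi$ is not assumed continuous; granting these, the type estimates just displayed are immediate.
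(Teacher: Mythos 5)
Your proof is correct and in fact builds literally the same $\Phi_0$ as the paper: writing the paper's $\eta(t)=\pz(t^{-1/n})t^{-1}$, one has $\Psi=1/\eta$, and hence $\Phi_0=\eta^{-1}(\cdot^{-1})=\Psi^{-1}$. The only genuine difference is in how the type $(n/(n+1),1)$ is verified. The paper invokes \cite[Proposition 3.10]{vi87} to pass from $\Phi_0$ to $\rho_0(t):=t^{-1}/\Phi_0^{-1}(t^{-1})=\pz(t^{1/n})$ and then establishes upper type $1/n$ for $\rho_0$ by a three-case computation (displays \eqref{7.11}--\eqref{7.13}) that uses \eqref{7.10} case by case. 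You instead work directly with $\Psi=\Phi_0^{-1}$, using the elementary equivalence between the types of $\Phi$ and of $\Phi^{-1}$, and you isolate the substance in a single clean statement: \eqref{7.10} forces $\int_0^2\phi(s)s^{-1}\,ds<\fz$, so the denominator in $\pz$ is bounded between two positive constants, hence $\pz(r)/r$ inherits almost-decreasing from $\phi(r)/r$. Unwinding definitions shows that ``$\pz(r)/r$ almost decreasing'' is exactly ``$\rho_0$ of upper type $1/n$,'' so you are proving the same inequality as the paper in a more conceptual way, at the modest cost of asserting (rather than proving) the $\Phi\leftrightarrow\Phi^{-1}$ type duality, which the paper outsources to Viviani. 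A one-line justification of that duality (e.g.\ ``$\Phi$ of upper type $p$'' is ``$\Phi(t)/t^p$ almost decreasing,'' which transfers to ``$\Phi^{-1}(u)/u^{1/p}$ almost increasing'') would make your version fully self-contained; you are also more careful than the paper about the generalized inverse when $\phi$ is not assumed continuous.
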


\begin{proof}
For all $t\in(0,\fz)$, let $\eta(t):=\pz(t^{-1/n})t^{-1}$. Then by
the definition of $\pz$ and the assumption that $\phi$ is
increasing, we conclude that $\eta$ is strictly decreasing on
$(0,\fz)$. Then, the \emph{inverse} of $\eta$ exists and let
$\Phi_0(t):=\eta^{-1}(t^{-1})$ for all $t\in(0,\fz)$. Thus, for
any cube $Q\subset\rn$,
\begin{eqnarray*}
\|\chi_Q\|_{L^{\Phi_0}(\rn)}=\frac{1}{\Phi_0^{-1}(1/|Q|)}
=\eta(1/|Q|)=\pz(l(Q))|Q|.
\end{eqnarray*}
From the fact that $\eta$ is strictly decreasing and the
definition of $\Phi_0$, it follows that $\Phi_0$ is strictly
increasing on $(0,\fz)$, $\lim_{t\to0^+}\Phi_0(t)=0$ and
$\lim_{t\to\fz}\Phi_0(t)=\fz.$ Now, we prove that $\Phi_0$ is of
type $(n/(n+1),1)$. To this end, by \cite[Proposition 3.10]{vi87},
it suffices to show that $\rho_0$ is of type $(0,1/n)$, where
$\rho_0(t):=t^{-1}/\Phi^{-1}_0(t^{-1})$ for all $t\in(0,\fz)$.

First, we prove that $\rho_0$ is of upper type $1/n$. Let
$s\in[1,\fz)$ and $t\in(0,\fz)$. If $ts\ge1$ and $t\ge1$, then,
from the upper type $1$ property of $\phi$, we infer that
\begin{eqnarray}\label{7.11}
\rho_0(ts)=\phi\lf((ts)^{1/n}\r)\lf\{\int_1^2\phi(r)\,\frac{dr}{r}\r\}^{-1}\ls
s^{1/n}\rho_0(t).
\end{eqnarray}
If $ts\ge1$ and $t\in(0,1]$, then by the upper type $1$ property
of $\phi$ and \eqref{7.10}, we conclude that
\begin{eqnarray}\label{7.12}
\rho_0(ts)&&\ls s^{1/n}\phi\lf(t^{1/n}\r)\lf\{\int_1^2\phi(r)\,
\frac{dr}{r}\r\}^{-1}\nonumber\\
&&\sim\int_{t^{1/n}}^2\phi(r)\,\frac{dr}{r}
\lf\{\int_{1}^2\phi(r)\,\frac{dr}{r}\r\}^{-1}s^{1/n}
\lf\{\int_{t^{1/n}}^2\phi(r)\,\frac{dr}{r}\r\}^{-1}\nonumber\\
&&\ls\phi(2)
\lf\{\int_{1}^2\phi(r)\,\frac{dr}{r}\r\}^{-1}s^{1/n}
\lf\{\int_{t^{1/n}}^2\phi(r)\,\frac{dr}{r}\r\}\ls s^{1/n}\rho_0(t).
\end{eqnarray}
If $ts\in(0,1]$, then $t\in(0,1]$. In this case, from the upper
type $1$ property of $\phi$ and \eqref{7.10}, it follows that
\begin{eqnarray}\label{7.13}
\rho_0(ts)&&\ls s^{1/n}\phi\lf(t^{1/n}\r)
\lf\{\int_{(ts)^{1/n}}^2\phi(r)\,\frac{dr}{r}\r\}^{-1}\nonumber\\
&&\sim
\int_{t^{1/n}}^2\phi(r)\,\frac{dr}{r}
\lf\{\int_{(ts)^{1/n}}^2\phi(r)\,\frac{dr}{r}\r\}^{-1}
s^{1/n}
\lf\{\int_{t^{1/n}}^2\phi(r)\,\frac{dr}{r}\r\}^{-1}\nonumber\\
&&\ls\phi(2)
\lf\{\int_{1}^2\phi(r)\,\frac{dr}{r}\r\}^{-1}s^{1/n}
\lf\{\int_{t^{1/n}}^2\phi(r)\,\frac{dr}{r}\r\}^{-1}\ls s^{1/n}\rho_0(t).
\end{eqnarray}
Thus, by \eqref{7.11}, \eqref{7.12} and \eqref{7.13}, we know that
$\rho_0$ is of upper type $1/n$. Similar to the above argument, by
the assumption \eqref{7.10}, we know that $\phi$ is of lower type
$0$ and hence $\rho_0$ is of lower type $0$, which completes the
proof of Lemma \ref{l7.3}.
\end{proof}

\begin{lemma}\label{l7.4}
Let $\phi\equiv1$, $\pz(t):=[\int^2_{\min(1,t)}1/r\,dr]^{-1}$  and
$\tz(t):=t[\ln(e+t)]^{-1}$ for all $t\in(0,\fz)$. Then for all cubes
$Q\subset\rn$,
\begin{equation}\label{7.14}
[\pz(l(Q))]^{-1}\sim\ln\lf(e+|Q|^{-1}\r)
\end{equation}
and
\begin{equation}\label{7.15}
\|\chi_{Q}\|_{L^{\tz}(\rn)}\sim\frac{|Q|}{\ln(e+|Q|^{-1})},
\end{equation}
where the implicit constants are independent of $Q$.
\end{lemma}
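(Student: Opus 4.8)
The plan is to prove the two equivalences in Lemma \ref{l7.4} separately: \eqref{7.14} by directly evaluating the integral defining $\pz$, and \eqref{7.15} by first writing $\|\chi_Q\|_{L^\tz(\rn)}$ in closed form via the inverse of $\tz$ and then reducing to a scalar two-sided estimate.

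For \eqref{7.14}, since $\phi\equiv1$ one has $[\pz(l(Q))]^{-1}=\int_{\min(1,l(Q))}^{2}r^{-1}\,dr$. I would split into the cases $l(Q)\ge1$ and $l(Q)<1$. In the first case the integral equals $\ln 2$ and $|Q|=[l(Q)]^n\ge1$, so $\ln(e+|Q|^{-1})\in(1,\ln(e+1)]$ and both sides of \eqref{7.14} lie between fixed positive constants. In the second case the integral equals $\ln 2+\frac1n\ln(|Q|^{-1})$, and, setting $u:=|Q|^{-1}\in(1,\fz)$, the claim becomes $\ln 2+\frac1n\ln u\sim\ln(e+u)$ uniformly for $u\in[1,\fz)$; this follows since the quotient of the two sides is continuous and strictly positive on $[1,\fz)$ with finite positive limits at both endpoints (namely $\ln(e+1)/\ln 2$ as $u\to1^+$ and $n$ as $u\to\fz$).

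For \eqref{7.15}, I would first record that $\tz(t)=t[\ln(e+t)]^{-1}$ is an Orlicz function which is continuous, strictly increasing, maps $[0,\fz)$ onto $[0,\fz)$, and is of lower type $q$ for every $q\in(0,1)$ (equivalently $p_\tz^-=1$; this is one of the examples recalled in Section \ref{s2}). Since $\int_\rn\tz(\chi_Q(x)/\lz)\,dx=|Q|\,\tz(1/\lz)$, the condition $|Q|\,\tz(1/\lz)\le1$ is equivalent to $\lz\ge[\tz^{-1}(|Q|^{-1})]^{-1}$, whence $\|\chi_Q\|_{L^\tz(\rn)}=[\tz^{-1}(|Q|^{-1})]^{-1}$; writing $u:=|Q|^{-1}$, the desired equivalence \eqref{7.15} is then exactly
$$\tz^{-1}(u)\sim u\ln(e+u),\qquad u\in(0,\fz).$$
The key step is the scalar estimate
$$\frac{u}{3}\le\tz\!\lf(u\ln(e+u)\r)\le u ,\qquad u\in(0,\fz),$$
which, after writing $\tz(u\ln(e+u))=u\cdot[\ln(e+u)]\,[\ln(e+u\ln(e+u))]^{-1}$, reduces to the elementary inequalities $\ln(e+u)\le\ln(e+u\ln(e+u))\le 3\ln(e+u)$ (the left one from $\ln(e+u)\ge1$, the right one from $\ln(e+u)\le e+u$ and $e+u\ln(e+u)\le 2(e+u)^2$ together with $\ln 2\le1\le\ln(e+u)$). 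The upper bound here, with the monotonicity of $\tz$, gives $u\ln(e+u)\le\tz^{-1}(u)$. For the reverse inequality I would fix some $q\in(0,1)$ and use the lower-type bound $\tz(st)\le C s^q\tz(t)$ $(s\in[0,1])$ in the form $\tz(s_0 v)\ge C^{-1}s_0^q\,\tz(v)$ for $s_0\in[1,\fz)$; choosing $s_0$ so that $C^{-1}s_0^q\ge3$ and applying this with $v=u\ln(e+u)$ together with the lower scalar bound yields $\tz(s_0u\ln(e+u))\ge u$, i.e. $\tz^{-1}(u)\le s_0\,u\ln(e+u)$. Combining the two bounds proves the displayed equivalence, hence \eqref{7.15}.

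No serious obstacle is expected; the one point needing care is that all constants in \eqref{7.15} must be independent of $Q$, which is ensured because everything is reduced to the scalar statement $\tz(u\ln(e+u))\sim u$ and to the lower-type constant of $\tz$, both uniform in $u$. Alternatively, \eqref{7.15} can be deduced from Lemma \ref{l7.3} (with $\phi\equiv1$) and \eqref{7.14}, once one verifies that the Orlicz function $\Phi_0$ constructed there is pointwise equivalent to $\tz$.
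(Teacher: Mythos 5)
Your main argument is correct and follows essentially the same path as the paper: both proofs reduce \eqref{7.15} to the scalar two-sided estimate $\tz(u\ln(e+u))\sim u$ (with $u=|Q|^{-1}$, which is exactly $|Q|\,\tz(1/A_Q)\sim1$ for $A_Q:=|Q|/\ln(e+|Q|^{-1})$) and then convert to a Luxembourg-norm estimate; your version routes this through $\tz^{-1}$ and makes the lower-type step explicit, which is a cleaner account of the final passage that the paper states somewhat tersely after its \eqref{7.18}. The case-splitting computation for \eqref{7.14} is also the same.

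However, the alternative you mention at the end --- deducing \eqref{7.15} from Lemma \ref{l7.3} with $\phi\equiv1$ --- does not work: Lemma \ref{l7.3} requires $\phi$ to satisfy \eqref{7.10}, namely $\phi(t)\sim\int_0^t\phi(r)r^{-1}\,dr$, and for $\phi\equiv1$ the right-hand side diverges. This is precisely the point of Remark \ref{r7.1}(ii)--(iii), and is the reason the paper needs Lemma \ref{l7.4} as a separate direct computation rather than as a corollary of Lemma \ref{l7.3}. So the direct proof you give is the right (and necessary) one, and the suggested shortcut should be dropped.
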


\begin{proof}
We first prove \eqref{7.14}. Let $Q\subset\rn$ be a cube. If
$l(Q)\ge1$, then
$$\ln(e+|Q|^{-1})\le\ln(2e)\sim\ln2\sim[\pz(l(Q))]^{-1}.
$$
Moreover, $\ln(e+|Q|^{-1})\ge1\sim[\pz(l(Q))]^{-1}$. Thus,
$\ln(e+|Q|^{-1})\sim[\pz(l(Q))]^{-1}$ if $l(Q)\ge1$.

If $l(Q)\in(0,1]$, then
$$\ln\lf(e+|Q|^{-1}\r)\le n\ln\lf(e+[l(Q)]^{-1}\r)\le
n\ln\lf(\frac{e+1}{l(Q)}\r)\ls\ln2-\ln l(Q)
\sim[\pz(l(Q))]^{-1}.$$
Furthermore,
$$\ln\lf(e+|Q|^{-1}\r)\ge\ln\lf(2e^{1/2}[l(Q)]^{-n/2}\r)
\gs\ln2-(n/2)\ln l(Q)\sim[\pz(l(Q))]^{-1}.$$ Thus,
$\ln(e+|Q|^{-1})\sim[\pz(l(Q))]^{-1}$ also holds if
$l(Q)\in(0,1]$. By the above estimates, we see that
$\ln(e+|Q|^{-1})\sim[\pz(l(Q))]^{-1}$ for any cube $Q\subset\rn$,
which prove \eqref{7.14}.

Now, we prove \eqref{7.15}. For any cube $Q\subset\rn$, let
$A_Q:=\frac{|Q|}{\ln(e+|Q|^{-1})}$. Then
\begin{eqnarray}\label{7.16}
\int_Q\tz(x,1/A_Q)\,dx=\frac{\ln(e+|Q|^{-1})}
{\ln(e+|Q|^{-1}\ln(e+|Q|^{-1}))}.
\end{eqnarray}
Obviously, $\ln(e+|Q|^{-1}\ln(e+|Q|^{-1}))\ge\ln(e+|Q|^{-1})$ for
all $Q\subset\rn$, which implies that for all $Q\subset\rn$,
$$\frac{\ln(e+|Q|^{-1})} {\ln(e+|Q|^{-1}\ln(e+|Q|^{-1}))}\le1.$$
By this, \eqref{7.16} and the definition of
$\|\chi_Q\|_{L^\fai(\rn)}$, we conclude that for all
$Q\subset\rn$,
\begin{eqnarray}\label{7.17}
\|\chi_{Q}\|_{L^{\tz}(\rn)}\ls A_Q.
\end{eqnarray}

Now, we prove $\|\chi_{Q}\|_{L^{\tz}(\rn)}\gs A_Q$. If
$l(Q)\in[1,\fz)$, then,
$$\ln(e+|Q|^{-1}\ln(e+|Q|^{-1}))\le\ln(e+2|Q|^{-1})
\ls\ln(e+|Q|^{-1}),$$ which implies that $\frac{\ln(e+|Q|^{-1})}
{\ln(e+|Q|^{-1}\ln(e+|Q|^{-1}))}\gs1$. From this and \eqref{7.16},
we infer that
\begin{eqnarray}\label{7.18}
\int_Q\tz(x,1/A_Q)\,dx\gs1.
\end{eqnarray}
If $l(Q)\in(0,1]$, by $\ln(e+|Q|^{-1})>1$, we know that
$e+\frac{\ln(e+|Q|^{-1})}{|Q|}\le(e+|Q|^{-1})\ln(e+|Q|^{-1})$,
which implies that
$$\ln\lf(e+|Q|^{-1}\ln(e+|Q|^{-1})\r)\le\ln(e+|Q|^{-1})
+\ln\lf(\ln(e+|Q|^{-1})\r).$$ 
By this and the fact that $\ln x<x$ for
all $x\in(1,\fz)$, we conclude that
$\ln(e+|Q|^{-1}\ln(e+|Q|^{-1}))<2\ln(e+|Q|^{-1})$, which, together
with \eqref{7.16}, implies that
$\int_Q\tz(x,\frac{1}{A_Q})\,dx>\frac{1}{2}$. From this and
\eqref{7.18}, we infer that for all cubes $Q\subset\rn$,
$\|\chi_{Q}\|_{L^{\tz}(\rn)}\gs A_Q$, which, together with
\eqref{7.17}, implies that \eqref{7.15} holds. This finishes the
proof of Lemma \ref{l7.4}.
\end{proof}

Now, we show Theorem \ref{t7.2} by using Lemmas \ref{l7.3} and
\ref{l7.4}.

\begin{proof}[Proof of Theorem \ref{t7.2}]
We first prove (i). Let $\Phi_0$ be as in Lemma \ref{l7.3}. Then
$\Phi_0$ is an Orlicz function of type $(n/(n+1),1)$ and for all
cubes $Q$, $\|\chi_{Q}\|_{L^{\Phi_0}(\rn)}=\pz(l(Q))|Q|$, which
implies that
\begin{eqnarray}\label{7.19}
\bmo_{\Phi_0}(\rn)=\bmo^{\pz}(\rn)
\end{eqnarray}
with equivalent norms. Furthermore, by a standard argument, we know
that
$$[L^1(\rn)+h_{\Phi_0}(\rn)]^{\ast}=
[L^1(\rn)]^{\ast}\cap[h_{\Phi_0}(\rn)]^{\ast},
$$
which, together with Corollary \ref{c7.1}, \eqref{7.19} and the
well-known fact that $[L^1(\rn)]^{\ast}=L^{\fz}(\rn)$, implies
that $[L^1(\rn)+h_{\Phi_0}(\rn)]^{\ast}=
L^{\fz}(\rn)\cap\bmo^{\pz}(\rn).$ From this and Proposition
\ref{p7.1}, we deduce (i).

Now, we prove (ii). By \eqref{7.14} and \eqref{7.15}, we conclude
that for all cubes $Q\subset\rn$,
$\|\chi_Q\|_{L^{\tz}(\rn)}\sim\pz(l(Q))|Q|$, which implies that
$\bmo_{\tz}(\rn)=\bmo^{\pz}(\rn)$. The remainder of the proof is
similar to that of (i). We omit the details here. This finishes the
proof of (ii) and hence Theorem \ref{t7.2}.
\end{proof}

\section{Boundedness of local Riesz transforms and
$S^0_{1,\,0}(\rn)$ pseudo-differential operators on $h_{\fai}(\rn)$\label{s8}}

\hskip\parindent In Subsection \ref{s8.1}, we show that local
Riesz transforms are bounded on $h_{\fai}(\rn)$, where
$\fai\in\aa^\loc_q(\rn)$ with $q\in [1,\fz)$. Subsection
\ref{s8.2} is devoted to the boundedness of $S^0_{1,\,0}(\rn)$
pseudo-differential operators on $h_{\fai}(\rn)$, where $\fai\in
\aa_p (\phi_\az)$ for some $\az\in (0,\fz)$,
which is contained in $\aa^{\loc}_{p}(\rn)$ (see
Definition \ref{d8.3} below), with $p\in[1,\fz)$ and $\phi_\az$ as in
\eqref{8.7} below.

\subsection{Local Riesz transforms\label{s8.1}}

\hskip\parindent Now, we recall the notion of local Riesz transforms introduced by
Goldberg \cite{go79}. In what follows, $\cs(\rn)$ denotes the
\emph{space of all Schwartz functions on $\rn$}.

\begin{definition}\label{d8.1}
Let $\phi_0\in\cd(\rn)$ be such that $\phi_0\equiv1$ on $Q(0,1)$
and $\supp(\phi_0)\subset Q(0,2)$. For $j\in\{1,\,\cdots,\,n\}$
and $x\in\rn$, let 
$$k_j (x):=\frac{x_j}{|x|^{n+1}}\phi_0(x).$$ 
For $f\in\cs(\rn)$, the \emph{local Riesz transform $r_j (f)$} of $f$
is defined by $r_j (f):= k_j \ast f$.
\end{definition}

We remark that $\phi_0$ in \cite{go79} was assumed that
$\phi_0\equiv1$ in a neighborhood of the origin and
$\phi_0\in\cd(\rn)$. In this paper, for the sake of convenience, we assume
$\phi_0\equiv1$ on $Q(0,1)$ and $\supp(\phi_0)\subset Q(0,2)$. We
have the boundedness on $h_{\fai}(\rn)$ of local Riesz transforms
$\{r_j\}_{j=1}^n$ as follows.

\begin{theorem}\label{t8.1}
Let $\fai$ be a growth function as in Definition \ref{d2.3}. For
$j\in\{1,\,\cdots,\,n\}$, let $r_j$ be the local Riesz operator as
in Definition \ref{d8.1}. Then there exists a positive constant
$C(\fai,\,n)$, depending only on $\fai$ and $n$, such that for all
$f\in h_{\fai}(\rn)$,  
$$\lf\|r_j (f)\r\|_{h_{\fai}(\rn)}\le
C(\fai,\,n) \|f\|_{h_{\fai}(\rn)}.$$
\end{theorem}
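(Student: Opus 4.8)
The plan is to reduce the boundedness of $r_j$ on $h_{\fai}(\rn)$ to checking its action on atoms, using the criterion provided by Theorem \ref{t6.2}. First I would fix an admissible triplet $(\fai,\,q,\,s)$ with $q\in(q(\fai),\fz)$ such that $\fai$ satisfies the uniformly locally $q$-dominated convergence condition, and take $\cb_{\bz}=h_{\fai}(\rn)$, which is a $p_0$-quasi-Banach space for the uniformly lower type index $p_0\in(0,1]$ of $\fai$; since $r_j$ is linear it is trivially $\cb_{\bz}$-sublinear. By Theorem \ref{t6.2}, it then suffices to show that $r_j$ maps all $(\fai,\,q,\,s)$-atoms and all $(\fai,\,q)$-single-atoms into uniformly bounded elements of $h_{\fai}(\rn)$; that is, I must produce a constant $C(\fai,n)$ with $\|r_j(a)\|_{h_{\fai}(\rn)}\le C(\fai,n)$ for every such atom $a$. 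By Theorem \ref{t5.1} (atomic characterization), it is enough to exhibit, for each atom $a$, an atomic decomposition of $r_j(a)$ whose $\blz_q$ is uniformly controlled, or equivalently to estimate $\int_\rn \fai(x,\cg_N^0(r_j(a))(x))\,dx$ directly and apply Theorem \ref{t3.2}.

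The core estimates split according to the size of the supporting cube $Q=Q(x_0,\ell)$ of the atom $a$. When $\ell\ge 1$ (or $a$ is a single-atom), the kernel $k_j$ is compactly supported in $Q(0,2)$, so $\supp(r_j(a))\subset$ a fixed dilate of $Q$; one controls $r_j(a)$ in $L^q_{\fai}$ over that enlarged cube using the standard $L^q$-boundedness of the truncated Riesz transform together with Lemma \ref{l2.4}(ii) and (iii) and Proposition \ref{p3.1}(ii), and concludes that $r_j(a)$ is a bounded multiple of a $(\fai,\,q,\,s)$-atom (or single-atom) on that cube. When $\ell<1$, $a$ has the vanishing-moment property, and the compact support of $\phi_0$ means $k_j$ agrees near the origin with the genuine Riesz kernel while being supported in $Q(0,2)$; so $r_j(a)$ is supported in $B(x_0,c)$ for a fixed $c$, and on $2\sqrt n Q$ one again uses $L^q$-boundedness, while away from $2\sqrt n Q$ one exploits the moment condition via a Taylor expansion of $k_j$ about $x_0$ (as in the proof of Lemma \ref{l3.4}, inequality \eqref{3.25}) to get the decay $|r_j(a)(x)|\ls \frac{\ell^{s+n+1}}{|x-x_0|^{s+n+1}}\|a\|_{L^q_{\fai}(Q)}\chi_{B(x_0,c)}(x)$. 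Summing over dyadic annuli and invoking the uniformly lower type $p_0$ property with $(s+n+1)p_0>nq$ (the defining property of $s\ge m(\fai)$) together with Lemma \ref{l2.4}(i),(iv) and Remark \ref{r2.3} yields $\int_\rn\fai(x,\cg_N^0(r_j(a))(x))\,dx\ls \fai(Q,\|a\|_{L^q_{\fai}(Q)})\ls 1$, exactly the bound needed.

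Since $\fai$ as in Definition \ref{d2.3} need not satisfy the $q$-dominated convergence condition, to invoke Theorem \ref{t6.2} in full generality I would either work with the subclass of growth functions to which it applies, or — following the approach in \cite{yys} for $h^{\Phi}_\oz(\rn)$ — bypass Theorem \ref{t6.2} and argue directly: given $f\in h_{\fai}(\rn)$, take by Theorem \ref{t5.1} an atomic decomposition $f=\sum_{i=0}^\fz b_i$ with $\blz_q(\{b_i\}_{i\in\nn})\sim\|f\|_{h_{\fai}(\rn)}$, observe that $r_j$ is $L^q_{\fai(\cdot,1)}$-bounded so that $r_j(f)=\sum_i r_j(b_i)$ converges appropriately, use $\cg_N^0(r_j(f))\le\sum_i\cg_N^0(r_j(b_i))$ together with the $\sigma$-quasi-subadditivity of $\fai$ (Lemma \ref{l2.1}(i)) and the per-atom estimate above to conclude $\int_\rn\fai(x,\cg_N^0(r_j(f))(x)/\blz_q(\{b_i\}))\,dx\ls 1$, hence $\|r_j(f)\|_{h_{\fai}(\rn)}\ls\|f\|_{h_{\fai}(\rn)}$ by Theorem \ref{t3.2}.

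The main obstacle I anticipate is the off-diagonal estimate in the case $\ell<1$: one must combine the vanishing moments of $a$ with the fact that $k_j=\frac{x_j}{|x|^{n+1}}\phi_0(x)$ is only equal to a homogeneous Calderón--Zygmund kernel near the origin, so the Taylor-remainder argument must be carried out for $k_j$ itself (not for the pure Riesz kernel), and one needs uniform control of the derivatives $\partial^\az k_j$ on the annulus where both $|x-x_0|\sim 2^k\ell$ and $x\in\supp\phi_0(\cdot-y)$ for $y\in Q$; this is routine but requires care with the cutoff $\phi_0$. A secondary technical point is arranging that the weight $\fai$, which only satisfies the \emph{local} $\aa^{\loc}_q$ condition, can be replaced on the relevant bounded region by a global $\aa_q$ weight $\wz\fai$ via Lemma \ref{l2.4}(i) and Remark \ref{r2.3}, which is exactly what makes the dyadic-annulus summation with Lemma \ref{l2.4}(iv) legitimate.
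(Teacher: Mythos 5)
Your ``bypass Theorem \ref{t6.2}'' route in the third paragraph is precisely the paper's proof: one restricts to $f\in h_\fai(\rn)\cap L^q_{\fai(\cdot,1)}(\rn)$, decomposes $f$ via Lemma \ref{l5.5} together with Remark \ref{r5.1} into multiples of $(\fai,q,s)$-atoms on cubes of sidelength at most $2$ plus a single-atom with the sum also converging in $L^q_{\fai(\cdot,1)}(\rn)$, uses the $L^q_{\fai(\cdot,1)}$-boundedness of $r_j$ (Lemma \ref{l8.1}) to commute $r_j$ with the sum, applies the per-atom estimates \eqref{8.1}--\eqref{8.2} via $\sigma$-quasi-subadditivity (Lemma \ref{l2.1}(i)) and Theorem \ref{t3.2}, and finishes by density (Corollary \ref{c4.1}). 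You are also right to discard the Theorem \ref{t6.2} route, since Theorem \ref{t8.1} carries no uniformly locally $q$-dominated convergence hypothesis, so that theorem is not applicable in full generality. The only small imprecision is that the off-diagonal decay must be established for the grand maximal function $\cg_N^0(r_j a)$ rather than just for $|r_j a|$, and the paper in fact only obtains decay of order $n+s+1-\delta$ in \eqref{8.6} for a small $\delta>0$ -- which still suffices because $(n+s+1)p_0>nq$ has room to spare -- but you flagged exactly this Taylor-remainder issue with the cutoff $\phi_0$ as the main obstacle.
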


To prove Theorem \ref{t8.1}, we need the following lemma established
in \cite[Lemma\,3.2]{ta1}.

\begin{lemma}\label{l8.1}
For $j\in\{1,\,\cdots,\,n\}$, let $r_j$ be the local Riesz
operator as in Definition \ref{d8.1}. For $\fai\in \aa^{\loc}_p
(\rn)$ with $p\in(1,\,\fz)$, there exists a positive constant
$C(p,\,\fai,\,n)$, depending only on $p$, $\fai$ and $n$, such
that for all $f\in L^p_{\fai(\cdot,1)}(\rn)$, 
$$\lf\|r_j(f)\r\|_{L^p_{\fai(\cdot,1)}(\rn)}\le
C(p,\,\fai,\,n)\|f\|_{L^p_{\fai(\cdot,1)}(\rn)}.$$
\end{lemma}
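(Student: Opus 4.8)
The plan is to regard $r_j$ as a standard Calder\'on--Zygmund operator and, exploiting that its kernel $k_j$ is compactly supported, to reduce the weighted estimate for the \emph{local} weight $w:=\fai(\cdot,1)\in A^{\loc}_p(\rn)$ to the classical weighted estimate for genuine Muckenhoupt weights via a unit-scale localization, finishing by a summation over a locally finite cover. Note that $\fai\in\aa^{\loc}_p(\rn)$ forces, by taking $t=1$ in Definition \ref{d2.2}, that $w\in A^{\loc}_p(\rn)$ with $A^{\loc}_p(w)\le\aa^{\loc}_p(\fai)$, so this reduction is exactly to Rychkov's local $A_p$ class.

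First I would verify that $r_j$ is a Calder\'on--Zygmund operator. Its kernel $k_j(x)=x_j|x|^{-n-1}\phi_0(x)$ satisfies $|k_j(x)|\ls|x|^{-n}$; writing $\nabla k_j=\phi_0\,\nabla(x_j|x|^{-n-1})+x_j|x|^{-n-1}\,\nabla\phi_0$ and using that $\nabla\phi_0$ is supported in $Q(0,2)\setminus Q(0,1)$, one also gets $|\nabla k_j(x)|\ls|x|^{-n-1}$ for $x\neq0$, so $k_j$ satisfies the H\"ormander condition. Since $\phi_0\equiv1$ near the origin, $k_j$ there coincides with the Riesz kernel, whence $\int_{\epsilon<|x|<\delta}k_j(x)\,dx=0$ for all small $\delta$ and all $\epsilon\in(0,\delta)$; moreover, by the Paley--Wiener theorem the Fourier transform of the compactly supported distribution $\phi_0\,\mathrm{p.v.}(x_j|x|^{-n-1})$ equals, up to a multiplicative constant, the convolution of the bounded function $\xi\mapsto\xi_j/|\xi|$ with $\widehat{\phi_0}\in\cs(\rn)$, hence is bounded, so $r_j$ extends boundedly to $L^2(\rn)$. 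Consequently, by the classical weighted theory of Calder\'on--Zygmund operators (see, for example, \cite{g79,gr1,st93}), there is a constant $C(n,p)$ and a nondecreasing function $\Psi$ such that for every $\omega\in A_p(\rn)$ with $p\in(1,\fz)$ and every $g$ in a suitable dense class,
$$\lf\|r_j g\r\|_{L^p_{\omega}(\rn)}\le C(n,p)\,\Psi\lf(A_p(\omega)\r)\lf\|g\r\|_{L^p_{\omega}(\rn)}.$$

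Next I would localize. Let $Q_k:=k+[-1/2,1/2)^n$ for $k\in\zz^n$, so $\sum_k\chi_{Q_k}=1$ a.e., and set $\wz Q_k:=Q(k,3)$. Since $\supp k_j\subset Q(0,2)$, for $x\in Q_k$ one has $x+Q(0,2)\subset\wz Q_k$, hence $r_j f(x)=r_j(f\chi_{\wz Q_k})(x)$. By Lemma \ref{l2.4}(i) together with Remark \ref{r2.3}, applied to the cube $\wz Q_k$ (of sidelength $3$), there exists $w_k\in A_p(\rn)$ with $w_k=w$ on $\wz Q_k$ and $A_p(w_k)\le C_0:=C(n,p)A^{\loc}_p(w)\le C(n,p)\aa^{\loc}_p(\fai)$, a bound independent of $k$. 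Applying the estimate of the previous paragraph with $\omega=w_k$,
\begin{eqnarray*}
\int_{Q_k}\lf|r_j f(x)\r|^p w(x)\,dx
&=&\int_{Q_k}\lf|r_j(f\chi_{\wz Q_k})(x)\r|^p w_k(x)\,dx
\le\int_{\rn}\lf|r_j(f\chi_{\wz Q_k})(x)\r|^p w_k(x)\,dx\\
&\ls&\int_{\rn}\lf|f(x)\chi_{\wz Q_k}(x)\r|^p w_k(x)\,dx
=\int_{\wz Q_k}\lf|f(x)\r|^p w(x)\,dx,
\end{eqnarray*}
with implicit constant $C(n,p)\Psi(C_0)$ depending only on $n$, $p$ and $\aa^{\loc}_p(\fai)$. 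Summing over $k\in\zz^n$ and using the bounded overlap $\sum_k\chi_{\wz Q_k}\le C(n)$ then yields
$$\lf\|r_j f\r\|^p_{L^p_{\fai(\cdot,1)}(\rn)}=\sum_{k\in\zz^n}\int_{Q_k}\lf|r_j f(x)\r|^p w(x)\,dx\ls\int_{\rn}\lf|f(x)\r|^p w(x)\,dx=\lf\|f\r\|^p_{L^p_{\fai(\cdot,1)}(\rn)},$$
first for $f$ in a dense class such as $\cd(\rn)$ and then, by density, for all $f\in L^p_{\fai(\cdot,1)}(\rn)$.

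The hard part will be bookkeeping rather than anything deep: one must ensure that the auxiliary weights $w_k$ furnished by Lemma \ref{l2.4}(i)/Remark \ref{r2.3} have $A_p(\rn)$ characteristics bounded \emph{uniformly in} $k$, so that a single constant serves for every $Q_k$ and the summation is legitimate; this uniformity is precisely what those statements guarantee and is the crux of the argument. A secondary, routine point is the approximation needed both to set up the a priori inequality on a dense class and to define $r_j f$ for general $f\in L^p_{\fai(\cdot,1)}(\rn)$; here one uses that $\fai\in\aa^{\loc}_p(\rn)$ gives $[\fai(\cdot,1)]^{-1/(p-1)}\in L^1_{\loc}(\rn)$ (cf.\ Lemma \ref{l2.5}), so that $L^p_{\fai(\cdot,1)}(\rn)\subset L^1_{\loc}(\rn)\cap\cd'(\rn)$ and $r_j f$ is a well-defined function given a.e.\ by the corresponding truncated singular integral of a local piece of $f$.
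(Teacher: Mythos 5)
Your proof is correct. The paper does not actually prove this lemma itself --- it is quoted from \cite[Lemma 3.2]{ta1} --- and your argument (unit-scale localization exploiting $\supp k_j\subset Q(0,2)$ so that $r_jf=r_j(f\chi_{\wz Q_k})$ on $Q_k$, extension of the local weight $\fai(\cdot,1)$ to genuine $A_p(\rn)$ weights with characteristics bounded uniformly in $k$ via Lemma \ref{l2.4}(i) and Remark \ref{r2.3}, the classical weighted Calder\'on--Zygmund estimate, and a bounded-overlap summation) is precisely the standard route taken there, so the two approaches coincide in substance.
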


Now, we prove Theorem \ref{t8.1} by using Lemma \ref{l8.1}.

\begin{proof}[Proof of Theorem \ref{t8.1}]
Let $s:=\lfz n[q(\fai)/i(\fai)-1]\rfz$, where $q(\fai)$ and
$i(\fai)$ are respectively as in \eqref{2.9} and \eqref{2.3}. Then
$(n+s+1)i(\fai)>nq(\fai)$, which implies that there exists
$q\in(q(\fai),\fz)$ and $p_0\in(0,i(\fai))$ such that
$(n+s+1)p_0>nq$, $\fai$ is of uniformly lower type $p_0$ and
$\fai\in \aa^{\loc}_q(\rn)$. We first assume that $f\in
h_{\fai}(\rn)\cap L^q_{\fai(\cdot,1)}(\rn)$. To show Theorem
\ref{t8.1}, it suffices to show that for any multiple of some
$(\fai,\,q)$-single-atom $b$,
\begin{eqnarray}\label{8.1}
\int_{\rn}\fai\lf(x,\cg_N^0(r_j(b))(x)\r)\,dx\ls
\fai\lf(\rn,\|b\|_{L^q_{\fai}(\rn)}\r),
\end{eqnarray}
and for any multiple of $(\fai,\,q,\,s)$-atom $b$ supported in
$Q:=Q(x_0,\,R_0)$ with $R_0\in(0,2]$,
\begin{eqnarray}\label{8.2}
\int_{\rn}\fai\lf(x,\cg_N^0(r_j(b))(x)\r)\,dx\ls
\fai\lf(Q,\|b\|_{L^q_{\fai}(Q)}\r).
\end{eqnarray}
Indeed, if \eqref{8.1} and \eqref{8.2} hold, by Lemma \ref{l5.5},
Remark \ref{r5.1} and the claim, proved in the proof of Theorem
\ref{t6.1}(i), that \eqref{6.3} holds in
$L^q_{\fai(\cdot,1)}(\rn)$, we know that there exist a sequence $\{b_i\}_i$ of
the multiple of $(\fai,\,q,\,s)$-atoms, respectively supported in
$\{Q_j\}_j$ with $l(Q_j)\in(0,2]$, and a multiple of some
$(\fai,\,q)$-single-atom $b_0$ such that $f=b_0+\sum_i b_i$ in
$L^q_{\fai(\cdot,1)}(\rn)$ and
$\|f\|_{h^{\fai}(\rn)}\sim\blz_q(\{b_i\}_i)$, which, together with
Lemma \ref{l8.1}, implies that $r_j(f)=r_j(b_0)+\sum_ir_j(b_i)$ in
$L^q_{\fai(\cdot,1)}(\rn)$. From this, we deduce that
$$\cg^0_N(r_j(f))\le\cg^0_N(r_j(b_0))+\sum_i\cg^0_N(r_j(b_i)),
$$
which, together with \eqref{8.1} and \eqref{8.2}, implies that for
all $\lz\in(0,\fz)$,
\begin{eqnarray*}
\int_{\rn}\fai\lf(x,\frac{\cg^0_N(r_j(f))(x)}{\lz}\r)\,dx
&&\ls\sum_{i=0}^{\fz}\int_{\rn}\fai\lf(x,\frac{\cg^0_N(r_j(b_i))(x)}
{\lz}\r)\,dx\\
&&\ls\fai\lf(\rn,\frac{\|b_0\|_{L^q_{\fai}(\rn)}}{\lz}\r)
+\sum_{i}\fai\lf(Q_i,\frac{\|b_i\|_{L^q_{\fai}(\rn)}}{\lz}\r).
\end{eqnarray*}
By this, we conclude that
$\|r_j(f)\|_{h_{\fai}(\rn)}\ls\|f\|_{h_{\fai}(\rn)}$. Since
$h_{\fai}(\rn)\cap L^q_{\fai(\cdot,1)}(\rn)$ is dense in
$h_{\fai}(\rn)$, a density argument then gives the desired
conclusion.

We first prove \eqref{8.1}. In this case, from the uniformly upper
type 1 property of $\fai$, H\"older's inequality, Lemma
\ref{l2.4}(iii), the fact that $\cg^0_N(r_j(b))\ls M^{\loc}(r_j(b))$
and Lemma \ref{l8.1}, we infer that
\begin{eqnarray}\label{8.3}
&&\int_{\rn}\fai\lf(x,\cg^0_{N}(r_j(b))(x)\r)\,dx\nonumber\\
&&\hs\le\int_{\rn}\lf(1+\frac{\cg_N^0(r_j(b))(x)}
{\|b\|_{L^q_{\fai}(\rn)}}\r)
\fai\lf(x,\|b\|_{L^q_{\fai}(\rn)}\r)\,dx\nonumber\\
&&\hs\le\fai\lf(\rn,\|b\|_{L^q_{\fai}(\rn)}\r)+
\frac{1}{\|b\|_{L^q_{\fai}(\rn)}}\lf\{\int_{\rn}
\lf|\cg^0_N(r_j(b))(x)\r|^q\fai\lf(x,\|b\|_{L^q_{\fai}(\rn)}\r)\,dx\r\}^{1/q}
\nonumber \\
&&\hs\hs\times
\lf[\fai\lf(\rn,\|b\|_{L^q_{\fai}(\rn)}\r)\r]^{(q-1)/q}
\ls\fai\lf(\rn,\|b\|_{L^q_{\fai}(\rn)}\r),
\end{eqnarray}
which implies \eqref{8.1}.

Now, we prove \eqref{8.2} for $b$ by considering the following
two cases for $R_0$.

{\it Case} 1) $R_0 \in[1,2]$. In this case, by the definitions of
$r_j (b)$ and $\cg^0_N (r_j (b))$, we see that
$$\supp\lf(\cg^0_N (r_j (b))\r)\subset
Q^{\ast}:= Q(x_0, R_0 +8).$$ From this, the uniformly upper type 1
property of $\fai$, H\"older's inequality, (ii) and (iii) of Lemma
\ref{l2.4}, we infer that
\begin{eqnarray*}
&&\int_{\rn}\fai\lf(x,\cg^0_N(r_j(b))(x)\r)\,dx\\
&&\hs=\int_{Q^{\ast}}\fai\lf(x,\cg^0_N(r_j(b))(x)\r)\,dx\\
&&\hs\le\int_{Q^{\ast}}\lf(1+\frac{\cg_N^0(r_j(b))(x)}
{\|b\|_{L^q_{\fai}(Q)}}\r)\fai\lf(x,\|b\|_{L^q_{\fai}(Q)}\r)\,dx\\
&&\hs\le\fai\lf(Q^{\ast},\|b\|_{L^q_{\fai}(Q)}\r)+
\frac{1}{\|b\|_{L^q_{\fai}(Q)}}\lf\{\int_{Q^{\ast}}
|\cg^0_N(r_j(b))(x)|^q\fai(x,\|b\|_{L^q_{\fai}(Q)})\,dx\r\}^{1/q}\\
\nonumber &&\hs\hs\times
\lf[\fai\lf(Q^{\ast},\|b\|_{L^q_{\fai}(Q)}\r)\r]^{(q-1)/q}
\ls\fai\lf(Q^{\ast},\|b\|_{L^q_{\fai}(Q)}\r)\ls
\fai\lf(Q,\|b\|_{L^q_{\fai}(Q)}\r),
\end{eqnarray*}
which implies \eqref{8.2} in Case 1).

{\it Case} 2) $R_0\in(0,1)$. In this case, let $\wz Q:=8nQ$.
Then
\begin{eqnarray}\label{8.4}
\qquad\int_{\rn}\fai\lf(x,\cg^0_N(r_j(b))(x)\r)\,dx
=\int_{\wz{Q}}\fai\lf(x,\cg^0_N(r_j(b))(x)\r)\,dx
+\int_{(\wz{Q})^\complement}\cdots
=:\mathrm{I_1}+\mathrm{I_2}.\qquad
\end{eqnarray}

For $\mathrm{I_1}$, similar to the proof of Case 1), we know that
\begin{eqnarray}\label{8.5}
\mathrm{I_1}&\le&\fai\lf(Q,\|b\|_{L^q_{\fai}(Q)}\r).
\end{eqnarray}

Now, we estimate $\mathrm{I}_2$. Similar to the proof of
\cite[(8.10)]{yys}, we see that for all
$x\in(\wz{Q})^\complement$,
\begin{eqnarray}\label{8.6}
\cg^0_N \lf(r_j (b)\r)(x)\ls
\frac{R_0^{n+s+1-\dz}}{|x-x_0|^{n+s+1-\dz}}\|b\|_{L^q_{\fai}(Q)},
\end{eqnarray}
where $\dz$ is a  positive constant small enough such that
$p_0(n+s+1-\dz)>nq$. By the fact that $\supp(\cg^0_N (r_j
(b)))\subset Q(x_0, R_0+8)\subset Q(x_0,9)$ and Lemma
\ref{l2.4}(i), we conclude that there exists a $\wz{\fai}\in \aa_q
(\rn)$ such that $\wz{\fai}(\cdot,t)=\fai(\cdot,t)$ on $Q(x_0,9)$
for all $t\in(0,\fz)$. Let $m_0$ be the integer such that $2^{m_0
-1}nR_0\le9<2^{m_0}nR_0$. From \eqref{8.6}, the uniformly lower
type $p_0$ property of $\fai$, Lemma \ref{l2.4}(iii) and
$p_0(n+s+1-\dz)>nq$, we infer that
\begin{eqnarray*}
\mathrm{I_2}&\ls&\int_{Q(x_0,9)\setminus\wz{Q}}\fai\lf(x,\cg^0_N
(r_j (b))(x)\r)\,dx\\
&\ls&\sum_{j=3}^{m_0}\int_{2^{j+1}nQ\setminus2^j n
 Q}\wz\fai\lf(x,
\frac{R_0^{n+s+1-\dz}}{|x-x_0|^{n+s+1-\dz}}\|b\|_{L^q_{\fai}(Q)}\r)\,dx\\
&\ls&\sum_{j=3}^{m_0}\int_{2^{j+1}nQ\setminus2^j
nQ}\lf(\frac{R_0^{n+s+1-\dz}}{|x-x_0|^{n+s+1-\dz}}\r)
^{p_0}\wz{\fai}\lf(x,\|b\|_{L^q_{\fai}(Q)}\r)\,dx\\
&\ls&\sum_{j=3}^{m_0}2^{k[(n+s+1-\dz)p_0-nq]}
\fai\lf(Q,\|b\|_{L^q_{\fai}(Q)}\r)\ls\fai\lf(Q,\|b\|_{L^q_{\fai}(Q)}\r),
\end{eqnarray*}
which, together with \eqref{8.4} and \eqref{8.5}, implies
\eqref{8.2} in Case 2). This finishes the proof of Theorem
\ref{t8.1}.
\end{proof}

\begin{remark}\label{r8.1}
We remark that Theorem \ref{t8.1} completely covers
\cite[Theorem\,8.2]{yys} by taking $\fai$ as in \eqref{1.1}, where
$\Phi$ is further assumed to satisfy the fact that $p_{\bfai}=p_{\bfai}^+$
and $p_{\Phi}^+$ is the attainable critical upper type index of
$\Phi$.
\end{remark}

\subsection{$S^0_{1,\,0}(\rn)$ pseudo-differential operators\label{s8.2}}

\hskip\parindent The pseudo-differential operators have been extensively studied in
the literature, and they are important in the study of partial
differential equations and harmonic analysis (see, for example,
\cite{sc99,st93,tay91,ta2}). Now, we recall the notion of
pseudo-differential operators.

\begin{definition}\label{d8.2}
A {\it  symbol} in $S^0_{1,0}(\rn)$ is a smooth function
$\sz(x,\xi)$ defined on $\rn\times\rn$ such that for all
multi-indices $\az$ and $\bz$, the following estimate holds:
$$\lf|\partial^{\az}_x \partial^{\bz}_{\xi}\sz(x,\xi)\r|\le
C({\az,\,\bz})(1+|\xi|)^{-|\bz|},$$ where $C(\az,\,\bz)$ is a
positive constant independent of $x$ and $\xi$. Let $f\in\cs(\rn)$
and $\widehat{f}$ be its \emph{Fourier transform}. The operator $T$ given by
setting, for all $x\in\rn$,
$$Tf(x):=\int_{\rn}\sz(x,\xi)e^{2\pi ix\xi}\hat{f}(\xi)\,d\xi$$
is called an \emph{$S^0_{1,0}(\rn)$ pseudo-differential
operator}.
\end{definition}

In the remainder of this section, for any given $\az\in(0,\fz)$
and all $t\in(0,\fz)$, let
\begin{eqnarray}\label{8.7}
\phi_\az(t):=(1+t)^{\az}.
\end{eqnarray}
Recall that a weight always means a locally
integrable function which is positive almost everywhere.
\begin{definition}\label{d8.3}
Let $\fai:\rn\times[0,\fz)\to[0,\fz)$ be a uniformly locally
integrable function and $\az\in(0,\fz)$. The function
$\fai(\cdot,t)$ is said to satisfy the \emph{uniformly local
weight condition $\aa_q(\phi_\az)$ with $q\in[1,\fz)$} if there
exists a positive constant $C(\az)$, depending on $\az$, such that
for all cubes $Q:= Q(x, r)$ and $t\in[0,\fz)$,
$$\lf[\frac{1}{\phi_\az(|Q|)|Q|}\int_{Q}\fai(x,t)\,dx\r]
\lf(\frac{1}{\phi_\az(|Q|)|Q|}\int_{Q}[\fai(x,t)]^{-\frac{1}{p-1}}\,dx
\r)^{p-1}\le C(\az)$$ when $q\in(1,\fz)$, and
$$\frac{1}{\phi_\az(|Q|)|Q|}\int_Q \fai(x,t)\,dx \lf(\esup_{y\in
Q}[\fai(y,t)]^{-1}\r)\le C(\az)$$ when $q=1$.
\end{definition}

We point out that $\aa_q(\phi_\az)$ with $q\in[1,\fz)$ when $\fai$
is as in \eqref{1.1} was introduced by Tang \cite{ta1}.

Similar to the classical Muckenhoupt weights, we have the
following properties of $\fai\in \aa_{\fz}(\phi_\az):=\cup_{1\le
p<\fz} \aa_p(\phi_\az)$, whose proofs are similar to those of
\cite[Lemmas 7.3 and 7.4]{ta1}. We omit the details.

\begin{lemma}\label{l8.2} Let $\az\in (0,\fz)$.

$\mathrm{(i)}$ If $1\le p_1 <p_2<\fz$, then
$\aa_{p_1}(\phi_\az)\subset \aa_{p_2}(\phi_\az)$.

$\mathrm{(ii)}$ For $p\in(1,\fz)$, $\fai\in \aa_{p}(\phi_\az)$ if
and only if $\fai^{-\frac{1}{p-1}}\in \aa_{p'}(\phi_\az)$, where
$1/p +1/p' =1$.

$\mathrm{(iii)}$ If $\fai\in \aa_p (\phi_\az)$ for $p\in[1,\fz)$,
then there exists a positive constant $C$ such that for all
$t\in(0,\fz)$, cubes $Q\subset\rn$ and measurable sets $E\subset Q$, 
$$\frac{|E|}{\phi_\az(|Q|)|Q|}\le
C\lf[\frac{\fai(E,t)}{\fai(Q,t)}\r]^{1/p}.$$
\end{lemma}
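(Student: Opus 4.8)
The three assertions are the $\phi_\az$-localized analogues of classical facts about Muckenhoupt weights, and the plan is to run the standard H\"older-inequality arguments (as in \cite[Lemmas 7.3 and 7.4]{ta1}) while carrying along one extra bookkeeping ingredient: since $\phi_\az(t)=(1+t)^\az\ge1$ for all $t\in[0,\fz)$, one always has $\phi_\az(|Q|)|Q|\ge|Q|$, so any power $(|Q|/[\phi_\az(|Q|)|Q|])^\tz$ with $\tz\ge0$ that appears is $\le1$ and may simply be discarded. Below I write $\Phi_Q:=\phi_\az(|Q|)|Q|$, and all estimates will be uniform in $t\in[0,\fz)$ because they issue from the $\aa_q(\phi_\az)$ conditions, which are uniform in $t$ by Definition \ref{d8.3}.

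For (i), fix $1\le p_1<p_2<\fz$, $\fai\in\aa_{p_1}(\phi_\az)$, a cube $Q$ and $t\in[0,\fz)$. When $p_1>1$, I would write $[\fai(\cdot,t)]^{-1/(p_2-1)}=\big([\fai(\cdot,t)]^{-1/(p_1-1)}\big)^{\tz}$ with $\tz:=(p_1-1)/(p_2-1)\in(0,1)$ and apply H\"older's inequality to obtain $\int_Q[\fai(x,t)]^{-1/(p_2-1)}\,dx\le|Q|^{1-\tz}\big(\int_Q[\fai(x,t)]^{-1/(p_1-1)}\,dx\big)^{\tz}$; raising this to the power $p_2-1$, multiplying by $\frac1{\Phi_Q}\int_Q\fai(x,t)\,dx$ and by $\Phi_Q^{-(p_2-1)}$, and regrouping, the right-hand side equals the $\aa_{p_1}(\phi_\az)$-quantity of $\fai$ times $(|Q|/\Phi_Q)^{p_2-p_1}\le1$, hence is $\le C(\az)$. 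When $p_1=1$ the same scheme works with $\frac1{|Q|}\int_Q[\fai(x,t)]^{-1/(p_2-1)}\,dx\le\big(\esup_{y\in Q}[\fai(y,t)]^{-1}\big)^{1/(p_2-1)}$ in place of the first H\"older step and the $\aa_1(\phi_\az)$ condition at the end. This yields $\fai\in\aa_{p_2}(\phi_\az)$.

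Assertion (ii) follows from the symmetry of Definition \ref{d8.3}: with $1/p+1/p'=1$ and $\psi:=\fai^{-1/(p-1)}$ one has $\fai=\psi^{-1/(p'-1)}$ and $p'-1=1/(p-1)$, so the $\aa_{p'}(\phi_\az)$-quantity of $\psi$ is exactly the $(p-1)$-th root of the $\aa_p(\phi_\az)$-quantity of $\fai$; thus one is bounded (uniformly in $Q$ and $t$) if and only if the other is. For (iii), fix $\fai\in\aa_p(\phi_\az)$, a cube $Q$, a measurable set $E\subset Q$ and $t\in[0,\fz)$. When $p\in(1,\fz)$, I would write $|E|=\int_E[\fai(x,t)]^{1/p}[\fai(x,t)]^{-1/p}\,dx$ and apply H\"older with exponents $p$ and $p'$ to get $|E|\le\fai(E,t)^{1/p}\big(\int_Q[\fai(x,t)]^{-1/(p-1)}\,dx\big)^{(p-1)/p}$; the $\aa_p(\phi_\az)$ condition bounds $\int_Q[\fai(x,t)]^{-1/(p-1)}\,dx$ by $C(\az)^{1/(p-1)}\Phi_Q^{p/(p-1)}\fai(Q,t)^{-1/(p-1)}$, and substituting this in gives $|E|\le C(\az)^{1/p}\Phi_Q[\fai(E,t)/\fai(Q,t)]^{1/p}$, which is the claim. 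When $p=1$, I would instead use $|E|\le\fai(E,t)\,\esup_{y\in Q}[\fai(y,t)]^{-1}$ and then the $\aa_1(\phi_\az)$ condition.

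No deep obstacle is expected: the only delicate points are the consistent normalization by $\Phi_Q=\phi_\az(|Q|)|Q|$, handled throughout by the trivial inequality $\Phi_Q\ge|Q|$, and the separate treatment of the case $p=1$, where each $L^{-1/(p-1)}$-average must be replaced by $\esup_{y\in Q}[\fai(y,t)]^{-1}$.
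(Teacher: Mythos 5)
Your proof is correct and follows the standard Hölder-inequality approach, which is precisely what the paper has in mind when it refers to Tang's \cite[Lemmas 7.3 and 7.4]{ta1} and omits the details. The key bookkeeping observation — that $\phi_\az(|Q|)\ge1$ lets one discard the extra factor $(|Q|/\Phi_Q)^{p_2-p_1}$ in (i) — is exactly the point where the $\phi_\az$-localized condition deviates from the classical Muckenhoupt argument, and you have handled it correctly; parts (ii) and (iii) are likewise verified (the identity $(p-1)(p'-1)=1$ makes (ii) an exact algebraic symmetry, and the Hölder split $|E|=\int_E\fai^{1/p}\fai^{-1/p}$ together with the $\aa_p(\phi_\az)$ bound gives (iii) with constant $C(\az)^{1/p}$).
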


\begin{lemma}\label{l8.3}
Let $T$ be an $S^0_{1,\,0}(\rn)$ pseudo-differential operator and
$\az\in(0,\fz)$. Then for $\fai\in \aa_p (\phi_\az)$ with
$p\in(1,\fz)$, there exists a positive constant
$C(p,\,\fai,\,\az)$, depending on $p,\,\fai$ and $\az$, such that
for all $f\in L^p_{\fai(\cdot,1)}(\rn)$,
$$\|Tf\|_{L^p_{\fai(\cdot,1)}(\rn)}\le
C(p,\,\fai,\,\az)\|f\|_{L^p_{\fai(\cdot,1)}(\rn)}.$$
\end{lemma}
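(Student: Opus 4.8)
The plan is to freeze $t=1$, so that $w:=\fai(\cdot,1)$ becomes a single weight on $\rn$ lying in $\aa_p(\phi_\az)$, and to split $T$ into a Calder\'on--Zygmund-type local piece and a rapidly decaying global piece. First I would recall the kernel estimates for an $S^0_{1,0}(\rn)$ symbol $\sz$: with $K(x,y):=\int_{\rn}\sz(x,\xi)e^{2\pi i(x-y)\xi}\,d\xi$, repeated integration by parts in $\xi$ together with $|\del_\xi^{\bz}\sz(x,\xi)|\ls(1+|\xi|)^{-|\bz|}$ gives, for $x\neq y$, the size and regularity bounds $|\del_x^{\az}\del_y^{\bz}K(x,y)|\ls|x-y|^{-n-|\az|-|\bz|}$, and moreover the improved decay $|K(x,y)|\ls|x-y|^{-M}$ for every $M\in(n,\fz)$ whenever $|x-y|\ge1$; in addition $T$ is bounded on $L^2(\rn)$ by the Calder\'on--Vaillancourt theorem. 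Then I would fix $\pz\in\cd(\rn)$ with $\pz\equiv1$ on $Q(0,1)$ and $\supp\pz\subset Q(0,2)$ and write $T=T_{\mathrm{loc}}+T_{\mathrm{glob}}$, where $T_{\mathrm{loc}}$ and $T_{\mathrm{glob}}$ have kernels $K(x,y)\pz(x-y)$ and $K(x,y)[1-\pz(x-y)]$, respectively.

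For $T_{\mathrm{glob}}$, the improved decay yields the pointwise estimate $|T_{\mathrm{glob}}f(x)|\ls\int_{|x-y|\ge1}|x-y|^{-M}|f(y)|\,dy\ls\sum_{k\in\nn}2^{k(n-M)}M^{\loc}_{\wz{C}_k}(f)(x)$ with $\wz{C}_k:=2^{(k+2)n}$, where $M^{\loc}_{\wz{C}}$ is the local maximal operator of the excerpt. Since $w\in\aa_p(\phi_\az)$, the weighted maximal operator $\wz M f(x):=\sup_{Q\ni x}[\phi_\az(|Q|)|Q|]^{-1}\int_Q|f(y)|\,dy$ is bounded on $L^p_w(\rn)$, which follows from the testing condition defining $\aa_p(\phi_\az)$ by the standard Vitali-covering argument (cf. Lemma \ref{l8.2}); since $\phi_\az$ is increasing, $M^{\loc}_{\wz{C}}(f)\le\phi_\az(\wz{C})\wz M f$ pointwise, whence $\|M^{\loc}_{\wz{C}_k}(f)\|_{L^p_w(\rn)}\ls(1+\wz{C}_k)^{\az}\|f\|_{L^p_w(\rn)}$. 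Choosing $M$ large enough that $\sum_{k\in\nn}2^{k(n-M)}(1+\wz{C}_k)^{\az}<\fz$ then gives $\|T_{\mathrm{glob}}f\|_{L^p_w(\rn)}\ls\|f\|_{L^p_w(\rn)}$.

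For $T_{\mathrm{loc}}$, its kernel $K(x,y)\pz(x-y)$ is supported in $\{|x-y|\le2\}$ and still satisfies the full Calder\'on--Zygmund size and smoothness conditions, and $T_{\mathrm{loc}}=T-T_{\mathrm{glob}}$ is bounded on $L^2(\rn)$ by the above. I would decompose $\rn$ into cubes $\{Q_m\}_m$ of sidelength $3$ with bounded overlap, so that $\supp(T_{\mathrm{loc}}(f\chi_{Q_m}))\subset 2Q_m$; since $\aa_p(\phi_\az)\subset\aa^{\loc}_p(\rn)$, Lemma \ref{l2.4}(i) together with Remark \ref{r2.3} supplies, for each $m$, a genuine $\aa_p(\rn)$ weight $\wz w_m$ with $\wz w_m=w$ on $2Q_m$ and $\aa_p(\wz w_m)$ bounded uniformly in $m$, so $\wz w_m(\cdot)=\wz w_m$ is an ordinary Muckenhoupt $A_p$ weight with uniform constant. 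The classical weighted Calder\'on--Zygmund theory applied on $2Q_m$ then gives $\|T_{\mathrm{loc}}(f\chi_{Q_m})\|_{L^p_w(2Q_m)}\ls\|f\|_{L^p_w(Q_m)}$ with constant independent of $m$, and summing over $m$ using the bounded overlap of $\{Q_m\}_m$ and of $\{2Q_m\}_m$ yields $\|T_{\mathrm{loc}}f\|_{L^p_w(\rn)}\ls\|f\|_{L^p_w(\rn)}$. Adding the two estimates completes the proof. I expect the main obstacle to be the rigorous localization of $T_{\mathrm{loc}}$: one must control the $L^2$-norm of the truncated operator, invoke the localization lemma so that the local weight is replaced by an honest Muckenhoupt weight with constants uniform over the covering cubes, and verify that gluing over the cover costs only a bounded-overlap factor, carefully treating the interactions between neighboring cubes allowed by the support $\{|x-y|\le2\}$ of the kernel; by contrast, the global piece is essentially a computation once the correct $\phi_\az$-normalized weighted maximal inequality is in hand.
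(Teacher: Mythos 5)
The paper itself does not prove Lemma~\ref{l8.3}: immediately before Lemma~\ref{l8.2} it declares that Lemmas~\ref{l8.2} and~\ref{l8.3} have proofs similar to those of \cite[Lemmas 7.3 and 7.4]{ta1} and omits the details. So there is no in-text proof for you to match; I can only assess whether your reconstruction is sound and whether it plausibly follows the route of the cited reference. The overall architecture you propose -- split $T$ into a compactly supported local piece $T_{\mathrm{loc}}$ and a rapidly decaying global piece $T_{\mathrm{glob}}$, kill $T_{\mathrm{glob}}$ with a $\phi_\az$-normalized weighted maximal function, and handle $T_{\mathrm{loc}}$ by tiling $\rn$, invoking Lemma~\ref{l2.4}(i)/Remark~\ref{r2.3} to replace $\fai(\cdot,1)$ by genuine $A_p$ weights $\wz w_m$ with uniformly controlled constants, and appealing to the classical weighted Calder\'on--Zygmund theorem cube by cube -- is the standard way to prove this kind of local weighted estimate, and your bookkeeping for $T_{\mathrm{loc}}$ (bounded overlap, $\supp(T_{\mathrm{loc}}(f\chi_{Q_m}))\subset 2Q_m$, disjoint tiling so the pieces reassemble) is correct once the cube size is adjusted so that a sidelength-$\sqrt n$ kernel support really lands inside $2Q_m$. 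Your observation that $\aa_p(\phi_\az)\subset\aa^{\loc}_p(\rn)$ (since $\phi_\az(|Q|)\le\phi_\az(1)$ for $|Q|\le1$) and the pointwise bound $M^{\loc}_{\wz C}(f)\le\phi_\az(\wz C)\wz M f$ are both correct and exactly capture why the restriction from $\aa^\loc_p(\rn)$ to $\aa_p(\phi_\az)$ is what makes the global piece controllable.

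The genuine gap is the strong $L^p_{\fai(\cdot,1)}$-boundedness of the $\phi_\az$-weighted maximal operator $\wz M$, which you treat as a one-line consequence of ``the testing condition defining $\aa_p(\phi_\az)$ by the standard Vitali-covering argument.'' Vitali covering plus H\"older plus the $\aa_p(\phi_\az)$ condition does give the \emph{weak}-type $(p,p)$ inequality (essentially the computation $w(Q)\le C\lambda^{-p}\int_Q|f|^p w$ when $\frac{1}{\phi_\az(|Q|)|Q|}\int_Q|f|>\lambda$), but upgrading weak $(p,p)$ to strong $(p,p)$ by Marcinkiewicz interpolation requires the openness property $\aa_p(\phi_\az)=\bigcup_{q<p}\aa_q(\phi_\az)$, i.e., a reverse-H\"older / self-improvement theorem for $\aa_p(\phi_\az)$ weights. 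None of the items in Lemma~\ref{l8.2} states this, and it is not automatic: the localization lemma gives you only a \emph{local} reverse H\"older on unit cubes, whereas the $\aa_p(\phi_\az)$ condition is tested on cubes of all scales with the $\phi_\az(|Q|)|Q|$ normalization, so one must check that the self-improvement survives the $\phi_\az$-renormalization on large cubes. This is precisely the technical heart of the lemma, and it is what Tang's \cite[Lemma 7.4]{ta1} is there to supply; your proof currently assumes it. Either supply the $\aa_p(\phi_\az)$ reverse-H\"older inequality (or cite Tang for it), or replace the maximal-function detour by a direct estimate for $T_{\mathrm{glob}}$ using the growth bound $\fai(Q(x,R),1)\ls[\phi_\az(R^n)R^n]^p\,\fai(Q(x,1),1)$ furnished by Lemma~\ref{l8.2}(iii), which avoids interpolation entirely.
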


Now, we establish the boundedness on $h_{\fai}(\rn)$ of
$S^0_{1,\,0}(\rn)$ pseudo-differential operators as follows.

\begin{theorem}\label{t8.2}
Let $T$ be an $S^0_{1,\,0}(\rn)$ pseudo-differential operator,
$\fai$ a growth function as in Definition \ref{d2.3} and $\fai\in
\aa_{\fz}(\phi_\az)$ for some $\az\in(0,\fz)$. Then, there exists a
positive constant $C(\fai,\,n,\,\az)$, depending only on $\fai$,
$n$ and $\az$, such that for all $f\in h_{\fai}(\rn)$,
$$\|Tf\|_{h_{\fai}(\rn)}\le C(\fai,\,n,\,\az)
\|f\|_{h_{\fai}(\rn)}.$$
\end{theorem}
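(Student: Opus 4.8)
The plan is to follow, almost verbatim, the scheme of the proof of Theorem \ref{t8.1}, with the weighted $L^q$-boundedness of the local Riesz transforms (Lemma \ref{l8.1}) replaced by that of $T$ (Lemma \ref{l8.3}), and the explicit kernel decay of the $r_j$ replaced by the standard kernel estimates for $S^0_{1,\,0}(\rn)$ pseudo-differential operators. First I would note that $\fai\in\aa_\fz(\phi_\az)\subset\aa^\loc_\fz(\rn)$, so that $\fai$ is a growth function to which all of Sections \ref{s3}--\ref{s5} apply, and fix $q\in(q(\fai),\fz)$ with $q>1$ and $\fai\in\aa_q(\phi_\az)$ (Lemma \ref{l8.2}(i)); then choose $p_0\in(0,i(\fai))$ with $\fai$ of uniformly lower type $p_0$, and an integer $s\ge m(\fai)$ large enough (together with $N>s$, $N\ge N_\fai$) so that $p_0(n+s+1)>nq(\az+1)$. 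By Theorem \ref{t5.1}, $h^{\fai,\,q,\,s}(\rn)=h_\fai(\rn)$, and by Corollary \ref{c4.1} it suffices, via a density argument, to bound $\|Tf\|_{h_\fai(\rn)}$ for $f\in h_\fai(\rn)\cap L^q_{\fai(\cdot,1)}(\rn)$. For such an $f$, Lemma \ref{l5.5} and Remark \ref{r5.1} (together with the fact, shown in the proof of Theorem \ref{t6.1}(i), that the decomposition \eqref{6.3} converges in $L^q_{\fai(\cdot,1)}(\rn)$) give $f=b_0+\sum_i b_i$ in $L^q_{\fai(\cdot,1)}(\rn)$, where $b_0$ is a multiple of a $(\fai,\,q)$-single-atom, each $b_i$ is a multiple of a $(\fai,\,q,\,s)$-atom supported in a cube $Q_i$ with $l(Q_i)\in(0,2]$, and $\blz_q(\{b_i\}_i)\sim\|f\|_{h_\fai(\rn)}$. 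By Lemma \ref{l8.3}, $Tf=Tb_0+\sum_i Tb_i$ in $L^q_{\fai(\cdot,1)}(\rn)$, hence $\cg^0_N(Tf)\le\cg^0_N(Tb_0)+\sum_i\cg^0_N(Tb_i)$ pointwise; by Lemma \ref{l2.1}(i), the theorem follows once we prove the analogues of \eqref{8.1} and \eqref{8.2}, i.e. $\int_\rn\fai(x,\cg^0_N(Tb_0)(x))\,dx\ls\fai(\rn,\|b_0\|_{L^q_{\fai}(\rn)})$ and, for a multiple $b$ of a $(\fai,\,q,\,s)$-atom supported in $Q=Q(x_0,R_0)$ with $R_0\in(0,2]$, $\int_\rn\fai(x,\cg^0_N(Tb)(x))\,dx\ls\fai(Q,\|b\|_{L^q_{\fai}(Q)})$.

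The single-atom inequality is proved exactly as \eqref{8.3}: use $\cg^0_N(Tb_0)\ls M^\loc(Tb_0)$ (Proposition \ref{p3.1}(i)), the uniformly upper type $1$ property of $\fai$, H\"older's inequality, Lemma \ref{l2.4}(iii), and Lemma \ref{l8.3} applied with the weight $\fai(\cdot,\|b_0\|_{L^q_{\fai}(\rn)})\in\aa_q(\phi_\az)$ (whose constant does not depend on the parameter), in place of Lemma \ref{l8.1}. For the atomic estimate, write $\wz Q:=8nQ$ and split $\int_\rn=\int_{\wz Q}+\int_{(\wz Q)^\complement}=:\mathrm{I}_1+\mathrm{I}_2$. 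Since $R_0\le2$, the cube $\wz Q$ has sidelength $\ls n$, so $\mathrm{I}_1\ls\fai(Q,\|b\|_{L^q_{\fai}(Q)})$ follows precisely as in Case 1) of the proof of \eqref{8.2}, using Lemma \ref{l8.3}, H\"older's inequality and parts (ii) and (iii) of Lemma \ref{l2.4}.

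The crux is $\mathrm{I}_2$. Since every $\pz\in\cd^0_N(\rn)$ is supported in $B(0,1)$ and $t\in(0,1)$, for $x\in(\wz Q)^\complement$ the value $\pz_t\ast(Tb)(x)$ depends only on $Tb$ on $B(x,1)$, which is disjoint from $Q$; for $y\in B(x,1)$ one has $Tb(y)=\int_Q K(y,z)b(z)\,dz$, $K$ being the distributional kernel of $T$. I would invoke the standard kernel estimates for $S^0_{1,\,0}(\rn)$ symbols: $K$ is smooth off the diagonal with $|\partial^\az_x\partial^\bz_y K(x,y)|\ls_{\az,\bz}|x-y|^{-n-|\az|-|\bz|}$, and moreover $|\partial^\az_x\partial^\bz_y K(x,y)|\ls_{\az,\bz,M}|x-y|^{-M}$ for every $M\in\nn$ when $|x-y|\ge1$. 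Combining these with the vanishing moments of $b$ up to order $s$ when $R_0<1$ (replacing $K(y,\cdot)$ on $Q$ by its degree-$s$ Taylor polynomial about $x_0$), exactly as in the derivation of \eqref{8.6}, one obtains, for some small $\dz>0$ with $p_0(n+s+1-\dz)>nq(\az+1)$ still holding, the pointwise bound
$$\cg^0_N(Tb)(x)\ls\frac{R_0^{n+s+1-\dz}}{|x-x_0|^{n+s+1-\dz}}\|b\|_{L^q_{\fai}(Q)}$$
for $x\in(\wz Q)^\complement$ (the rapid decay handling the region $|x-x_0|\ge1$). Then I would decompose $(\wz Q)^\complement$ into dyadic annuli $2^{k+1}nQ\setminus 2^knQ$, apply the uniformly lower type $p_0$ property of $\fai$ on each annulus (where $R_0/|x-x_0|\sim2^{-k}$), and use Lemma \ref{l8.2}(iii) to compare $\fai(2^{k+1}nQ,\cdot)$ with $\fai(Q,\cdot)$ at the cost of a factor $\ls[\phi_\az(|2^{k+1}nQ|)2^{kn}]^q\ls 2^{knq(\az+1)}$; since $p_0(n+s+1-\dz)>nq(\az+1)$, the resulting geometric-type series converges and yields $\mathrm{I}_2\ls\fai(Q,\|b\|_{L^q_{\fai}(Q)})$. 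Combining $\mathrm{I}_1$ and $\mathrm{I}_2$ completes the atomic estimate, and hence the proof.

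The step I expect to be the main obstacle is exactly this tail estimate for $\mathrm{I}_2$: one must quote and apply correctly the two-regime kernel estimates for $S^0_{1,\,0}(\rn)$ operators (Calder\'on--Zygmund smoothness near the diagonal together with rapid decay at infinity), reduce $\cg^0_N(Tb)$ to the above pointwise decay estimate, and then carry the extra polynomial factor $\phi_\az$ through the dyadic summation against the Musielak--Orlicz measure $\fai(\cdot,t)$ via Lemma \ref{l8.2}(iii); this forces the index bookkeeping $p_0(n+s+1-\dz)>nq(\az+1)$, which is why $s$ (and $N$) must be taken large while $q$ is fixed with $\fai\in\aa_q(\phi_\az)$. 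All remaining steps are routine adaptations of the corresponding arguments in the proofs of Theorem \ref{t8.1}, Theorem \ref{t6.1}(i) and Lemma \ref{l3.4}.
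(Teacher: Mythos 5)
Your proposal follows essentially the same route as the paper's proof: reduce to uniform estimates over $(\fai,\,q,\,s)$-atoms and $(\fai,\,q)$-single-atoms via Lemma \ref{l5.5}, Remark \ref{r5.1} and Corollary \ref{c4.1}, treat the near part with Lemma \ref{l8.3}, H\"older's inequality and Lemma \ref{l2.4}, and treat the far part via the kernel decay of $S^0_{1,\,0}(\rn)$ operators, the vanishing moments when $l(Q)<1$, and Lemma \ref{l8.2}(iii), under the threshold $p_0(n+s+1)>nq(1+\az)$. The only differences are presentational: the paper separately proves the pointwise decay of $\cg^0_N(Tb)$ in the two regimes $R_0\in(0,1)$ (power $n+s+1$, via moments, citing \cite{yys}) and $R_0\in[1,2]$ (arbitrary power $M$, via rapid decay, citing \cite{yys}) and, for small $R_0$, further splits the dyadic sum at the scale where $\phi_\az$ begins to grow, whereas you propose a single $\dz$-perturbed bound $|x-x_0|^{-(n+s+1-\dz)}$ and the uniform estimate $\phi_\az(|2^{k+1}nQ|)\ls 2^{kn\az}$; both yield the same convergent geometric series.
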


\begin{proof}
Let $q\in(q(\fai),\fz)$, $\az\in(0,\fz)$ and the nonnegative
integer $s$ satisfy $(n+s+1)i(\fai)>nq(1+\az)$, where $i(\fai)$ is
as in \eqref{2.3}. Then, by \eqref{2.3}, we further see that there
exists $p_0\in(0,i(\fai))$ such that $(n+s+1)p_0>nq(1+\az)$. To
show Theorem \ref{t8.2}, similar to the proof of Theorem
\ref{t8.1}, replacing Lemma \ref{l8.1} by Lemma \ref{l8.3} in the
proof of Theorem \ref{t8.1}, we see that it suffices to show that
for any multiple of some $(\fai,\,q)$-single-atom $b$,
\begin{eqnarray}\label{8.8}
\int_{\rn}\fai\lf(x,\cg_N^0(T(b))(x)\r)\,dx\ls
\fai\lf(\rn,\|b\|_{L^q_{\fai}(\rn)}\r),
\end{eqnarray}
and for any multiple of $(\fai,\,q,\,s)$-atom $b$ supported in
$Q_0:=Q(x_0,\,R_0)$ with $R_0\in(0,2]$,
\begin{eqnarray}\label{8.9}
\int_{\rn}\fai\lf(x,\cg_N^0(T(b))(x)\r)\,dx\ls
\fai\lf(Q_0,\|b\|_{L^q_{\fai}(Q_0)}\r).
\end{eqnarray}

The proof of \eqref{8.8} is similar to that of \eqref{8.3}. We omit the
details.

Now, we prove \eqref{8.9}. Let $\wz{Q}_0:=2Q_0$. Similar to the
proof of \eqref{8.5}, we see that
\begin{eqnarray}\label{8.10}
&&\int_{\wz{Q}_0}\fai\lf(x,\cg^0_N(T(b))(x)\r)\,dx\ls
\fai\lf(Q_0,\|b\|_{L^q_{\fai}(Q_0)}\r).
\end{eqnarray}

To estimate $\int_{\rn\setminus \wz{Q}_0}\fai(x,\cg^0_N(T
b)(x))\,dx$, we consider the following two cases for $R_0$.

{\it Case} 1) $R_0\in(0,1)$.  In this case, similar to the proof of
\cite[p.\,74]{yys}, we see that for all $x\in \wz{Q}_0^\complement$,
\begin{eqnarray*}
\cg_N^0(T(b))(x)\ls|
x-x_0|^{-(n+s+1)}|Q_0|^{\frac{s+1}{n}}\|b\|_{L^1 (\rn)}.
\end{eqnarray*}
This, combined with H\"older's inequality, Lemma \ref{l8.2}(iii)
and the definition of $\aa_p (\phi_\az)$, implies that
\begin{eqnarray}\label{8.11}
&&\hs\int_{\rn\setminus \wz{Q}_0}\fai\lf(x,\cg^0_N(T(b))(x)\r)\,dx\nonumber\\
 &&\hs\hs\ls\int_{\rn\setminus \wz{Q}_0}\fai\lf(x,|
x-x_0|^{-(n+s+1)}|Q_0|^{\frac{s+1}{n}}\|b\|_{L^1 (\rn)}\r)\,dx\nonumber\\
 &&\hs\hs\ls\int_{\rn\setminus
\wz{Q}_0}\fai\lf(x,|Q_0|^{\frac{s+1}{n}}
|x-x_0|^{-(n+s+1)}\|b\|_{L^q_{\fai}(\rn)}\phi_{\az}(|Q_0|)
|Q_0|\r)\,dx\nonumber\\
 &&\hs\hs\ls\sum_{k=1}^{\fz}\int_{2^k
Q_0}\fai\lf(x,|Q_0|^{\frac{s+1}{n}} (2^k
R_0)^{-(n+s+1)}\|b\|_{L^q_{\fai}(\rn)}\phi_{\az}(|Q_0|)
|Q_0|\r)\,dx\nonumber\\
 &&\hs\hs\ls\sum_{k=1}^{m_0}\int_{2^k
Q_0}\fai\lf(x,|Q_0|^{\frac{s+1}{n}} (2^k
R_0)^{-(n+s+1)}\|b\|_{L^q_{\fai}(\rn)}
\phi_{\az}(|Q_0|) |Q_0|\r)\,dx\nonumber\\
 &&\hs\hs\hs +\sum_{k=m_0 +1}^{\fz}\int_{2^k
Q_0}\cdots=:\mathrm{I_1}+\mathrm{I_2},
\end{eqnarray}
where the integer $m_0$ satisfies $2^{m_0
-1}\le\frac{1}{R_0}<2^{m_0}$.

To estimate $\mathrm{I_1}$, for any $k\in\{1,\,\cdots,\,m_0\}$, by
the choice of $m_0$ and $R_0\in(0,1)$, we know that $2^k
R_0^n\le1$, which, together with the uniformly lower type $p_0$
property of $\fai$, Lemma \ref{l8.2}(iii) and the fact that
$(n+s+1)p_0>nq(1+\az)$, implies that
\begin{eqnarray}\label{8.12}
\mathrm{I_1}&\ls&\sum_{k=1}^{m_0}\fai\lf(2^kQ_0,|Q_0|^{\frac{s+n+1}{n}}
(2^k
R_0)^{-(n+s+1)}\|b\|_{L^q_{\fai}(\rn)}\r)\nonumber\\
 &\ls&\sum_{k=1}^{m_0}2^{-k(n+s+1)p_0}2^{knq}\fai\lf(Q_0,
\|b\|_{L^q_{\fai}(\rn)}\r)\ls\fai\lf(Q_0,\|b\|_{L^q_{\fai}(\rn)}\r).
\end{eqnarray}
For $\mathrm{I_2}$, similar to the estimate of $\mathrm{I_1}$, we
know that
\begin{eqnarray*}
\mathrm{I_2}&\ls&\sum_{k=m_0
+1}^{\fz}\fai\lf(2^kQ_0,|Q_0|^{\frac{s+n+1}{n}} (2^k
R_0)^{-(n+s+1)}\|b\|_{L^q_{\fai}(\rn)}\r)\\
&\ls&\sum_{k=m_0+1}^{\fz}2^{-k(n+s+1)p_0}\fai\lf(2^kQ_0,
\|b\|_{L^q_{\fai}(\rn)}\r)\\
&\ls&\sum_{k=m_0
+1}^{\fz}2^{-k(n+s+1)p_0}2^{knq}\lf[\phi_{\az}(|2^k Q_0|)\r]^q
\fai\lf(Q_0,\|b\|_{L^q_{\fai}(\rn)}\r)\\
&\ls&\sum_{k=m_0 +1}^{\fz}2^{-k[(n+s+1)p_0-nq(\az+1)]}
\fai\lf(Q_0,\|b\|_{L^q_{\fai}(\rn)}\r)\ls
\fai\lf(Q_0,\|b\|_{L^q_{\fai}(\rn)}\r),
\end{eqnarray*}
which, together with \eqref{8.10}, \eqref{8.11} and \eqref{8.12},
implies \eqref{8.9} in Case 1).

{\it Case} 2) $R_0\in[1,2]$. In this case, similar to the proof of
\cite[(8.44)]{yys}, we know that for all $x\in
\wz{Q}_0^\complement$,
\begin{eqnarray}\label{8.13}
\cg^0_N (T(b))(x)\ls|x-x_0|^{-M} \|b\|_{L^1 (\rn)}.
\end{eqnarray}
Take $M>\frac{nq(1+\az)}{p_0}$. By \eqref{8.13}, H\"older's
inequality and Lemma \ref{l8.2}(iii), we conclude that
\begin{eqnarray*}
&&\int_{\rn\setminus \wz{Q}_0}\fai\lf(x,\cg^0_N(T(b))(x)\r)\,dx\\
 &&\hs\ls\int_{\rn\setminus \wz{Q}_0}\fai\lf(x,|
x-x_0|^{-M}\|b\|_{L^1 (\rn)}\r)\,dx\\
&&\hs\ls\sum_{k=1}^{\fz}\int_{2^k Q_0}\fai\lf(x,(2^k
R_0)^{-M}\|b\|_{L^q_{\fai}(\rn)}\phi_{\az}(|Q_0|)|Q_0|\r)\,dx\\
&&\hs\ls\sum_{k=1}^{\fz}2^{-kMp_0}R_0^{-Mp_0}\fai(2^kQ_0,
\|b\|_{L^q_{\fai}(\rn)})\\
&&\hs \ls\sum_{k=1}^{\fz}2^{-k(Mp_0-nq)}\phi_{\az}(|2^k
Q_0|)R_0^{-Mp_0}\fai(Q_0,
\|b\|_{L^q_{\fai}(\rn)})\\
&&\hs\ls\sum_{k=1}^{\fz}2^{-k[Mp_0-nq(1+\az)]}
R_0^{-(Mp_0-nq\az)}\fai\lf(Q_0,\|b\|_{L^q_{\fai}(\rn)}\r)
\ls\fai\lf(Q_0,\|b\|_{L^q_{\fai}(\rn)}\r),
\end{eqnarray*}
which, together with \eqref{8.10}, implies \eqref{8.9} in Case 2).
This finishes the proof of Theorem \ref{t8.2}.
\end{proof}

By Theorems \ref{t8.2} and \ref{t7.1}, \cite[p.\,233,\,(4)]{st93}
and the proposition in \cite[p.\,259]{st93}, we have the following
result.

\begin{corollary}\label{c8.1}
Let $T$ be an $S^0_{1,\,0}(\rn)$ pseudo-differential operator and
$\fai$ a growth function satisfying $\fai\in \aa_{\fz}(\phi_\az)$ for
some $\az\in(0,\fz)$. Then there exists a positive constant $C
(\fai,\,\az)$, depending on $\fai$ and $\az$, such that for all
$f\in\bmo_{\fai}(\rn)$, 
$$\|Tf\|_{\bmo_{\fai}(\rn)}\le C
(\fai,\,\az) \|f\|_{\bmo_{\fai}(\rn)}.$$
\end{corollary}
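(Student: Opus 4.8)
The plan is to deduce the corollary from Theorem \ref{t8.2} by a duality argument. By Corollary \ref{c7.1} (i.e.\ Theorems \ref{t5.1} and \ref{t7.1}), we have $[h_{\fai}(\rn)]^{\ast}=\bmo_{\fai}(\rn)$; in particular, by Theorem \ref{t7.1}(i) every $f\in\bmo_{\fai}(\rn)$ induces a bounded linear functional on $h_{\fai}(\rn)$, with norm comparable to $\|f\|_{\bmo_{\fai}(\rn)}$. Hence, to bound an operator on $\bmo_{\fai}(\rn)$ it suffices to bound its (pre)transpose on $h_{\fai}(\rn)$, and the desired inequality will come out with a constant of the same type as in Theorem \ref{t8.2}.

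First I would bring in the pseudo-differential calculus. Let ${}^tT$ denote the transpose of $T$, i.e.\ the operator determined by $\laz Tf,g\raz=\laz f,{}^tTg\raz$ for all $f,g\in\cs(\rn)$. By the symbolic calculus for $S^0_{1,\,0}(\rn)$ pseudo-differential operators (see \cite[p.\,233,\,(4)]{st93}), ${}^tT$ is again an $S^0_{1,\,0}(\rn)$ pseudo-differential operator. Since $\fai\in\aa_{\fz}(\phi_\az)$ for some $\az\in(0,\fz)$, Theorem \ref{t8.2} applies to ${}^tT$ and yields a positive constant $C(\fai,n,\az)$ such that $\|{}^tTg\|_{h_{\fai}(\rn)}\le C(\fai,n,\az)\|g\|_{h_{\fai}(\rn)}$ for all $g\in h_{\fai}(\rn)$. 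Consequently, the Banach-space adjoint $({}^tT)^{\ast}$ of ${}^tT$ is a bounded linear operator on $[h_{\fai}(\rn)]^{\ast}=\bmo_{\fai}(\rn)$ with operator norm controlled by $C(\fai,n,\az)$.

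It then remains to identify $({}^tT)^{\ast}$ with $T$ on $\bmo_{\fai}(\rn)$. For $g$ in a dense subspace of $h_{\fai}(\rn)$ (for instance finite linear combinations of $(\fai,q,s)$-atoms and $(\fai,q)$-single-atoms, which lie in a suitable $L^q_{\fai(\cdot,1)}(\rn)$; see Section \ref{s6}) and for sufficiently nice $f$, the chain $\laz ({}^tT)^{\ast}f,g\raz=\laz f,{}^tTg\raz=\laz Tf,g\raz$ shows that $({}^tT)^{\ast}f$ coincides, as an element of $\bmo_{\fai}(\rn)$, with the a priori defined $Tf$; the precise sense in which $T$ acts on $\bmo_{\fai}(\rn)$ and the compatibility of this action with the duality pairing are furnished by the proposition in \cite[p.\,259]{st93}, adapted to the present weighted Musielak-Orlicz setting. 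The main obstacle is exactly this last identification: one must give a well-defined meaning to $Tf$ for general $f\in\bmo_{\fai}(\rn)$ (via a truncation/mollification procedure as in \cite{st93}), verify that it agrees with $({}^tT)^{\ast}f$ on a dense subspace, and extend by continuity; the mapping properties of $S^0_{1,\,0}(\rn)$ operators on $L^{\fz}(\rn)$ and on atoms, together with the density results of Section \ref{s6}, make this step routine though somewhat technical.
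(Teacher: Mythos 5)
Your proposal is correct and follows essentially the same route as the paper: the paper's (very terse) justification of Corollary \ref{c8.1} is precisely to combine Theorem \ref{t8.2} (boundedness of $S^0_{1,0}$ operators on $h_{\fai}(\rn)$), Theorem \ref{t7.1} (the duality $[h^{\fai,\fz,s}(\rn)]^{\ast}=\bmo_{\fai}(\rn)$), the fact from \cite[p.\,233,\,(4)]{st93} that the transpose of an $S^0_{1,0}$ operator is again $S^0_{1,0}$, and the proposition in \cite[p.\,259]{st93} for the compatibility of the pseudo-differential action on $\bbmo$-type spaces with the duality pairing. You have simply spelled out the duality argument the paper leaves implicit.
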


\begin{remark}\label{r8.2}
Let $\oz\in A_{\fz}(\phi_{\az})$ for some $\az\in(0,\fz)$ and
$\Phi$ be an Orlicz function satisfying the fact that
$p_{\bfai}=p_{\bfai}^+$ and $p_{\Phi}^+$ is the attainable
critical upper type index of $\Phi$. Then, Theorem \ref{t8.2} and
Corollary \ref{c8.1} cover, respectively,
\cite[Theorem\,8.18]{yys} and \cite[Corollary 8.20]{yys} by taking
$\fai(x,t):=\oz(x)\Phi(t)$ for all $x\in\rn$ and $t\in[0,\fz)$.
\end{remark}

\medskip

\Acknowledgements{This work was supported
by the National Natural Science Foundation (Grant No. 11171027) of
China and Program for Changjiang Scholars and Innovative
Research Team in University of China. The authors would like to
thank the referees for their careful reading and many valuable
remarks which made this article more readable.}

%    Insert the bibliography data here.

\end{document}